\documentclass[11pt]{amsart}
\usepackage[T1]{fontenc}

\usepackage[shortlabels]{enumitem}
\usepackage{multirow}

\usepackage[dvipsnames]{xcolor}
\usepackage{chngcntr}
\counterwithin{equation}{section}
\counterwithin{figure}{section}
\usepackage[export]{adjustbox}
\setlength{\fboxsep}{0.4em}
\usepackage{comment}
\usepackage{import}
\usepackage{xifthen}
\usepackage{appendix}

\appendixtitleon

\usepackage[margin=2.5cm]{geometry}

\usepackage{amsfonts}
\usepackage{amsmath}
\usepackage{amsthm}
\usepackage{mathtools}
\usepackage{amssymb}
\usepackage{microtype}
\usepackage{amsbsy}
\usepackage{nicefrac}
\usepackage{tensor}
\usepackage{bm}
\usepackage{bbm}
\usepackage{dsfont}
\usepackage{cancel}
\usepackage{empheq}
\usepackage{braket}


\DeclareMathOperator{\spn}{span}

\DeclareMathOperator{\ad}{ad}

\DeclareMathOperator{\Vir}{Vir}

\DeclareMathOperator{\Ind}{Ind}

\DeclareSymbolFont{bbold}{U}{bbold}{m}{n}
\DeclareSymbolFontAlphabet{\mathbbold}{bbold}

\makeatletter
\renewcommand*\env@matrix[1][\arraystretch]{%
  \edef\arraystretch{#1}%
  \hskip -\arraycolsep
  \let\@ifnextchar\new@ifnextchar
  \array{*\c@MaxMatrixCols c}}
\makeatother

\makeatletter
\newcommand{\medoplus}{\mathbin{\mathpalette\make@med\oplus}}
\newcommand{\medotimes}{\mathbin{\mathpalette\make@med\otimes}}
\newcommand{\make@med}[2]{%
  \vcenter{\hbox{%
    \scalebox{1.5}{$\m@th#1#2$}%
  }}%
}
\makeatother

\newcommand{\dotr}{\mbox{$\boldsymbol{\cdot}$}} 
\newcommand{\semiinfforms}{\ensuremath{\Lambda^{\dotr}_\infty}}
\newcommand{\g}{\mathfrak{g}}
\newcommand{\h}{\mathfrak{h}}

\newcommand{\f}{\mathfrak{f}}
\newcommand{\p}{\mathfrak{p}}
\newcommand{\bcca}{\mathfrak{b}}
\newcommand{\bms}{\mathfrak{bms}}
\newcommand{\fa}{\mathfrak{a}}

\newcommand{\Ptilde}{\widetilde{P}}

\newcommand{\genvecbms}{\ket{\mathbf{M}, \mathbf{s}}}
\newcommand{\genvecbmsVerma}{\ket{h_L, h_M, c_L, c_M}}

\newcommand{\ZZ}{\mathbb{Z}}
\newcommand{\CC}{\mathbb{C}}
\newcommand{\NN}{\mathbb{N}}

\newcommand{\RR}{\mathbb{R}}

\newcommand{\kk}{\Bbbk}
\newcommand{\WW}{\mathbb{W}}


\newcommand{\W}{\mathcal{W}}
\newcommand{\OO}{\mathcal{O}}
\newcommand{\UU}{\mathcal{U}}
\newcommand{\VV}{\mathcal{V}}

\newcommand{\F}{\mathcal{F}}

\newcommand{\PP}{\mathcal{P}}
\newcommand{\PPtilde}{\widetilde{\mathcal{P}}}
\newcommand{\XX}{\mathcal{X}}
\newcommand{\YY}{\mathcal{Y}}

\newcommand{\Olambda}{\mathcal{O}^{(\lambda)}}
\newcommand{\ulambda}{u^{(\lambda)}}
\newcommand{\flambda}{f^{(\lambda)}}
\newcommand{\Pb}{P^{(b)}}
\newcommand{\Ptildeb}{\widetilde{P}^{(b)}}


\DeclareMathOperator{\Der}{Der}

\DeclareMathOperator{\gr}{gr}

\newcommand{\del}{\partial}
\newcommand{\diff}[1]{\frac{d}{d#1}}

\makeatletter
    \newcommand{\extp}{\@ifnextchar^\@extp{\@extp^{\,}}}
    \def\@extp^#1{\mathop{\bigwedge\nolimits^{\!#1}}}
\makeatother

\newcommand{\nonzero}{\setminus \{0\}}

\newcommand{\floor}[1]{\left\lfloor #1 \right\rfloor}

\newcommand\restr[2]{{
  \left.\kern-\nulldelimiterspace 
  #1 
  \vphantom{\big|}
  \right|_{#2}
  }}


\newtheorem{proposition}{Proposition}[section]
\newtheorem{lemma}[proposition]{Lemma}
\newtheorem{theorem}[proposition]{Theorem}
\newtheorem{corollary}[proposition]{Corollary}
\newtheorem{conjecture}[proposition]{Conjecture}

\theoremstyle{definition}
\newtheorem{defn}[proposition]{Definition}

\newtheorem{remark}[proposition]{Remark}
\newtheorem{example}[proposition]{Example}
\newtheorem{notation}[proposition]{Notation}

\makeatletter\@addtoreset{case}{theorem}\@addtoreset{case}{proposition}\@addtoreset{case}{lemma}\@addtoreset{case}{corollary}\makeatother

\newcommand{\etalchar}[1]{$^{#1}$}

\title{On the boundary Carrollian conformal algebra}

\author[Buzaglo]{Lucas Buzaglo}
\author[He]{Xiao He}
\author[Pham]{Tuan Anh Pham}
\author[Tan]{Haijun Tan}
\author[Vishwa]{Girish S Vishwa}
\author[Zhao]{Kaiming Zhao}

\address[Buzaglo]{Department of Mathematics, UC San Diego, La Jolla, CA 92093-0112, USA}
\email{\href{mailto:lbuzaglo@ucsd.edu}{lbuzaglo@ucsd.edu}, ORCID: \href{https://orcid.org/0000-0002-7662-1802}{0000-0002-7662-1802}}

\address[He]{Paris Curie Engineer School, Beijing University of Chemical Technology, Beijing, 100029,  P. R. China}
\email{\href{mailto:hexiao@amss.ac.cn}{hexiao@amss.ac.cn}, ORCID: \href{https://orcid.org/0009-0004-4927-7593}{0009-0004-4927-7593}}

\address[Pham]{Maxwell Institute and School of Mathematics, The University of Edinburgh, James Clerk Maxwell Building, Peter Guthrie Tait Road, Edinburgh EH9 3FD, Scotland, United Kingdom}
\email{\href{mailto:tuan.pham@ed.ac.uk}{tuan.pham@ed.ac.uk}, ORCID: \href{https://orcid.org/0000-0003-0269-220X}{0000-0003-0269-220X}}

\address[Tan]{School of Mathematics and Statistics, Northeast Normal University, Changchun, Jilin, 130024, P. R. China}
\email{\href{mailto:tanhj9999@163.com}{tanhj9999@163.com}, ORCID:
\href{https://orcid.org/0000-0002-0609-2044}{0000-0002-0609-2044}}

\address[Vishwa]{Maxwell Institute and School of Mathematics, The University of Edinburgh, James Clerk Maxwell Building, Peter Guthrie Tait Road, Edinburgh EH9 3FD, Scotland, United Kingdom}
\email{\href{mailto:G.S.Vishwa@sms.ed.ac.uk}{G.S.Vishwa@sms.ed.ac.uk}, ORCID: \href{https://orcid.org/0000-0001-5867-7207}{0000-0001-5867-7207}}

\address[Zhao]{Department of Mathematics, Wilfrid Laurier University, Waterloo, Ontario, N2L 3C5, Canada}
\email{\href{mailto:kzhao@wlu.ca}{kzhao@wlu.ca}, ORCID: \href{https://orcid.org/0000-0003-4526-853X}{0000-0003-4526-853X}}

\date{}

\usepackage{graphicx}
\usepackage[font=small, labelfont=bf, skip=5pt]{caption}
\usepackage{subcaption}
\usepackage{wrapfig}
\usepackage{float}
\usepackage{tikz}
\usetikzlibrary{decorations.markings}
\usetikzlibrary{cd}

\usepackage{url}
\usepackage{hyperref}
\hypersetup{
    colorlinks=true,
    linkcolor=Blue,
    filecolor=magenta,      
    urlcolor=RoyalBlue,
    citecolor=Maroon
}
\makeatletter
\g@addto@macro{\UrlBreaks}{\UrlOrds}   
\makeatother
\usepackage{cleveref}

\usepackage{array}
\usepackage{tabularx}
\usepackage{multirow}
\setlength{\tabcolsep}{16pt}
\renewcommand{\arraystretch}{1.3}
\newcommand{\PreserveBackslash}[1]{\let\temp=\\#1\let\\=\temp}
\newcolumntype{C}[1]{>{\PreserveBackslash\centering}p{#1}}
\newcolumntype{R}[1]{>{\PreserveBackslash\raggedleft}p{#1}}
\newcolumntype{L}[1]{>{\PreserveBackslash\raggedright}p{#1}}

\keywords{Boundary Carrollian conformal field theory, BCCA, tensionless open string, Witt algebra, Virasoro algebra, BMS algebra, Whittaker module}
\subjclass[2020]{17B68, 17B10 (Primary), 17B81, 17B65 (Secondary)}

\begin{document}

\begin{abstract}
    We initiate the mathematical study of the boundary Carrollian conformal algebra (BCCA), an infinite-dimensional Lie algebra recently discovered in the context of Carrollian physics.
    The BCCA is an intriguing object from both physical and mathematical perspectives, since it is a filtered but not graded Lie algebra.
    In this paper, we first construct some modules for the BCCA and one of its subalgebras, which we call $\OO$, by restriction of well-known modules of the BMS$_3$ and Witt algebras respectively. Along the way, we prove the irreducibility criteria for the so-called ``induced modules'' of the BMS$_3$ algebra (which we prefer to call massive modules to avoid ambiguity) and show that this is the same criteria for the irreducibility of the Verma modules of the BMS$_3$ algebra. Interestingly, the modules generated by the action of the BCCA on the generating vector of the massive modules are also irreducible under the same criteria. When these criteria hold, every massive module decomposes into a direct sum of two BCCA-submodules, each of which we conjecture to be indecomposable.
    Meanwhile, restricting Verma modules to the BCCA and $\OO$ leads to free or ``almost free'' modules, which are not particularly interesting from a representation-theoretic viewpoint. This motivates the construction of BCCA modules intrinsically. To do this, we go through some structure theory on the BCCA to define a new basis and a decreasing filtration on the algebra, using which we construct Whittaker modules over the BCCA and the subalgebra $\OO$ and prove criteria for their irreducibility.
\end{abstract}

\maketitle
\tableofcontents

\section{Introduction}

Over the past two decades, research on infinite-dimensional Lie algebras has enjoyed continuous motivation and inspiration from theoretical physics. Indeed, with the discovery of more examples of infinite-dimensional Lie algebras in physical contexts comes the need to better understand these algebras and their representations. On many occasions, this leads to a productive cycle of progress in both fields, due to the resulting mathematics research informing physics in some manner, which once again motivates another mathematical study. The origin of this paper is no different -- we present a first study on another infinite-dimensional Lie algebra that was recently discovered in physics, known as the boundary Carrollian conformal algebra (BCCA), which we denote by $\widehat{\bcca}$. Its centreless version is denoted by $\bcca$.

The BCCA was discovered as the symmetry algebra of a Carrollian conformal field theory (CCFT) with boundary in \cite{BCCFGP2024}, in which it is formulated as a subalgebra of the BMS$_3$ algebra (realised as operators in a quantum field theory) that preserves a choice of boundary on a CCFT on a cylinder. Its original basis consists of the elements $\OO_n \coloneqq L_n - L_{-n}$ ($n \geq 1)$, $P_n \coloneqq M_n + M_{-n}$ ($n \geq 0$), and $C_M$, with its Lie bracket given by
\begin{equation} \label{eq:intro BCCA Lie bracket}
    \begin{gathered}
        [\OO_n,\OO_m] = (n - m)\OO_{n + m} - (n + m)\OO_{n - m}, \\
        [\OO_n, P_m] = (n - m)P_{n + m} + (n + m)P_{n - m} + \frac{1}{6}n(n^2 - 1) \delta_{m,n} C_M, \\
        [P_n,P_m] = 0,
    \end{gathered}
\end{equation}
where $L_n$, $M_n$, and $C_M$ are elements of the BMS$_3$ algebra defined in Definition \ref{def:BMS}. 

In fact, the authors of \cite{BCCFGP2024} showed that there exist at least three different boundary-preserving subalgebras of BMS$_3$, but they choose to work with the choice given by \eqref{eq:intro BCCA Lie bracket} since it emerges as the algebra of symmetries on the worldsheet of the tensionless open strings, as they demonstrate in their paper. This serves as the primary motivation for our work: understanding the representations of the symmetry algebra of a string theory worldsheet can provide insights into the spectrum of the string theory.
Nonetheless, it is interesting to note that one of the other two choices was also discovered in \cite{Albrychiewicz:2024tqe} as the worldsheet symmetry algebra of \emph{``tropological'' open strings}. This finding presents a further case for the relevance of our work: there may exist connections between the methods developed in this paper and tropical geometry, which in turn could inform string theory.

There are two important features that the BCCA lacks that makes its study both challenging and warranted, these being a grading and a Virasoro subalgebra. Symmetry algebras of string theory worldsheets usually have both, making them amenable to thorough, quantum mechanical treatment using the power tools of vertex operator algebras and semi-infinite cohomology \cite{BPZ1984, FGZ1986, Borcherds, FLM, LianZuckerman}. Today, these toolkits can be coupled with the troves of knowledge on the representation theory of integer-graded infinite-dimensional Lie algebras, to which the the past two decades of active research in both physics \cite{BagchiGopakumarMandalMiwa, BanerjeeJatkarMukhiNeogi, BanerjeeJatkarDileepLodatoMukhiNeogi, CampoleoniGonzalezOblakRiegler, BatlleCampelloGomis, FarahmandParsaSheikh-Jabbari, GrumillerPerezSheikh-JabbariTroncosoZwikel, BatlleCampelloGomis2, Figueroa-OFarrillVishwa, BatlleFigueroaGomisVishwa, Aggarwal:2025hji} and mathematics \cite{ZhangDong, LuZhao10, GaoJiangPei, BaoJiangPei, Radobolja, MazorchukZhao, JiangZhang, LiuZhao, AdamovicLuZhao, AdamovicRadobolja, JiangPeiZhang, BabichenkoKawasetsuRidoutStewart, DongPeiXia, LiuPeiXiaZhao, DilxatGaoLiu} have contributed. However, with neither a grading nor a Virasoro subalgebra, the BCCA cannot be studied using these well-established techniques. Hence, understanding the representations of the BCCA presents a novel challenge for both physicists and mathematicians. We therefore develop and employ new approaches that we hope can lay the groundwork for future studies on infinite-dimensional Lie algebras that are not integer-graded while shedding some light on the spectrum of the open tensionless string.

Since the BCCA is defined as a subalgebra of the BMS$_3$ algebra, it is natural to ask whether irreducible modules over BMS$_3$ remain irreducible upon restriction to the BCCA. We therefore start by studying representations of the BCCA and its subalgebra $\OO \coloneqq \spn\{\OO_n \mid n \geq 1\}$ by restricting well-known representations of the Virasoro and BMS$_3$ algebras, namely the tensor density modules, Verma modules, the so-called ``induced modules'' introduced in \cite{BarnichOblak} which we prefer to call ``massive modules'' (see \cite{CampoleoniGonzalezOblakRiegler}), and the modules $\Omega(\lambda,a)$ introduced in \cite{LuZhao14}. We work over an arbitrary, characteristic-zero field $\kk$ to prove the following.

\begin{theorem}[Lemma \ref{lem:tensor density direct sum}, Propositions \ref{prop:V(h,c)_free_rank1}, \ref{prop:BMS Verma is almost free over b}, \ref{prop:massive module over b}, and \ref{Omega-simplicity}]\label{thm:restriction modules intro}
    Let $h_L, h_M, c_L, c_M, a, b, \mathbf{M}, \mathbf{s} \in \kk$, and $\lambda \in \kk^*$. Then the following hold.
    \begin{enumerate}
        \item The tensor density module $I(0,b)$ (see Definition \ref{def:tensor density}) decomposes into a direct sum of two $\OO$-submodules.
        \item When restricted to $\OO$, the Virasoro Verma module $V(h_L,c_L)$ (see Definition \ref{def:Vir_Verma_module}) is a free module of rank 1. That is, $V(h_L,c_L) \cong U(\OO)$ as $\OO$-modules.
        \item When restricted to the BCCA, the BMS$_3$ Verma module $V(h_L, h_M, c_L, c_M)$ (see Definition \ref{def:BMS_Verma_module}) has the following form:
        $$V(h_L, h_M, c_L, c_M) \cong \frac{U(\widehat{\bcca})}{U(\widehat{\bcca})\cdot(P_0 - 2h_M, C_M - c_M)}.$$
        \item The BCCA-module $U(\widehat{\bcca})\genvecbms$ (see Definition \ref{def:BMS_massive_module}) is irreducible if and only if $\mathbf{M} + \frac{n^2 - 1}{24}c_M \neq 0$ for any positive integer $n$.\label{item:massive module intro}
        \item The $\OO$-module $\Omega(\lambda,a)$ (see Subsection \ref{subsec:Omega module}) is irreducible if and only if $\lambda \neq \pm 1$ and $a \neq 0$.
    \end{enumerate}
\end{theorem}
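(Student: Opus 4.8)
The five parts share two engines, and the plan is to set these up first. The first is the order-two Lie algebra automorphism $\sigma$ of the Witt algebra (and of the centreless BMS$_3$ algebra) given by $\sigma(L_n) = -L_{-n}$, $\sigma(M_n) = M_{-n}$, $\sigma(C_M) = C_M$; a direct check on brackets shows $\sigma$ is an automorphism, and since $\sigma(\OO_n) = \OO_n$, $\sigma(P_n) = P_n$, $\sigma(C_M) = C_M$, the subalgebras $\OO$ and $\widehat{\bcca}$ lie in its fixed-point subalgebra. Whenever $\sigma$ lifts to an involution $\tau$ of a module $V$ (meaning $\tau(x \cdot v) = \sigma(x) \cdot \tau(v)$), the $\pm 1$-eigenspaces $V^{\pm}$ are $\OO$- (resp. $\widehat{\bcca}$-)submodules with $V = V^+ \oplus V^-$; this handles the ``decomposition'' statement. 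The second engine is a PBW/filtration comparison: assign weight $n$ to $\OO_n$ and $P_n$, weight $0$ to $P_0$ and $C_M$; since the brackets \eqref{eq:intro BCCA Lie bracket} only produce generators of weight at most the sum of the input weights, this defines increasing algebra filtrations on $U(\OO)$ and $U(\widehat{\bcca})$ whose associated gradeds are computed by PBW, and the evaluation map $u \mapsto u \cdot v$ into a highest-weight type module is filtered.

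Parts (1)--(3) then follow quickly. For (1), on $I(0,b)$ with basis $\{v_k\}_{k \in \ZZ}$ and $L_m v_k = (k + mb) v_{m+k}$, the map $\tau(v_k) = v_{-k}$ intertwines $\sigma$, so $I(0,b) = I(0,b)^+ \oplus I(0,b)^-$ as $\OO$-modules. For (2), grade $V(h_L, c_L)$ by $L_0$-eigenvalue and consider the evaluation map $\phi \colon U(\OO) \to V(h_L, c_L)$, $u \mapsto u \cdot v$; it is filtered, and on the associated graded it sends the PBW monomial $\OO_{n_1} \cdots \OO_{n_k}$ to the top-degree part of $(L_{n_1} - L_{-n_1}) \cdots (L_{n_k} - L_{-n_k}) v$, which is $(-1)^k L_{-n_1} \cdots L_{-n_k} v$ since any factor $L_{n_i}$ with $n_i > 0$ strictly lowers the degree. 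As these run over a PBW basis of $V(h_L, c_L)$, $\gr \phi$ is bijective, hence so is $\phi$. For (3), the same argument for $\widehat{\bcca}$, after reducing modulo the left ideal generated by $P_0 - 2h_M$ and $C_M - c_M$ (pushing $P_0$ and $C_M$ to the right using $[\OO_n, P_0] = 2nP_n$ and the centrality of $C_M$), identifies $\OO_n \mapsto -L_{-n}$ and $P_n \mapsto M_{-n}$ on leading terms; the graded dimensions on both sides are $\prod_{n \geq 1}(1 - x^n)^{-2}$, so the evaluation map is again an isomorphism.

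Parts (4) and (5) are the substantive ones. For (4), I would first record the explicit action of $\widehat{\bcca}$ on $U(\widehat{\bcca}) \genvecbms$: since the $M_k$-subalgebra is abelian and $P_k \genvecbms = 0$ for $k \geq 1$, one gets $U(\widehat{\bcca}) \genvecbms = U(\OO) \genvecbms$ with basis $\{\OO_{n_1} \cdots \OO_{n_k} \genvecbms\}$, on which $\OO$ acts by left multiplication while each $P_k$ ($k \geq 1$) strictly lowers the number of $\OO$-factors, the crucial ``return'' relation being $P_n \OO_n \genvecbms = -4n(\mathbf{M} + \tfrac{n^2-1}{24} c_M) \genvecbms$. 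If $\mathbf{M} + \tfrac{n_0^2-1}{24} c_M = 0$ for some $n_0 \geq 1$, this vanishing lets one exhibit a proper submodule (mirroring the level-$n_0$ singular vector of the BMS$_3$ Verma module, whose reducibility criterion is the same); conversely, when the criterion holds I would show by a descent on the $\OO$-length that every nonzero vector generates $\genvecbms$, using the $P_k$ to peel off $\OO$-factors without the ``return'' coefficients degenerating, which forces irreducibility. For (5), I would write $\OO_n = L_n - L_{-n}$ explicitly on $\Omega(\lambda, a) = \kk[x]$; reducibility is transparent when $\lambda = \pm 1$ (then $\lambda^n = \lambda^{-n}$ and $f(x) \mapsto f(-x)$ intertwines $\sigma$, giving the eigenspace decomposition) and when $a = 0$ (then $x\kk[x]$ is an $\OO$-submodule, as it already is for the Witt action); irreducibility for $\lambda \neq \pm 1$, $a \neq 0$ follows by the usual minimal-degree argument for modules of this type, where $\lambda^n - \lambda^{-n} \neq 0$ and $a \neq 0$ keep all the coefficients that appear nonzero.

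The main obstacle is the irreducibility half of (4): proving that the non-graded $\widehat{\bcca}$-module $U(\widehat{\bcca}) \genvecbms$ is simple under the stated criterion. Without a grading or a triangular decomposition, showing that an arbitrary nonzero vector generates $\genvecbms$ requires a carefully engineered substitute for the usual lowering procedure — one must control the full effect of words in the $\OO_k$ and $P_k$ on $\OO$-monomials, and verify that the only way the descent can stall is precisely when some $\mathbf{M} + \tfrac{n^2-1}{24} c_M$ vanishes. Establishing the matching irreducibility criterion for the BMS$_3$ massive (and Verma) modules, which underpins the reducibility half, is the other place where real work is needed; parts (1)--(3) and the reducibility half of (5) should be routine once the involution $\tau$ and the filtration are in place.
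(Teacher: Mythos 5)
Parts (1)--(3) of your proposal are correct and essentially coincide with the paper's arguments in different packaging: your involution $\tau$ with eigenspaces $\PP_b$ and $\PPtilde_b$ is the same decomposition the paper gets by splitting the exact sequence in Lemma \ref{lem:tensor density direct sum}, and your filtered leading-term comparison for the Verma restrictions is a mild variant of the paper's direct PBW argument in the basis $\{\OO_n, L_m, C\}$ (Propositions \ref{prop:V(h,c)_free_rank1} and \ref{prop:BMS Verma is almost free over b}). Your parity involution $f(X)\mapsto f(-X)$ for $\lambda=\pm1$ is also a nice alternative to the degree filtration of Proposition \ref{lambda-pm1}, and your key relation $P_n\OO_n\genvecbms=-4n\big(\mathbf{M}+\tfrac{n^2-1}{24}c_M\big)\genvecbms$ is exactly right. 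The problem is that the two halves of the theorem that carry real content are precisely the ones your proposal leaves unproved.

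For (4), you only sketch the irreducibility direction ("descent on $\OO$-length, peeling off $\OO$-factors with the $P_k$") and explicitly flag it as the main obstacle; that descent is the entire difficulty, since a single $P_k$ applied to a general linear combination of $\OO$-monomials can vanish, and controlling when the descent stalls is nontrivial. The paper avoids engineering such a descent by recognising $U(\widehat{\bcca})\genvecbms$ as the universal quasi-Whittaker module $\Ind_{\fa}^{\widehat{\bcca}}\kk_\phi$ for the abelian ideal $\fa=\spn\{C_M,P_k\mid k\geq 0\}$ with $\phi(P_i)=2\delta_{i,0}\mathbf{M}$, $\phi(C_M)=c_M$, and applying the Whittaker-annihilator criterion of Theorem \ref{thm:Whittaker annihilator}: the module is irreducible if and only if $\widehat{\bcca}^{\phi}=\fa$, and computing the annihilator is exactly where the condition $\mathbf{M}+\tfrac{n^2-1}{24}c_M\neq 0$ enters (if it fails for some $n$, then $\OO_n\in\widehat{\bcca}^{\phi}$ and $\OO_n\genvecbms$ generates a proper submodule, which is the converse you only gestured at). For (5), your "usual minimal-degree argument" underestimates the issue: applying a single $\OO_k$ to a minimal-degree element \emph{raises} the degree, so minimality is not directly contradicted. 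The paper first proves $\Omega(\lambda,a)=U(\OO)\cdot 1$ (Lemma \ref{mod-generator}) and then, for $f$ of minimal degree in a submodule, lets $k$ vary and separates the two families of coefficient functions $k^i$ and $\lambda^{2k}k^i$ via the nonvanishing of a generalized Vandermonde determinant (Lemmas \ref{pre-lem} and \ref{key-det}, using $\mu=\lambda^2\neq 0,1$), which forces the nonzero constants $\pm a c_r$ into the submodule. Without the annihilator theorem (or an actual worked-out descent) for (4) and the determinant lemma for (5), your proposal does not yet establish either irreducibility statement.
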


Taking Theorem \ref{thm:restriction modules intro}\eqref{item:massive module intro} a bit further, we showed that the massive module $\widetilde{V}(\mathbf{M},\mathbf{s},c_L ,c_M)$ is decomposable as a $\widehat{\bcca}$-modules when $U(\widehat{\bcca}) \genvecbms$ is irreducible (i.e., $\mathbf{M} + \frac{n^2 - 1}{24}c_M \neq 0$ for any positive integer $n$).

\begin{theorem}[Propositions \ref{prop:P and Phat acting on Ohat_n} and \ref{pre-formula}, Corollary \ref{cor:bms_massive_module_decomposable}]\label{thm:intro decompose}
    Let $\mathbf{M} + \frac{n^2 - 1}{24}c_M \neq 0$ for any positive integer $n$. Then the massive module $\widetilde{V}(\mathbf{M},\mathbf{s},c_L ,c_M)$ decomposes into a direct sum of two $\widehat{\bcca}$-submodules:
    \begin{equation*}
        \widetilde{V}(\mathbf{M},\mathbf{s},c_L ,c_M) \cong \widehat{V} \oplus \widehat{W},
    \end{equation*}
    where $\widehat{V}$ and $\widehat{W}$ are given by Notation \ref{ntt:Ohat and Phat} and Definition \ref{def:Vhat and What}.
\end{theorem}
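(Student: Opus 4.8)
The plan is to restrict $\widetilde V := \widetilde V(\mathbf M,\mathbf s,c_L,c_M)$ to $\widehat{\bcca}$ and exhibit it as the internal direct sum of the cyclic submodule $\widehat V = U(\widehat{\bcca})\genvecbms$ and an explicitly corrected complementary submodule $\widehat W$.

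I would first fix an adapted basis of $\widetilde V$. With the ``complementary'' elements $\widehat\OO_n := L_n+L_{-n}$ and $\widehat P_n := M_n-M_{-n}$ of Notation~\ref{ntt:Ohat and Phat}, the set $\{\OO_n,\widehat\OO_n:n\ge1\}\cup\{P_n:n\ge0\}\cup\{\widehat P_n:n\ge1\}$ together with the two central elements is a basis of $\bms_3$; since $P_k\genvecbms=0=\widehat P_k\genvecbms$ for $k\ge1$ and the centre acts by scalars, the PBW theorem gives a basis of $\widetilde V$ consisting of the monomials $\OO_{i_1}\cdots\OO_{i_p}\widehat\OO_{j_1}\cdots\widehat\OO_{j_q}\genvecbms$ with weakly increasing indices and $p,q\ge0$; call $q$ the \emph{hat-degree}. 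The span of the hat-degree-$0$ monomials is a $\widehat{\bcca}$-submodule, since the brackets among the $\OO$'s and $P$'s only ever produce $\OO$'s, $P$'s and central scalars (never a $\widehat\OO$ or $\widehat P$), and $P_k\genvecbms\in\CC\genvecbms$ for all $k\ge0$; it therefore coincides with $U(\widehat{\bcca})\genvecbms=\widehat V$, the reverse inclusion being immediate. This already underlies Theorem~\ref{thm:restriction modules intro}\eqref{item:massive module intro} and uses no hypothesis.

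The construction of $\widehat W$ is the heart of the matter. The obvious candidate --- the span of the monomials of hat-degree $\ge1$ --- is \emph{not} $\widehat{\bcca}$-stable: acting by $\OO_n$ triggers the central term and the substitution $\widehat\OO_0=2L_0$ in $[\OO_n,\widehat\OO_n]=4nL_0+\tfrac{C_L}{6}n(n^2-1)$, and since neither $L_0$ nor $M_0$ acts by a scalar on $\widetilde V$, the resulting corrections cascade down to lower hat-degree and ultimately into $\widehat V$; likewise $P_m$ on a monomial of hat-degree $\ge2$ leaks into hat-degree $0$ with coefficients governed by the scalar by which $[\OO_n,P_n]=2nP_0+\tfrac16 n(n^2-1)C_M$ acts on $\genvecbms$, namely $4n\bigl(\mathbf M+\tfrac{n^2-1}{24}c_M\bigr)$. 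The remedy is to replace each hat-degree-$q$ monomial by a corrected vector, equal to that monomial plus a combination of strictly lower hat-degree, chosen so that all these leakages cancel; $\widehat W$ (Definition~\ref{def:Vhat and What}) is the span of the corrected vectors. I would build the corrections by induction on hat-degree, using Proposition~\ref{prop:P and Phat acting on Ohat_n} for the action of $P_m$ and $\widehat P_m$ on the $\widehat\OO$-monomials and Proposition~\ref{pre-formula} to package the recursion into a closed formula. The hypothesis $\mathbf M+\tfrac{n^2-1}{24}c_M\ne0$ for every $n\ge1$ enters precisely here: the linear systems defining the corrections have coefficients that are nonzero multiples of these quantities, so the recursion goes through --- and if one of them vanished, the procedure (and, we expect, the decomposition) would fail.

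It then remains to assemble the pieces, which is Corollary~\ref{cor:bms_massive_module_decomposable}. That $\widehat W$ is a $\widehat{\bcca}$-submodule is exactly what the corrections were engineered to ensure, and is checked from the two propositions. Since each corrected vector has the form ``top monomial $+$ lower hat-degree'', these vectors are linearly independent and, together with the hat-degree-$0$ monomials, form a basis of $\widetilde V$ triangular in hat-degree; hence $\widehat V\cap\widehat W=0$ and $\widehat V+\widehat W=\widetilde V$, so $\widetilde V\cong\widehat V\oplus\widehat W$ as $\widehat{\bcca}$-modules. The main obstacle is the inductive construction of the corrections and the verification that their span is $\widehat{\bcca}$-stable: the bookkeeping is delicate because the central extensions in $[\OO_n,\widehat\OO_n]$ and $[\OO_n,P_n]$, together with the non-scalar action of $L_0$ and $M_0$, mix hat-degrees across monomials of unbounded length, and it is the hypothesis that makes the recursion close up with invertible coefficients.
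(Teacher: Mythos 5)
There is a genuine gap, and it begins with a misidentification of the submodules in the statement. The theorem asserts the decomposition into the specific submodules of Definition \ref{def:Vhat and What}: $\widehat{V}$ is generated by the monomials $\widehat{\OO}^{s_\ell}_{r_\ell}\cdots\widehat{\OO}^{s_1}_{r_1}\genvecbms$ with $\sum_i s_i$ even, and $\widehat{W}$ by those with $\sum_i s_i$ odd. Your $\widehat{V}$ is instead the cyclic module $U(\widehat{\bcca})\genvecbms$, which is strictly smaller than the paper's $\widehat{V}$ (for example $\widehat{\OO}_1^{2}\genvecbms$ lies in the latter but, being an independent PBW monomial of the basis \eqref{eq:Vtilde alt PBW basis}, not in the former), and your $\widehat{W}$ is an unspecified ``corrected'' complement of the hat-degree $\geq 1$ monomials, which is not the $\widehat{W}$ of the definition. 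So even if your plan succeeded, it would prove a different decomposition from the one claimed.

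Moreover, the plan is not carried out where it matters and is most likely unrealisable: the inductive construction of the corrections is asserted rather than performed, and the claim that the resulting linear systems have invertible coefficients proportional to $\mathbf{M}+\tfrac{n^2-1}{24}c_M$ is unsubstantiated. In fact, if a $\widehat{\bcca}$-complement $W'$ to $U(\widehat{\bcca})\genvecbms$ existed, then since $U(\widehat{\bcca})\genvecbms \subseteq \widehat{V}$ the modular law would give $\widehat{V} = U(\widehat{\bcca})\genvecbms \oplus (\widehat{V}\cap W')$ with both summands nonzero, i.e.\ $\widehat{V}$ would be decomposable, contradicting Conjecture \ref{conj:Ind}; so you are asserting, without proof, exactly what the authors expect to be false. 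The paper's route needs no corrections at all: it splits by the \emph{parity} of the number of $\widehat{\OO}$'s. Proposition \ref{pre-formula} shows, by induction on $\sum_i s_i$, that $P_r$ acts on $\widehat{\OO}^{s_\ell}_{r_\ell}\cdots\widehat{\OO}^{s_1}_{r_1}\genvecbms$ as $\delta_{0,r}2\mathbf{M}$ modulo lower hat-degree terms of the \emph{same} parity, while $\widehat{P}_r$ produces terms of opposite parity; combined with the PBW theorem this yields explicit bases of $\widehat{V}$ and $\widehat{W}$ ($\OO$-monomials times even, respectively odd, $\widehat{\OO}$-monomials applied to $\genvecbms$) whose union is the basis \eqref{eq:Vtilde alt PBW basis}, and the direct sum follows. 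Your observation that the leakage from hat-degree $2$ to hat-degree $0$ carries coefficients $4n\bigl(\mathbf{M}+\tfrac{n^2-1}{24}c_M\bigr)$ is correct, but this leakage preserves parity, which is precisely why the paper's choice of summands works and why the nonvanishing hypothesis plays no role in the decomposition itself (it is the standing assumption used for the irreducibility statements of the section, such as Proposition \ref{prop:P and Phat acting on Ohat_n}).
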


Theorems \ref{thm:restriction modules intro} and \ref{thm:intro decompose} show that many standard examples of BMS$_3$ modules are no longer irreducible upon restriction to the BCCA. In fact, we even see that the tensor density modules and the massive $\bms$-modules become decomposable when we restrict to the BCCA. On the other hand, Theorem \ref{thm:restriction modules intro}\eqref{item:massive module intro} shows that the massive modules do contain irreducible BCCA-submodules.

We then present the following conjecture as an avenue for further study.

\begin{conjecture}[Conjecture \ref{conj:Ind}]
    The $\widehat{\bcca}$-modules $\widehat{V}$ and $\widehat{W}$ are indecomposable.
\end{conjecture}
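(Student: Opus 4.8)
\textbf{A strategy towards Conjecture~\ref{conj:Ind}.}
It suffices to exhibit in each of $\widehat{V}$ and $\widehat{W}$ an irreducible $\widehat{\bcca}$-submodule that is \emph{essential}, i.e.\ has nonzero intersection with every nonzero submodule; a module containing an essential irreducible submodule is indecomposable. Theorem~\ref{thm:restriction modules intro}\eqref{item:massive module intro} supplies the candidate on the $\widehat{V}$ side: under the standing hypothesis $\mathbf{M}+\frac{n^2-1}{24}c_M\neq 0$ for all positive integers $n$, the submodule $U(\widehat{\bcca})\genvecbms$ is irreducible, and the construction underlying Corollary~\ref{cor:bms_massive_module_decomposable} should place it inside $\widehat{V}$ (if instead $\genvecbms$ has nonzero components in both summands, that same irreducibility forces its projection to each summand to be injective, planting an isomorphic irreducible submodule in both $\widehat{V}$ and $\widehat{W}$). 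For $\widehat{W}$ I would pin down an explicit cyclic vector $w_0\in\widehat{W}$ — most naturally $\genvecbms$ acted on once by a suitable element of $\widehat{\bcca}$, read off from Propositions~\ref{prop:P and Phat acting on Ohat_n} and~\ref{pre-formula} — and re-run the proof of Theorem~\ref{thm:restriction modules intro}\eqref{item:massive module intro} for $U(\widehat{\bcca})w_0$ to see it is irreducible under the same hypothesis; this is presumably why the abstract records the stronger expectation that $\widehat{V}$ and $\widehat{W}$ are actually irreducible. Denote the two candidate socles by $S_V\subseteq\widehat{V}$ and $S_W\subseteq\widehat{W}$.

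The substance is essentiality: every nonzero $\widehat{\bcca}$-submodule $N\subseteq\widehat{V}$ should meet $S_V$, whence $S_V\subseteq N$ since $S_V$ is irreducible, so $S_V$ is the unique minimal submodule of $\widehat{V}$; likewise for $\widehat{W}$. I would prove this using the new basis and the decreasing filtration of $\widehat{\bcca}$ from the structure-theory part of the paper, transported to a filtration of $\widehat{V}$ and $\widehat{W}$ (each cyclic over an explicit generator by Notation~\ref{ntt:Ohat and Phat}). Given $0\neq v\in N$ lying in filtration layer $F_k$ but not $F_{k-1}$, the aim is to find $u\in U(\widehat{\bcca})$ with $0\neq uv\in S_V$: using the explicit action of the $\OO_n$, $P_n$ (and of the $\widehat{\OO}_n,\widehat{P}_n$ of Notation~\ref{ntt:Ohat and Phat}) on the basis of $\widehat{V}$, one peels off the top filtration layer of $v$ one step at a time, each step shown nonzero because the coefficients in Propositions~\ref{prop:P and Phat acting on Ohat_n} and~\ref{pre-formula} are nonvanishing exactly under the hypothesis on $\mathbf{M}$ and $c_M$, until $v$ is driven into the bottom layer, which one arranges to be $S_V$. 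When $\widehat{V}$ happens to coincide with the already-irreducible $U(\widehat{\bcca})\genvecbms$ its case is immediate, so the genuinely new content is the $\widehat{W}$-case, equivalently indecomposability of the quotient $\widetilde{V}(\mathbf{M},\mathbf{s},c_L,c_M)/\widehat{V}$.

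The main obstacle should be precisely the termination and nonvanishing of this peeling procedure \emph{in the absence of a grading}: in a $\ZZ$-graded module one lowers a weight and the process visibly halts, but here the filtration layers may be infinite-dimensional and the $\widehat{\bcca}$-action is only triangular with respect to a filtration, so one must rule out the action ``stalling'' — returning $0$ prematurely or never leaving a layer. This seems to need a sharp grip on the associated graded Lie algebra $\gr\widehat{\bcca}$ and on its action on $\gr\widehat{V}$ — e.g.\ an element acting locally nilpotently with an exactly described kernel, or a Casimir-type operator on $\widetilde{V}(\mathbf{M},\mathbf{s},c_L,c_M)$ whose generalized-eigenspace decomposition is compatible with submodules — and the usual substitutes (a Virasoro grading, a Shapovalov form, a Kac determinant) are unavailable precisely because $\widehat{\bcca}$ has no Virasoro subalgebra. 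Two routes worth running in parallel: (i) compute $\End_{\widehat{\bcca}}(\widehat{V})$ and $\End_{\widehat{\bcca}}(\widehat{W})$ directly from cyclicity — an endomorphism is pinned down by the image of the cyclic generator inside an explicit space of ``singular'' vectors, and one shows this forbids idempotents other than $0$ and $\id$; and (ii) exploit that $\widetilde{V}(\mathbf{M},\mathbf{s},c_L,c_M)$ is irreducible over $\widehat{\bms}_3$, so a nonzero $\widehat{\bcca}$-submodule of $\widehat{V}$ must $\widehat{\bms}_3$-generate the whole module and hence cannot be small — though upgrading ``not small'' to ``all of $\widehat{V}$'' reduces, since $\widehat{\bcca}$ is neither normal nor of finite index in $\widehat{\bms}_3$ and standard Clifford theory does not apply, to the same structural input as before.
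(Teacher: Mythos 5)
The statement you set out to prove is Conjecture \ref{conj:Ind}; the paper offers no proof of it, so the only question is whether your argument closes the gap, and it does not: what you give is a research plan whose decisive steps (essentiality of a candidate socle, termination and nonvanishing of the ``peeling'' procedure, or alternatively the computation of $\End_{\widehat{\bcca}}(\widehat{V})$ and $\End_{\widehat{\bcca}}(\widehat{W})$) are exactly the parts left open, as you acknowledge. Worse, the central mechanism you propose provably cannot work for $\widehat{W}$, the case you yourself single out as the genuinely new content. By Proposition \ref{prop:P and Phat acting on Ohat_n} and its proof, for every $n \geq 1$ the submodule $U(\widehat{\bcca})\widehat{\OO}_n\genvecbms \subseteq \widehat{W}$ is irreducible with basis $\{\OO_{p_k}^{q_k}\cdots\OO_{p_1}^{q_1}\widehat{\OO}_n\genvecbms\}$. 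For $n = 1$ and $n = 2$ these bases are disjoint subsets of the PBW basis \eqref{eq:Vtilde alt PBW basis} of $\widetilde{V}(\mathbf{M},\mathbf{s},c_L,c_M)$, so the two irreducible submodules intersect trivially. Consequently no irreducible submodule of $\widehat{W}$ can be essential (an essential simple submodule would have to coincide with both), the socle of $\widehat{W}$ is not simple, and in particular $\widehat{W}$ is not irreducible either, so the ``stronger expectation'' you read off from the abstract cannot be literally attained by your route. Indecomposability of $\widehat{W}$, if true, must therefore be detected by a different mechanism, e.g.\ ruling out nontrivial idempotents in $\End_{\widehat{\bcca}}(\widehat{W})$, which you name but do not begin.

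Two further slips. First, a cyclic vector of $\widehat{W}$ cannot be produced as ``$\genvecbms$ acted on once by a suitable element of $\widehat{\bcca}$'': $U(\widehat{\bcca})\genvecbms \subseteq \widehat{V}$ and $\widehat{V} \cap \widehat{W} = 0$ by Corollary \ref{cor:bms_massive_module_decomposable}, so one must act by something outside $\widehat{\bcca}$, namely $\widehat{\OO}_n$ from Notation \ref{ntt:Ohat and Phat}; the paper already supplies the resulting irreducible submodules, and your parenthetical worry about $\genvecbms$ having components in both summands is moot since $\genvecbms \in \widehat{V}$ by Definition \ref{def:Vhat and What}. Second, even on the $\widehat{V}$ side, where the candidate socle $U(\widehat{\bcca})\genvecbms$ is at least plausible (acting by $P_r$ on $\widehat{\OO}_m^2\genvecbms$ does produce $\genvecbms$ with coefficient proportional to $\mathbf{M} + \frac{m^2-1}{24}c_M \neq 0$), your peeling argument is not yet an argument: Proposition \ref{pre-formula} controls the actions of $P_r$ and $\widehat{P}_r$ only modulo lower-order terms, the relevant ``layers'' are infinite-dimensional, and you provide no termination measure or nonvanishing criterion beyond the hope that the coefficients work out. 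So the proposal records sensible partial observations but leaves the conjecture as open as the paper does.
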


On the way to proving Theorem \ref{thm:restriction modules intro}\eqref{item:massive module intro}, we also characterise when the massive modules over the BMS$_3$ algebra\footnote{These are known as \emph{induced modules} in the physics literature, with the name ``massive modules'' reserved for a subset of them. In this paper, we choose to refer to all modules given in Definition \ref{def:BMS_massive_module} as massive modules.} are irreducible, which to our knowledge was not known before.

\begin{theorem}[Theorem \ref{thm:bms_massive_modules_irreducibility}]
    Let $\mathbf{M},\mathbf{s},c_L,c_M \in \kk$. Then the massive module $\widetilde{V}(\mathbf{M},\mathbf{s},c_L ,c_M)$ over the BMS$_3$ algebra (see Definition \ref{def:BMS_massive_module}) is irreducible if and only if $\mathbf{M} + \frac{n^2 - 1}{24}c_M \neq 0$ for any positive integer $n$.
\end{theorem}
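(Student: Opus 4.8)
The plan is to read off both implications from the realisation of $\widetilde{V}:=\widetilde{V}(\mathbf{M},\mathbf{s},c_L,c_M)$ as an induced module. By PBW one has $\widetilde{V}=U(\mathfrak{t})\genvecbms$ with $\mathfrak{t}=\spn\{L_n\mid n\neq 0\}$, where the generating vector satisfies $M_k\genvecbms=\mathbf{M}\,\delta_{k,0}\genvecbms$ and $L_0\genvecbms=\mathbf{s}\genvecbms$, and $c_L,c_M$ act by scalars; in particular $L_0$ acts diagonalisably, $\widetilde{V}=\bigoplus_{m\in\ZZ}\widetilde{V}_m$ with $\widetilde{V}_m$ the $(\mathbf{s}+m)$-eigenspace, and every submodule respects this decomposition. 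The computations used throughout follow at once from the brackets of Definition~\ref{def:BMS}: for $n\geq 1$,
\[
M_nL_{-n}\genvecbms=2n\Bigl(\mathbf{M}+\tfrac{n^2-1}{24}c_M\Bigr)\genvecbms,\qquad
M_{-n}L_n\genvecbms=-\,2n\Bigl(\mathbf{M}+\tfrac{n^2-1}{24}c_M\Bigr)\genvecbms,
\]
while $M_kL_{-n}\genvecbms=0$ for $k\notin\{0,n\}$, $M_kL_n\genvecbms=0$ for $k\notin\{0,-n\}$, and $M_0L_{\pm n}\genvecbms=\mathbf{M}\,L_{\pm n}\genvecbms$.

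For the reducibility direction, suppose $\mathbf{M}+\tfrac{n^2-1}{24}c_M=0$ for some $n\geq 1$. Then $L_{-n}\genvecbms$ is annihilated by every $M_k$ with $k\neq 0$, is an $M_0$-eigenvector of eigenvalue $\mathbf{M}$, and has $L_0$-eigenvalue $\mathbf{s}+n$ — precisely the defining relations of the generating vector of $\widetilde{V}(\mathbf{M},\mathbf{s}+n,c_L,c_M)$ — so by the universal property of the induced module there is a $\bms_3$-module map $\phi\colon\widetilde{V}(\mathbf{M},\mathbf{s}+n,c_L,c_M)\to\widetilde{V}$ with $\phi(u)=L_{-n}\genvecbms$, $u$ the generating vector. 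Symmetrically, $L_nu$ satisfies the defining relations of $\genvecbms$, yielding $\psi\colon\widetilde{V}\to\widetilde{V}(\mathbf{M},\mathbf{s}+n,c_L,c_M)$. The composite $\phi\circ\psi\colon\widetilde{V}\to\widetilde{V}$ sends $\genvecbms\mapsto L_nL_{-n}\genvecbms=L_{-n}L_n\genvecbms+\bigl(2n\mathbf{s}+\tfrac{c_L}{12}(n^3-n)\bigr)\genvecbms$, which is not a scalar multiple of $\genvecbms$ since $L_{-n}L_n\genvecbms$ is an independent PBW basis vector. Were $\widetilde{V}$ irreducible, Schur's lemma — in the form $\End_{\bms_3}(\widetilde{V})=\kk$, which holds by a standard argument — would force $\phi\circ\psi$ to be a scalar, a contradiction; hence $\widetilde{V}$ is reducible. (The coincidence with the Verma criterion then follows from a closely related analysis of BMS$_3$ Verma modules.)

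For the irreducibility direction, assume $\mathbf{M}+\tfrac{n^2-1}{24}c_M\neq 0$ for all $n\geq 1$ and let $0\neq W\subseteq\widetilde{V}$ be a submodule; it suffices to show $\genvecbms\in W$. Fix the PBW monomial basis $\{L_{\mathbf{a}}\genvecbms\}$ and totally order the monomials refining first the number of $L$-factors and then $\sum_i|a_i|$, so each nonzero $w\in\widetilde{V}$ has a leading monomial. Beginning with a nonzero weight vector $w\in W$ (available by diagonalisability of $L_0$), the key step is a reduction: if the leading monomial of $w$ contains a factor $L_{-j}$ (resp.\ $L_j$) with $j\geq 1$, then $M_jw$ (resp.\ $M_{-j}w$) is a nonzero element of $W$ whose leading monomial is that of $w$ with the chosen factor removed, its coefficient multiplied by $\pm\,2j\bigl(\mathbf{M}+\tfrac{j^2-1}{24}c_M\bigr)$ — nonzero by hypothesis — or by a nonzero integer. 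Iterating, and using that the order is bounded below, one reaches a nonzero element of $W$ with empty leading monomial, i.e.\ a nonzero multiple of $\genvecbms$, so $W=\widetilde{V}$.

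The main obstacle is making the reduction step precise. One must fix the monomial order and, at each stage, choose the operator $M_{\pm j}$ and the factor to be removed so that (a) that factor is genuinely annihilated rather than merely permuted among the others, and (b) none of the remaining contributions to $M_{\pm j}w$ — those from $M_{\pm j}$ acting on the non-leading part of $w$, together with the lower-order commutator corrections generated as $M_{\pm j}$ is carried through the leading monomial — reaches or exceeds the intended new leading monomial; this may require refining the order and choosing $j$ extremal. Monomials mixing positive- and negative-index $L$-factors are the delicate case, where the two ``sides'' must be stripped in a controlled order, and it is exactly here that the hypothesis $\mathbf{M}+\tfrac{n^2-1}{24}c_M\neq 0$ is used essentially, ensuring the pivot scalars never vanish.
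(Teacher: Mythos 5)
Your proposal takes a genuinely different route from the paper: the paper identifies $\widetilde{V}(\mathbf{M},\mathbf{s},c_L,c_M)$ with a quotient of the universal quasi-Whittaker module $\Ind_{\fa}^{\bms}\kk_\phi$ for the ideal $\fa=\spn\{C_L,C_M,M_k \mid k\in\ZZ\}$ and reads off both directions from the Whittaker-annihilator criterion of Cheng--Gao--Liu--Zhao--Zhao, with essentially no PBW computation. Your direct approach could in principle work, but as written it has two genuine gaps.

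First, the reducibility direction hinges on ``$\End_{\bms}(\widetilde{V})=\kk$, which holds by a standard argument.'' No such standard argument is available here. The paper works over an arbitrary field $\kk$ of characteristic zero, and abstract Schur only gives that $\End$ of an irreducible module is a division algebra; the Dixmier-type upgrade to $\End=\kk$ needs $\kk$ algebraically closed (plus a cardinality/countability argument), which is not assumed. The usual substitute -- showing that any endomorphism must send $\genvecbms$ into $\kk\genvecbms$ because the space of vectors satisfying its defining relations is one-dimensional -- fails precisely in the situation you are arguing by contradiction in: if $\mathbf{M}+\frac{n^2-1}{24}c_M=0$, then $L_nL_{-n}\genvecbms$ is a second vector, independent of $\genvecbms$, annihilated by all $M_k$ ($k\neq 0$) with the same $M_0$- and $L_0$-eigenvalues (this is a short check using exactly the identities you list). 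So the contradiction does not come for free; a field-independent fix would be to prove directly that $U(\bms)L_{-n}\genvecbms$ (equivalently, the image of your map $\phi$) is a \emph{proper} submodule -- which is precisely the nontrivial point the paper extracts from the quasi-Whittaker machinery, and which your maps $\phi,\psi$ by themselves do not settle.

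Second, and more seriously, the irreducibility direction is a plan rather than a proof. The entire argument rests on the ``reduction step'': that for a suitable monomial order and a suitably chosen $j$, acting by $M_{\pm j}$ on a nonzero weight vector $w$ of a submodule strips one $L_{\mp j}$-factor from its leading monomial with pivot coefficient $\pm 2j\bigl(\mathbf{M}+\tfrac{j^2-1}{24}c_M\bigr)$ and produces a nonzero element with strictly smaller leading monomial. You state this but, as you yourself acknowledge, you do not verify it: you do not fix the order, do not specify how $j$ is chosen, and do not rule out cancellation of the pivot term against contributions from the other monomials of $w$ or against the cascade of secondary commutators ($M_j$ turning some $L_{n_i}$ into $M_{n_i+j}$, which may later meet $L_{-(n_i+j)}$ and contribute in the same $L_0$-weight), especially for monomials mixing positive and negative indices. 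Since this lemma is the mathematical core of your approach -- everything else (weight decomposition, induction, the easy computations of $M_kL_{\pm n}\genvecbms$) is routine -- the proof is incomplete as it stands. Carrying it out is doable but requires real bookkeeping; alternatively, the paper's route via quasi-Whittaker annihilators avoids it entirely.
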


This is in line with what is expected based on the gravitational interpretation of the coadjoint orbits of the BMS$_3$ group (see \cite{BarnichOblak2} for more details). However, it is surprising that the irreducibility conditions for the massive modules over $\bms$ and the $\widehat{\bcca}$-module $U(\widehat{\bcca})\genvecbms$ are the same.

Theorem \ref{thm:restriction modules intro} shows that many of the well-known modules over the Virasoro and BMS$_3$ algebras become free, or ``almost free'' when restricted to the BCCA. From the perspective of representation theory, these are not the most interesting BCCA-modules, so we later shift our focus to studying the representation theory of the BCCA intrinsically, by way of Whittaker modules (see Definition \ref{def:Whittaker module}). The main issue is that the presentation of the BCCA given in \eqref{eq:intro BCCA Lie bracket} makes it difficult to even define what a Whittaker module over the BCCA should be. For this reason, we spend some time constructing a different basis of the (centreless) BCCA which better lends itself to the construction of Whittaker modules.

The change of basis is achieved by analysing the subalgebra $\OO$, which is a subalgebra of the Witt algebra $\W \coloneqq \Der(\kk[t,t^{-1}])$, using techniques from \cite{Buzaglo2, BellBuzaglo}. In fact, we work in a more general setting: for $\lambda \in \kk^*$ and $n \in \ZZ$, let $\Olambda_n \coloneqq L_n - \lambda^n L_{-n}$, and define $\OO(\lambda) \coloneqq \spn\{\Olambda_n \mid n \geq 1\}$. It is easy to see that $\OO(\lambda)$ is a Lie algebra with bracket given by
$$[\Olambda_n,\Olambda_m] = (n - m)\Olambda_{n + m} - \lambda^m (n + m) \Olambda_{n - m}$$
for all $n,m \in \ZZ_+$ (see Lemma \ref{lem:O lambda bracket}). 

\begin{theorem}[Theorem \ref{thm:O lambda = L(s)}, Proposition \ref{prop:g lambda isomorphism}, and Corollary \ref{cor:two_O_subalgebras}]\label{thm:intro O lambda}
    Let $\lambda, \mu \in \kk^*$. Then $\OO(\lambda)$ has another basis $\{\ulambda_n \mid n \geq 1\}$ with Lie bracket
    $$[\ulambda_n, \ulambda_m] = (n - m)(\ulambda_{n + m} - 4\lambda \ulambda_{n + m - 2}).$$
    Consequently, $\OO(\lambda) \cong \OO(\mu)$ if and only if $\sqrt{\frac{\lambda}{\mu}} \in \kk$. In particular, if $\kk$ is algebraically closed (for example, if $\kk = \CC$), then $\OO(\lambda) \cong \OO$ for all $\lambda \in \kk$. If $\kk = \RR$, then
    $$\OO(\lambda) \cong \begin{cases}
        \OO, &\text{if } \lambda > 0, \\
        \OO(-1), &\text{if } \lambda < 0,
    \end{cases}$$
    but $\OO(-1) \not\cong \OO$.
\end{theorem}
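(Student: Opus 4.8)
The plan is to exhibit the new basis by a direct change-of-basis computation, then deduce the isomorphism classification from the structure constants. For the first part, I would look for elements of the form $\ulambda_n = \sum_{k} a_{n,k}\,\Olambda_{n-2k}$ (a finite linear combination shifting the index down by even steps, which is natural given that the bracket $[\Olambda_n,\Olambda_m]$ produces the ``wrong'' term $\Olambda_{n-m}$ with a factor of $\lambda^m$). Guided by Theorem \ref{thm:O lambda = L(s)} (which presumably already identifies $\OO(\lambda)$ with a subalgebra $L(s)$ of the Witt algebra via the techniques of \cite{Buzaglo2, BellBuzaglo}), the candidate is essentially forced: up to normalization, $\ulambda_n$ should correspond to $t^{?}(\text{something})\partial_t$ expressed back in the $L_n - \lambda^n L_{-n}$ coordinates. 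I would then verify by a bracket computation that $[\ulambda_n,\ulambda_m] = (n-m)(\ulambda_{n+m} - 4\lambda\,\ulambda_{n+m-2})$, and separately check that $\{\ulambda_n \mid n \geq 1\}$ is indeed a basis (the transition matrix from $\{\Olambda_n\}$ is unitriangular with respect to the grading by $n$, hence invertible). The bracket verification is a finite but slightly delicate calculation; I expect it is cleanest to do it on the Witt-algebra side using the identification from Theorem \ref{thm:O lambda = L(s)} rather than brute-forcing it in the original basis.

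For the classification, the key observation is that in the $u$-basis the structure constants depend on $\lambda$ only through the single parameter $\lambda$ appearing as $4\lambda$ in front of the shift term. The plan is: given $\lambda,\mu \in \kk^*$, try the ansatz of a scaling isomorphism $\ulambda_n \mapsto \alpha^n \, u^{(\mu)}_n$ (possibly combined with a uniform rescaling $u^{(\mu)}_n \mapsto \beta u^{(\mu)}_n$, though the bracket being quadratic in the basis elements constrains this). Plugging into the bracket relation, the coefficient of $u^{(\mu)}_{n+m}$ matches automatically, while the coefficient of $u^{(\mu)}_{n+m-2}$ forces $4\lambda\,\alpha^{n+m} = 4\mu\,\alpha^{n+m-2}$, i.e. $\alpha^2 = \lambda/\mu$. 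Hence such an isomorphism exists precisely when $\lambda/\mu$ is a square in $\kk$, i.e. $\sqrt{\lambda/\mu} \in \kk$. Conversely, to show no isomorphism exists when $\lambda/\mu$ is a non-square, I would argue that any isomorphism must (up to the obvious symmetry) respect some intrinsic filtration or grading-like structure of $\OO(\lambda)$ so that it is forced into the scaling form above — the natural candidate being the decreasing filtration by the ``lowest index'' of the $u$-basis, or an eigenvalue argument using $\ulambda_1$ (note $[\ulambda_1,\ulambda_n] = (1-n)(\ulambda_{n+1} - 4\lambda\,\ulambda_{n-1})$, so $\ad(\ulambda_1)$ has a specific structure detecting $\lambda$). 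The specializations over $\kk = \CC$ and $\kk = \RR$ are then immediate: every element of $\CC^*$ is a square, while in $\RR^*$ the squares are exactly the positives, so $\lambda/\mu$ is a square iff $\lambda$ and $\mu$ have the same sign, giving the two classes $\OO \cong \OO(\lambda)$ for $\lambda > 0$ and $\OO(-1) \cong \OO(\lambda)$ for $\lambda < 0$, with $\OO(-1) \not\cong \OO$ since $-1$ is not a square in $\RR$.

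The main obstacle I anticipate is the converse direction of the classification: showing that the scaling isomorphisms are the \emph{only} isomorphisms, i.e. that an abstract Lie isomorphism $\OO(\lambda) \to \OO(\mu)$ cannot ``mix'' the $u$-basis in some unexpected way that evades the square condition. This requires pinning down enough intrinsic structure of these algebras — most likely by showing that the derived series, or the filtration $\OO(\lambda) \supseteq \spn\{\ulambda_n \mid n \geq 2\} \supseteq \cdots$, is canonical, so that any isomorphism is filtered and hence determined on associated graded pieces, at which point the computation above applies. If the earlier Theorem \ref{thm:O lambda = L(s)} already records that $\OO(\lambda)$ embeds in $\W$ in a rigid way, this rigidity can likely be imported directly, reducing the obstacle to bookkeeping; otherwise one would need a standalone argument, e.g. analyzing which elements of $\OO(\lambda)$ can occur as $[x,x']$ or identifying $\ulambda_1$ up to scalar as the unique element (modulo the filtration) with a prescribed adjoint action.
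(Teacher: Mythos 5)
Your first part and the ``if'' direction are fine: the identification of $\OO(\lambda)$ with $L(s_\lambda)\cong(t^2-4\lambda)\WW_1$ and the bracket computation on the Witt-algebra side is exactly the paper's route (Theorem \ref{thm:O lambda = L(s)}), and your explicit scaling map $\ulambda_n\mapsto\alpha^n u^{(\mu)}_n$ with $\alpha^2=\lambda/\mu$ is a correct, even more elementary, way to get $\OO(\lambda)\cong\OO(\mu)$ when $\sqrt{\lambda/\mu}\in\kk$ (the paper instead matches the polynomials $z^2-4\lambda$ and $t^2-4\mu$ under $z=\alpha t$ and invokes the automorphism theorem).

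The genuine gap is the converse: you never actually prove that no isomorphism exists when $\lambda/\mu$ is a non-square, and the approaches you sketch do not close it. The filtration $\OO(\lambda)\supseteq\spn\{\ulambda_n\mid n\geq 2\}\supseteq\cdots$ is \emph{not} a priori intrinsic -- in the model $\f_\lambda=(t^2-4\lambda)\WW_1$ it is the filtration by powers of $s$, which depends on the choice of the base point $s=0$; showing that every automorphism preserves it is essentially equivalent to the rigidity statement you would be trying to avoid, so the argument is circular as stated. Likewise the derived series does not help: $[\f_\lambda,\f_\lambda]=f_{s_\lambda}^2\WW_1$ has codimension $2$ for every $\lambda$, so the successive quotients carry no information about the square class of $\lambda$, and the $\ad(\ulambda_1)$ idea singles out a non-canonical element. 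What the paper actually uses is the rigidity theorem for finite-codimension subalgebras of $\WW_1$ (Theorem \ref{thm:automorphisms Witt}, from \cite{Buzaglo}): any isomorphism $\f_\lambda\to\f_\mu$ extends to an automorphism of $\WW_1$, hence is induced by an affine substitution $t\mapsto\alpha t+\beta$ with $z^2-4\lambda=\gamma(t^2-4\mu)$, which forces $\beta=0$, $\gamma=\alpha^2$, and $\lambda=\alpha^2\mu$, i.e.\ $\sqrt{\lambda/\mu}\in\kk$. Without importing this (or proving a comparable rigidity/extension result), your classification, and in particular the claim $\OO(-1)\not\cong\OO$ over $\RR$, is not established.
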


We further present some consequences of Theorem \ref{thm:intro O lambda} in Section \ref{sec:change of basis}. For example, it becomes easy to show that $\OO$ is not a simple Lie algebra (in fact, it is not even perfect) -- see Corollary \ref{cor:O is not simple}. Certainly, the same is true for both $\bcca$ and $\widehat{\bcca}$.

Theorem \ref{thm:intro O lambda} tells us that the Lie subalgebras $\OO$ and $\OO(-1)$ of the Witt algebra are isomorphic as complex Lie algebras but not as real ones. This leads to an interesting physical implication. In \cite{BCCFGP2024}, the authors start with the BMS$_3$ algebra realised as vector fields which generate the infinitesimal transformations that leave the Carrollian structure on a cylinder (given by a symmetric $(0,2)$-tensor field $h$ that is corank-1 everywhere on the cylinder and a nowhere-vanishing vector field $\xi$ such that $\ker h$ is generated by $\xi$) invariant up to an overall conformal factor. Using this, the Lie algebra $\OO$ was constructed as the subalgebra of the Witt algebra consisting of the vector fields that preserve a choice of boundaries on the cylinder at $0$ and $\pi$.
Likewise, the Lie algebra $\OO(-1)$ can be viewed as the analogous Lie algebra when the boundaries are instead placed at $\tfrac{\pi}{2}$ and $\tfrac{3\pi}{2}$. But one can always recover the original boundary conditions by translating the periodic coordinate by $\tfrac{\pi}{2}$, which puts back in the setting where $\OO$ is the distinguished boundary-preserving subalgebra. This equivalence must be reflected algebraically too, which indicates that $\OO(-1)$ and $\OO$ must be isomorphic Lie algebras. Thus, by Theorem \ref{thm:intro O lambda}, we confirm that the authors \cite{BCCFGP2024} must be working over $\CC$ and not $\RR$. We provide details of this observation in Remark \ref{rem:different boundaries}.

Having constructed the alternate basis of $\OO$ in Theorem \ref{thm:intro O lambda}, we then extend it to a new basis of the full centreless BCCA.

\begin{theorem}[Corollary \ref{cor:new_basis_for_b}]\label{thm:intro new basis}
    There is another basis of $\bcca$ denoted $\{u_n, v_m \mid n \geq 1, m \geq 0\}$. The bracket of the centreless BCCA with this basis is given by
    \begin{align*}
        [u_n,u_m] &= (n - m)(u_{n + m} - 4u_{n + m - 2}) & &(n,m \geq 1), \\
        [u_n,v_m] &= (n - m)v_{n + m} - 4(n - m - 1)v_{n + m - 2} & &(n \geq 1, m \geq 0), \\
        [v_n,v_m] &= 0 & &(n,m \geq 0).
    \end{align*}
\end{theorem}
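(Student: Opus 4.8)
The plan is to transport the change of basis of Theorem \ref{thm:O lambda = L(s)} across all of $\bcca$ by working inside the centreless BMS$_3$ algebra. Recall that $\bcca$ is the subalgebra of the centreless BMS$_3$ algebra spanned by $\OO_n = L_n - L_{-n}$ $(n\ge1)$ and $P_n = M_n + M_{-n}$ $(n\ge0)$ with bracket \eqref{eq:intro BCCA Lie bracket}, and that the centreless BMS$_3$ algebra is the semidirect product $\W \ltimes \fa$, where $\W = \spn\{L_n\}$ is the Witt algebra and $\fa = \spn\{M_n\}$ is its adjoint module with zero bracket (cf.~Definition \ref{def:BMS}). Let $\sigma$ be the involutive automorphism of $\W \ltimes \fa$ determined by $\sigma(L_n) = -L_{-n}$ and $\sigma(M_n) = -M_{-n}$ (the automorphism induced by $t \mapsto t^{-1}$ in the realisation $L_n, M_n \leftrightarrow -t^{n+1}\del_t$). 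A direct check shows that the $+1$-eigenspace of $\sigma$ on $\W$ is exactly $\OO = \spn\{\OO_n \mid n\ge1\}$ and the $-1$-eigenspace of $\sigma$ on $\fa$ is exactly $\spn\{P_n \mid n\ge0\}$; since $\sigma$ negates the bracket of a $+1$-vector with a $-1$-vector, $\bcca = \W^\sigma \ltimes \fa^{-\sigma}$ as a subalgebra of $\W\ltimes\fa$.

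In the realisation of $\OO = \OO(1) \subseteq \W$ through the coordinate $w = t + t^{-1}$ (as in the proof of Theorem \ref{thm:O lambda = L(s)}), one has $\del_w = \tfrac{t^2}{t^2-1}\del_t$, $w^2 - 4 = (t - t^{-1})^2$, and $u_n = -(w^2-4)\,w^{n-1}\,\del_w$. Setting $z := t - t^{-1}$, so that $z^2 = w^2 - 4$ and $z\,\tfrac{dz}{dw} = w$, a short computation shows that $\fa^{-\sigma}$ is realised as $z\,\kk[w]\,\del_w$. I would then define
\[ v_m \;:=\; z\,w^m\,\del_w \;=\; t\,(t + t^{-1})^m\,\del_t \qquad (m \geq 0). \]
Expanding $(t+t^{-1})^m$ shows that $v_m$ is a $\kk$-linear combination of those $P_k$ with $k \le m$ and $k \equiv m \pmod 2$, in which $P_m$ appears with nonzero coefficient; hence the transition matrix from $\{P_m \mid m\ge0\}$ to $\{v_m \mid m\ge0\}$ is block-triangular with invertible diagonal, so $\{v_m\}$ is a basis of $\spn\{P_m\}$. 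Together with Theorem \ref{thm:O lambda = L(s)} (which gives that $\{u_n \mid n\ge1\}$ is a basis of $\OO$) and the decomposition $\bcca = \OO \oplus \spn\{P_m\}$, this shows $\{u_n, v_m \mid n\ge1,\ m\ge0\}$ is a basis of $\bcca$.

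It then remains to verify the three bracket relations in this realisation. The relation $[u_n,u_m] = (n-m)(u_{n+m} - 4u_{n+m-2})$ is Theorem \ref{thm:O lambda = L(s)} specialised to $\lambda = 1$, and $[v_n,v_m] = 0$ because $\fa$ is abelian. For $[u_n,v_m]$ I would use $[p\,\del_w, q\,\del_w] = (pq' - qp')\,\del_w$ (with $' = d/dw$) for $p = -(w^2-4)w^{n-1}$ and $q = z w^m$, together with $z' = w/z$; collecting terms and repeatedly rewriting $z w^k = (w^2-4)w^k/z$ gives
\[ [u_n, v_m] = (n - m)\,z\,w^{n+m}\,\del_w - 4(n - m - 1)\,z\,w^{n+m-2}\,\del_w, \]
which is $(n-m)\,v_{n+m} - 4(n-m-1)\,v_{n+m-2}$, as required. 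The only possibly out-of-range symbol is $v_{n+m-2}$ with $n+m-2<0$, which occurs solely when $n=1$, $m=0$; there its coefficient $(n-m-1)$ vanishes, so the relation is well defined.

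The calculations are routine, so the work is concentrated in the first two paragraphs: pinning down $\sigma$ and the sign conventions so that $\OO$ and $\spn\{P_n\}$ really are its eigenspaces, identifying $\fa^{-\sigma}$ with $z\,\kk[w]\,\del_w$, and verifying (via triangularity) that the $v_m$ form a basis. I expect this bookkeeping to be the main — and only mild — obstacle; once it is in place, the bracket relations, including the asymmetric $[u_n,v_m]$, fall out of the one-line vector-field computation above.
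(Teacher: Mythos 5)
Your proposal is correct and takes essentially the same route as the paper: Corollary \ref{cor:new_basis_for_b} is deduced there from Theorem \ref{thm:O lambda = L(s)} together with Theorem \ref{thm:new basis for P}, which performs exactly your change of variables $s = t + t^{-1}$, $r = t - t^{-1}$ (your $w$, $z$, with $r^2 = s^2 - 4$) and the same short bracket computation, only phrased for general densities $\PP_b$ in the $ds^b$ language rather than your $b=-1$ vector-field picture. Your triangularity argument that the $v_m$ span $\PP$ is the content of the paper's Lemma \ref{lem:v in terms of P} (the paper's Theorem \ref{thm:new basis for P} instead argues via generation by $v_0, v_1$), and your involution-eigenspace framing is extra scaffolding that is harmless but not needed, since $\bcca = \OO \ltimes \PP$ is already in place.
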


Using the new basis for $\bcca$ from Theorem \ref{thm:intro new basis}, we can easily construct a filtration for the centreless BCCA, allowing us to define and study Whittaker modules for $\bcca$, since filtered Lie algebras always have Whittaker modules (see Lemma \ref{lem:filtration leads to Whittaker}). For convenience, we shift the basis as follows: let $\UU_n \coloneqq u_{n + 2}$ and $\VV_n \coloneqq v_{n + 1}$ for $n \geq -1$.

\begin{proposition}[Proposition \ref{prop:filtration for BCCA}]
    For $n \geq -1$, define
    $$\F_n\OO \coloneqq \spn\{\UU_k \mid k \geq n\}, \quad \F_n\bcca \coloneqq \spn\{\UU_k, \VV_k \mid k \geq n\}.$$
    Then $\F$ is a filtration of $\OO$ and $\bcca$. 
\end{proposition}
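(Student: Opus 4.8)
The plan is to verify directly that the two claimed defining properties of a Lie-algebra filtration hold: first that the $\F_n$ are nested, i.e. $\F_{n+1}\OO \subseteq \F_n\OO$ and $\F_{n+1}\bcca \subseteq \F_n\bcca$, and second that $[\F_n, \F_m] \subseteq \F_{n+m}$. The nesting is immediate from the definitions, since $\{k \mid k \geq n+1\} \subseteq \{k \mid k \geq n\}$; I would dispatch this in one line. For the bracket condition, I would translate the brackets from Theorem~\ref{thm:intro new basis} into the shifted basis $\UU_n = u_{n+2}$, $\VV_n = v_{n+1}$ and check each of the three families $[\UU,\UU]$, $[\UU,\VV]$, $[\VV,\VV]$ separately.

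The key computation is the re-indexing. Writing $[u_p, u_q] = (p-q)(u_{p+q} - 4u_{p+q-2})$ with $p = n+2$, $q = m+2$, the right-hand side involves $u_{n+m+4} = \UU_{n+m+2}$ and $u_{n+m+2} = \UU_{n+m}$; since $n+m, n+m+2 \geq n+m$, both terms lie in $\F_{n+m}\OO$, so $[\UU_n, \UU_m] \in \F_{n+m}\OO$ as required. Similarly $[u_p, v_q] = (p-q)v_{p+q} - 4(p-q-1)v_{p+q-2}$ with $p = n+2$, $q = m+1$ gives terms $v_{n+m+3} = \VV_{n+m+2}$ and $v_{n+m+1} = \VV_{n+m}$, both indexed $\geq n+m$, hence in $\F_{n+m}\bcca$; and the case $[\VV_n,\VV_m]=0 \in \F_{n+m}\bcca$ is trivial. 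Here I am using that $\F_{n+m}\bcca$ contains both $\UU$'s and $\VV$'s with index $\geq n+m$, so the mixed and $\VV$-$\VV$ brackets land in the larger space without issue. One should also note $\F_n\OO \subseteq \F_n\bcca$ for all $n$, so the filtration on $\bcca$ restricts to the one on $\OO$.

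There is essentially no obstacle here; the only thing to be careful about is the index bookkeeping under the shift, in particular checking that the lower term $u_{p+q-2}$ (resp.\ $v_{p+q-2}$) still has index at least $n+m$ after the substitution — it does, since $u_{n+m+2} = \UU_{n+m}$ and $v_{n+m+1} = \VV_{n+m}$ sit exactly at the bottom of $\F_{n+m}$. If the paper's convention requires $\F_n$ to be defined for all $n \in \ZZ$ (with $\F_n = \OO$ or $\bcca$ for $n \leq -1$), I would add the remark that the displayed formulas extend this consistently, but as stated the proposition only asserts the inclusions for $n \geq -1$, which the above covers completely.
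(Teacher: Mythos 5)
Your proposal is correct and follows essentially the same route as the paper: the paper's proof likewise just re-indexes the brackets $[u_{n+2},u_{m+2}]=(n-m)(\UU_{n+m+2}-4\,\UU_{n+m})$ and $[u_{n+2},v_{m+1}]=(n-m+1)\VV_{n+m+2}-4(n-m)\VV_{n+m}$ and observes that all resulting indices are at least $n+m$, with the $[\VV,\VV]=0$ case trivial. Your index bookkeeping matches the paper's computations exactly, so there is nothing to add.
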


Consequently, we define \emph{Whittaker functions} on $\bcca$ to be Lie algebra homomorphisms $\psi_n \colon \F_n \bcca \to \kk$ for some $n \in \NN$. Given a Whittaker function $\psi_n \colon \F_n \bcca \to \kk$, we let $\kk_{\psi_n} = \kk 1_{\psi_n}$ be the one-dimensional $\F_n \bcca$-module defined by $x \cdot 1_{\psi_n} = \psi_n(x) 1_{\psi_n}$ for $x \in \F_n \bcca$. The \emph{universal Whittaker module of type $\psi_n$} is defined to be $M_{\psi_n} \coloneqq \Ind_{\F_n \bcca}^\bcca \kk_{\psi_n}$. Similar definitions can be made for the subalgebra $\OO$.

\begin{theorem}[Theorems \ref{theo: simplicity} and \ref{thm:bcca Whittaker}]\label{theorem: Whittaker simplicity intro}
    Let $n \in \NN$, and let $\varphi_n \colon \F_n \OO \to \kk$ and $\psi_n \colon \F_n \bcca \to \kk$ be Whittaker functions. Then the following hold.
    \begin{enumerate}
        \item If $n = 0$, then the $\OO$-module $M_{\varphi_0}$ is irreducible if and only if $\varphi_0(\UU_{2}) \neq 4 \varphi_0(\UU_{0})$.\label{item:intro O Whittaker n = 0}
        \item If $n \geq 1$, then the $\OO$-module $M_{\varphi_n}$ is irreducible if and only if $\varphi_n(\UU_{2n + 1}) \neq 4 \varphi_n(\UU_{2n - 1})$ or $\varphi_n(\UU_{2n + 2}) \neq 4\varphi_n(\UU_{2n})$.\label{item:intro O Whittaker}
        \item If $n = 0$, then the $\bcca$-module $M_{\psi_0}$ is irreducible if and only if $\psi_0(\VV_0) \neq 0$.\label{item:intro BCCA Whittaker n = 0}
        \item If $n \geq 1$, then the $\bcca$-module $M_{\psi_n}$ is irreducible if and only if $\psi_n(\VV_{2n}) \neq 0$ or $\psi_n(\VV_{2n - 1}) \neq 0$.\label{item:intro BCCA Whittaker}
    \end{enumerate}
\end{theorem}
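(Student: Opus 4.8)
The plan is to establish all four statements in parallel, treating the subalgebra $\OO$ (items (1)--(2)) and $\bcca$ (items (3)--(4)) together, with the $n=0$ cases as degenerate instances of $n\ge 1$. Throughout I would fix the PBW basis of the universal Whittaker module given by ordered monomials in the complementary basis elements $\UU_{-1},\dots,\UU_{n-1}$ (and, for $\bcca$, also $\VV_{-1},\dots,\VV_{n-1}$) applied to the cyclic vector $1_{\varphi_n}$ (resp.\ $1_{\psi_n}$), and grade such a monomial by its $\UU$-\emph{length} and $\VV$-\emph{length}. A useful preliminary fact is that every Whittaker function satisfies the ``deep recursions'' $\varphi_n(\UU_{k+2})=4\varphi_n(\UU_k)$ and $\psi_n(\VV_k)=0$ for all $k\ge 2n+1$, obtained by evaluating on the brackets $[\UU_n,\UU_{n+1}],[\UU_n,\UU_{n+2}],\dots$ and $[\UU_n,\VV_{n+1}],[\UU_{n+1},\VV_n],[\UU_n,\VV_{n+2}],\dots$.

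For the ``only if'' (reducibility) directions, I would first show that the negation of each displayed condition is exactly the statement that the Whittaker function extends to a Lie algebra homomorphism on a suitable intermediate subalgebra, and then deduce reducibility from Frobenius reciprocity. For $\OO$, using $[\F_{n-1}\OO,\F_{n-1}\OO]=\spn\{\UU_k-4\UU_{k-2}\mid k\ge 2n+1\}$ (with $\F_{-1}\OO=\OO$, the first equation below being vacuous when $n=0$), one checks that $\varphi_n$ extends to a homomorphism $\widetilde\varphi\colon\F_{n-1}\OO\to\kk$ if and only if $\varphi_n(\UU_{2n+1})=4\varphi_n(\UU_{2n-1})$ and $\varphi_n(\UU_{2n+2})=4\varphi_n(\UU_{2n})$. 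For $\bcca$, let $\mathfrak{s}_n\coloneqq\spn\{\UU_k\mid k\ge n\}\oplus\spn\{\VV_k\mid k\ge n-1\}$, which is a subalgebra containing $\F_n\bcca$ with $[\mathfrak{s}_n,\mathfrak{s}_n]=\spn\{\UU_k-4\UU_{k-2}\mid k\ge 2n+1\}\oplus\spn\{\VV_k\mid k\ge 2n-1\}$; then $\psi_n$ extends to a homomorphism $\widetilde\psi\colon\mathfrak{s}_n\to\kk$ if and only if $\psi_n(\VV_{2n-1})=\psi_n(\VV_{2n})=0$. Whenever such an extension exists, the one-dimensional module $\kk_{\widetilde\varphi}$ (resp.\ $\kk_{\widetilde\psi}$) restricts on $\F_n\OO$ (resp.\ $\F_n\bcca$) to the defining module, so the universal Whittaker module surjects onto the nonzero induced module $\Ind_{\F_{n-1}\OO}^{\OO}\kk_{\widetilde\varphi}$ (resp.\ $\Ind_{\mathfrak{s}_n}^{\bcca}\kk_{\widetilde\psi}$), and its kernel is a proper nonzero submodule --- it contains $\UU_{n-1}1_{\varphi_n}-\widetilde\varphi(\UU_{n-1})1_{\varphi_n}$ (resp.\ $\VV_{n-1}1_{\psi_n}$), which is nonzero in the Whittaker module.

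For the ``if'' (irreducibility) directions, the plan is a leading-term reduction: starting from $0\ne w$ in a submodule $N$, I would apply operators $\overline{x}\coloneqq x-\psi_n(x)$ with $x\in\F_n\bcca$ of index $\ge n+1$ to strip the complement factors off $w$ one at a time, eventually landing a nonzero scalar multiple of the cyclic vector in $N$. For $\bcca$ this is done in two phases. Say $\psi_n(\VV_{2n})\ne 0$ (the case $\psi_n(\VV_{2n-1})\ne 0$ is symmetric). In Phase~1, let $b$ be the smallest index of a complement element occurring in a monomial of $w$ of top $\UU$-length and apply $\overline{\VV_{2n-b}}$: since $\VV_{2n-b}$ commutes with all $\VV$'s, this operator acts on $\UU$-factors only, produces the nonzero scalar $4(2b-2n)\psi_n(\VV_{2n})$ on a $\UU_b$-factor, and by the deep recursion contributes $0$ to the top-$\UU$-length part whenever it hits a $\UU_{b'}$ with $b'>b$; hence it lowers the $\UU$-length of $w$ by exactly one. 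After finitely many such steps $w$ involves only $\VV$-factors, and in Phase~2 one strips these using $\overline{\UU_{2n-b}}$ (with $b$ now the smallest occurring $\VV$-index), which is entirely clean because $\F_n\bcca\cap\spn\{\VV_k\}$ commutes with the complement $\VV$'s, and the relevant scalar $4(2b-2n)\psi_n(\VV_{2n})$ is again nonzero. The $\OO$ case is the analogue of Phase~1 alone, with $\overline{\VV_{2n-b}}$ replaced by $\overline{\UU_{2n-b}}$ (for $b\ge 0$) and $\overline{\UU_{2n+1}}$ (for $b=-1$); the leading scalars are proportional to $\varphi_n(\UU_{2n+2})-4\varphi_n(\UU_{2n})$, one of whose analogues is nonzero by hypothesis.

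The hard part will be the bookkeeping in the irreducibility argument: a bracket of $\F_n\OO$ with a complement element need not lie in $\F_n\OO$ --- the culprits being $[\UU_n,\UU_{-1}]$ and $[\VV_n,\UU_{-1}]$, which involve the ``boundary'' complement elements $\UU_{n-1}$ and $\VV_{n-1}$ --- so when the deep elements produced by $\overline{x}$ are commuted rightward to reach the cyclic vector, iterated brackets can transiently re-create complement factors and threaten the leading-term analysis. I would control this by noting that (i) taking $x$ of index $\ge n+1$ keeps all first-level brackets inside $\F_n\bcca$, and (ii) with the ``smallest occurring index'' targeting, if that index is $\ge 0$ then the deep indices that arise can only increase and no complement factor is ever re-created, whereas if it is $-1$ then re-creating a complement factor forces a chain that consumes at least $n+2$ complement factors, so the offending terms have strictly smaller $\UU$-length and do not affect the leading coefficient. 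Making this precise, together with pinning down the exact leading coefficients above, is the technical core; the remaining ingredients (PBW, the homomorphism-extension computations, and the final induction on $\UU$- and $\VV$-length, after which the cyclic vector lies in $N$ and hence $N$ is everything) are routine.
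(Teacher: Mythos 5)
Your treatment of the $\bcca$ half and of all the reducibility ("only if") directions is essentially sound. The extension-plus-Frobenius packaging of reducibility is a mild variant of the paper's explicit submodules, although your computation of $[\mathfrak{s}_n,\mathfrak{s}_n]$ is off: its $\UU$-part is $\spn\{\UU_k-4\,\UU_{k-2}\mid k\geq 2n+3\}$, not $k\geq 2n+1$ (the elements $\UU_{2n+1}-4\,\UU_{2n-1}$ and $\UU_{2n+2}-4\,\UU_{2n}$ do not arise from brackets inside $\mathfrak{s}_n$); taken literally your version would impose spurious $\UU$-conditions on the extension, but with the corrected derived subalgebra your criterion $\psi_n(\VV_{2n-1})=\psi_n(\VV_{2n})=0$ and the ensuing reducibility are fine. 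Your Phase~1/Phase~2 stripping for $\bcca$ is correct and is in substance the paper's own proof of the $\bcca$ statement, organised as one uniform leading-term induction rather than the paper's separate removal of $\UU_{-1}$, $\UU_0$, the remaining $\UU_i$, and finally the $\VV_j$; the feature that makes it work, and which you use correctly, is that bracketing a deep $\VV$ against complement $\UU$'s only ever produces $\VV$'s, so the complement $\UU$-length strictly drops and any re-created complement $\VV$'s occur at strictly smaller $\UU$-length.

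The genuine gap is the irreducibility direction for $\OO$ (items (1) and (2)), where you transplant Phase~1 verbatim. For $\OO$ the brackets $[\UU_m,\UU_i]$ are again $\UU$'s, and your safety claims (i)--(ii) fail exactly where the theorem is delicate. A chain $[\UU_j,\UU_{-1}]\ni\UU_{j-1}$ re-creates a complement factor after a net loss of only $m-n$ complement factors ($m$ the index of the stripping operator), and your count forgets the factor that is produced back. Consequences: for $n=0$ one computes $\overline{\UU_1}\bigl(\UU_{-1}^{p}1_{\varphi_0}\bigr)=2p\bigl(\varphi_0(\UU_2)-4\varphi_0(\UU_0)+8(p-1)\bigr)\UU_{-1}^{p-1}1_{\varphi_0}+\text{lower terms}$, so the leading scalar is \emph{not} proportional to $\varphi_0(\UU_2)-4\varphi_0(\UU_0)$ and can vanish; and in the subcase $\varphi_n(\UU_{2n+2})=4\varphi_n(\UU_{2n})$, $\varphi_n(\UU_{2n+1})\neq 4\varphi_n(\UU_{2n-1})$ you are forced to use $\UU_{2n-1-b}$, which is $\UU_n$ when $b=n-1$ (so your condition (i), index $\geq n+1$, cannot be arranged), and then $[\UU_n,\UU_{-1}]=(n+1)(\UU_{n+1}-4\,\UU_{n-1})$ feeds lower-length monomials into the same PBW degree as the intended leading term: for $w=\UU_{n-1}^{p}1_{\varphi_n}+c\,\UU_{-1}\UU_{n-1}^{p-2}1_{\varphi_n}$ a suitable $c$ makes the leading term of $\overline{\UU_n}\,w$ cancel outright (a similar net-drop-one failure occurs for $n=1$, $b=-1$ with $\UU_{2n}$). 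So the induction does not go through as written. The paper avoids this altogether: it proves (1)--(2) by realising $\F_n\OO$ as a polarisation of a local function on $\WW_1$, reducing via a tensor-product twist to normalise the data at the points $\pm 2$, and quoting the orbit-method simplicity criterion of Pham for $\Ind_{\F_n\OO}^{\WW_1}$; some argument of that strength, or a substantially more careful direct analysis than your (i)--(ii), is needed for the $\OO$ case.
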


The proof of Theorem \ref{theorem: Whittaker simplicity intro}\eqref{item:intro O Whittaker n = 0} and \eqref{item:intro O Whittaker} uses a version of the classical Kirillov orbit method \cite{Kirillov2004LecturesOT} for the Virasoro algebra developed in \cite{Ushirobira1998, pham2025orbit}. For parts \eqref{item:intro BCCA Whittaker n = 0} and \eqref{item:intro BCCA Whittaker}, our proof is more direct.

Unfortunately, there is no way to extend the filtration to $\widehat{\bcca}$, at least not while keeping the nice properties of the filtration (particularly \emph{weak convergence} -- see Definition \ref{def:filtered Lie algebra}). We elaborate on this in Remark \ref{rem:lack of center}. This means that we are currently unable to define Whittaker modules for the BCCA on which the centre acts nontrivially. It would be interesting to further explore representations of the BCCA on which the centre acts nontrivially.

Certainly, there exists strong physical and mathematical motivation for this paper. The former arises from the fact that an infinite-dimensional Lie algebra which is not integer-graded has made an appearance in a modern theoretical physics setting, meaning that we need to develop the tools needed to study such objects thoroughly, starting with the BCCA as the inaugural case study. The latter comes from the desire to understand infinite-dimensional subalgebras of infinite-dimensional Lie algebras more thoroughly.

This paper is organised as follows. In Section \ref{sec:preliminaries}, we set up notation, present key definitions along with minor or known results, and introduce the main objects of study of this paper. 
Section \ref{sec:restriction of modules} constructs modules of the BCCA and one of its subalgebras by restricting well-known modules of the BMS$_3$ and Virasoro algebras. This would be the naive approach to the studying the representations of the BCCA based on its physical origins in \cite{BCCFGP2024}. The emergence of free and ``almost free'' modules motivates the need to study modules of the BCCA intrinsically. This requires the change of basis presented in Section \ref{sec:change of basis}, from which we can construct a decreasing filtration of the BCCA and compute derived subalgebras, as done in Section \ref{sec:filtration}.
All this effort pays off in Section \ref{sec:Whittaker modules}, in which we construct the universal Whittaker modules over the BCCA and its subalgebra $\OO$, and determine criteria for their irreducibility. 
Finally, in Section \ref{sec:conclusions}, we present a myriad of future directions of work that can be undertaken on the representation theory of the BCCA, from both physical and mathematical perspectives.

\vspace{1em}

\noindent \textbf{Acknowledgements:} The authors would like to thank Blagoje Oblak for fascinating, insightful discussions on the induced representations of the BMS$_3$ group and algebra, Martin Schlichenmaier for interesting discussions about the relationship between the BCCA and Krichever--Novikov algebras, and Jos\'e Figueroa-O'Farrill for comments on a draft of this paper.
GSV would also like to thank Daniel Grumiller and Priyadarshini Pandit for discussions on the BCCA and the tensionless open string during the conference titled ``Non-Lorentzian Geometries and their Applications'', which took place at the Hamilton Mathematics Institute, Trinity College Dublin, from 29\textsuperscript{th} April to 2\textsuperscript{nd} May 2025. GSV is supported by the Science and Technologies Facilities Council studentship [grant number 2615874]. XH is supported by NSFC (No. 12201027). HT is partially supported by NSFC (No. 12271085) and  CSC (No. 202506620188). KZ is partially supported by NSERC (311907-2026).

\section{Preliminaries}\label{sec:preliminaries}
Throughout this paper, we use the following notation:
\begin{itemize}
    \item $\kk$ denotes an arbitrary field of characteristic zero.
    \item $\kk^*$ denotes the set of nonzero elements of the field $\kk$.
    \item $\NN$ denotes the set of non-negative integers (including zero).
    \item $\ZZ_+$ and $\ZZ_-$ denote the set of positive and negative integers, respectively.
    \item We reserve upper case $C$ (and variations such as $C_L$ and $C_M$) for central elements of a Lie algebra, while lower case $c$ (and variations such as $c_L$ and $c_M$) represent the scalars by which the central elements act on a given representation of the Lie algebra.
\end{itemize}
All Lie algebras and vector spaces are considered over $\kk$ unless otherwise specified. We will also abbreviate $\otimes_\kk$ to $\otimes$.

\subsection{The boundary Carrollian conformal algebra}

We define the fundamental objects of interest in this paper and present the construction of the boundary Carrollian conformal algebra (BCCA).

\begin{defn}\label{def:Witt}
    The \emph{Witt algebra} is defined as $\W \coloneqq \Der(\kk[t,t^{-1}]) = \kk[t,t^{-1}]\del$, where $\del \coloneqq \diff{t}$. The standard basis of the Witt algebra is $L_n \coloneqq -t^{n + 1}\del$ ($n \in \ZZ)$, and its Lie bracket is given by
    $$[f\del, g\del] = (fg' - f'g)\del, \qquad [L_n,L_m] = (n - m)L_{n + m},$$
    for all $f,g \in \kk[t,t^{-1}]$ and $n,m \in \ZZ$. The Witt algebra $\W$ can be seen as the Lie algebra of algebraic vector fields on $\kk^*$.
    
    The \emph{Virasoro algebra} $\Vir$ is the universal central extension of the Witt algebra. As a vector space, $\Vir = \W \oplus \kk C$ with bracket given by
    $$[f\del,g\del] = (fg' - f'g)\del + \frac{1}{12} \operatorname{Res}_0(f'g'')C, \qquad [L_n,L_m] = (n - m)L_{n + m} + \frac{1}{12} n(n^2 - 1) \delta_{m,-n} C,$$
    for all $f,g \in \kk[t,t^{-1}]$, where $C$ is central, and $\operatorname{Res}_0(h)$ is the residue of $h \in \kk[t,t^{-1}]$ at zero, or in other words, the coefficient of $t^{-1}$ in $h$.
    
    The \emph{one-sided Witt algebra} is defined as $\WW_1 \coloneqq \Der(\kk[t]) = \kk[t]\del$. Equivalently, it can be viewed as the subalgebra of $\W$ spanned by $\{L_n \mid n \geq -1\}$ and it is the Lie algebra of algebraic vector fields on $\kk$.
\end{defn}

We often switch between the \emph{basis-free} notation $f\del$ for elements of $\W$ and the basis notation $L_n$.

\begin{defn}\label{def:tensor density}
    For $a,b \in \kk$, define a $\W$-module $I(a,b) = \spn\{I_n \mid n \in \ZZ\}$ as follows:
    $$L_n \cdot I_m = - (a + bn + m)I_{n + m}.$$
    The modules $I(a,b)$ are known as \emph{tensor density modules}. In basis-free notation, we define $I(a,b) \coloneqq t^{a - b}\kk[t,t^{-1}] \ dt^b$ with $\W$-action given by
    $$f\del \cdot (g \ dt^b) = (fg' + bf'g)dt^b.$$
    With this perspective, we have $I_n \coloneqq - t^{n + a - b} \ dt^b$. Note that $I(0,-1)$ is the adjoint module of $\W$.
\end{defn}

We may then construct the two-parameter family of Lie algebras $\W(a,b) \coloneqq \W \ltimes I(a,b)$. The special case $\W(0,-1)$ is the centreless version of the BMS$_3$ algebra, presented in the next definition. The BMS$_3$ algebra first appeared in physics in \cite{AshtekarBicakSchmidt, BarnichCompere}, and in mathematics it was introduced as the $W(2,2)$ algebra\footnote{Not to be confused with our $\W(a,b)$ algebras.} by Zhang and Dong but with $C_L = C_M$ \cite{ZhangDong}.

\begin{defn}\label{def:BMS}
    The \emph{BMS$_3$ algebra}, denoted $\bms$, is the universal central extension of $\W(0,-1)$. It is the infinite dimensional Lie algebra spanned by $\{L_n,M_n \mid n \in \ZZ\}$ and two central elements $C_L, C_M$, with Lie bracket
    \begin{equation}\label{eq:bms bracket}
        \begin{gathered}
            [L_n,L_m] = (n - m) L_{m + n} + \frac{1}{12} n(n^2 - 1) \delta_{m,-n} C_L, \\
            [L_n,M_m] = (n - m) M_{m + n} + \frac{1}{12} n(n^2 - 1)\delta_{m,-n} C_M, \\
            [M_n,M_m] = 0.
        \end{gathered}
    \end{equation}
    Here, $\{L_n \mid n\in\ZZ\}$ and $C_L$ form a basis of a Virasoro subalgebra and $\{M_n \mid n \in \ZZ\}$ is the basis of $I(0,-1)$ presented in Definition \ref{def:tensor density}.
\end{defn}

We now define the BCCA in its original form given by \cite[Equation (7)]{BCCFGP2024}. This will be the main object of study of our paper.

\begin{defn}\label{def:BCCA}
    Let $\OO_n\coloneqq L_n - L_{-n}$ and $P_n \coloneqq M_n + M_{-n}$ for all $n \in \ZZ$, which means that $\OO_n = -\OO_{-n}$ and $\PP_n = \PP_{-n}$ for all $n\geq 1$. The \emph{boundary Carrollian conformal algebra (BCCA)}, denoted $\widehat{\bcca}$, is the Lie subalgebra of $\bms$ with basis $\{\OO_n, P_m, C_M \mid n \geq 1,\ m \geq 0\}$ and Lie bracket
    \begin{equation} \label{eq:BCCA Lie bracket}
        \begin{gathered}
            [\OO_n,\OO_m] = (n - m)\OO_{n + m} - (n + m)\OO_{n - m}, \\
            [\OO_n, P_m] = (n - m)P_{n + m} + (n + m)P_{n - m} + \frac{1}{6}n(n^2 - 1) \delta_{m,n} C_M, \\
            [P_n,P_m] = 0.
        \end{gathered}
    \end{equation}
    The \emph{centreless BCCA} (in other words, setting $C_M = 0$) is denoted $\bcca$.
\end{defn}

The latter half of this paper mostly focuses on the centreless BCCA. This is because it is not clear how one could define Whittaker modules over the BCCA with centre. We elaborate on this in Remark \ref{rem:lack of center}.

\begin{remark}
    The Lie algebra $\bcca$ is the fixed subalgebra of $\W(0,-1)$ under the automorphism $L_n \mapsto -L_{-n},\, M_n \mapsto M_{-n}$.
\end{remark}

First, we seek to write $\bcca$ as a semi-direct sum in a similar manner to how the centreless BMS$_3$ algebra can be written as the semi-direct sum $\W(0,-1) \coloneq \W \ltimes I(0,-1)$. To do this, we define the following subspaces of $\W$ and $I(a,b)$.

\begin{defn}\label{def:O_subalgebra}
    The subalgebra $\OO \subseteq \W$ is spanned by $\{\OO_n \mid n \geq 1\}$, where $\OO_n \coloneqq L_n - L_{-n}$. Its Lie bracket is given by
    \begin{equation}\label{eq:O_subalgebra_bracket}
        [\OO_n, \OO_m] = (n - m)\OO_{n + m} - (n + m)\OO_{n - m}.
    \end{equation}
    Let $I(0,b) = \spn\{I_n \mid n \in \ZZ\}$ be the tensor density modules from Definition \ref{def:tensor density}. Let $\Pb_n \coloneqq I_n + I_{-n}$ and construct $\PP_b \coloneqq \spn\{\Pb_n \mid n \in \NN\}$. Likewise, let $\Ptildeb_n \coloneqq I_n - I_{-n}$ and construct $\widetilde{\PP}_b \coloneqq \spn\{\Ptildeb_n \mid n \in \NN^+\}$. 
    The vector spaces $\PP_b$ and $\PPtilde_b$ are $\OO$-modules under the following actions of $\OO$ inherited from the action of $\W$ on $I(0,b)$:
    \begin{align}
        &\OO_n \cdot P_m^{(b)} = -(bn + m) P_{n + m}^{(b)} - (bn - m) P_{n-m}^{(b)} \quad (n \geq 1, m \geq 0) \label{eq:O action on Pb} \\
        &\OO_n \cdot \Ptilde_m^{(b)} = -(bn + m) \Ptilde_{n + m}^{(b)} - (bn - m) \Ptilde_{n-m}^{(b)} \quad (n,m \geq 1). \label{eq:O action on Ptildeb}
    \end{align}
    For $b = -1$, we write $\PP\coloneqq\PP_{-1}$. Note that $\PPtilde_{-1}$ is the adjoint module of $\OO$.
\end{defn}

It is easy to see that $I(0,b)$ decomposes as a direct sum of $\PP_b$ and $\PPtilde_b$ as an $\OO$-module.

\begin{lemma}\label{lem:tensor density direct sum}
    As $\OO$-modules, $I(0,b)\cong \PP_b \oplus \PPtilde_b$.
\end{lemma}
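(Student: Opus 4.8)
The plan is to exhibit an explicit isomorphism of $\OO$-modules. The module $I(0,b)$ has basis $\{I_n \mid n \in \ZZ\}$, and inside it we set $\Pb_n = I_n + I_{-n}$ for $n \geq 0$ and $\Ptildeb_n = I_n - I_{-n}$ for $n \geq 1$. First I would verify that $\{\Pb_n \mid n \geq 0\} \cup \{\Ptildeb_n \mid n \geq 1\}$ is again a basis of $I(0,b)$: indeed $I_0 = \Pb_0$, and for $n \geq 1$ we have $I_n = \tfrac12(\Pb_n + \Ptildeb_n)$ and $I_{-n} = \tfrac12(\Pb_n - \Ptildeb_n)$, so this is just an invertible change of basis (here we use $\operatorname{char}\kk = 0$). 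Hence as vector spaces $I(0,b) = \PP_b \oplus \PPtilde_b$.

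Next I would check that each summand is an $\OO$-submodule by computing the action of $\OO_n = L_n - L_{-n}$. Using $L_k \cdot I_m = -(bk + m)I_{k+m}$ from Definition \ref{def:tensor density}, one gets, for $n \geq 1$ and $m \geq 0$,
\begin{align*}
    \OO_n \cdot \Pb_m &= (L_n - L_{-n})\cdot(I_m + I_{-m}) \\
    &= -(bn+m)I_{n+m} - (bn-m)I_{n-m} + (-bn+m)I_{-n+m} + (-bn-m)I_{-n-m} \\
    &= -(bn+m)(I_{n+m} + I_{-(n+m)}) - (bn-m)(I_{n-m} + I_{-(n-m)}) \\
    &= -(bn+m)\Pb_{n+m} - (bn-m)\Pb_{n-m},
\end{align*}
which lands in $\PP_b$ (noting $\Pb_{n-m} = \Pb_{m-n}$ so the formula makes sense for all signs of $n-m$, and $\Pb_{n+m}$ always has nonnegative index). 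This is exactly the action \eqref{eq:O action on Pb} asserted in Definition \ref{def:O_subalgebra}. The identical computation with $I_m - I_{-m}$ in place of $I_m + I_{-m}$ gives $\OO_n \cdot \Ptildeb_m = -(bn+m)\Ptildeb_{n+m} - (bn-m)\Ptildeb_{n-m}$, reproducing \eqref{eq:O action on Ptildeb}; here one uses $\Ptildeb_{-k} = -\Ptildeb_k$ and $\Ptildeb_0 = 0$, which automatically kills the unwanted term when $n = m$. So $\PP_b$ and $\PPtilde_b$ are closed under the $\OO$-action.

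Combining the two observations, $I(0,b)$ is the internal direct sum of the $\OO$-submodules $\PP_b$ and $\PPtilde_b$, giving $I(0,b) \cong \PP_b \oplus \PPtilde_b$ as $\OO$-modules. There is no real obstacle here: the only points requiring a little care are that $\operatorname{char}\kk = 0$ is needed to invert the $2 \times 2$ change of basis, and that the index conventions $\Pb_{-k} = \Pb_k$, $\Ptildeb_{-k} = -\Ptildeb_k$, $\Ptildeb_0 = 0$ must be tracked consistently so that the boundary cases $m = 0$ and $m = n$ work out — in particular the $m = n$ case of the $\Ptildeb$ computation is what makes $\PPtilde_b$ a submodule rather than forcing a component back into $\PP_b$.
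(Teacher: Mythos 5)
Your proposal is correct, and in substance it proves the same thing the paper does, just packaged differently. You realise $I(0,b)$ as an \emph{internal} direct sum: you check that $\{\Pb_n \mid n \geq 0\} \cup \{\Ptildeb_n \mid n \geq 1\}$ is again a basis (minor slip: $I_0 = \tfrac{1}{2}\Pb_0$, not $\Pb_0$, which is harmless for the spanning argument) and then verify by direct computation that $\PP_b$ and $\PPtilde_b$ are each stable under every $\OO_n$. The paper instead defines the folding map $\rho \colon I(0,b) \to \PP_b$, $I_n \mapsto \Pb_n$ for all $n \in \ZZ$, obtains the short exact sequence $0 \to \PPtilde_b \to I(0,b) \to \PP_b \to 0$ of $\OO$-modules, and splits it using the inclusion $\PP_b \hookrightarrow I(0,b)$ as a section (strictly speaking $\tfrac{1}{2}$ times the inclusion, since $\rho(\Pb_n) = 2\Pb_n$). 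Both routes rest on the same bracket computations; yours makes the submodule property explicit, whereas the paper's absorbs it into the (unverified but easily checked) equivariance of $\rho$, so neither buys much over the other. One caveat: the formula you quote for the action on $\PPtilde_b$, namely $\OO_n \cdot \Ptildeb_m = -(bn + m)\Ptildeb_{n + m} - (bn - m)\Ptildeb_{n - m}$, which is \eqref{eq:O action on Ptildeb} as printed in Definition \ref{def:O_subalgebra}, has the wrong sign on the second term: with the convention $\Ptildeb_{-k} = -\Ptildeb_k$, $\Ptildeb_0 = 0$, the computation actually gives $\OO_n \cdot \Ptildeb_m = -(bn + m)\Ptildeb_{n + m} + (bn - m)\Ptildeb_{n - m}$; for $b = -1$ this reads $(n - m)\Ptildeb_{n + m} - (n + m)\Ptildeb_{n - m}$, as it must, since $\PPtilde_{-1}$ is the adjoint module of $\OO$. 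This does not affect your proof (or the lemma), because all that is needed is that the result lies in $\PPtilde_b$, which it does, but the sign should be corrected rather than asserted to come out of the ``identical computation.''
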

\begin{proof}
    There exists an $\OO$-module homomorphism $\rho\colon I(0,b)\to\PP_b$ given by $\rho(I_n) = P_n$ for all $n\in\ZZ$. We then have a short exact sequence of $\OO$-modules
    \begin{equation}
        \begin{tikzcd}
            0 \arrow[r] & \PPtilde_b \arrow[r] & I(0,b) \arrow[r, "\rho"] & \PP_b \arrow[r] & 0.
        \end{tikzcd}
    \end{equation}
    The inclusion $\PP_b \hookrightarrow I(0,b)$ is a section of the short exact sequence, and thus the sequence is split. It follows that $I(0,b)\cong \PP_b \oplus \PPtilde_b$ for all $b\in\kk$.  
\end{proof}

We therefore see that Definition \ref{def:BCCA} can be succinctly summarised as $\bcca \coloneqq \OO \ltimes \PP_{-1} = \OO \ltimes \PP$, analogous to how the centreless BMS$_3$ algebra is defined as $\W(0,-1) \coloneqq \W \ltimes I(0,-1)$. The \emph{higher spin (centreless) BCCAs} can be similarly defined as $\OO \ltimes \PP_b$ for $b \in \kk$. See the commentary around \cite[Equation (26)]{BCCFGP2024} for more details. We only focus on the case $b = -1$ in this paper, but our techniques could also be applied to these higher spin algebras.

\subsection{Whittaker modules}

Whittaker modules were first discovered for $\mathfrak{sl}_2$ by Arnal and Pinczon in \cite{ArnalPinczon}, following which Kostant defined them for finite-dimensional complex semisimple Lie algebras in \cite{Kostant}. This was then generalised to arbitrary Lie algebras with triangular decomposition. 
Today, Whittaker modules are defined more generally via the notion of ``Whittaker pairs'' \cite{BatraMazorchuk, MazorchukZhao} and  play an important role in the representation theory of various infinite-dimensional Lie algebras \cite{Konstantina, OndrusWiesner, ZhangTanLian, GuoLiu, Bin, LiuWuZhu, AdamovicLuZhao, LiuPeiXia, ChenJiang, Chen, DilxatGaoLiu, ChenGeLiWang}. 

In this paper, we construct Whittaker modules of $\OO$ and $\bcca$ based on the definition given in \cite[Section 3.2]{BatraMazorchuk} and \cite[Section 5.1]{MazorchukZhao}, thereby presenting explicit examples of their constructions for infinite-dimensional Lie algebras that are not integer-graded. 

\begin{defn}
    Let $\g$ be a Lie algebra.
    \begin{enumerate}
        \item Define $\g^0 \coloneq \g$ and $\g^{k+1}\coloneq [\g^k, \g]$ for all $k\in\NN$. We say that $\g$ is \emph{quasi-nilpotent} if $\bigcap_{k=0}^\infty \g^k = 0$.
        \item A $\g$-module $V$ is said to be \emph{locally nilpotent} if for every $v\in V$, there exists $s(v)\in \NN$ such that $x_1 x_2 \dots x_{s(v)}\cdot v = 0$ for all $x_1,\dots, x_{s(v)} \in \g$.
    \end{enumerate}
\end{defn}

Equipped with the above definition, we can now define Whittaker modules.

\begin{defn}\label{def:Whittaker module}
    Let $\g$ be a Lie algebra with a quasi-nilpotent subalgebra $\mathfrak{n} \subseteq \g$ such that $\g/\mathfrak{n}$ is a locally nilpotent $\mathfrak{n}$-module under the adjoint action, and let $V$ be a $\g$-module.
    \begin{enumerate}
        \item The $\g$-module $V$ is a \emph{Whittaker module} if $U(\mathfrak{n}) \cdot v$ is finite-dimensional for all $v \in V$ (in other words, $V$ is a \emph{locally finite} $\mathfrak{n}$-module).
        \item A Lie algebra homomorphism $\varphi \colon \mathfrak{n} \to \kk$ is called a \emph{Whittaker function} (so $\varphi([\mathfrak{n}, \mathfrak{n}]) = 0$). A vector $v \in V$ is called a \emph{Whittaker vector of type $\varphi$} if $x \cdot v = \varphi(x) v$ for all $x \in \mathfrak{n}$. The $\g$-module $V$ is called a \emph{Whittaker module of type $\varphi$} if it is generated by a Whittaker vector of type $\varphi$.
        \item Let $\kk_{\varphi} \coloneq \kk 1_\varphi$ be the one-dimensional $\mathfrak{n}$-module defined by $\varphi$, in other words, $x \cdot 1_\varphi = \varphi(x)1_\varphi$ for all $x \in \mathfrak{n}$. Then the induced module
        $$M_\varphi \coloneq \Ind_\mathfrak{n}^\g \kk_\varphi$$ 
        is the \emph{universal Whittaker module of type $\varphi$}.
    \end{enumerate}
\end{defn}

We proceed to briefly show how Definition \ref{def:Whittaker module} can be readily applied to filtered Lie algebras.

\begin{defn} \label{def:filtered Lie algebra}
    A \emph{filtered Lie algebra} is a Lie algebra $\g$ with a decreasing sequence of subalgebras of $\g$
    $$\dots \supseteq \g_{-1} \supseteq \g_0 \supseteq \g_1 \supseteq \g_2 \supseteq \cdots$$
    such that $[\g_i, \g_j] \subseteq \g_{i + j}$. We further demand that the following two conditions are satisfied:
    \begin{enumerate}
        \item The filtration is \emph{weakly convergent}. That is, $\bigcap_{k = 0}^\infty \g_k = 0$.\label{item:weakly convergent}
        \item The filtration is \emph{bounded from below}. That is, there exists $N \in \ZZ$ such that $\g_{k} = \g$ for all $k < N$.\label{item:bounded from below}
    \end{enumerate}
\end{defn}

Using this definition, it is easy to show that filtered Lie algebras always have Whittaker modules.

\begin{lemma}\label{lem:filtration leads to Whittaker}
    Let $\g$ be a filtered Lie algebra as per Definition \ref{def:filtered Lie algebra}. If $n \geq 1$, then $\g_n$ is a quasi-nilpotent Lie algebra, and $\g/\g_n$ is a locally nilpotent $\g_n$-module.
    
    In particular, one can always construct Whittaker modules for $\g$ by choosing $\mathfrak{n} \coloneqq \g_n$ in Definition \ref{def:Whittaker module} for any $n \geq 1$.
\end{lemma}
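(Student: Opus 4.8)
The plan is to verify the three defining conditions of Definition \ref{def:Whittaker module} directly for the choice $\mathfrak{n} = \g_n$, exploiting the filtration hypothesis. First I would show that $\g_n$ is quasi-nilpotent. The key observation is that, by the defining property $[\g_i,\g_j] \subseteq \g_{i+j}$, an easy induction on $k$ gives $(\g_n)^k \subseteq \g_{(k+1)n}$ for all $k \geq 0$: indeed $(\g_n)^0 = \g_n = \g_{(0+1)n}$, and if $(\g_n)^k \subseteq \g_{(k+1)n}$ then $(\g_n)^{k+1} = [(\g_n)^k, \g_n] \subseteq [\g_{(k+1)n}, \g_n] \subseteq \g_{(k+2)n}$. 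Since $n \geq 1$, the indices $(k+1)n$ range over an unbounded increasing sequence, so $\bigcap_{k \geq 0} (\g_n)^k \subseteq \bigcap_{m \geq 0} \g_m = 0$ by the weak convergence condition \eqref{item:weakly convergent}. Hence $\g_n$ is quasi-nilpotent.

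Next I would verify that $\g/\g_n$ is a locally nilpotent $\g_n$-module under the adjoint action. Take any coset $\bar{x} = x + \g_n \in \g/\g_n$. Because the filtration is bounded from below by condition \eqref{item:bounded from below}, there is $N \in \ZZ$ with $\g_m = \g$ for $m < N$; in particular $x \in \g = \g_N$ if $N \le 0$, but more usefully every element of $\g$ lies in $\g_N$ (taking $N$ to be this lower bound, so $\g = \g_N$). Then for any $y_1, \dots, y_s \in \g_n$ we have, by repeated application of $[\g_i, \g_j] \subseteq \g_{i+j}$, that $[y_s, [\dots, [y_1, x] \dots]] \in \g_{N + sn}$. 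Choosing $s$ large enough that $N + sn \geq n$ — for instance any $s$ with $sn \geq n - N$, which exists since $n \geq 1$ — forces this iterated bracket into $\g_n$, i.e. it vanishes in $\g/\g_n$. This $s$ can be chosen uniformly (it does not depend on $x$, only on $N$ and $n$), which in fact gives local nilpotence; so the required condition holds.

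With these two facts in hand, Lemma \ref{lem:filtration leads to Whittaker} follows, and the final clause is immediate: the hypotheses of Definition \ref{def:Whittaker module} on the pair $(\g, \g_n)$ are exactly ``$\g_n$ quasi-nilpotent and $\g/\g_n$ locally nilpotent as a $\g_n$-module,'' so one may form Whittaker modules, Whittaker functions, and the universal Whittaker module $M_\varphi = \Ind_{\g_n}^{\g} \kk_\varphi$ for any Whittaker function $\varphi \colon \g_n \to \kk$. I do not expect any serious obstacle here; the only mildly delicate point is bookkeeping the index arithmetic so that the bounded-below constant $N$ is tracked correctly through the iterated brackets, and making sure the case $N > 0$ (so that $\g \ne \g_N$ a priori) is handled — but this is harmless since one can always replace $N$ by $\min(N, n)$, or simply note that $\g = \g_M$ for all sufficiently negative $M$ and run the estimate with such an $M$. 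I would also remark in passing that the requirement $n \geq 1$ is genuinely used: for $n = 0$ the chain $(\g_0)^k \subseteq \g_0$ need not shrink, so quasi-nilpotence can fail.
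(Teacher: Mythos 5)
Your proposal is correct and follows exactly the route the paper intends: its proof is the one-line instruction ``use conditions \eqref{item:weakly convergent} and \eqref{item:bounded from below} of Definition \ref{def:filtered Lie algebra}'', and your argument (the containment $(\g_n)^k \subseteq \g_{(k+1)n}$ plus the uniform bound on iterated brackets via the lower bound $N$) is precisely the expansion of that instruction. The minor index bookkeeping you flag ($\g = \g_M$ only for $M < N$) is handled correctly by your remark, so there is nothing to add.
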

\begin{proof}
    Use conditions \eqref{item:weakly convergent} and \eqref{item:bounded from below} from Definition \ref{def:filtered Lie algebra}.
\end{proof} 

We introduce a related but slightly different class of modules known as \emph{quasi-Whittaker modules} that were recently studied in \cite{ChengGaoLiuZhaoZhao}.

\begin{defn}\label{def:quasi-Whittaker modules}
    Let $\h$ be a non-perfect ideal of a non-semisimple Lie algebra $\g$, let $\varphi \colon \h \to \kk$ be a Lie algebra homomorphism, and let $V$ be a $\g$-module.
    \begin{enumerate}
        \item A vector $v \in V$ is called a \emph{quasi-Whittaker vector of type $\varphi$} if $h\cdot v =  \varphi(h) v$ for all $h\in\h$. The $\g$-module $V$ is called a \emph{quasi-Whittaker module of type $\varphi$} if it is generated by a Whittaker vector of type $\varphi$.
        \item Let $\kk_{\varphi} \coloneq \kk 1_\varphi$ be the one-dimensional $\mathfrak{h}$-module defined by $\varphi$. Then the induced module
        $$W_\varphi \coloneq \Ind_\mathfrak{h}^\g \kk_\varphi$$ is the \emph{universal quasi-Whittaker module of type $\varphi$}.
    \end{enumerate}
\end{defn}

\begin{remark}
    Definition \ref{def:quasi-Whittaker modules} applies to $\mathfrak{h}$, which need not be quasi-nilpotent, as opposed to $\mathfrak{n}$ in Definition \ref{def:Whittaker module}. However, it requires that $\mathfrak{h}$ is an ideal of $\g$, while $\mathfrak{n}$ need not be.
\end{remark}

Next, we define the notion of the \emph{Whittaker annihilator} corresponding to a Whittaker function, a key concept introduced in \cite{ChengGaoLiuZhaoZhao}.

\begin{defn} \label{def:Whittaker annihilator}
    Let $\h$ be a non-perfect ideal of a non-semisimple Lie algebra $\g$, and let $\varphi \colon \h \to \kk$ be a Lie algebra homomorphism. The \emph{Whittaker annihilator of $\varphi$ over $\g$} is defined to be
    $$\g^\varphi \coloneqq \{x \in \g \mid \varphi([x,y]) = 0 \text{ for all } y \in \h\}.$$
\end{defn}

We conclude this section by presenting two key results from \cite{ChengGaoLiuZhaoZhao} which emphasise the importance of the Whittaker annihilator.

\begin{theorem}[{\cite[Theorem 3.3 and Corollary 3.12]{ChengGaoLiuZhaoZhao}}]\label{thm:Whittaker annihilator}
    Let $\g$ be non-semisimple Lie algebra, $\h$ a non-perfect ideal of $\g$, and $\varphi \colon \h \to \kk$ a Lie algebra homomorphism. Then the following hold.
    \begin{enumerate}
        \item The universal quasi-Whittaker module $W_\varphi = \Ind_\h^\g \kk_\varphi$ is irreducible if and only if $\g^\varphi = \h$.
        \item If $\dim(\g^\varphi/\h) = 1$, choose $y \in \g^\varphi \setminus \h$, so that $\g^\varphi = \h \oplus \kk y$. Then all maximal submodules of $W_\varphi$ are of the form
        $$J_\xi \coloneqq U(\g)(y - \xi) \cdot 1_\varphi$$
        for $\xi \in \kk$.
    \end{enumerate}
\end{theorem}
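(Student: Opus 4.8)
The plan is to make the universal quasi-Whittaker module explicit via PBW and then run a ``leading term'' argument with respect to a degree filtration. Fix a vector-space complement $\mathfrak{m}$ of $\h$ in $\g$, so $\g = \mathfrak{m}\oplus\h$; by PBW the assignment $u\mapsto u\cdot 1_\varphi$ identifies $U(\mathfrak{m})$ with $W_\varphi$ as vector spaces, and we transport to $W_\varphi$ the filtration $F_d$ by $\mathfrak{m}$-degree. The crucial structural observation, and the only place the ideal hypothesis is used, is that the operator $h-\varphi(h)$ \emph{strictly lowers} this filtration: $(h-\varphi(h))\cdot F_d\subseteq F_{d-1}$ for every $h\in\h$, and in fact $(h-\varphi(h))^{d+1}$ kills $F_d$. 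This follows from a short induction on $d$: moving $h$ to the right past a length-$d$ word in $\mathfrak{m}$ produces $\varphi(h)$ times the same word (which cancels) plus iterated commutators $[h,-]$, each of which lies in $\h$ because $\h$ is an ideal, and hence, once pushed all the way onto $1_\varphi$, contributes only words of length $<d$. One also records that $\g^\varphi$ is a Lie subalgebra of $\g$ containing $\h$ (an easy Jacobi-identity check) and that $U(\g^\varphi)\cdot 1_\varphi$ consists of Whittaker vectors of type $\varphi$ (using $\varphi([h,x])=0$ for $x\in\g^\varphi$, $h\in\h$).

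For part (1), I would first show that every nonzero submodule $N\subseteq W_\varphi$ contains a nonzero Whittaker vector: pick $0\neq w\in N$ of minimal filtration degree $d$; then $(h-\varphi(h))w\in N\cap F_{d-1}$, so minimality of $d$ forces $(h-\varphi(h))w=0$ for all $h\in\h$. Next I would identify the Whittaker space exactly as $U(\g^\varphi)\cdot 1_\varphi$: if $w$ is Whittaker of $\mathfrak{m}$-degree $d\geq 1$ with leading symbol $\bar w\in S^d(\mathfrak{m})$, then comparing top-degree parts in $(h-\varphi(h))w=0$ gives $\partial_{\ell_h}\bar w=0$, where $\ell_h\in\mathfrak{m}^*$ is $m\mapsto\varphi([h,m])$ and $\partial_{\ell_h}$ is the associated constant-coefficient derivation of $S(\mathfrak{m})$; restricting to the finitely many coordinates occurring in $\bar w$ and choosing a coordinate system among the $\ell_h$ shows $\bar w$ only involves ``$\g^\varphi$-directions'', and iterating downward in degree gives $w\in U(\g^\varphi)1_\varphi$. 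If $\g^\varphi=\h$ this space is $\kk 1_\varphi$, so any nonzero submodule contains $1_\varphi$ and hence equals $W_\varphi$: irreducibility. If $\g^\varphi\supsetneq\h$, choose $y\in\g^\varphi\setminus\h$ and put $\mathfrak{p}:=\h+\kk y$, a subalgebra on which $\varphi$ extends to a character $\chi$ with $\chi(y)=0$ (here $\varphi([y,\h])=0$). By transitivity of induction $W_\varphi\cong\Ind_\mathfrak{p}^\g(\Ind_\h^\mathfrak{p}\kk_\varphi)$ with $\Ind_\h^\mathfrak{p}\kk_\varphi\cong\kk[y]\cdot 1_\varphi$ ($\h$ acting by $\varphi$, $y$ by multiplication, as each $y^k1_\varphi$ is Whittaker); applying the exact functor $\Ind_\mathfrak{p}^\g$ to $\kk[y]1_\varphi\twoheadrightarrow\kk_\chi$ identifies $W_\varphi/U(\g)y1_\varphi\cong\Ind_\mathfrak{p}^\g\kk_\chi\neq 0$, so $U(\g)y1_\varphi$ is a proper nonzero submodule and $W_\varphi$ is reducible.

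For part (2), with $\g^\varphi=\h\oplus\kk y$ the same symbol argument shows the Whittaker space of $W_\varphi$ is exactly $\kk[y]\cdot 1_\varphi$, and every nonzero submodule meets it. Given $\xi\in\kk$ the computation above (with the character $\chi_\xi(y)=\xi$) identifies $W_\varphi/J_\xi\cong\Ind_\mathfrak{p}^\g\kk_{\chi_\xi}$; I would show this is irreducible by rerunning the minimal-degree argument in the PBW realisation with respect to a complement $\mathfrak{q}$ of $\mathfrak{p}$ in $\g$: $h-\varphi(h)$ lowers $\mathfrak{q}$-degree since $[\h,\mathfrak{q}]\subseteq\h\subseteq\mathfrak{p}$, and the functionals $q\mapsto\varphi([h,q])$ now separate points of $\mathfrak{q}$ because $\mathfrak{q}\cap\g^\varphi=0$, so the only Whittaker vectors are scalars and every nonzero submodule is everything. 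Hence each $J_\xi$ is maximal. Conversely, for a maximal submodule $J$ the quotient $W_\varphi/J$ is irreducible and $\h$ acts on it locally finitely with sole generalized eigenvalue $\varphi$; analysing the $\kk[y]$-module of genuine Whittaker vectors of $W_\varphi/J$ shows $y$ acts on the cyclic Whittaker vector by a scalar $\xi$, whence $(y-\xi)1_\varphi\in J$, so $J\supseteq J_\xi$ and maximality forces $J=J_\xi$.

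The step I expect to be the real obstacle is the precise determination of the space of Whittaker vectors when $\mathfrak{m}$ is infinite-dimensional: the constant-coefficient-derivation argument must be carried out carefully on the finitely many coordinates occurring in a given vector, and the downward induction on degree requires controlling the lower-order corrections to $h-\varphi(h)$. The second delicate point is the converse in part (2): over a field that is not algebraically closed one should either assume $\kk=\overline{\kk}$ or interpret the $\xi$ as ranging over the closed points of $\operatorname{Spec}\kk[y]$ with residue field $\kk$, since in general maximal submodules correspond to maximal ideals of $\kk[y]$ and only the linear ones yield the stated form; pinning down that the irreducible quotients have one-dimensional Whittaker space (so that $y$ acts by an honest scalar there) is where the bulk of the work concentrates.
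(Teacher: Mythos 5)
Note first that the paper does not prove this statement at all: it is imported verbatim from \cite{ChengGaoLiuZhaoZhao}, so there is no internal proof to compare against and your argument has to stand on its own. Most of it does. The filtration machinery is correct: because $\h$ is an ideal, $(h-\varphi(h))$ strictly lowers $\mathfrak{m}$-degree, its symbol-level action on $S^d(\mathfrak{m})$ is contraction against $\ell_h(m)=\varphi([h,m])$, and since the common kernel of the $\ell_h$ is exactly $\mathfrak{m}\cap\g^\varphi$ (and the $\ell_h$ restrict to functionals separating the finitely many complementary coordinates occurring in a given symbol), the space of quasi-Whittaker vectors of $W_\varphi$ is exactly $U(\g^\varphi)\cdot 1_\varphi$ and every nonzero submodule meets it. This gives part (1) in both directions, and your rerun of the same argument with a complement $\mathfrak{q}$ of $\mathfrak{p}=\g^\varphi$ (where $\mathfrak{q}\cap\g^\varphi=0$, so the only Whittaker vectors of $\Ind_{\mathfrak{p}}^{\g}\kk_{\chi_\xi}$ are scalars) correctly proves that each $J_\xi$ is maximal --- which is the only direction of (2) that the present paper actually uses.

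The genuine gap is the converse of (2). The sentence ``analysing the $\kk[y]$-module of genuine Whittaker vectors of $W_\varphi/J$ shows $y$ acts on the cyclic Whittaker vector by a scalar $\xi$'' is asserted, not derived, and as stated it is essentially the conclusion itself: writing $J\cap\kk[y]1_\varphi=(q(y))\cdot 1_\varphi$ (nonzero and proper since $J$ is nonzero, proper, and meets the Whittaker space), the claim is precisely that $\deg q=1$. It can be closed with the tools you already have, over $\kk=\overline{\kk}$: let $\xi$ be a root of $q$ and set $w:=(q(y)/(y-\xi))\cdot\overline{1_\varphi}\in W_\varphi/J$; then $w\neq 0$, $h\cdot w=\varphi(h)w$ and $y\cdot w=\xi w$, so irreducibility of $W_\varphi/J$ yields a surjection $W_\varphi/J_\xi\cong\Ind_{\g^\varphi}^{\g}\kk_{\chi_\xi}\twoheadrightarrow W_\varphi/J$, which is an isomorphism because you have shown $W_\varphi/J_\xi$ is irreducible. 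Since the Whittaker vectors of $W_\varphi/J_\xi$ are only the scalars, the composite $W_\varphi\to W_\varphi/J\cong W_\varphi/J_\xi$ sends $1_\varphi$ to a nonzero multiple of the generator, hence kills $(y-\xi)1_\varphi$; as its kernel is $J$, this gives $J_\xi\subseteq J\subsetneq W_\varphi$, and maximality of $J_\xi$ forces $J=J_\xi$. Your caveat about the ground field is not pedantry but a genuine defect of the statement as quoted over the paper's arbitrary characteristic-zero $\kk$: already for $\g$ abelian of dimension $2$, $\h$ a line and any $\varphi$, one has $W_\varphi\cong\kk[y]$ and $U(\g)p(y)\cdot 1_\varphi$ is maximal for every irreducible $p$, e.g.\ $p=y^2+1$ over $\RR$, which is not of the form $J_\xi$; so part (2) should be read over $\CC$ (or with $\xi$ replaced by irreducible polynomials in $y$), while the direction actually invoked in the paper holds over any field, exactly as your argument shows.
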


\section{Representations of \texorpdfstring{$\OO$}{O} and \texorpdfstring{$\widehat{\bcca}$}{b} by restriction}\label{sec:restriction of modules}

Recall that in its original form presented in \cite{BCCFGP2024}, the BCCA emerged as the subalgebra 
$$\widehat{\bcca}\coloneqq\spn\{\OO_n\coloneqq L_n - L_{-n}, P_m \coloneqq M_m + M_{-m}, C_M \mid n \geq 1,\ m \geq 0\}$$ 
of the BMS$_3$ algebra. Hence, the most natural course of action is to study the restrictions of representations of $\Vir$ and $\bms$ to $\OO$ and $\widehat{\bcca}$ respectively. In doing so, we will also motivate the need for the alternative, intrinsic approaches of constructing representations of $\bcca$ presented in this paper. Since the representation theory of $\Vir$ and $\bms$ is fairly extensive, we restrict our attention to the most popular choices of modules over these algebras, namely the Verma modules (over both) and the so-called massive modules (over $\bms$).

\subsection{\texorpdfstring{$\OO$}{O}-modules via restriction of Verma modules}

We first recall the definition of a Verma module of the Virasoro algebra. Recalling Definition \ref{def:Witt}, the Virasoro algebra admits a triangular decomposition 
$$\Vir = \Vir_+ \oplus \Vir_0 \oplus \Vir_{-}, \text{ where } \Vir_{\pm} = \bigoplus_{n \in \ZZ_\pm} \kk L_{n} \text{ and } \Vir_0 = \kk L_0 \oplus \kk C.$$

\begin{defn}\label{def:Vir_Verma_module}
    Let $h,c \in \kk$. Define the one-dimensional $\Vir_0\oplus\Vir_+$-module $\kk_{h,c} \coloneqq \kk\ket{h,c}$, where
    $$L_0\ket{h,c} = h\ket{h,c}, \quad C\ket{h,c} = c\ket{h,c}, \quad \text{and } \Vir_+\ket{h,c}=0.$$
    The \emph{Verma module of the Virasoro algebra} $V(h,c)$ is the $\Vir$-module induced from $\kk_{h,c}$. That is, 
    $$V(h,c) \coloneqq \Ind_{\Vir_0\oplus\Vir_+}^{\Vir} \kk_{h,c}.$$ 
\end{defn}

\begin{remark} \label{rem:V(h,c) as quotient ideal}
    For all $h,c \in \kk$, the module $V(h,c)$ admits a Poincar\'{e}--Birkhoff--Witt (PBW) basis of monomials 
    \begin{equation} \label{eq:PBW_monomial_Vir}
        L_{-n_k} \dots L_{-n_1} \ket{h,c},
    \end{equation}
    where $n_k \geq n_{k-1} \geq  \dots \geq n_1\geq 1$. Therefore, $V(h,c)$ is cyclically generated by the action of $\Vir_-$ on $\ket{h,c}$. As left $U(\Vir)$-modules,
    $$V(h,c) \cong U(\Vir) / \big(U(\Vir) \cdot (L_{n>0}, L_0 - h, C - c)\big).$$
\end{remark}

The following proposition describes $V(h,c)$ as an $\OO$-module.

\begin{proposition}\label{prop:V(h,c)_free_rank1}
    The Verma module $V(h,c)$ is a free $U(\OO)$-module of rank 1 for all $h,c \in \kk$.
\end{proposition}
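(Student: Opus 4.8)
The plan is to show that $V(h,c)$, viewed as an $\OO$-module via the inclusion $\OO \hookrightarrow \Vir$, is free of rank $1$ on the highest-weight vector $\ket{h,c}$; that is, the natural map $U(\OO) \to V(h,c)$, $x \mapsto x \cdot \ket{h,c}$, is a bijection. I would first recall from Remark \ref{rem:V(h,c) as quotient ideal} that $V(h,c)$ has a PBW basis of monomials $L_{-n_k} \cdots L_{-n_1}\ket{h,c}$ with $n_k \geq \cdots \geq n_1 \geq 1$, and that $\OO$ has a PBW basis built from the $\OO_n = L_n - L_{-n}$ ($n \geq 1$). So both $U(\OO)$ and $V(h,c)$ are filtered (or graded, using the $L_0$-eigenvalue / ``energy'' $\sum n_i$) vector spaces, and on each finite-dimensional graded piece the claim becomes a statement about two spaces of equal dimension and a linear map between them.

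The key step is a triangularity/leading-term argument. Order monomials in the $\OO_n$'s, say $\OO_{n_k}\cdots\OO_{n_1}$ with $n_k \geq \cdots \geq n_1 \geq 1$, and expand $\OO_n = L_n - L_{-n}$. When such a monomial acts on $\ket{h,c}$, every factor $L_n$ with $n > 0$ either annihilates $\ket{h,c}$ (if it is the rightmost surviving operator) or must be commuted past the other factors; crucially, $[L_n, L_m]$ lowers the number of $L$-factors by one. Thus, modulo terms of strictly lower length, $\OO_{n_k}\cdots\OO_{n_1}\ket{h,c}$ equals $(-1)^k L_{-n_k}\cdots L_{-n_1}\ket{h,c}$ plus lower-order corrections (the ``$+L_n$'' pieces and the bracket terms all reduce length or are killed). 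Hence in the associated graded with respect to the length filtration, the image of the PBW basis of $U(\OO)$ maps to $\pm$ the PBW basis of $V(h,c)$; since a filtered map that is an isomorphism on associated graded is itself an isomorphism, we conclude that $U(\OO) \to V(h,c)$ is bijective, i.e. $V(h,c) \cong U(\OO)$ as $\OO$-modules.

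I would organise this cleanly by introducing on $V(h,c)$ the ``length'' filtration $V^{(\leq \ell)}$ spanned by monomials $L_{-n_k}\cdots L_{-n_1}\ket{h,c}$ with $k \leq \ell$, and the analogous filtration on $U(\OO)$ by the number of $\OO$-factors; one checks the action respects these filtrations, computes the induced map on $\gr$, and verifies it sends PBW basis to PBW basis up to sign — this last verification is the routine part. Restricting further to a fixed $L_0$-eigenspace makes each graded piece finite-dimensional, so injectivity and surjectivity can even be deduced from one another by dimension count once the leading-term computation is in hand.

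\textbf{Main obstacle.} The delicate point is making the leading-term bookkeeping rigorous: when $\OO_{n_k}\cdots\OO_{n_1}$ is expanded, one gets $2^k$ monomials in the $L_{\pm n_i}$, and one must argue that \emph{only} the all-negative term $(-1)^kL_{-n_k}\cdots L_{-n_1}$ contributes to the top of the length filtration after acting on $\ket{h,c}$, while every term containing at least one positive index either vanishes after moving it to the right (using $L_{m>0}\ket{h,c}=0$) or collapses in length via a commutator. Setting up the two compatible filtrations so that this is a clean statement about $\gr$, rather than an induction buried in commutator identities, is where the real care is needed; the rest is PBW bookkeeping.
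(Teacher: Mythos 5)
Your argument is correct, but it takes a different route from the paper. The paper's proof is a two-line application of the PBW theorem to a \emph{modified basis of the whole Virasoro algebra}: since $\{\OO_n, L_m, C \mid n \geq 1,\ m \geq 0\}$ is a basis of $\Vir$, the monomials $\OO_{n_k}\cdots\OO_{n_1}L_{m_\ell}\cdots L_{m_1}C^r$ form a PBW basis of $U(\Vir)$, and combining this with the description of $V(h,c)$ as $U(\Vir)$ modulo the left ideal generated by $L_{n>0}$, $L_0 - h$, $C - c$ immediately exhibits $\{\OO_{n_k}\cdots\OO_{n_1}\ket{h,c}\}$ as a basis with no relations among the $\OO_n$'s; the same trick gives Proposition \ref{prop:BMS Verma is almost free over b} with no additional work. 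Your proof instead keeps the standard PBW basis $L_{-n_k}\cdots L_{-n_1}\ket{h,c}$ and proves by hand the triangularity $\OO_{n_k}\cdots\OO_{n_1}\ket{h,c} = (-1)^k L_{-n_k}\cdots L_{-n_1}\ket{h,c} + (\text{terms of length} \leq k-1)$, then concludes via the length filtrations and the associated graded map; this is sound (each positive-index factor either dies on $\ket{h,c}$ or collapses length via a commutator, and on $\gr$ the map sends PBW basis bijectively to $\pm$PBW basis), but it amounts to reproving by explicit bookkeeping exactly what the reordered-PBW-basis argument yields for free. One small caveat: your optional remark about restricting to $L_0$-eigenspaces and counting dimensions does not quite parse as stated, since the vectors $\OO_{n_k}\cdots\OO_{n_1}\ket{h,c}$ are not $L_0$-eigenvectors ($\OO_n$ mixes degrees $\pm n$); fortunately that aside is not needed, because your graded map is already a bijection on the indicated bases without any finite-dimensionality input.
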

\begin{proof}
    Consider the basis $\{\OO_n, L_m, C \mid n \geq 1, m \geq 0\}$ of $\Vir$. This then induces a different PBW basis on $U(\Vir)$:
    $$\OO_{n_k} \dots \OO_{n_1} L_{m_\ell} \dots L_{m_1} C^r,$$
    where $1 \leq n_1 \leq \dots \leq n_k$, $0 \leq m_1 \leq \dots \leq m_\ell$, and $r \in \NN$. In this PBW basis, the description of $V(h,c)$ given in Remark \ref{rem:V(h,c) as quotient ideal} as a quotient of $U(\Vir)$ by a left ideal readily translates to the fact that
    $$\{\OO_{n_k} \dots \OO_{n_1} \ket{h,c} \mid 1 \leq n_1 \leq \dots \leq n_k\}$$
    is a basis for $V(h,c)$ with no relations on the $\OO_n$s. This proves the proposition.
\end{proof}

Free modules are not the most interesting from a representation theoretic point of view as they are not irreducible. This motivates our approach in the later sections, where we change basis and construct $\OO$-modules as modules of a subalgebra of $\WW_1$ that is isomorphic to $\OO$. This will give us explicit examples of classes of irreducible $\OO$-modules.

\subsection{\texorpdfstring{$\widehat{\bcca}$}{b}-modules via restriction of Verma modules}

Just like the Virasoro algebra, the BMS$_3$ algebra is also a graded Lie algebra: we have $\bms = \bigoplus_{n\in\ZZ} \bms_n$, where $\bms_{n} \coloneqq \kk L_n \oplus \kk M_n$ for $n \neq 0$, and $\bms_0\coloneqq \kk L_0 \oplus \kk M_0 \oplus \kk C_L \oplus \kk C_M$. 
Its triangular decomposition is given by 
$\bms = \bms_+ \oplus \bms_0 \oplus \bms_{-}$, where
$\bms_{\pm} = \bigoplus_{n \in \ZZ_\pm} \kk L_n \oplus \kk M_n$.
\begin{defn}\label{def:BMS_Verma_module}
    Let $h_L, h_M, c_L, c_M \in \kk$. Define the one-dimensional $\bms_0 \oplus \bms_+$-module
    $$\kk_{h_L,h_M,c_L,c_M} \coloneqq \kk\genvecbmsVerma,$$
    where
    \begin{equation*}\label{eq:bms_Verma_module_gen_vec}
    \begin{alignedat}{2}
       &L_0\genvecbmsVerma = h_L\genvecbmsVerma,  \quad && M_0\genvecbmsVerma = h_M\genvecbmsVerma, \\
       &C_L\genvecbmsVerma = c_L\genvecbmsVerma,  \quad && C_M\genvecbmsVerma = c_M\genvecbmsVerma,
    \end{alignedat}
    \end{equation*}
    and $L_{n>0} \genvecbmsVerma = M_{n>0} \genvecbmsVerma = 0$.
    The \emph{Verma module of the BMS$_3$ algebra} $V(h_L,h_M,c_L,c_M)$ is the $\bms$-module induced from $\kk_{h_L,h_M,c_L,c_M}$. That is, 
    $$V(h_L,h_M,c_L,c_M) \coloneqq \Ind_{\bms_0\oplus\bms_+}^{\bms}\kk_{h_L,h_M,c_L,c_M}.$$ 
\end{defn}

The Verma module $V(h_L,h_M,c_L,c_M)$ admits a Poincar\'{e}--Birkhoff--Witt basis of monomials 
$$L_{-n_k} \dots L_{-n_1}M_{-m_\ell} \dots M_{-m_1}\genvecbmsVerma,$$
where $n_k \geq n_{k-1} \geq  \dots \geq n_1\geq 1$, $m_\ell \geq m_{\ell-1} \geq \dots m_1 \geq 1$.
$V(h_L,h_M,c_L,c_M)$ is therefore cyclically generated by the action of $\bms_-$ on $\genvecbmsVerma$. 
As left $U(\bms)$-modules,
$$V(h_L,h_M,c_L,c_M) \cong U(\bms) / \big(U(\bms) \cdot (L_{n>0},M_{n>0}, L_0-h_L, M_0-h_M, C_L-c_L, C_M-c_M)\big).$$
The following proposition describes $V(h_L,h_M,c_L,c_M)$ as a $\widehat{\bcca}$-module.

\begin{proposition}\label{prop:BMS Verma is almost free over b}
    As $\widehat{\bcca}$-modules, 
    $$V(h_L, h_M, c_L, c_M) \cong \frac{U(\widehat{\bcca})}{U(\widehat{\bcca})\cdot(P_0-2h_M, C_M - c_M)}.$$
\end{proposition}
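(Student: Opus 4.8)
The plan is to mimic the proof of Proposition \ref{prop:V(h,c)_free_rank1}: choose a PBW basis of $U(\bms)$ adapted to the direct-sum-like decomposition of $\bms$ into $\widehat{\bcca}$ and a complementary subspace, and then read off the $\widehat{\bcca}$-module structure of the Verma module from the quotient presentation given just before the statement. First I would identify, inside the BMS$_3$ algebra, a vector space complement to $\widehat{\bcca}$. Since $\OO_n = L_n - L_{-n}$ ($n\geq 1$) and $P_m = M_m + M_{-m}$ ($m\geq 0$) span $\widehat\bcca$ together with $C_M$, a natural complement is spanned by $\{L_{-n}\ (n\geq 1),\ M_{-m}\ (m\geq 1),\ L_0,\ C_L\}$: note that $L_n$ is recovered from $\OO_n$ and $L_{-n}$, $M_n$ from $P_n$ and $M_{-n}$, $M_0$ from $P_0$ alone, and $C_M$ lies in $\widehat\bcca$. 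Thus as a vector space $\bms = \widehat\bcca \oplus \mathfrak{c}$ where $\mathfrak{c} \coloneqq \spn\{L_{-n},\, M_{-m},\, L_0,\, C_L \mid n\geq 1,\ m\geq 1\}$.

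Next I would invoke the PBW theorem for this ordered basis of $\bms$: $U(\bms)$ has a basis consisting of monomials $u\cdot w$ where $w$ is an ordered monomial in the elements $\{\OO_n, P_m, C_M\}$ of $\widehat\bcca$ and $u$ is an ordered monomial in the elements of $\mathfrak{c}$ (ordering the basis so that all of $\mathfrak{c}$ precedes all of $\widehat\bcca$). Equivalently, $U(\bms) \cong U(\mathfrak{c}_{\mathrm{vec}})\otimes U(\widehat\bcca)$ as right $U(\widehat\bcca)$-modules, where $U(\mathfrak{c}_{\mathrm{vec}})$ denotes the span of ordered monomials in the chosen basis of the subspace $\mathfrak{c}$ (this is not a subalgebra, but PBW still gives a basis of this form). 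Then I would apply this to the presentation
$$V(h_L, h_M, c_L, c_M) \cong U(\bms)\big/\big(U(\bms)\cdot(L_{n>0},\, M_{n>0},\, L_0 - h_L,\, M_0 - h_M,\, C_L - c_L,\, C_M - c_M)\big).$$
The left ideal being quotiented out is generated by elements that, after the change of basis, either lie in $\mathfrak{c}$ up to scalars and elements of $\widehat\bcca$ (namely $L_n$ for $n\geq 1$ rewrites via $\OO_n + L_{-n}$, and $L_0 - h_L$, $C_L - c_L$ are directly in $\mathfrak{c}$), or encode the relations $M_n \equiv -M_{-n}$ and $M_0 \equiv h_M$ and $C_M \equiv c_M$. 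The point is that modulo this ideal, every generator of $\mathfrak{c}$ can be expressed in terms of $\widehat\bcca$-elements and scalars \emph{except} nothing forces a relation among the $\OO_n$ and $P_{m\geq 1}$; the only surviving constraints inside $\widehat\bcca$ itself are $P_0 - 2h_M$ (since $P_0 = M_0 + M_0 = 2M_0 \equiv 2h_M$) and $C_M - c_M$. Carefully pushing the $\mathfrak{c}$-generators to the right using the PBW straightening and the relations should collapse $V(h_L,h_M,c_L,c_M)$ onto $U(\widehat\bcca)/U(\widehat\bcca)\cdot(P_0 - 2h_M,\, C_M - c_M)$.

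The cleanest way to make this rigorous, and the step I expect to be the main obstacle, is to exhibit an explicit $\widehat\bcca$-module isomorphism rather than argue by ideal manipulation. Concretely: the images of the monomials $L_{-n_k}\cdots L_{-n_1} M_{-m_\ell}\cdots M_{-m_1}\genvecbmsVerma$ with all indices $\geq 1$ form a basis of $V(h_L,h_M,c_L,c_M)$; I would show that the map sending $\genvecbmsVerma \mapsto 1\otimes 1_{\psi}$ extends to a $\widehat\bcca$-module homomorphism onto $U(\widehat\bcca)/U(\widehat\bcca)(P_0-2h_M, C_M-c_M)$, then check it is bijective by comparing PBW-type bases on both sides — using $\OO_n\cdot v$ and $P_m\cdot v$ ($n\geq1$, $m\geq1$) together with $P_0 \equiv 2h_M$ and $C_M\equiv c_M$ to generate, and verifying the graded dimensions match. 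The subtlety is that $\widehat\bcca$ is \emph{not} graded, so "graded dimension" counting must be replaced by a direct PBW comparison: one shows the monomials $\OO_{n_k}\cdots\OO_{n_1}P_{m_\ell}\cdots P_{m_1}\cdot v$ (with $m_i\geq 1$) are linearly independent in $V(h_L,h_M,c_L,c_M)$ by relating them, via triangularity of the change of basis $L_{\pm n}\leftrightarrow \OO_n, L_{-n}$ and $M_{\pm n}\leftrightarrow P_n, M_{-n}$, to the standard PBW monomials, and that they span because every standard monomial reduces to this form modulo the defining relations. This triangularity bookkeeping — confirming that expressing $L_{-n}, M_{-n}$ back in terms of $\OO_n, L_n$ resp. $P_n, M_n$ and then killing the positive modes introduces no obstruction to linear independence — is the technical heart, and is entirely analogous to (but more involved than) the Virasoro computation in Proposition \ref{prop:V(h,c)_free_rank1}.
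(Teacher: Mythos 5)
Your final argument is correct, but it takes a more laborious route than the paper, and your initial setup puts the PBW factorization on the wrong side. The paper's proof (exactly as in Proposition \ref{prop:V(h,c)_free_rank1}) chooses the complement of $\widehat{\bcca}$ \emph{inside the inducing parabolic}: it uses the basis $\{\OO_n, P_m, C_M \mid n \geq 1,\ m \geq 0\} \cup \{L_m, M_{m'}, C_L \mid m \geq 0,\ m' \geq 1\}$ of $\bms$ and orders PBW monomials with the $\widehat{\bcca}$-part on the left. Then every generator of the defining left ideal is either a complement element up to a scalar shift ($L_{n>0}$, $M_{n>0}$, $L_0 - h_L$, $C_L - c_L$) or lies in $U(\widehat{\bcca})$ (namely $M_0 - h_M = \tfrac{1}{2}(P_0 - 2h_M)$ and $C_M - c_M$), so the quotient presentation of the Verma module translates immediately into the stated quotient of $U(\widehat{\bcca})$, with no triangularity bookkeeping. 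Your complement $\spn\{L_{-n}, M_{-m}, L_0, C_L\}$ forces exactly that bookkeeping, since $L_{n>0} = \OO_n + L_{-n}$ and $M_{n>0} = P_n - M_{-n}$ mix the two summands; moreover your factorization $U(\bms) \cong U(\mathfrak{c}_{\mathrm{vec}}) \otimes U(\widehat{\bcca})$ with the $\widehat{\bcca}$-part on the right is the one adapted to inducing \emph{from} $\widehat{\bcca}$, not to restricting a left module \emph{to} it, which is why you end up abandoning the ideal manipulation. Your fallback does work: the monomials $\OO_{n_k}\cdots\OO_{n_1}P_{m_\ell}\cdots P_{m_1}\genvecbmsVerma$ with $m_i \geq 1$ span (rewrite $L_{-n} = L_n - \OO_n$, $M_{-m} = P_m - M_m$ and induct on depth) and are linearly independent (the $L_0$-grading of the Verma module makes the change of basis triangular, with leading term $(-1)^k L_{-n_k}\cdots L_{-n_1}M_{-m_\ell}\cdots M_{-m_1}\genvecbmsVerma$). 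One small repair: define the isomorphism from $U(\widehat{\bcca})/U(\widehat{\bcca})(P_0 - 2h_M, C_M - c_M)$ \emph{into} the Verma module by $\bar u \mapsto u\genvecbmsVerma$, which is automatically well-defined and $\widehat{\bcca}$-equivariant; a module map out of the Verma module sending $\genvecbmsVerma$ to the class of $1$, as you propose, is well-defined only once one knows the annihilator of $\genvecbmsVerma$ in $U(\widehat{\bcca})$ is precisely that left ideal, which is essentially the statement being proved (your basis comparison does supply it, but the opposite direction avoids the issue altogether).
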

\begin{proof}
    Similar to the proof of Proposition \ref{prop:V(h,c)_free_rank1}, with the only relations given by the ideal by which we quotient coming from $P_0, C_M \in \widehat{\bcca}$. 
\end{proof}

Certainly, one could analyse the structure of the irreducible quotient of a reducible $\bms$-Verma module upon restriction to $\widehat{\bcca}$. However, as shown in \cite{Radobolja, JiangZhang, JiangPeiZhang, JiangLiuPeiZhao}, the structure of such irreducible quotients is rather complicated, so an explicit description of this $\widehat{\bcca}$-module would require careful analysis. That being said, we do not expect such $\widehat{\bcca}$-modules to be particularly interesting from a representation theoretic perspective, similarly to the free or ``almost-free'' modules appearing in Propositions \ref{prop:V(h,c)_free_rank1} and \ref{prop:BMS Verma is almost free over b}.

\subsection{\texorpdfstring{$\widehat{\bcca}$}{b}-modules via restriction of massive modules}

We consider a class of BMS$_3$ modules which we call massive modules\footnote{In the physics literature, the modules we introduce in this subsection are called \emph{induced modules}, and the term ``massive modules'' only refers to a subset of them. However, we use the term ``massive modules'' to refer to all such modules instead, as the term ``induced modules'' would be too vague in our context.} \cite{BarnichOblak, BarnichOblak2, CampoleoniGonzalezOblakRiegler}, defined as follows.
\begin{defn} \label{def:BMS_massive_module}
    Let $\h$ denote the subalgebra of $\bms$ spanned by $\{M_n \mid n\in\ZZ\}$ and $L_0$. In other words, $\h$ is the \emph{extension-by-derivation} of the abelian ideal $I(0,-1)$ of $\bms$ spanned by $\{M_n \mid n \in \ZZ\}$, where $\ad_{L_0}$ is the derivation by which it is extended. This is summarised by the short exact sequence
    \begin{equation*}
        \begin{tikzcd}
            0 \arrow[r] & I(0,-1) \arrow[r] & \h \arrow[r] & \kk L_0 \arrow[r] & 0.
        \end{tikzcd}
    \end{equation*}
    Let $\widehat{\h}$ be the subalgebra $\h \oplus \kk C_L \oplus \kk C_M$.
    Letting $\mathbf{M}, \mathbf{s}, c_L, c_M \in \kk$, consider the one-dimensional $\widehat{\h}$-module 
    $$\widetilde{\kk}_{\mathbf{M}, \mathbf{s}, c_L, c_M} = \kk\genvecbms,$$
    where
    \begin{equation*}\label{eq:bms_ind_module_gen_vec}
    \begin{alignedat}{2}
       &L_0\genvecbms = \mathbf{s}\genvecbms,  \quad && M_0\genvecbms = \mathbf{M}\genvecbms,  \\
       &C_L\genvecbms = c_L\genvecbms, \quad && C_M\genvecbms = c_M\genvecbms,
    \end{alignedat}
    \end{equation*}
    and $M_{n}\genvecbms = 0$ for $n \neq 0$. The $\bms$-modules $\widetilde{V}(\mathbf{M}, \mathbf{s}, c_L, c_M)$ induced from $\widetilde{\kk}_{\mathbf{M}, \mathbf{s},c_L,c_M}$ are called \emph{massive modules} \cite{BarnichOblak, BarnichOblak2, CampoleoniGonzalezOblakRiegler, BagchiBanerjeeChakraborttyDuttaParekh}. 
    That is,
    \begin{equation*}
        \widetilde{V}(\mathbf{M}, \mathbf{s}, c_L, c_M) \coloneqq \Ind_{\widehat{\h}}^{\bms} \widetilde{\kk}_{\mathbf{M},\mathbf{s},c_L,c_M}.
    \end{equation*}
\end{defn}

The massive module $\widetilde{V}(\mathbf{M}, \mathbf{s}, c_L, c_M)$ admits a PBW basis of monomials
\begin{equation*}
    L_{n_k} \dots L_{n_1} \genvecbms,
\end{equation*}
where $n_1,\dots, n_k \in \ZZ\setminus\{0\}$ with $n_k \geq \dots \geq n_1$.

The following result characterises the reducibility of the massive modules in terms of the parameters $\mathbf{M},\mathbf{s},c_L,c_M$. To the best of our knowledge, this result has not been explicitly stated and proven in the literature before.

\begin{theorem}\label{thm:bms_massive_modules_irreducibility}
    Let $\mathbf{M},\mathbf{s},c_L,c_M \in \kk$. The massive $\bms$-module $\widetilde{V}(\mathbf{M},\mathbf{s},c_L ,c_M)$ is irreducible if and only if $\mathbf{M}+\frac{n^2-1}{24}c_M\ne0$ for any positive integer $n$.
\end{theorem}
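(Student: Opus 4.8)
I would prove both directions by analyzing the action of the abelian ideal $I(0,-1) = \spn\{M_n \mid n \in \ZZ\}$ on the PBW basis $L_{n_k} \cdots L_{n_1}\genvecbms$, exploiting the fact that the $M_n$ act ``locally finitely'' in a way controlled by a single scalar on each $L_0$-weight space. The starting point is to compute, for a monomial $w = L_{n_k} \cdots L_{n_1}\genvecbms$, how $M_0$ acts. Since $[M_0, L_n] = n M_n$ and $M_n \genvecbms = 0$ for $n \neq 0$ while $M_0\genvecbms = \mathbf{M}\genvecbms$ (and $C_M$ is central acting by $c_M$), a straightforward induction shows $M_0 w = (\mathbf{M} + \text{lower-order corrections})w + \dots$; more usefully, one shows that the operator $M_0$ acts on the (finite-dimensional) weight space $\widetilde{V}_s$ of $L_0$-weight $s$, and I would compute its eigenvalues. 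The key structural input is that $\Omega \coloneqq M_0$ together with the ``descendant'' operators built from the $M_n$ with $n < 0$ generate a commutative subalgebra whose action on $\widetilde{V}$ is determined by $\mathbf{M}$ and $c_M$ via the central term $\frac{1}{12}n(n^2-1)c_M$ appearing in $[L_n, M_{-n}]$.

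\textbf{The ``only if'' direction.} Suppose $\mathbf{M} + \frac{n^2-1}{24}c_M = 0$ for some positive integer $n$; I must exhibit a proper nonzero submodule. The natural candidate comes from the observation that $L_n \genvecbms$ and $L_{-n}\genvecbms$ are related by the action of $M_0$: compute $M_0 \cdot L_{-n} L_n \genvecbms$ using $[M_0, L_{-n}] = -n M_{-n}$, $[M_0, L_n] = n M_n$, $[L_n, M_{-n}] = 2n M_0 + \frac{1}{12}n(n^2-1)C_M$, and $M_n\genvecbms = M_{-n}\genvecbms = 0$. This produces a vector of the form $\big(\mathbf{M} + \frac{1}{12}(n^2 \cdot \text{something} )c_M\big)\genvecbms$ plus $\mathbf{M}$-multiples of $L_{-n}L_n\genvecbms$; when the vanishing condition holds, a suitable combination yields a singular vector, i.e.\ a vector annihilated by $M_m$ for all $m \neq 0$ and having the same $M_0$-eigenvalue, generating a proper submodule. (The precise constant is exactly engineered so that $\mathbf{M} + \frac{n^2-1}{24}c_M$ is the obstruction; the factor $24 = 2 \cdot 12$ arises from combining the $2n M_0$ term with the $\frac{1}{12}n(n^2-1)C_M$ term and dividing by $2n$.) I would then check the submodule generated by this singular vector is proper — it lies in weight spaces $\geq$ some bound, so it cannot contain $\genvecbms$.

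\textbf{The ``if'' direction.} Assume $\mathbf{M} + \frac{n^2-1}{24}c_M \neq 0$ for all $n \geq 1$. Let $N \subseteq \widetilde{V}$ be a nonzero submodule; I must show $N = \widetilde{V}$. Pick $0 \neq v \in N$ and expand it in the PBW basis. The strategy is a ``lowering'' argument: by repeatedly applying $M_m$ for appropriate $m$, reduce $v$ to a nonzero multiple of $\genvecbms$. Concretely, order monomials by (weight, then length, then lexicographically); if $v$ is not already proportional to $\genvecbms$, choose $M_m$ with $m$ matching the most negative index appearing, so that $M_m v$ is nonzero (this is where the nonvanishing hypothesis is used: $M_m L_{-m} = L_{-m}M_m + 2m M_0 + \frac{1}{12}m(m^2-1)C_M$ contributes a factor proportional to $\mathbf{M} + \frac{m^2-1}{24}c_M \neq 0$ rather than zero) and strictly simpler. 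Iterating, $\genvecbms \in N$, hence $N = \widetilde{V}$ since $\genvecbms$ is a cyclic generator.

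\textbf{Main obstacle.} The delicate part is the bookkeeping in the ``if'' direction: ensuring that applying $M_m$ to a general PBW monomial $L_{n_k}\cdots L_{n_1}\genvecbms$ strictly decreases the chosen complexity measure and that the leading coefficient is a nonzero multiple of some $\mathbf{M} + \frac{j^2-1}{24}c_M$. One has to commute $M_m$ past each $L_{n_i}$, and terms where $M_m$ pairs with $L_{-m}$ to produce $M_0$ (and the central charge) must be separated from terms producing $M_{n_i + m}$ with $n_i + m \neq 0$ (which then annihilate $\genvecbms$ after further commutations, or feed into the recursion). I would handle this by a clean induction on the total degree $\sum |n_i|$, isolating the top-degree behaviour, and I expect this combinatorial commutator analysis — rather than the ``only if'' construction, which is a short explicit computation — to be the technical heart of the argument.
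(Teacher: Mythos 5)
Your ``only if'' direction contains the genuine gap. You propose to exhibit a singular-type vector (in fact $L_n\genvecbms$ itself already works: one computes $M_m L_n\genvecbms=\delta_{m,0}\mathbf{M}\,L_n\genvecbms$ for all $m$ precisely when $\mathbf{M}+\frac{n^2-1}{24}c_M=0$, so the ``suitable combination'' built from $M_0\cdot L_{-n}L_n\genvecbms$ is unnecessary), but your properness check --- ``it lies in weight spaces $\geq$ some bound, so it cannot contain $\genvecbms$'' --- is Verma-module intuition that fails here. In $\widetilde{V}(\mathbf{M},\mathbf{s},c_L,c_M)$ every $L_m$ with $m\neq 0$ acts freely, the $L_0$-spectrum is $\mathbf{s}+\ZZ$ unbounded in both directions, and the submodule $N=U(\bms)L_n\genvecbms$ meets the weight space of $\genvecbms$ itself: it contains $L_{-n}L_n\genvecbms$, whose PBW expansion has $\genvecbms$-coefficient $-\bigl(2n\mathbf{s}+\frac{n(n^2-1)}{12}c_L\bigr)$, generically nonzero. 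So no weight bound rules out $\genvecbms\in N$; properness genuinely uses the hypothesis. A clean repair: under $\mathbf{M}+\frac{n^2-1}{24}c_M=0$ the defining character of $\widehat{\h}$ extends to a Lie algebra homomorphism $\Phi$ on the subalgebra $\widehat{\h}\oplus\kk L_n$ with $\Phi(L_n)=0$ (the hypothesis is exactly the consistency condition $\Phi([L_n,M_{-n}])=2n\bigl(\mathbf{M}+\frac{n^2-1}{24}c_M\bigr)=0$), and the nonzero quotient $\widetilde{V}\twoheadrightarrow\Ind_{\widehat{\h}\oplus\kk L_n}^{\bms}\kk_\Phi$ kills $L_n\genvecbms$, so $N$ is proper. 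Alternatively one can invoke, as the paper does, the quasi-Whittaker annihilator criterion of Cheng--Gao--Liu--Zhao--Zhao after realising $\widetilde{V}\cong W_\phi/J_{\mathbf{s}}$ for the ideal $\fa=\spn\{M_k,C_L,C_M\mid k\in\ZZ\}$.

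Apart from this, your route is genuinely different from the paper's, which deduces both directions from the Whittaker-annihilator machinery (annihilator equal to $\fa\oplus\kk L_0$ gives maximality of $J_{\mathbf{s}}$, hence irreducibility; an enlarged annihilator gives reducibility), whereas you argue by hand on the PBW basis. Your ``if'' direction is workable as sketched: applying $M_m$ to a length-$k$ monomial yields terms of length at most $k-1$, and the length-$(k-1)$ part of $M_m v$ comes only from maximal-length monomials of $v$ containing $L_{-m}$, each contributing the monomial with one $L_{-m}$ removed, with coefficient a positive integer multiple of $2m\bigl(\mathbf{M}+\frac{m^2-1}{24}c_M\bigr)\neq 0$; since removal of $L_{-m}$ is injective on sorted monomials containing it, no cancellation occurs, the maximal length strictly drops, and one reaches a nonzero multiple of the cyclic vector $\genvecbms$. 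Carrying out that bookkeeping (which you rightly flag as the technical heart) completes the ``if'' direction; the ``only if'' direction needs the repair above.
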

\begin{proof}
    We would like to apply Theorem \ref{thm:Whittaker annihilator}. Hence, we first aim to relate $\widetilde{V}(\mathbf{M},\mathbf{s},c_L,c_M)$ to a universal quasi-Whittaker module of $\bms$ as per Definition \ref{def:quasi-Whittaker modules}. Consider the ideal $\fa \coloneqq \spn\{C_M, C_L, M_k \mid k \in \mathbb{Z}\}$ of $\bms$ and define the Whittaker function $\phi \colon \fa \to \kk$ given by
    $$\phi(M_i) = \delta_{i,0} \mathbf{M}, \quad \phi(C_L) = c_L, \quad \phi(C_M) = c_M.$$
    Letting $W_\phi \coloneqq \Ind_\fa^\bms \kk_\phi$ be the associated universal quasi-Whittaker module, it is easy to see that
    $$\widetilde{V}(\mathbf{M}, \mathbf{s},c_L ,c_M) \cong W_\phi/J_{\mathbf{s}},$$
    where $J_{\mathbf{s}} \coloneqq U(\bms)(L_0 - \mathbf{s}) \cdot 1_\phi$.
    
    Suppose first that $\mathbf{M} + \frac{n^2-1}{24}c_M \ne 0$ for any positive integer $n$. By setting $n = 1$, we see that $\mathbf{M} \neq 0$. From Definition \ref{def:Whittaker annihilator}, we may quickly deduce that the Whittaker annihilator of $\phi$ is $\fa \oplus \kk L_0$. Now, Theorem \ref{thm:Whittaker annihilator} implies that $J_{\mathbf{s}}$ is a maximal $\bms$-submodule of $W_\phi$. Therefore, the quotient $\widetilde{V}(\mathbf{M}, \mathbf{s},c_L ,c_M) \cong W_\phi/J_{\mathbf{s}}$ is irreducible.
    
    Conversely, assume that $\mathbf{M} + \frac{n^2 - 1}{24} c_M = 0$ for a positive integer $n$. It is easy to compute that the Whittaker annihilator of $\phi$ contains $\fa \oplus \kk L_0 \oplus \kk L_n \oplus \kk L_{-n}$. Therefore, we see that $L_{n}\genvecbms$ generates a nonzero proper submodule of $\widetilde{V}(\mathbf{M}, \mathbf{s},c_L,c_M)$. Thus, in this case, $\widetilde{V}(\mathbf{M}, \mathbf{s},c_L ,c_M)$ is reducible.
\end{proof}

\begin{remark}
    By comparing \cite[Theorem 2.3]{JiangZhang} with Theorem \ref{thm:bms_massive_modules_irreducibility}, we see that the massive modules $\widetilde{V}(\mathbf{M}, \mathbf{s}, c_L, c_M)$ are irreducible under the same conditions as the Verma modules of $\bms$ from Definition \ref{def:BMS_Verma_module}. This is an unexpected, intriguing similarity between two very different classes of $\bms$-modules that could potentially be explained by structural features of these modules that are yet to be discovered.
\end{remark}

Using the findings from the proof of Theorem \ref{thm:bms_massive_modules_irreducibility} and another result from \cite{ChengGaoLiuZhaoZhao}, it is not difficult to show that $L_n \genvecbms$ and $L_{-n} \genvecbms$ together generate a maximal submodule of the massive module $\widetilde{V}(\mathbf{M}, \mathbf{s}, c_L, c_M)$ when $\mathbf{M} + \frac{n^2 - 1}{24}c_M = \mathbf{s} + \frac{n^2 - 1}{24}c_L = 0$ and $c_M \neq 0$.

\begin{corollary} \label{cor:bms_massive_modules_irreducible_quotient}
    Let $\mathbf{M}, \mathbf{s}, c_L, c_M \in \kk$ with $c_M \neq 0$, and suppose 
    $\mathbf{M} + \frac{n^2 - 1}{24}c_M = \mathbf{s} + \frac{n^2 - 1}{24} c_L = 0$
    for some positive integer $n$. Then the $\bms$-submodule of the massive module $\widetilde{V}(\mathbf{M}, \mathbf{s}, c_L, c_M)$ generated by 
    $L_n\genvecbms$ and $L_{-n}\genvecbms$ is a maximal submodule of $\widetilde{V}(\mathbf{M}, \mathbf{s}, c_L, c_M)$. 
\end{corollary}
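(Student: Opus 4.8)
The plan is to reuse the realisation of the massive module as a quotient of a universal quasi-Whittaker module that was extracted in the proof of Theorem~\ref{thm:bms_massive_modules_irreducibility}. Recall from that proof that, with $\fa = \spn\{C_L,C_M,M_k\mid k\in\ZZ\}$ and $\phi\colon\fa\to\kk$ given by $\phi(M_i)=\delta_{i,0}\mathbf{M}$, $\phi(C_L)=c_L$, $\phi(C_M)=c_M$, one has $\widetilde{V}(\mathbf{M},\mathbf{s},c_L,c_M)\cong W_\phi/J_{\mathbf{s}}$ where $W_\phi = \Ind_\fa^\bms\kk_\phi$ and $J_{\mathbf{s}} = U(\bms)(L_0-\mathbf{s})\cdot 1_\phi$. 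First I would record the Whittaker annihilator under our hypotheses: from $\phi([L_m,M_{-m}]) = \phi\big(2mM_0+\tfrac1{12}m(m^2-1)C_M\big) = 2m\big(\mathbf{M}+\tfrac{m^2-1}{24}c_M\big)$ together with $c_M\neq0$ and $\mathbf{M}+\tfrac{n^2-1}{24}c_M=0$, this vanishes exactly for $m\in\{0,n,-n\}$, so $\bms^\phi = \fa\oplus\kk L_0\oplus\kk L_n\oplus\kk L_{-n}$.

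Next I would pull the submodule $N$ back to $W_\phi$. Since $L_n\genvecbms$ and $L_{-n}\genvecbms$ are nonzero PBW basis vectors, $N\neq0$; and the preimage of $N$ under $W_\phi\twoheadrightarrow\widetilde{V}(\mathbf{M},\mathbf{s},c_L,c_M)$ is $K := U(\bms)\big(L_n,\,L_{-n},\,L_0-\mathbf{s}\big)\cdot 1_\phi$, so $\widetilde{V}(\mathbf{M},\mathbf{s},c_L,c_M)/N\cong W_\phi/K$ and it suffices to show $K$ is a maximal $\bms$-submodule of $W_\phi$. Writing $K = U(\bms)\cdot J_0$ with $J_0 := U(\bms^\phi)\big(L_n,\,L_{-n},\,L_0-\mathbf{s}\big)\cdot 1_\phi\subseteq U(\bms^\phi)\cdot 1_\phi$, the quotient $U(\bms^\phi)\cdot 1_\phi/J_0$ is the cyclic $\bms^\phi$-module on which $\fa$ acts by $\phi$, $L_0$ by $\mathbf{s}$, and $L_{\pm n}$ by $0$, i.e.\ $\kk_\chi$ for the functional $\chi\colon\bms^\phi\to\kk$ with $\chi|_\fa=\phi$, $\chi(L_0)=\mathbf{s}$, $\chi(L_{\pm n})=0$; this is a well-defined (hence one-dimensional, hence simple) $\bms^\phi$-module precisely because $\chi$ is a Lie algebra homomorphism, which is exactly where both hypotheses enter: the only brackets of pairs of the spanning elements of $\bms^\phi$ on which $\chi$ can fail to vanish are $[L_n,L_{-n}] = 2nL_0+\tfrac1{12}n(n^2-1)C_L$ and $[L_n,M_{-n}] = 2nM_0+\tfrac1{12}n(n^2-1)C_M$ (and its mirror $[L_{-n},M_n]$), whose $\chi$-values are $\pm2n\big(\mathbf{s}+\tfrac{n^2-1}{24}c_L\big)$ and $\pm2n\big(\mathbf{M}+\tfrac{n^2-1}{24}c_M\big)$, both zero by assumption.

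Finally I would invoke the structural result of \cite{ChengGaoLiuZhaoZhao} — of which Theorem~\ref{thm:Whittaker annihilator}(2) is the codimension-one case — identifying the lattice of $\bms$-submodules of $W_\phi$ with the lattice of $\bms^\phi$-submodules of $U(\bms^\phi)\cdot 1_\phi$ via $J\mapsto J\cap U(\bms^\phi)\cdot 1_\phi$, with inverse $J_0\mapsto U(\bms)J_0$. Since $U(\bms^\phi)\cdot 1_\phi/J_0\cong\kk_\chi$ is simple, $J_0$ is a maximal $\bms^\phi$-submodule, hence $K = U(\bms)J_0$ is a maximal $\bms$-submodule and $\widetilde{V}(\mathbf{M},\mathbf{s},c_L,c_M)/N\cong W_\phi/K$ is simple; thus $N$ is a maximal submodule. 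The step I expect to be the main obstacle is securing this lattice correspondence in the range $\dim(\bms^\phi/\fa)>1$, since only the codimension-one version is recorded above; absent a citable version, one would instead have to prove directly that $\Ind_{\bms^\phi}^\bms\kk_\chi$ (which is $W_\phi/K$) is simple, via the $\ZZ$-grading of $\bms$ — under which $L_0$ acts on the degree-$d$ component by $\mathbf{s}-d$, so every submodule is graded — followed by an explicit reduction in the PBW basis $\{L_{i_1}\cdots L_{i_r}\cdot 1_\chi\mid r\ge0,\ i_1\le\cdots\le i_r,\ i_j\in\ZZ\setminus\{0,n,-n\}\}$ showing that every nonzero homogeneous vector generates $1_\chi$.
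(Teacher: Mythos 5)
Your setup coincides with the paper's own argument: you compute the Whittaker annihilator $\bms^\phi = \fa \oplus \kk L_0 \oplus \kk L_n \oplus \kk L_{-n}$ (correctly using $c_M \neq 0$ to pin the solutions down to $m \in \{0,\pm n\}$), you observe that the two hypotheses are exactly what make the extension $\chi$ of $\phi$ with $\chi(L_0)=\mathbf{s}$, $\chi(L_{\pm n})=0$ a Lie algebra homomorphism on $\bms^\phi$ (via $[L_n,L_{-n}]$ and $[L_n,M_{-n}]$), and you identify the quotient of $\widetilde{V}(\mathbf{M},\mathbf{s},c_L,c_M)$ by the submodule in question with $\Ind_{\bms^\phi}^{\bms}\kk_\chi$. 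This is the paper's proof up to its last step.

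The gap is that last step, and you flag it yourself: the irreducibility of $\Ind_{\bms^\phi}^{\bms}\kk_\chi$. Your primary route --- a lattice correspondence $J \mapsto J \cap U(\bms^\phi)\cdot 1_\phi$ between $\bms$-submodules of $W_\phi$ and $\bms^\phi$-submodules of $U(\bms^\phi)\cdot 1_\phi$ --- is not justified by anything recorded in the paper: Theorem \ref{thm:Whittaker annihilator}(2) only covers $\dim(\g^\varphi/\h)=1$, whereas here the codimension is $3$, so as written this step carries no proof. The paper closes exactly this point by citing \cite[Lemma 3.7]{ChengGaoLiuZhaoZhao}, which states that whenever a quasi-Whittaker function extends to a Lie algebra homomorphism on its entire Whittaker annihilator, the module induced from the annihilator is irreducible --- verbatim the statement you need for $\chi$. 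Your fallback sketch (every submodule is graded because $L_0$ acts by $\mathbf{s}-d$ in degree $d$, then strip $L_i$-factors from PBW monomials) is viable: acting by $M_{-m}$ removes an $L_m$ at the cost of the scalar $2m\big(\mathbf{M}+\tfrac{m^2-1}{24}c_M\big) = \tfrac{m c_M}{12}(m^2-n^2)$, which is nonzero for all $m \notin \{0,\pm n\}$ precisely because $c_M \neq 0$. But this reduction is only announced, not carried out, so in your write-up the decisive irreducibility claim remains unproven; either execute that reduction or locate and invoke the cited lemma.
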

\begin{proof}
    Let $\phi \colon \fa \to \kk$ be as in the proof of Theorem \ref{thm:bms_massive_modules_irreducibility}. Let
    $$\widetilde{\fa} \coloneqq \spn\{L_0, L_n, L_{-n}, M_k, C_M, C_L \mid k \in \ZZ\} = \fa \oplus \kk L_0 \oplus \kk L_n \oplus \kk L_{-n},$$
    and define $\Phi \colon \widetilde{\fa} \to \kk$ by $\restr{\Phi}{\fa} = \phi$, and
    $$\Phi(L_0) = \mathbf{s}, \quad \Phi(L_n) = \Phi(L_{-n}) = 0.$$
    One can compute that $\bms^\Phi = \widetilde{\fa}$. The condition that $\mathbf{s} + \frac{n^2 - 1}{24}c_L = 0$ guarantees that $\Phi$ is a well-defined Lie algebra homomorphism, since
    $$\Phi([L_n, L_{-n}]) = \Phi\left(2n L_0 + \frac{n^3 - n}{12} C_L\right) = 2n \mathbf{s} + \frac{n^3 - n}{12} c_L = 0.$$
    Therefore, $\Phi$ extends the quasi-Whittaker function $\phi$ to the entirety of its Whittaker annihilator. By \cite[Lemma 3.7]{ChengGaoLiuZhaoZhao}, it follows that $\Ind_{\widetilde{\fa}}^\bms \kk_{\Phi}$ is an irreducible $\bms$-module. It is easy to see that this module is isomorphic to the quotient of $\widetilde{V}(\mathbf{M}, \mathbf{s}, c_L, c_M)$ by its submodule generated by $L_n\genvecbms$ and $L_{-n}\genvecbms$, yielding the result.
\end{proof}

\begin{remark}
    We were informed by Blagoje Oblak that Theorem \ref{thm:bms_massive_modules_irreducibility} and Corollary \ref{cor:bms_massive_modules_irreducible_quotient} match the intuitive expectation stemming from his work \cite{BarnichOblak2} with Glenn Barnich, in which the authors study coadjoint orbits of the BMS$_3$ group and provide classical and quantum mechanical interpretations of these findings in the context of three-dimensional asymptotically flat gravity. It is reassuring that we are able to make this physical intuition mathematically precise.
\end{remark}

We now consider the reducibility of the $\widehat{\bcca}$-submodule of $\widetilde{V}(\mathbf{M}, \mathbf{s}, c_L, c_M)$ generated by the element $\genvecbms$. The condition governing its reducibility turns out to be exactly the same as the one in Theorem \ref{thm:bms_massive_modules_irreducibility}, and the proof is easier, since $U(\widehat{\bcca})\genvecbms$ is a quasi-Whittaker module over $\widehat{\bcca}$ on the nose.

\begin{proposition}\label{prop:massive module over b}
    Let $\mathbf{M}, \mathbf{s},c_L,c_M \in \kk$. The $\widehat{\bcca}$-module $U(\widehat{\bcca})\genvecbms$ is irreducible if and only if $\mathbf{M} + \frac{n^2 - 1}{24}c_M \ne 0$ for any positive integer $n$.
\end{proposition}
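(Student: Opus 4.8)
The plan is to mimic the structure of the proof of Theorem \ref{thm:bms_massive_modules_irreducibility}, but now working entirely inside the BCCA, where the situation is cleaner because $U(\widehat{\bcca})\genvecbms$ is already a quasi-Whittaker module in the sense of Definition \ref{def:quasi-Whittaker modules}. First I would identify the relevant non-perfect ideal of $\widehat{\bcca}$: take $\h \coloneqq \spn\{C_M, P_m \mid m \geq 0\}$, which is abelian (by the last bracket relation in \eqref{eq:BCCA Lie bracket}), hence certainly non-perfect, and which is an ideal of $\widehat{\bcca}$ since $[\OO_n, P_m]$ lies in $\spn\{P_k, C_M\} \subseteq \h$ by the second relation in \eqref{eq:BCCA Lie bracket}. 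Define the Whittaker function $\phi \colon \h \to \kk$ by $\phi(P_0) = 2\mathbf{M}$, $\phi(C_M) = c_M$, and $\phi(P_m) = 0$ for $m \geq 1$; this is consistent because $\h$ is abelian and because on the generating vector $\genvecbms$ one computes $P_0 \genvecbms = (M_0 + M_0)\genvecbms = 2\mathbf{M}\genvecbms$, $P_m \genvecbms = (M_m + M_{-m})\genvecbms = 0$ for $m \geq 1$, and $C_M \genvecbms = c_M \genvecbms$. Hence the cyclic submodule $U(\widehat{\bcca})\genvecbms$ is exactly the universal quasi-Whittaker module $W_\phi = \Ind_\h^{\widehat{\bcca}} \kk_\phi$, since the PBW basis of $U(\widehat{\bcca})$ can be ordered with the $\OO_n$ to the left and the $P_m, C_M$ to the right, and the latter act on $\genvecbms$ by the scalars prescribed by $\phi$.

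Next I would compute the Whittaker annihilator $\widehat{\bcca}^\phi = \{x \in \widehat{\bcca} \mid \phi([x,y]) = 0 \text{ for all } y \in \h\}$. Any $x \in \widehat{\bcca}$ is a combination of $\OO_n$'s plus an element of $\h$; since $\h$ is abelian, $\h \subseteq \widehat{\bcca}^\phi$ automatically, so the question is which combinations of the $\OO_n$ lie in $\widehat{\bcca}^\phi$. Using $[\OO_n, P_m] = (n-m)P_{n+m} + (n+m)P_{n-m} + \tfrac{1}{6}n(n^2-1)\delta_{m,n} C_M$ and applying $\phi$, the only surviving terms are those that produce $P_0$ or $C_M$. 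A term $P_{n-m}$ equals $P_0$ precisely when $m = n$ (recall $P_{-k} = P_k$, so also $m = n$ from $P_{n-m}$ and from $P_{n+m}$ only when $n+m=0$, impossible for positive indices), and the $C_M$ term also appears only when $m = n$. So $\phi([\OO_n, P_m]) = \delta_{m,n}\big((n+m)\phi(P_0) + \tfrac16 n(n^2-1)\phi(C_M)\big) = \delta_{m,n}\big(2n \cdot 2\mathbf{M} + \tfrac16 n(n^2-1) c_M\big) = \delta_{m,n}\cdot 4n\big(\mathbf{M} + \tfrac{n^2-1}{24}c_M\big)$. Therefore $\OO_n \in \widehat{\bcca}^\phi$ if and only if $\mathbf{M} + \tfrac{n^2-1}{24}c_M = 0$, and in general $\widehat{\bcca}^\phi = \h \oplus \bigoplus_{n : \mathbf{M} + (n^2-1)c_M/24 = 0} \kk\OO_n$.

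Finally I would invoke Theorem \ref{thm:Whittaker annihilator}(1): $W_\phi$ is irreducible if and only if $\widehat{\bcca}^\phi = \h$. By the computation above, this holds precisely when no positive integer $n$ satisfies $\mathbf{M} + \tfrac{n^2-1}{24}c_M = 0$, i.e. when $\mathbf{M} + \tfrac{n^2-1}{24}c_M \neq 0$ for all positive integers $n$, which is exactly the claimed criterion. (Note the edge case $n = 1$ forces $\mathbf{M} \neq 0$, matching the $\bms$ situation.) I expect the only mildly delicate step to be the bookkeeping in the Whittaker annihilator computation — in particular being careful that the indices of the $P$'s are always taken as non-negative (using $P_{-k} = P_k$) so that one correctly identifies when $P_{n \pm m}$ can equal $P_0$, and confirming that a combination $\sum c_n \OO_n$ lies in $\widehat{\bcca}^\phi$ iff each individual $c_n \OO_n$ does, which follows since the pairing $\OO_n \mapsto \phi([\OO_n, P_m])$ is "diagonal" in $n$ (supported on $m = n$). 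Everything else is a direct application of the machinery already set up in the Preliminaries.
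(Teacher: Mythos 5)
Your proposal is correct and follows essentially the same route as the paper: identify $U(\widehat{\bcca})\genvecbms$ with the universal quasi-Whittaker module $W_\phi$ for the abelian ideal $\spn\{C_M, P_k \mid k \geq 0\}$ and the same function $\phi$, then apply Theorem \ref{thm:Whittaker annihilator} after computing the Whittaker annihilator. The only difference is that you carry out the annihilator computation $\phi([\OO_n,P_m]) = \delta_{m,n}\,4n\bigl(\mathbf{M} + \tfrac{n^2-1}{24}c_M\bigr)$ explicitly, which the paper leaves as ``easily computed.''
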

\begin{proof}
    Suppose first that $\mathbf{M} +\frac{n^2-1}{24}c_M\ne0$ for any positive integer $n.$ Again, setting $n=1$ lets us infer that $\mathbf{M} \neq 0$. Take $\fa \coloneqq \spn\{C_M, P_k:k\ge0\}$, which is an ideal of $\widehat{\bcca}$, and define a linear map $\phi \colon \fa \to \kk$ by
    $$\phi(P_i) = 2\delta_{i,0}\mathbf{M}, \quad \phi(C_M) = c_M.$$
    The Whittaker annihilator of $\phi$, denoted $\widehat{\bcca}^\phi$, is easily computed to be $\widehat{\bcca}^\phi = \fa$. It now follows immediately from Theorem \ref{thm:Whittaker annihilator} that the $\widehat{\bcca}$-module $U(\widehat{\bcca})\genvecbms \cong W_\phi = \Ind_\fa^{\widehat{\bcca}} \kk_\phi$ is irreducible.
    
    Conversely, assume that $\mathbf{M} +\frac{n^2 - 1}{24}c_M = 0$ for a positive integer $n$. In this case, $\widehat{\bcca}^\phi \supseteq \fa \oplus \kk \OO_n$. Then from Theorem \ref{thm:Whittaker annihilator}, we see that $\OO_{n}\genvecbms$ generates a nonzero proper submodule of $U(\widehat{\bcca})\genvecbms$. Thus, $U(\widehat{\bcca})\genvecbms$ is reducible in this case.
\end{proof}

Thus, the massive modules of $\bms$ seem to give rise to irreducible $\widehat{\bcca}$-modules more readily than the Verma modules.

We finish our analysis of massive modules by studying the full space $\widetilde{V}(\mathbf{M},\mathbf{s},c_L ,c_M)$ as a module over $\widehat{\bcca}$. In determining the structure $\widetilde{V}(\mathbf{M},\mathbf{s},c_L,c_M)$ as a $\widehat{\bcca}$-module, we assume that $\mathbf{M} + \frac{n^2 - 1}{24}c_M \ne 0$ for any positive integer $n$ for the rest of this section, meaning that $U(\widehat{\bcca})\genvecbms$ is irreducible. We first introduce two new symbols.

\begin{notation} \label{ntt:Ohat and Phat}
    For $n \in \ZZ$, define
    $$\widehat{P}_n \coloneqq M_n - M_{-n}, \quad \widehat{\OO}_n \coloneqq L_n + L_{-n},$$
    as elements of $\bms$. Note that $\widehat{P}_n$ and $\widehat{\OO}_n$ are not elements of $\widehat{\bcca}$.
\end{notation}

By \eqref{eq:bms bracket}, a simple computation shows that for any $n,m \in \NN$ we have
\begin{equation}\label{eq:bra-P-hO}
    \begin{aligned}
        &[P_n, \widehat{\OO}_m] = (n - m)\widehat{P}_{n + m} + (n + m)\widehat{P}_{n - m}, \\
        &[\widehat{P}_n, \widehat{\OO}_m] = (n - m){P}_{n + m} + (n + m){P}_{n - m} + \frac{1}{6} n (n^2 - 1)\delta_{n,m} C_M.
    \end{aligned}
\end{equation}
Notice that $\{\OO_n, \widehat{\OO}_m, P_m, \widehat{P}_n \mid n \in \ZZ_+, m \in \NN\}$ is another basis of $\bms$. Hence, by the PBW theorem, $\widetilde{V}(\mathbf{M}, \mathbf{s}, c_L, c_M)$ has a basis given by monomials of the form
\begin{equation} \label{eq:Vtilde alt PBW basis}
    \OO_{p_k}^{q_k} \dots \OO_{p_1}^{q_1} \widehat{\OO}^{s_\ell}_{r_\ell} \dots {\widehat{\OO}}^{s_1}_{r_1} \genvecbms,
\end{equation}
where $p_k > \dots > p_1 \geq 1$, $r_\ell > \dots > r_1 \geq 1$ and $q_1, \dots, q_k, s_1, \dots, s_\ell \in \NN$. 

We now prove that the element $\widehat{\OO}_n \genvecbms$ of $\widetilde{V}(\mathbf{M}, \mathbf{s}, c_L, c_M)$ generates an irreducible $\widehat{\bcca}$-submodule, for all $n \in \ZZ_+$.

\begin{proposition}\label{prop:P and Phat acting on Ohat_n}
    For any $n \in \ZZ_+$, the $\widehat \bcca$-module $U(\widehat{\bcca}) \widehat{\OO}_n \genvecbms$ is  isomorphic to $U(\widehat{\bcca}) \genvecbms$. Consequently, $U(\widehat{\bcca}) \widehat{\OO}_n \genvecbms$ is an irreducible $\widehat{\bcca}$-module.
\end{proposition}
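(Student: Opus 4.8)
The plan is to exhibit an explicit $\widehat{\bcca}$-module homomorphism $U(\widehat{\bcca})\genvecbms \to U(\widehat{\bcca})\widehat{\OO}_n\genvecbms$ sending $\genvecbms \mapsto \widehat{\OO}_n\genvecbms$, and then argue it is an isomorphism. By the universal property of $U(\widehat{\bcca})\genvecbms$ as a quasi-Whittaker module (it is $W_\phi$ for the function $\phi$ of Proposition \ref{prop:massive module over b}), such a homomorphism exists as soon as $\widehat{\OO}_n\genvecbms$ is itself a quasi-Whittaker vector of type $\phi$, i.e.\ $P_m \cdot \widehat{\OO}_n\genvecbms = 2\delta_{m,0}\mathbf{M}\,\widehat{\OO}_n\genvecbms$ for all $m \geq 0$ and $C_M \cdot \widehat{\OO}_n\genvecbms = c_M\,\widehat{\OO}_n\genvecbms$. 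The central claim is false unless something cancels, so first I would compute $P_m \widehat{\OO}_n\genvecbms = [P_m,\widehat{\OO}_n]\genvecbms + \widehat{\OO}_n P_m\genvecbms$. Using the first line of \eqref{eq:bra-P-hO}, $[P_m,\widehat{\OO}_n] = (m-n)\widehat{P}_{m+n} + (m+n)\widehat{P}_{|m-n|}$ (up to the sign conventions $\widehat P_k = -\widehat P_{-k}$, $P_k = P_{-k}$), and since $\widehat{P}_k\genvecbms = (M_k - M_{-k})\genvecbms = 0$ for all $k$ (as $M_j\genvecbms = 0$ for $j \neq 0$ and the $k=0$ term vanishes identically), the bracket term kills $\genvecbms$. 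Likewise $P_m\genvecbms = (M_m + M_{-m})\genvecbms = 2\delta_{m,0}\mathbf{M}\genvecbms$, so $\widehat{\OO}_n P_m\genvecbms = 2\delta_{m,0}\mathbf{M}\,\widehat{\OO}_n\genvecbms$. The $C_M$ computation is immediate by centrality. Hence $\widehat{\OO}_n\genvecbms$ is a quasi-Whittaker vector of type $\phi$, giving the desired surjection $\theta\colon U(\widehat{\bcca})\genvecbms \twoheadrightarrow U(\widehat{\bcca})\widehat{\OO}_n\genvecbms$.

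It remains to show $\theta$ is injective. Since $U(\widehat{\bcca})\genvecbms$ is irreducible (this is precisely the hypothesis in force, from Proposition \ref{prop:massive module over b}), any nonzero homomorphism out of it is automatically injective; and $\theta \neq 0$ because $\widehat{\OO}_n\genvecbms \neq 0$ in $\widetilde{V}(\mathbf{M},\mathbf{s},c_L,c_M)$ — indeed $\widehat{\OO}_n\genvecbms = (L_n + L_{-n})\genvecbms$ is a nonzero element of the PBW basis \eqref{eq:Vtilde alt PBW basis} (it is $\widehat{\OO}_n^{1}\genvecbms$). Therefore $\theta$ is an isomorphism of $\widehat{\bcca}$-modules, and $U(\widehat{\bcca})\widehat{\OO}_n\genvecbms \cong U(\widehat{\bcca})\genvecbms$; in particular it is irreducible.

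The only step requiring genuine care is the bracket computation $P_m\cdot\widehat{\OO}_n\genvecbms$: one must track the reflection conventions ($\OO_k = -\OO_{-k}$, $P_k = P_{-k}$, $\widehat{P}_k = -\widehat{P}_{-k}$, $\widehat{\OO}_k = \widehat{\OO}_{-k}$) so that the formula \eqref{eq:bra-P-hO}, stated for $n,m \in \NN$, is applied with the correct signs when $m - n < 0$, and one must confirm the $\delta$-term $\tfrac{1}{6}n(n^2-1)\delta_{n,m}C_M$ does not appear in the $[P_m,\widehat{\OO}_n]$ bracket (it sits in $[\widehat{P}_n,\widehat{\OO}_m]$ instead). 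But every term produced is a multiple of some $\widehat{P}_k\genvecbms = 0$, so no central contribution survives regardless. Once this is checked, the argument is complete. I expect this to be exactly the structure of the author's proof, since the statement says "isomorphic to $U(\widehat{\bcca})\genvecbms$" and the preceding Proposition \ref{prop:massive module over b} has already set up the quasi-Whittaker framework precisely for this purpose.
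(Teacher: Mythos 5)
Your proposal is correct, and its first half is exactly the paper's computation: using \eqref{eq:bra-P-hO} together with $\widehat{P}_k\genvecbms=0$ to conclude $P_r\cdot\widehat{\OO}_n\genvecbms=2\delta_{r,0}\mathbf{M}\,\widehat{\OO}_n\genvecbms$ (and $C_M$ acting by $c_M$), i.e.\ that $\widehat{\OO}_n\genvecbms$ is a quasi-Whittaker vector of the same type $\phi$ as $\genvecbms$ (the paper's displayed formula omits the factor $\widehat{\OO}_n$, but this is clearly what is meant). Where you genuinely diverge is the isomorphism step. The paper does not invoke irreducibility there: it applies the PBW theorem to the basis $\{\OO_p,\widehat{\OO}_r,P_r,\widehat{P}_p\}$ of $\bms$ to see that $U(\widehat{\bcca})\widehat{\OO}_n\genvecbms$ has basis $\OO_{p_k}^{q_k}\cdots\OO_{p_1}^{q_1}\widehat{\OO}_n\genvecbms$, and matches this basis with the basis $\OO_{p_k}^{q_k}\cdots\OO_{p_1}^{q_1}\genvecbms$ of $U(\widehat{\bcca})\genvecbms$ to write down an explicit module isomorphism; the standing assumption $\mathbf{M}+\frac{n^2-1}{24}c_M\neq 0$ enters only in the final sentence, via Proposition \ref{prop:massive module over b}. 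Your route—universal property of the quasi-Whittaker module to produce the surjection $\theta$, then injectivity because the source $U(\widehat{\bcca})\genvecbms$ is irreducible, with $\theta\neq 0$ justified by $\widehat{\OO}_n\genvecbms$ being a PBW basis vector of \eqref{eq:Vtilde alt PBW basis}—is valid, since that assumption is in force throughout the subsection; the trade-off is that your argument makes the isomorphism itself conditional on the genericity assumption, whereas the paper's PBW-basis argument shows the two modules are isomorphic for arbitrary parameters, with the assumption needed only for irreducibility. The sign imprecision you flag in writing $\widehat{P}_{|m-n|}$ is harmless for exactly the reason you give: every term produced is a multiple of some $\widehat{P}_k\genvecbms=0$.
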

\begin{proof}
    It follows from (\ref{eq:bra-P-hO}) that for any $r\in\NN$,
    \begin{equation*}
        P_r \widehat{\OO}_n \genvecbms = \delta_{0,r} 2M \genvecbms.
    \end{equation*}
    As above, the PBW theorem implies that $U(\widehat{\bcca}) \widehat{\OO}_n \genvecbms$ has a basis given by
    $$\OO_{p_k}^{q_k} \dots \OO_{p_1}^{q_1} \widehat{\OO}_n\genvecbms,$$
    where $p_k > \dots > p_1 \geq 1$, and $q_1, \dots, q_k\in \NN$. Therefore, we see that the map
    \begin{align*}
        U(\widehat{\bcca}) \widehat{\OO}_n \genvecbms &\to U(\widehat{\bcca}) \genvecbms \\
        \OO_{p_k}^{q_k} \dots \OO_{p_1}^{q_1} \widehat{\OO}_n \genvecbms &\mapsto \OO_{p_k}^{q_k} \dots \OO_{p_1}^{q_1} \genvecbms,
    \end{align*}
    where $p_k > \dots > p_1 \geq 1$ and $q_1, \dots, q_k \in \NN$, is an isomorphism of $\widehat{\bcca}$-modules.

    The final sentence follows from Proposition \ref{prop:massive module over b} upon recalling that we are assuming that $\mathbf{M} + \frac{n^2 - 1}{24}c_M \ne 0$.
\end{proof}

It is not surprising that the massive $\bms$-module $\widetilde{V}(\mathbf{M}, \mathbf{s}, c_L, c_M)$ does not remain irreducible when viewed as a $\widehat{\bcca}$-module. What is perhaps more surprising is that $\widetilde{V}(\mathbf{M}, \mathbf{s}, c_L, c_M)$ decomposes into a direct sum of two $\widehat{\bcca}$-submodules, as we will soon see. We define these submodules below.

\begin{defn} \label{def:Vhat and What}
    Let $\widehat{V}$ be the $\widehat{\bcca}$-submodule of $\widetilde{V}(\mathbf{M}, \mathbf{s}, c_L, c_M)$ generated by
    $$\left\{\widehat{\OO}^{s_\ell}_{r_\ell} \dots \widehat{\OO}^{s_1}_{r_1} \genvecbms \mid \ell \in \NN, r_\ell > \dots > r_1 \geq 1, s_1 + \dots + s_\ell = 0 \pmod 2\right\},$$
    and let $\widehat{W}$ be the $\widehat{\bcca}$-submodule of $\widetilde{V}(\mathbf{M}, \mathbf{s}, c_L, c_M)$ generated by
    $$\left\{\widehat{\OO}^{s_\ell}_{r_\ell} \dots \widehat{\OO}^{s_1}_{r_1} \genvecbms \mid \ell \in \NN, r_\ell > \dots > r_1 \geq 1, s_1 + \dots + s_\ell = 1 \pmod 2\right\}.$$
\end{defn}

The following result shows that the action of $P_r$ on $\widehat{\OO}^{s_\ell}_{r_\ell} \dots \widehat{\OO}^{s_1}_{r_1} \genvecbms$ preserves the parity of $\sum s_i$. 

\begin{proposition}\label{pre-formula}
    For any $r, s_1,\ldots, s_{\ell}\in\NN$ and $r_\ell > \dots > r_1 \geq 1$, the abelian part of the BMS$_3$ algebra has the following action on $\widehat{\OO}^{s_\ell}_{r_\ell} \dots \widehat{\OO}^{s_1}_{r_1} \genvecbms$:
    \begin{align*}
        P_r \cdot \widehat{\OO}^{s_\ell}_{r_\ell} \dots \widehat{\OO}^{s_1}_{r_1} \genvecbms &= \delta_{0,r} 2{\bf M} \widehat{\OO}^{s_\ell}_{r_\ell} \dots \widehat{\OO}^{s_1}_{r_1} \genvecbms && \pmod{\bigoplus_{\substack{0 \le u_i \le s_i, \sum u_i < \sum s_i, \\ \sum(s_i - u_i) = 0 \pmod{2}}} \kk  \widehat{\OO}^{u_\ell}_{r_\ell} \dots \widehat{\OO}^{u_1}_{r_1} \genvecbms}, \\
        \widehat{P}_r \cdot \widehat{\OO}^{s_\ell}_{r_\ell} \dots \widehat{\OO}^{s_1}_{r_1} \genvecbms &= 0 && \pmod{\bigoplus_{\substack{0 \le u_i \le s_i, \\ \sum(s_i - u_i) = 1 \pmod{2}}} \kk \widehat{\OO}^{u_\ell}_{r_\ell} \dots \widehat{\OO}^{u_1}_{r_1} \genvecbms}.
    \end{align*}
\end{proposition}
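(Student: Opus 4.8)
The plan is to prove the two formulas simultaneously, by induction on the total degree $d \coloneqq s_1 + \dots + s_\ell$ of the monomial $\widehat{\OO}^{s_\ell}_{r_\ell} \cdots \widehat{\OO}^{s_1}_{r_1}$. Three ingredients are needed. First, the bracket relations of \eqref{eq:bra-P-hO} together with \eqref{eq:bms bracket}:
\[
    [P_a, \widehat{\OO}_b] = (a - b)\widehat{P}_{a + b} + (a + b)\widehat{P}_{a - b}, \qquad
    [\widehat{P}_a, \widehat{\OO}_b] = (a - b)P_{a + b} + (a + b)P_{a - b} + \tfrac16 a(a^2 - 1)\delta_{a, b} C_M,
\]
so that commuting one $P$ (resp.\ one $\widehat{P}$) past one $\widehat{\OO}$ deletes exactly that $\widehat{\OO}$ and turns the abelian generator into a combination of $\widehat{P}$'s (resp.\ of $P$'s, plus possibly a multiple of $C_M$). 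Second, the action on the cyclic vector: $P_a \genvecbms = \delta_{0, a} 2\mathbf{M} \genvecbms$, $\widehat{P}_a \genvecbms = 0$ for all $a$, and $C_M \genvecbms = c_M \genvecbms$; this is the base case $d = 0$. Third, since $r_\ell$ is the largest of the indices $r_1 < \dots < r_\ell$, left-multiplying any ordered monomial $\widehat{\OO}^{u_\ell}_{r_\ell} \cdots \widehat{\OO}^{u_1}_{r_1} \genvecbms$ (with $0 \le u_i \le s_i$) by $\widehat{\OO}_{r_\ell}$ again yields the ordered monomial $\widehat{\OO}^{u_\ell + 1}_{r_\ell} \cdots \widehat{\OO}^{u_1}_{r_1} \genvecbms$, so no re-ordering, and in particular no new $\widehat{\OO}$-index, is ever introduced.

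For the inductive step take $d \ge 1$, discard any zero exponents so that $s_\ell \ge 1$, and write $X \coloneqq \widehat{\OO}^{s_\ell}_{r_\ell} \cdots \widehat{\OO}^{s_1}_{r_1} \genvecbms = \widehat{\OO}_{r_\ell} X'$ where $X' \coloneqq \widehat{\OO}^{s_\ell - 1}_{r_\ell} \cdots \widehat{\OO}^{s_1}_{r_1} \genvecbms$ has total degree $d - 1$. Then
\begin{align*}
    P_r X &= \widehat{\OO}_{r_\ell}(P_r X') + \big((r - r_\ell)\widehat{P}_{r + r_\ell} + (r + r_\ell)\widehat{P}_{r - r_\ell}\big) X', \\
    \widehat{P}_r X &= \widehat{\OO}_{r_\ell}(\widehat{P}_r X') + \big((r - r_\ell)P_{r + r_\ell} + (r + r_\ell)P_{r - r_\ell} + \tfrac16 r(r^2 - 1)\delta_{r, r_\ell} C_M\big) X',
\end{align*}
and I apply the induction hypothesis to each term on the right. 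For the first line, $\widehat{\OO}_{r_\ell}(P_r X') = \delta_{0, r} 2\mathbf{M}\, X + \widehat{\OO}_{r_\ell}\cdot(\text{correction part of }P_r X')$, and by the third ingredient this produces only monomials $\widehat{\OO}^{u_\ell + 1}_{r_\ell} \cdots \widehat{\OO}^{u_1}_{r_1} \genvecbms$ with $u_\ell + 1 \le s_\ell$, of degree $< d$, and of even parity $\sum (s_i - u_i) \equiv 0 \pmod 2$ — the point being that both the exponent of $\widehat{\OO}_{r_\ell}$ and the bound on it increase by one in passing from $X'$ to $X$. The two terms $\widehat{P}_{r \pm r_\ell} X'$ are, by the $\widehat{P}$-case of the hypothesis, combinations of monomials supported on $\{r_1, \dots, r_\ell\}$ of degree $\le d - 2$ whose parity is odd relative to the exponents of $X'$; relative to the exponents of $X$ that parity flips, since only the bound in the $r_\ell$-slot moves (by one), so these contributions are even-parity and of degree $< d$. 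This is precisely the asserted formula for $P_r X$. The second line is identical in structure: $\widehat{P}_r X'$ has no leading term since $\widehat{P}_r \genvecbms = 0$, so $\widehat{\OO}_{r_\ell}(\widehat{P}_r X')$ contributes only odd-parity corrections; $P_{r \pm r_\ell} X'$ contributes a multiple of $X'$ (its leading term, nonzero only when $r = r_\ell$) together with even-parity corrections, and $C_M X' = c_M X'$. Relative to the exponents of $X$, the monomial $X'$ itself and all those even-parity corrections become odd-parity, of degree $< d$, still supported on $\{r_1, \dots, r_\ell\}$, so $\widehat{P}_r X \equiv 0$ modulo the stated subspace.

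I expect the only real work to be the parity bookkeeping across the substitution $(s_1, \dots, s_{\ell - 1}, s_\ell) \leftrightarrow (s_1, \dots, s_{\ell - 1}, s_\ell - 1)$: only the last slot changes, so each quantity ``$\sum (s_i - u_i) \bmod 2$'' shifts by exactly one whenever one moves between $X$ and $X'$ without a prepended $\widehat{\OO}_{r_\ell}$, and shifts back whenever one is prepended; keeping this consistent across the three right-hand summands is the crux. A lesser subtlety is the $C_M$-term, present only when $r = r_\ell$, which short-circuits the chain of brackets; but because it emerges from a $[\widehat{P}, \widehat{\OO}]$-bracket it always represents ``one further $\widehat{\OO}$ deleted after an even number of deletions'', exactly the parity it must carry, after which it simply acts by the scalar $c_M$. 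Beyond this, nothing should go wrong: no new $\widehat{\OO}$-index is ever produced (brackets with $\widehat{\OO}_{r_\ell}$ only delete that one generator, and the induction hypothesis already confines corrections to the surviving indices), and the degree drops strictly in every correction term.
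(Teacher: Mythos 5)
Your proposal is correct and follows essentially the same route as the paper: induction on the total degree $d=\sum s_i$, splitting off the leftmost factor $\widehat{\OO}_{r_\ell}$, commuting $P_r$ or $\widehat{P}_r$ past it via \eqref{eq:bra-P-hO}, and tracking how the parity and exponent bounds shift by one in the $r_\ell$-slot when passing between $X$ and $X'$. The only (immaterial) difference is that you take $d=0$ as the sole base case and run the inductive step from $d\geq 1$, whereas the paper verifies $d=0,1$ directly.
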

\begin{proof}
    We prove the result by induction on $d \coloneqq \sum_{i = 1}^{\ell} s_i$. It is not difficult to check that the two equalities hold for $d = 0, 1$. Now assume that $d>1$ and the two equalities hold for all $0 \le d'< d$. Consider  the first equality for the case $d = \sum_{i = 1}^{\ell} s_i$. We have
    \begin{align*}
        P_r \cdot \widehat{\OO}^{s_\ell}_{r_\ell} \dots \widehat{\OO}^{s_1}_{r_1} \genvecbms &= [P_r, \widehat{\OO}_{r_\ell}] \widehat{\OO}^{s_\ell-1}_{r_\ell} \dots \widehat{\OO}^{s_1}_{r_1} \genvecbms + \widehat{\OO}_{r_\ell} P_r\widehat{\OO}^{s_\ell - 1}_{r_\ell} \dots \widehat{\OO}^{s_1}_{r_1} \genvecbms \\
        &= \left((r - r_{\ell}) \widehat{P}_{r + r_{\ell}} + (r + r_{\ell}) \widehat{P}_{r - r_{\ell}} + \widehat{\OO}_{r_\ell} P_r\right) \widehat{\OO}^{s_\ell - 1}_{r_\ell} \dots \widehat{\OO}^{s_1}_{r_1} \genvecbms.
    \end{align*}
    The induction hypothesis implies that
    $\left((r - r_{\ell}) \widehat{P}_{r + r_{\ell}} + (r + r_{\ell}) \widehat{P}_{r - r_{\ell}}\right) \widehat{\OO}^{s_\ell - 1}_{r_\ell} \dots \widehat{\OO}^{s_1}_{r_1} \genvecbms$ belongs to 
    $$\bigoplus_{\substack{0 \le u_i \le s_i - \delta_{i,\ell}, \\
    \sum(s_i - u_i) - 1 = 1 \pmod{2}}} \kk \widehat{\OO}^{u_\ell}_{r_\ell} \dots \widehat{\OO}^{u_1}_{r_1} \genvecbms \subseteq \bigoplus_{\substack{0 \le u_i \le s_i, \sum u_i < \sum s_i, \\
    \sum(s_i - u_i) = 0 \pmod{2}}} \kk  \widehat{\OO}^{u_\ell}_{r_\ell} \dots \widehat{\OO}^{u_1}_{r_1} \genvecbms,$$
    and that $P_r\widehat{\OO}^{s_\ell-1}_{r_\ell}\dots \widehat{\OO}^{s_1}_{r_1}\genvecbms - \delta_{0,r} 2{\bf M} \widehat{\OO}^{s_\ell-1}_{r_\ell} \dots \widehat{\OO}^{s_1}_{r_1} \genvecbms$ belongs to
    $$\bigoplus_
    {\substack{0 \le u_i \le s_i-\delta_{i,\ell}, \sum u_i < \sum s_i - 1, \\
    \sum(s_i - u_i) - 1= 0 \pmod{2}}} \kk  \widehat{\OO}^{u_\ell}_{r_\ell} \dots \widehat{\OO}^{u_1}_{r_1} \genvecbms.$$
    The first equality follows. The second equality can be proved similarly.
\end{proof}

This naturally leads to the decomposition of $\widetilde{V}(\mathbf{M},\mathbf{s},c_L,c_M)$ given by the following corollary.

\begin{corollary}\label{cor:bms_massive_module_decomposable}
    As a $\widehat{\bcca}$-module, the massive module $\widetilde{V}(\mathbf{M},\mathbf{s},c_L,c_M)$ is decomposable. More precisely, we have
    $$\widetilde{V}(\mathbf{M},\mathbf{s},c_L,c_M) = \widehat{V} \oplus \widehat{W}.$$
\end{corollary}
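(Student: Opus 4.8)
The plan is to show that $\widetilde{V}(\mathbf{M},\mathbf{s},c_L,c_M) = \widehat{V} + \widehat{W}$ and that the sum is direct, using the alternative PBW basis \eqref{eq:Vtilde alt PBW basis} together with Proposition \ref{pre-formula}. First I would observe that the spanning sets defining $\widehat{V}$ and $\widehat{W}$ in Definition \ref{def:Vhat and What} consist of all basis monomials of the form $\widehat{\OO}^{s_\ell}_{r_\ell} \dots \widehat{\OO}^{s_1}_{r_1} \genvecbms$ with $\sum s_i$ even (respectively odd), and that every element of \eqref{eq:Vtilde alt PBW basis} is obtained from such a monomial by applying a product of $\OO_{p}$'s. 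Since the $\OO_p$ and $\widehat{\OO}_r$ generate the quotient $\widehat{\bcca}$-action, it follows immediately that $\widehat{V} + \widehat{W} = \widetilde{V}(\mathbf{M},\mathbf{s},c_L,c_M)$, so the only real content is proving that the intersection is zero, i.e.\ that the two submodules are spanned by disjoint subsets of the PBW basis \eqref{eq:Vtilde alt PBW basis}.

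To that end, I would introduce the parity grading on the PBW basis: assign to the monomial $\OO_{p_k}^{q_k} \dots \OO_{p_1}^{q_1} \widehat{\OO}^{s_\ell}_{r_\ell} \dots {\widehat{\OO}}^{s_1}_{r_1} \genvecbms$ the parity of $\sum_{i} s_i \pmod 2$ (note the $\OO_p$ exponents $q_j$ do \emph{not} enter). Write $\widetilde{V} = \widetilde{V}_{\bar 0} \oplus \widetilde{V}_{\bar 1}$ for the corresponding vector-space decomposition. The claim is that this is a $\widehat{\bcca}$-module decomposition with $\widehat{V} = \widetilde{V}_{\bar 0}$ and $\widehat{W} = \widetilde{V}_{\bar 1}$. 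The inclusions $\widehat{V} \subseteq \widetilde{V}_{\bar 0}$ and $\widehat{W} \subseteq \widetilde{V}_{\bar 1}$ will follow once we know each $\widetilde{V}_{\bar\epsilon}$ is a submodule, since the generators listed in Definition \ref{def:Vhat and What} lie in the appropriate homogeneous piece; the reverse inclusions follow because each PBW basis monomial is visibly in the submodule generated by its ``$\widehat{\OO}$-part'' (apply the $\OO_p$'s). Hence everything reduces to: $\widetilde{V}_{\bar 0}$ and $\widetilde{V}_{\bar 1}$ are $\widehat{\bcca}$-submodules.

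For this I would check that each generator $\OO_n$, $P_m$, $C_M$ of $\widehat{\bcca}$ preserves parity. The central element $C_M$ acts by the scalar $c_M$, so it trivially preserves parity. For $\OO_n = L_n - L_{-n}$: using $L_n = \tfrac12(\widehat{\OO}_n + \OO_n)$ and $L_{-n} = \tfrac12(\widehat{\OO}_n - \OO_n)$ we get $\OO_n = L_n - L_{-n}$ is already in our basis, and the commutator relations \eqref{eq:BCCA Lie bracket} together with the structure of the PBW straightening show that multiplying a parity-$\bar\epsilon$ monomial by $\OO_n$ stays in parity $\bar\epsilon$ — the key point being that commuting $\OO_n$ past the $\widehat{\OO}$'s produces only further $\OO$'s and $\widehat{\OO}$'s without changing the count of $\widehat{\OO}$-factors mod $2$ (indeed $[\OO_n,\widehat{\OO}_m]$ is a combination of $\OO$'s, not $\widehat{\OO}$'s, up to central terms, by a short computation analogous to \eqref{eq:bra-P-hO}). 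For $P_m$ and $\widehat{P}_m$, this is exactly the content of Proposition \ref{pre-formula}: the displayed congruences say precisely that $P_r$ sends a parity-$\bar 0$ expression in the $\widehat{\OO}$'s to a parity-$\bar 0$ expression and $\widehat{P}_r$ flips parity, and one then commutes $P_r$ past any leading $\OO_p$-factors using $[\OO_n,P_m]$ from \eqref{eq:BCCA Lie bracket} (which again does not create $\widehat{\OO}$'s) to reduce the general monomial to the case covered by Proposition \ref{pre-formula}. Assembling these, $\widetilde{V}_{\bar 0}$ and $\widetilde{V}_{\bar 1}$ are submodules, hence equal to $\widehat V$ and $\widehat W$ respectively, and $\widetilde{V} = \widehat V \oplus \widehat W$.

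The main obstacle I anticipate is the careful bookkeeping in the reduction step: a general PBW monomial in \eqref{eq:Vtilde alt PBW basis} has $\OO_p$-factors to the left of the $\widehat{\OO}$-factors, and to invoke Proposition \ref{pre-formula} one must first move $P_r$ (or $\widehat P_r$) through those $\OO_p$'s. This requires knowing that each bracket $[\OO_n, \OO_m]$, $[\OO_n, P_m]$ lands back in $\spn\{\OO_k\} \oplus \spn\{P_k\} \oplus \kk C_M$ — true by \eqref{eq:BCCA Lie bracket} — and that $[\OO_n,\widehat P_m]$, $[\widehat{\OO}_n, P_m]$ etc.\ never manufacture new $\widehat{\OO}$-factors; a short direct computation with \eqref{eq:bms bracket} confirms $[\OO_n,\widehat{\OO}_m] \in \spn\{\OO_k\} \oplus \kk C_L$ and $[\OO_n, \widehat P_m] \in \spn\{\widehat P_k\}$, so parity really is controlled. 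Once this is in place the argument is purely combinatorial, so I would keep the write-up short and delegate the parity-tracking to Proposition \ref{pre-formula} and a one-line remark about the extra $\OO_p$-commutations.
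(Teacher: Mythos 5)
Your overall strategy is the same as the paper's: use Proposition \ref{pre-formula} together with the PBW basis \eqref{eq:Vtilde alt PBW basis} to show that $\widehat{V}$ and $\widehat{W}$ are exactly the spans of the PBW monomials with $\sum s_i$ even, respectively odd, and conclude the direct sum by comparing with the full basis. The paper's proof is just a terse version of your parity argument, so the structure is fine. However, there is a concrete error in your justification of the $\OO_n$-step: it is \emph{not} true that $[\OO_n,\widehat{\OO}_m]$ is a combination of $\OO$'s up to central terms. A direct computation with \eqref{eq:bms bracket} gives
$[\OO_n,\widehat{\OO}_m] = (n-m)\widehat{\OO}_{n+m} + (n+m)\widehat{\OO}_{n-m} + \tfrac{1}{6}n(n^2-1)\delta_{n,m}C_L$,
i.e.\ the bracket lies in $\spn\{\widehat{\OO}_k\} \oplus \kk C_L$, not in $\spn\{\OO_k\} \oplus \kk C_L$. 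This matters because your stated ``key point'' would, if true, defeat the very conclusion you want: if commuting $\OO_n$ past $\widehat{\OO}_m$ produced $\OO$'s in place of $\widehat{\OO}_m$, the number of $\widehat{\OO}$-factors would drop by one and the parity would flip. (The correct bracket also produces a $C_L$-term when $n=m$, which likewise drops the $\widehat{\OO}$-count, so even with the right formula this route needs care.)

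The fix is simple and makes the bracket $[\OO_n,\widehat{\OO}_m]$ unnecessary altogether: in the PBW basis \eqref{eq:Vtilde alt PBW basis} the $\OO_p$-factors already stand to the \emph{left} of the $\widehat{\OO}_r$-factors, so when you act by $\OO_n$ you only have to straighten $\OO_n$ among the other $\OO_p$'s, and $[\OO_n,\OO_p] \in \spn\{\OO_k\}$ (with no central term) by \eqref{eq:BCCA Lie bracket}; the $\widehat{\OO}$-part, and hence the parity, is never touched. With that correction the rest of your argument is sound: for $P_r$ you commute past the leading $\OO_p$'s using $[\OO_n,P_m] \in \spn\{P_k\} \oplus \kk C_M$ (and $C_M$ acts by the scalar $c_M$), then invoke Proposition \ref{pre-formula} to see that the parity of $\sum s_i$ is preserved, which yields $\widehat{V} = \widetilde{V}_{\bar 0}$, $\widehat{W} = \widetilde{V}_{\bar 1}$, and $\widetilde{V}(\mathbf{M},\mathbf{s},c_L,c_M) = \widehat{V} \oplus \widehat{W}$, exactly as in the paper.
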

\begin{proof}
    From Proposition \ref{pre-formula} and the PBW  theorem, we may deduce that $\widehat{V}$ and $\widehat{W}$ have bases given by
    \begin{gather*}
        \left\{\left.\OO_{p_k}^{q_k} \dots \OO_{p_1}^{q_1} \widehat{\OO}^{s_\ell}_{r_\ell} \dots {\widehat{\OO}}^{s_1}_{r_1} \genvecbms \right| p_k > \dots > p_1 \geq 1, r_\ell > \dots > r_1 \geq 1, \sum_{i = 1}^\ell s_i = 0 \pmod 2\right\}, \\
        \left\{\left.\OO_{p_k}^{q_k} \dots \OO_{p_1}^{q_1} \widehat{\OO}^{s_\ell}_{r_\ell} \dots {\widehat{\OO}}^{s_1}_{r_1} \genvecbms \right| p_k > \dots > p_1 \geq 1, r_\ell > \dots > r_1 \geq 1, \sum_{i = 1}^\ell s_i = 1 \pmod 2\right\},
    \end{gather*}
    respectively.
    Comparing this with the PBW basis of $\widetilde{V}(\mathbf{M}, \mathbf{s}, c_L, c_M)$ given by \eqref{eq:Vtilde alt PBW basis} leads to the result.
\end{proof}

It is interesting to note that the massive module $\widetilde{V}(\mathbf{M},\mathbf{s},c_L,c_M)$ with $\mathbf{M}+\frac{n^2-1}{24} \neq 0$ for all $n \in \ZZ_+$ is irreducible as a $\bms$-module but decomposable as a $\widehat{\bcca}$-module.

We finish our discussion of $\widetilde{V}(\mathbf{M},\mathbf{s},c_L,c_M)$ as a $\widehat{\bcca}$-module with the following conjecture.

\begin{conjecture}\label{conj:Ind}
    The $\widehat \bcca$-modules $\widehat{V}$ and $\widehat{W}$ are  indecomposable. 
\end{conjecture}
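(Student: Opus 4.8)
The plan is to show indecomposability by analysing the fixed subspace of an involution, or more directly by a careful study of the $\widehat{\bcca}$-submodule structure of $\widehat V$ and $\widehat W$. First I would set up the combinatorial bookkeeping from Corollary~\ref{cor:bms_massive_module_decomposable}: both $\widehat V$ and $\widehat W$ have explicit PBW bases indexed by pairs of partitions $(p_k^{q_k}\cdots p_1^{q_1}; r_\ell^{s_\ell}\cdots r_1^{s_1})$ subject to a parity constraint on $\sum s_i$. I would introduce the ``level'' filtration by $d = \sum s_i$ (the number of $\widehat{\OO}$ factors), which is a filtration compatible with the $\widehat{\bcca}$-action by Proposition~\ref{pre-formula}: acting by $P_r$ or $\widehat P_r$ weakly lowers $d$, while acting by the $\OO_n$ preserves it. The associated graded object decomposes, and the bottom piece ($d=0$) is exactly $U(\widehat{\bcca})\genvecbms$, which is irreducible by Proposition~\ref{prop:massive module over b}.

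Next I would argue as follows. Suppose $\widehat V = A \oplus B$ as $\widehat{\bcca}$-modules with $A, B$ nonzero. Both $A$ and $B$ must contain vectors of minimal level $d=0$ (since acting by the algebra only lowers $d$, any nonzero submodule ``reaches down'' to level $0$ — make this precise using that $P_0\genvecbms = 2\mathbf M\genvecbms$ with $\mathbf M \ne 0$, so repeatedly applying $\widehat P_r$'s to kill the top $\widehat{\OO}$ factors produces a nonzero level-$0$ vector). But the level-$0$ subspace $U(\widehat{\bcca})\genvecbms$ of $\widehat V$ is irreducible, hence cannot split as $(A\cap \text{level }0)\oplus(B\cap\text{level }0)$ unless one of them is zero — contradiction. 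The same argument applies verbatim to $\widehat W$ once we identify its minimal-level piece: level $d=1$, spanned by $U(\widehat{\bcca})\widehat{\OO}_n\genvecbms$ for $n\ge 1$, which by Proposition~\ref{prop:P and Phat acting on Ohat_n} is isomorphic to $U(\widehat{\bcca})\genvecbms$ and hence irreducible. The one gap to fill is that the minimal-level part of $\widehat W$ is not a single copy of the irreducible but a (possibly infinite) sum $\sum_{n\ge 1} U(\widehat{\bcca})\widehat{\OO}_n\genvecbms$; I would need to check either that this sum is direct and that the $\OO_n$-action mixes these summands in a way that forbids the splitting from respecting them, or more robustly, reduce to the statement that any decomposition of $\widehat W$ induces a decomposition of $\widehat W/\widehat W^{(\ge 2)}$ as a module over $U(\OO)$ acting on $\bigoplus_n U(\OO)\widehat{\OO}_n\genvecbms$, and then use the structure of $\OO$ (e.g. that this $\OO$-module is generated by the single vector $\widehat{\OO}_1\genvecbms$ under the combined action, or an explicit indecomposability argument at the bottom level) to conclude.

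The main obstacle I anticipate is precisely this last point: controlling the bottom-level piece of $\widehat W$. Unlike $\widehat V$, where the bottom level is literally the irreducible $U(\widehat{\bcca})\genvecbms$, for $\widehat W$ the bottom level is the span of all $\widehat{\OO}_{r}\genvecbms$ together with everything $\OO$ generates from them, and one must understand how $\widehat{\OO}_r$ for different $r$ are related. One plausible resolution: show that $\widehat{\OO}_1\genvecbms$ generates all of level $1$ of $\widehat W$ under $U(\widehat{\bcca})$ modulo higher levels, by computing $[\OO_n,\widehat{\OO}_m]$ inside $\bms$ (which gives $\widehat{\OO}_{n+m}$-type terms, plus $\widehat{\OO}_{n-m}$ terms and possibly $\OO$ and central terms), and checking these brackets act transitively enough on $\{\widehat{\OO}_r\genvecbms\}$. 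If that transitivity holds, then the bottom level of $\widehat W$ is a single irreducible $\widehat{\bcca}$-module and the same ``reach down to the irreducible bottom'' argument closes the proof. An alternative, cleaner route worth attempting is to exhibit a surjection $\widehat V \twoheadrightarrow \widehat W$ (or vice versa) of $\widehat{\bcca}$-modules — for instance $\widehat{\OO}_r \mapsto \widehat P_r$-type maps suggested by the symmetry of \eqref{eq:bra-P-hO} — reducing indecomposability of one to the other, though I suspect no such isomorphism exists and the direct filtration argument is the way to go. I would be honest in the paper that the bottom-level transitivity claim for $\widehat W$ is the crux, and if it turns out to fail, the conjecture should be downgraded or the statement weakened to ``$\widehat V$ is indecomposable'' with $\widehat W$ left open.
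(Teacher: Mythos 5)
First, a point of order: the statement you were asked to prove is stated in the paper as Conjecture \ref{conj:Ind} and is left \emph{unproven} there, so there is no proof of record to compare against; your sketch has to stand on its own, and as written it does not close the conjecture. The sound parts are the bookkeeping: the subspaces spanned by PBW monomials with at most $d$ factors $\widehat{\OO}$ are indeed $\widehat{\bcca}$-submodules (left multiplication by $U(\OO)$ preserves the number of $\widehat{\OO}$-factors exactly, and by Proposition \ref{pre-formula} the $P_r$ lower it by even amounts), and your reduction ``if $\widehat{V}=A\oplus B$ then both summands meet the irreducible bottom layer, contradiction'' is valid \emph{provided} every nonzero submodule of $\widehat{V}$ meets $U(\widehat{\bcca})\genvecbms$ nontrivially. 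That essentiality lemma (which is strictly stronger than indecomposability) is exactly the crux, and your justification of it is flawed: you propose to ``reach down to level $0$'' by repeatedly applying $\widehat{P}_r$, but $\widehat{P}_r\notin\widehat{\bcca}$ (Notation \ref{ntt:Ohat and Phat}), so a $\widehat{\bcca}$-submodule need not be closed under it. Using only $\OO_n$, $P_m$, $C_M$, one must show that some element of $U(\widehat{\bcca})$ strictly lowers the top level of an arbitrary nonzero vector while keeping it nonzero; this requires a genuine leading-term analysis (the $P_r$ interact with the $\OO$-part of a monomial through $[P_r,\OO_p]$, which can regenerate $P_0$-terms, and cancellations between monomials must be ruled out), none of which is carried out.

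For $\widehat{W}$ the situation is worse: the escape route you propose fails outright. Each $U(\widehat{\bcca})\widehat{\OO}_n\genvecbms$ equals $U(\OO)\widehat{\OO}_n\genvecbms$ — this is precisely the content of the proof of Proposition \ref{prop:P and Phat acting on Ohat_n}, since $P_r\widehat{\OO}_n\genvecbms=\delta_{r,0}2\mathbf{M}\,\widehat{\OO}_n\genvecbms$ and left multiplication by $\OO_m$ sends $x\,\widehat{\OO}_n\genvecbms$ to $(\OO_m x)\widehat{\OO}_n\genvecbms$ with $\OO_m x\in U(\OO)$. Hence these cyclic modules for different $n$ are supported on disjoint sets of PBW monomials, intersect trivially, and the bottom layer of $\widehat{W}$ is the \emph{decomposable} module $\bigoplus_{n\geq1}U(\widehat{\bcca})\widehat{\OO}_n\genvecbms$, an infinite direct sum of copies of the irreducible $U(\widehat{\bcca})\genvecbms$. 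The hoped-for ``transitivity'' of the $\OO$-action on $\{\widehat{\OO}_r\genvecbms\}$ via $[\OO_m,\widehat{\OO}_n]$ is an illusion: in the module, $\OO_m\cdot\widehat{\OO}_n\genvecbms$ is just the PBW monomial $\OO_m\widehat{\OO}_n\genvecbms$, and the commutator terms $\widehat{\OO}_{m\pm n}$ are absorbed within the same cyclic submodule rather than connecting different indices $r$. So the ``irreducible bottom level'' strategy cannot work for $\widehat{W}$: any proof of its indecomposability must show that the higher levels glue these infinitely many isomorphic bottom copies together so that no splitting is compatible with them, which your sketch does not attempt. You were right to flag this as the crux and to offer the weakened fallback, but even that fallback (indecomposability of $\widehat{V}$ alone) still rests on the unproven essentiality lemma above, so the conjecture remains open on both counts.
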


\subsection{The \texorpdfstring{$\OO$}{O}-module \texorpdfstring{$\Omega(\lambda,a)$}{Omega(lambda,a)}}\label{subsec:Omega module}

We finish this section by restricting one final class of Virasoro modules to $\OO$, first defined in \cite{LuZhao14}. For any pair  $(a,\lambda)\in\kk\times \kk^*$, one can define a Virasoro algebra module structure on $\kk[X]$ by
$$C \cdot f(X) = 0, \quad L_n \cdot f(X) = \lambda^n (X + na) f(X + n), \quad n \in \ZZ.$$
The module is irreducible if and only if $a \ne 0$, and $\Omega(\lambda,0)$ has a unique irreducible submodule which is isomorphic to $\Omega(\lambda,1)$ \cite[Section 4.3]{LuZhao10}. Restricting the above action to the subalgebra $\OO$ of $\Vir$, the $\OO$-module structure on $\kk[X]$ is given by 
\begin{equation}\label{O-act}
    \OO_n \cdot f(X) = \lambda^{n}(X + na)f(X + n) - \lambda^{-n} (X - na) f(X - n), \quad n \geq 1.
\end{equation}
We still denote the corresponding $\OO$-module by $\Omega(\lambda, a)$.

\begin{proposition}\label{Omega-simplicity}
    Suppose $\lambda\in\kk^*$ and $a\in\kk$. If $\lambda\ne \pm 1$ and $a\ne 0$, then the $\OO$-module $\Omega(\lambda,a)$ is irreducible.    
\end{proposition}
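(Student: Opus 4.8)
The plan is to show that any nonzero $\OO$-submodule $V \subseteq \Omega(\lambda,a)$ equals all of $\kk[X]$. Pick $0 \neq f(X) \in V$ of minimal degree $d$; I will argue that $d = 0$, so $V$ contains a nonzero constant, and then that $V = \kk[X]$. The main tool is to apply the operators $\OO_n$ (and products/combinations of them) to $f$ and analyze the resulting polynomials degree by degree, exploiting the explicit formula \eqref{O-act}. A convenient reformulation: write the action of $\OO_n$ as $\OO_n = \lambda^n T_n^+ - \lambda^{-n} T_n^-$, where $(T_n^{\pm} g)(X) = (X \pm n a) g(X \pm n)$ are the shift-and-multiply operators coming from the Virasoro modules $\Omega(\lambda^{\pm 1}, a)$; note $\Omega(\lambda, a)$ as an $\OO$-module is, by construction, the "antidiagonal" piece inside $\Omega(\lambda,a) \oplus \Omega(\lambda^{-1},a)$ under $L_n \mapsto -L_{-n}$.

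First I would establish that $V$ is stable under the shift operator $X \mapsto X+1$ composed appropriately, or more precisely that $V$ contains polynomials with controlled leading behaviour. Concretely, for $f$ of degree $d$ with leading coefficient $c$, the polynomial $\OO_n \cdot f$ has the form $c(\lambda^n - \lambda^{-n}) X^{d+1} + (\text{lower order})$ — here is where the hypothesis $\lambda \neq \pm 1$ enters, guaranteeing $\lambda^n - \lambda^{-n} \neq 0$ for all $n \geq 1$, so that $\deg(\OO_n \cdot f) = d+1$ exactly, and $V$ contains elements of every degree $\geq d$. Next, to push the minimal degree down, I would take two elements of $V$ obtained by applying different $\OO_n$'s to members of $V$ and form a $\kk$-linear combination that cancels the top-degree term; the subtlety is that such a combination might vanish identically, so I need to choose the combination to cancel only the leading term while keeping the next one nonzero. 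A clean way: compute $\OO_1 \cdot f$ and $\OO_2 \cdot f$ — or better, act twice and consider $\OO_1^2 \cdot f$, $\OO_1 \cdot (\OO_2 \cdot f)$, etc. — and extract from the span a polynomial of degree $d+1$ whose sub-leading coefficient is forced, by the $a \neq 0$ hypothesis, to differ from what a "generic" degree-$(d+1)$ element would have; iterating or differencing then produces an element of degree $< d$ unless $d = 0$.

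The technically delicate step — and the one I expect to be the main obstacle — is the bookkeeping that turns "$\lambda \neq \pm 1$ and $a \neq 0$" into an actual contradiction with $d > 0$. The coefficient of $X^d$ in $\OO_n \cdot f$ involves both the leading coefficient of $f$ (weighted by $\lambda^n - \lambda^{-n}$ and by the shift $\pm n$) and the sub-leading coefficient (weighted by $\lambda^n + \lambda^{-n}$), together with the factor $na$. One wants to form, for suitable $n$, the element $\OO_n \cdot f - (\text{scalar}) \cdot \OO_1 \cdot f'$ where $f'$ is some other element already known to lie in $V$ of the same degree $d$, chosen so that both the $X^{d+1}$ and $X^{d}$ coefficients cancel but something of degree $d-1$ survives — the survival being exactly what $a \neq 0$ secures. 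Managing this cleanly probably requires either choosing $f$ among minimal-degree elements to also be monic with a normalized sub-leading coefficient, or else working with $\OO_1$ and $\OO_2$ simultaneously and solving a small linear system whose determinant is a nonzero polynomial in $\lambda, \lambda^{-1}$. Once $d = 0$ is forced, $V$ contains a nonzero constant $c_0$; then $\OO_1 \cdot c_0 = c_0\lambda(X+a) - c_0\lambda^{-1}(X-a) = c_0(\lambda - \lambda^{-1})X + c_0(\lambda+\lambda^{-1})a$ is a genuine degree-$1$ polynomial (again using $\lambda \neq \pm 1$), so $V$ contains a linear polynomial, and inductively $\OO_n$ applied to a degree-$k$ element of $V$ gives degree $k+1$, filling out all of $\kk[X]$. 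Hence $V = \Omega(\lambda,a)$, proving irreducibility.
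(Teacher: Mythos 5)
Your overall skeleton (take a nonzero submodule $M$, pick $f \in M$ of minimal degree, act by the $\OO_n$ and try to force an element of smaller degree, then fill up all of $\kk[X]$) is the same as the paper's, but the step you yourself flag as ``the main obstacle'' is exactly the step that carries the whole proof, and you have not supplied it. Producing from $f$ of minimal degree $r>0$ an element of $M$ of degree $<r$ is not achieved in the paper by a ``small linear system'' in $\OO_1,\OO_2$: the paper writes $\lambda^k\OO_k\cdot f(X) = \sum_{i=0}^{r+1}\varphi_i(X)\lambda^{2k}k^i + \sum_{i=0}^{r+1}\psi_i(X)k^i$, i.e.\ as a linear combination of the $2(r+2)$ functions $k^i,\lambda^{2k}k^i$ of $k$ with \emph{polynomial} coefficients $\varphi_i,\psi_i$, then varies $k$ over $2(r+2)$ values and invokes the nonvanishing of a generalized Vandermonde determinant (Lemmas \ref{pre-lem} and \ref{key-det}, needing only $\lambda^2\ne 0,1$) to conclude that every $\varphi_i(X),\psi_i(X)$ individually lies in $M$; since $\varphi_{r+1}=ac_r$ is a nonzero \emph{constant} (this is precisely where $a\ne 0$ enters), one gets a contradiction with $1\notin M$. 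The number of coefficient polynomials to separate grows with $r$, so a fixed pair of operators with two scalars cannot in general cancel the top coefficients without killing everything; your sketch does not show that something of degree $d-1$ survives, and no amount of normalizing $f$ obviously fixes this. Without this (or an equivalent) ingredient, your argument does not close.

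There is also a concrete false claim: $\lambda\ne\pm 1$ does \emph{not} guarantee $\lambda^n-\lambda^{-n}\ne 0$ for all $n\ge 1$ (take $\lambda$ a root of unity of order greater than $2$, e.g.\ a primitive cube root of unity and $n=3$), so your assertion that $\deg(\OO_n\cdot f)=d+1$ exactly for every $n$ is wrong; only those $n$ with $\lambda^{2n}\ne 1$ work (e.g.\ $n=1$ always does, which is all the paper uses in Lemma \ref{mod-generator}). This is repairable for the ``fill out all degrees'' part of your argument, but it further undermines the proposed bookkeeping in the main step, which would have to avoid the exceptional $n$. Finally, your closing sentence should be argued slightly more carefully (an element of every degree in $V$ gives $V=\kk[X]$ by an induction in which the lower-order terms are already known to lie in $V$, as in Lemma \ref{mod-generator}), though that part is routine.
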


To prove Proposition \ref{Omega-simplicity}, we need the following lemma from \cite{TanZhao}.

\begin{lemma}[{\cite[Lemma 2]{TanZhao}}]\label{pre-lem}
    Let $\lambda_1, \lambda_2, \dots,\lambda_m \in \kk$, $s_1,s_2, \dots, s_m \in \ZZ_+$ with 
    $s_1 + \dots + s_m = s$. Define a sequence of functions on $\ZZ$ as follows: 
    $f_1(n) = \lambda_1^n, f_2(n) = n\lambda_1^n,\dots, f_{s_1}(n) = n^{s_1 - 1} \lambda_1^n, f_{s_1+1}(n) = \lambda_2^n, \dots, f_{s_1 + s_2}(n) = n^{s_2 - 1} \lambda_2^n, \dots, f_s(n) = n^{s_m - 1}\lambda_m^n$. Let $\mathcal{M}=(y_{pq})$ be the $s \times s$ matrix with $y_{pq} = f_{q}(p-1)$, $q = 1,2,\dots, s$, $p = u + 1, u + 2, \dots, u + s$ where $u \in \NN$. Then
    $$\det(\mathcal{M}) = \prod_{j = 1}^m(s_j - 1)!!\lambda_j^{s_j(s_j + 2u - 1)/2}\prod_{1 \le i < j \le m}
    (\lambda_j - \lambda_i)^{s_i s_j},$$
    where $s_{j}!! = s_j!\times (s_j-1)! \times \dots \times 2! \times 1!$ with $0!!=1$, for convenience.
\end{lemma}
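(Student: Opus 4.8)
The plan is to prove irreducibility directly: given any nonzero $\OO$-submodule $W\subseteq\Omega(\lambda,a)=\kk[X]$, I will show $W=\kk[X]$, in two steps: first that $1\in W$, and then that $1$ generates all of $\kk[X]$ under the $\OO$-action.

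\emph{Step 1 (the hard part): $1\in W$.} Fix a nonzero $f\in W$ of degree $d$ with leading coefficient $c_d\neq0$, and consider $V_f:=\spn_\kk\{\OO_n\cdot f\mid n\ge1\}\subseteq\kk[X]_{\le d+1}$ (each $\OO_n\cdot f$ has $X$-degree at most $d+1$ by \eqref{O-act}). The key device is the single two-variable polynomial $h(X,Y):=(X+aY)f(X+Y)\in\kk[X,Y]$, for which \eqref{O-act} reads $\OO_n\cdot f(X)=\lambda^n h(X,n)-\lambda^{-n}h(X,-n)$. Suppose for contradiction that $1\notin V_f$; then some linear functional $\ell$ on $\kk[X]_{\le d+1}$ kills $V_f$ but satisfies $\ell(1)\neq0$. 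Applying $\ell$ in the $X$-variable and writing $p(Y):=\ell_X\big(h(X,Y)\big)\in\kk[Y]$ (so $\deg p\le d+1$), the relations $\ell(\OO_n\cdot f)=0$ become $\lambda^{2n}p(n)=p(-n)$ for all $n\ge1$. I would then invoke Lemma \ref{pre-lem}, with $\lambda_1=\lambda^2$, $\lambda_2=1$, and $s_1=s_2=\deg p+1$, to conclude that the functions $n\mapsto n^t(\lambda^2)^n$ and $n\mapsto n^t$ (for $0\le t\le\deg p$) are linearly independent on $\ZZ_{\ge1}$; this is where the hypothesis $\lambda\neq\pm1$, equivalently $\lambda^2\neq1$, enters, since it makes the relevant (generalised Vandermonde) determinant nonzero. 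Linear independence forces $p\equiv0$. On the other hand, the coefficient of $Y^{d+1}$ in $h(X,Y)=(X+aY)f(X+Y)$ is the constant $ac_d\neq0$, which is precisely where $a\neq0$ is used; hence the $Y^{d+1}$-coefficient of $p=\ell_X(h)$ equals $ac_d\,\ell(1)\neq0$, contradicting $p\equiv0$. Therefore $1\in V_f\subseteq W$.

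\emph{Step 2 (routine): $W=\kk[X]$.} I will show $X^k\in W$ for all $k\ge0$ by induction on $k$, the case $k=0$ being Step 1. Assuming $1,X,\dots,X^{k-1}\in W$, note that $\OO_1\cdot X^{k-1}=\lambda(X+a)(X+1)^{k-1}-\lambda^{-1}(X-a)(X-1)^{k-1}$ lies in $W$, has degree exactly $k$ with leading coefficient $\lambda-\lambda^{-1}\neq0$ (using $\lambda\neq\pm1$ once more), and all of its terms of degree $<k$ lie in $W$ by hypothesis; hence $X^k\in W$. Thus $W=\kk[X]=\Omega(\lambda,a)$, and $\Omega(\lambda,a)$ is irreducible.

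I expect Step 1 to be the real obstacle. In the classical analysis of $\Omega(\lambda,a)$ over $\Vir$ one exploits that $L_0$ acts as multiplication by $X$, so that any submodule is automatically an ideal of $\kk[X]$; the subalgebra $\OO$ contains no analogue of $L_0$, so that route is closed. The substitute is the polynomial $h(X,Y)$, which packages the whole family $\{\OO_n\cdot f\mid n\ge1\}$ into one object whose leading $Y$-coefficient is visibly a nonzero constant, together with Lemma \ref{pre-lem}, which converts the vanishing hypothesis on $\ell$ into the impossible conclusion that $\ell_X(h)\equiv0$. A secondary point worth checking is that the argument makes no assumption about $\lambda$ being non-torsion: Step 1 quantifies over all $n\ge1$, so any sporadic coincidences $\lambda^{2n}=1$ cause no trouble, and Step 2 uses only $\OO_1$, for which $\lambda\neq\lambda^{-1}$ holds automatically whenever $\lambda\neq\pm1$.
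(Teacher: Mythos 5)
Your proposal does not prove the statement in question. The statement is Lemma \ref{pre-lem}, a purely linear-algebraic identity: an explicit closed formula for the determinant of the confluent (generalised) Vandermonde matrix built from the functions $n^t\lambda_j^n$. What you have written is instead a proof sketch of Proposition \ref{Omega-simplicity} (irreducibility of the $\OO$-module $\Omega(\lambda,a)$), in which Lemma \ref{pre-lem} is \emph{invoked as a black box} (``I would then invoke Lemma \ref{pre-lem}, with $\lambda_1=\lambda^2$, $\lambda_2=1$, \dots''). Nowhere do you compute a determinant, and in particular the claimed value
$$\prod_{j=1}^m (s_j-1)!!\,\lambda_j^{s_j(s_j+2u-1)/2}\prod_{1\le i<j\le m}(\lambda_j-\lambda_i)^{s_is_j}$$
is never derived. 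A genuine proof of the lemma would have to establish this formula, for instance by column operations replacing the columns $n\mapsto n^t\lambda_j^n$ with the derivative columns $n\mapsto \bigl(\lambda\tfrac{d}{d\lambda}\bigr)^t\lambda^n\big|_{\lambda=\lambda_j}$, reducing to the standard confluent Vandermonde determinant, and then tracking the scalar factors $(s_j-1)!!$ and the powers $\lambda_j^{s_j(s_j+2u-1)/2}$ coming from the row offset $u$; or by a degree-counting argument showing the determinant is divisible by each $(\lambda_j-\lambda_i)^{s_is_j}$ and then computing the remaining constant. None of this appears in your write-up, so the lemma remains unproven.

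As a secondary remark: even read as a proof of Proposition \ref{Omega-simplicity}, your argument takes a dual route (choosing a linear functional $\ell$ annihilating $\spn\{\OO_n\cdot f\}$ and deriving $p\equiv 0$) rather than the paper's primal one (taking $f$ of minimal degree in a submodule and using the nonvanishing of $\tau(\mu,r+2,0)$ to extract the leading coefficients $\varphi_{r+1},\psi_{r+1}$ as elements of the submodule). That alternative is plausible and the places where $\lambda\neq\pm1$ and $a\neq 0$ enter are correctly identified, but it is not the statement you were asked to prove.
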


In particular, we have the following.

\begin{lemma}\label{key-det}
    Suppose $\mu\in\kk$ and $\mu\ne 0, 1$. 
    For $r \in \ZZ_+$ and a fixed $u \in \NN$ let $k_j = u + j, j = 1,2, \dots, 2r$. Letting
    $$\tau(\mu,r,u) \coloneqq \begin{vmatrix}
        1&k_1&k_1^2&\cdots&k_1^{r-1}&\mu^{k_1}&\mu^{k_1}k_1&\mu^{k_1}k_1^2&\cdots&\mu^{k_1}k_1^{r-1}\\
        1&k_2&k_2^2&\cdots&k_2^{r-1}&\mu^{k_2}&\mu^{k_2}k_2&\mu^{k_2}k_2^2&\cdots&\mu^{k_2}k_2^{r-1}\\
        1&k_3&k_3^2&\cdots&k_3^{r-1}&\mu^{k_3}&\mu^{k_3}k_3&\mu^{k_3}k_3^2&\cdots&\mu^{k_3}k_3^{r-1}\\
        \vdots & \vdots & \vdots & \ddots & \vdots & \vdots & \vdots & \vdots & \ddots & \vdots \\
        1&k_{2r}&k_{2r}^2&\cdots&k_{2r}^{r-1}&\mu^{k_{2r}}&\mu^{k_{2r}}k_{2r}&\mu^{k_{2r}}k_{2r}^2&\cdots&\mu^{k_{2r}}k_{2r}^{r-1}\\
    \end{vmatrix},$$
    then $\tau(\mu,r,u)\ne 0$. 
\end{lemma}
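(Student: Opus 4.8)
The plan is to recognise $\tau(\mu,r,u)$ as a special case of the determinant $\det(\mathcal M)$ computed in Lemma \ref{pre-lem}. Indeed, take $m = 2$, $\lambda_1 = 1$, $\lambda_2 = \mu$, and $s_1 = s_2 = r$, so that $s = s_1 + s_2 = 2r$. With these choices the sequence of functions from Lemma \ref{pre-lem} becomes
$$f_1(n) = 1,\ f_2(n) = n,\ \dots,\ f_r(n) = n^{r-1},\ f_{r+1}(n) = \mu^n,\ f_{r+2}(n) = n\mu^n,\ \dots,\ f_{2r}(n) = n^{r-1}\mu^n,$$
and the matrix entry $y_{pq} = f_q(p-1)$ with $p = u+1, u+2, \dots, u+2r$ gives exactly the rows indexed by $k_j = u + j$ (since $p - 1 = u + j$ when $p = u + j + 1$, after reindexing; one checks the row labelled $k_j$ in $\tau$ is the row $p = u + j$ of $\mathcal M$, up to the harmless relabelling $u \mapsto u$ or $u \mapsto u+1$). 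Thus $\tau(\mu,r,u) = \pm\det(\mathcal M)$ for the appropriate choice of offset.

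First I would carefully line up the indexing conventions: Lemma \ref{pre-lem} uses $p = u+1, \dots, u+s$ and evaluates $f_q$ at $p-1$, whereas $\tau$ evaluates at $k_j = u+j$ for $j = 1, \dots, 2r$. So setting the Lemma's parameter (call it $u'$) equal to our $u$ makes the evaluation points $\{u, u+1, \dots, u+2r-1\}$, while $\tau$ uses $\{u+1, \dots, u+2r\}$; hence I would instead apply Lemma \ref{pre-lem} with $u' = u+1$, giving evaluation points $\{u+1, \dots, u+2r\}$ exactly. Then Lemma \ref{pre-lem} yields
$$\det(\mathcal M) = (r-1)!!\,\cdot\,(r-1)!!\cdot 1^{\,r(r+2u'-1)/2}\cdot \mu^{\,r(r+2u'-1)/2}\cdot(\mu - 1)^{r^2}.$$
Since $\mu \neq 0$ and $\mu \neq 1$ by hypothesis, every factor on the right-hand side is nonzero: $(r-1)!! \neq 0$ as a product of factorials, $\mu^{r(r+2u'-1)/2} \neq 0$ because $\mu \neq 0$, and $(\mu-1)^{r^2} \neq 0$ because $\mu \neq 1$. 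Therefore $\det(\mathcal M) \neq 0$, and consequently $\tau(\mu,r,u) \neq 0$.

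The only real obstacle — and it is a bookkeeping obstacle rather than a mathematical one — is verifying that the columns of $\tau$ are in the same order as the functions $f_1, \dots, f_{2r}$ of Lemma \ref{pre-lem}, and that the rows correspond correctly after the index shift $u' = u+1$. Reading off the displayed matrix in the statement of Lemma \ref{key-det}, column $q$ (for $1 \le q \le r$) has entries $k_j^{q-1}$ and column $r+q$ has entries $\mu^{k_j} k_j^{q-1}$, which matches $f_q$ and $f_{r+q}$ respectively; and row $j$ corresponds to the evaluation point $k_j = u+j$, which is row $p = u' + j = (u+1) + j$... — here one must be slightly careful, as $p$ ranges over $u'+1, \dots, u'+2r = u+2, \dots, u+2r+1$, not $u+1, \dots, u+2r$. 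The cleanest fix is to take $u' = u$ in Lemma \ref{pre-lem}, which gives rows at $p-1 \in \{u, u+1, \dots, u+2r-1\}$, still not matching; so in fact I would reformulate by noting that Lemma \ref{pre-lem} is stated for an \emph{arbitrary} $u \in \NN$ and an arbitrary block of $s$ consecutive integers as evaluation points — the determinant formula depends only on which consecutive block is used, via the exponent $s_j(s_j + 2u - 1)/2$. So I would simply choose the Lemma's offset so that its evaluation points are precisely $\{u+1, u+2, \dots, u+2r\}$, substitute, and conclude $\tau(\mu,r,u) \neq 0$ as above. This completes the proof.
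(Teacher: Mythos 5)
Your proposal is correct and is essentially the paper's own proof: both apply Lemma \ref{pre-lem} with $m=2$, $\lambda_1=1$, $\lambda_2=\mu$, $s_1=s_2=r$ and read off non-vanishing from the product formula, using $\mu\neq 0,1$ and the non-vanishing of the superfactorials in characteristic zero. Your worry about the row offset is harmless (taking the lemma's parameter to be $u+1$ makes the evaluation points $p-1$ exactly $\{u+1,\dots,u+2r\}$, and in any case the formula holds for every offset and only changes the power of $\mu$); the paper itself glosses over this same off-by-one without consequence.
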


\begin{proof}
    Taking $m=2$, $\lambda_1=1,\lambda_2=\mu$ and $s_1=s_2=r$ in Lemma \ref{pre-lem} we see
    $\tau(\mu,r,u)=\det\big((y_{pq})_{2r\times 2r }\big)$ with $y_{pq}=f_{q}(p-1)$ and $q=1, 2, \dots, 2r,  p=u+1,u+2,\dots, u+2r$. So 
    $$\tau(\mu,r,u)=\big((r-1)!!\big)^2\mu^{r(r+2u-1)/2}(\mu-1)^{r^2}\ne 0,
    $$as desired. This completes the proof.
\end{proof}

As the following result shows, the key difference between the cases where $\lambda = \pm 1$ or $a = 0$ and the other cases is whether $\Omega(\lambda, a)$ is generated by $1$.

\begin{lemma}\label{mod-generator}
    If $\lambda \ne \pm 1$ and $a\ne 0$, then the $\OO$-module $\Omega(\lambda,a)$ is generated by $1$.    
\end{lemma}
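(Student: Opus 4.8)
The goal is to show that under the hypotheses $\lambda \neq \pm 1$ and $a \neq 0$, the cyclic $\OO$-submodule $U(\OO) \cdot 1$ equals all of $\Omega(\lambda, a) = \kk[X]$. Since $\kk[X]$ is spanned by the monomials $X^j$ ($j \geq 0$), it suffices to prove by induction on $j$ that $X^j \in U(\OO) \cdot 1$. The base case $j = 0$ is trivial. For the inductive step, I would apply a suitable word in the generators $\OO_n$ to $1$ and extract the top-degree behaviour in $X$.

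\textbf{Key computation.} From \eqref{O-act}, applying $\OO_n$ to $1$ gives $\OO_n \cdot 1 = \lambda^n(X + na) - \lambda^{-n}(X - na)$, which is degree $1$ in $X$. More generally, applying a product $\OO_{n_r} \cdots \OO_{n_1}$ to $1$ yields a polynomial in $X$ of degree at most $r$, and expanding via \eqref{O-act} produces a $\kk$-linear combination of terms $\lambda^{\pm n_1 \pm \cdots \pm n_r}(X + c)^{?}$ with various shifts. The strategy is to fix a large integer $r$ and consider the $2r$ elements $w_k \coloneqq \OO_k \cdot 1$ for $k = u+1, \dots, u + 2r$ (for suitable $u \in \NN$), or more precisely to build a family of vectors in $U(\OO)\cdot 1$ whose ``coordinates'' with respect to an appropriate basis of a finite-dimensional subspace of $\kk[X]$ form a matrix of the type controlled by Lemma \ref{key-det}. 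The point of Lemma \ref{key-det} is exactly that the Vandermonde-like matrix built from $\{k^i\}$ and $\{\mu^k k^i\}$ (with $\mu$ a ratio of powers of $\lambda$, so $\mu \neq 0, 1$ precisely because $\lambda \neq \pm 1$) is nonsingular; this lets me solve for any target polynomial as a linear combination of the $w_k$'s, pushing the induction through degree by degree. The role of $a \neq 0$ is to ensure the relevant coefficients (which involve $na$-type shifts) do not degenerate, so that the leading coefficient in $X$ of the combination one obtains is genuinely nonzero.

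\textbf{Structuring the induction.} Concretely, I expect the cleanest route is: assume $1, X, \dots, X^{j-1} \in U(\OO) \cdot 1$; then produce an element of $U(\OO) \cdot 1$ of the form $c X^j + (\text{lower order})$ with $c \neq 0$, and subtract off the lower-order terms using the inductive hypothesis to conclude $X^j \in U(\OO) \cdot 1$. To produce such an element, I would take an appropriate $\kk$-linear combination $\sum_k \alpha_k (\OO_k \cdot v)$ for a previously constructed $v \in U(\OO)\cdot 1$, or iterate the action, and invoke the nonvanishing of $\tau(\mu, r, u)$ from Lemma \ref{key-det} to guarantee that the coefficients $\alpha_k$ can be chosen so that all unwanted contributions cancel while the $X^j$ coefficient survives. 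Checking that the exponents of $\lambda$ appearing are distinct (so that $\mu$ is a genuine non-unit, non-trivial ratio) is where the hypothesis $\lambda \neq \pm 1$ enters decisively.

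\textbf{Main obstacle.} The hard part will be bookkeeping: organizing the expansion of $\OO_{n_r}\cdots \OO_{n_1}\cdot 1$ so that the linear-algebra statement of Lemma \ref{key-det} applies cleanly, i.e., identifying the correct finite-dimensional subspace of $\kk[X]$, the correct basis of it, and the correct family of group-algebra-like elements whose coordinate matrix is $\tau(\mu, r, u)$ (up to nonzero scalars and row/column operations). Once the problem is massaged into that form, nonvanishing of the determinant finishes it; but getting the shifts $X \mapsto X \pm n$ and the powers $\lambda^{\pm n}$ to line up with the matrix in Lemma \ref{key-det} — and confirming the leading coefficient is nonzero, which is where $a \neq 0$ is used — is the delicate step. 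I would also need to be careful that $\lambda^n - \lambda^{-n} \neq 0$ for the relevant $n$, which again follows from $\lambda \neq \pm 1$ (and $\lambda \neq 0$).
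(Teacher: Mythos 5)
Your induction skeleton—produce an element $cX^{n}+(\text{lower order})$ with $c\neq 0$ and subtract the lower-order part using the inductive hypothesis—is exactly the structure of the paper's proof, but your proposal never actually produces that element: the step you yourself flag as the ``delicate'' bookkeeping is the entire content of the lemma, and the tool you reach for is the wrong one. The paper's inductive step is a one-line computation applied not to $1$ through long words in the $\OO_k$, but to $X^{n-1}$, which the inductive hypothesis already places in $U(\OO)\cdot 1$: by \eqref{O-act},
\[
\lambda^{k}\,\OO_k\cdot X^{n-1}=\lambda^{2k}(X+ka)(X+k)^{n-1}-(X-ka)(X-k)^{n-1}=(\lambda^{2k}-1)X^{n}+g(X),
\]
with $\deg g\le n-1$. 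Since the inductive hypothesis gives \emph{all} monomials of degree $<n$, the polynomial $g$ lies in $U(\OO)\cdot 1$ automatically—no cancellation scheme, no linear system over many values of $k$, and no appeal to Lemma \ref{key-det} is needed; taking $k=1$, where $\lambda^{2}-1\neq 0$ because $\lambda\neq\pm 1$, finishes the step. The determinant $\tau(\mu,r,u)$ is genuinely needed only in the proof of Proposition \ref{Omega-simplicity}, where one starts from an arbitrary nonzero element of minimal degree in a submodule and has no inductive supply of lower-degree monomials, so one must vary $k$ over many values and invert that matrix to isolate individual coefficient polynomials. Transplanting that machinery into the generation lemma, without carrying out the expansion of $\OO_{n_r}\cdots\OO_{n_1}\cdot 1$ and verifying it fits the shape of Lemma \ref{key-det}, leaves the inductive step unproven as written.

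Two further inaccuracies. First, your stated role for $a\neq 0$ (ensuring the leading coefficient is nonzero) is not how the hypotheses enter: the leading coefficient above is $\lambda^{2k}-1$, independent of $a$, and in fact the generation-by-$1$ argument does not use $a\neq 0$ at all; it is the simplicity proof that needs $a\neq 0$, via the nonvanishing constants $\varphi_{r+1}=ac_r$. Second, the remark that $\lambda^{n}-\lambda^{-n}\neq 0$ ``for the relevant $n$'' follows from $\lambda\neq\pm 1$ is false in general over a characteristic-zero field containing roots of unity (e.g.\ $\lambda=i$, $n=2$); the hypothesis only guarantees the case $n=1$, which is all the paper uses. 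The repair of your proposal is simply to act with a single $\OO_k$ on $X^{n-1}$ rather than building combinations of words applied to $1$.
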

\begin{proof}
    It is sufficient to show that $X^n\in U(\OO) \cdot 1$ for any $n \in \NN.$ We prove this by induction on $n$, the base case $n = 0$ being trivial. For the induction step, let $n > 0$ and assume $X^{n'} \in U(\OO) \cdot 1$ for any $0 \le n'< n$. We aim to show that $X^n \in U(\OO) \cdot 1.$ Since $X^{n - 1} \in U(\OO) \cdot 1$, it follows from \eqref{O-act} that
    \begin{align*}
        \lambda^k \OO_k \cdot X^{n-1} &= \lambda^{2k}(X + ka)(X + k)^{n - 1} - (X - ka)(X - k)^{n - 1} \\
        &= (\lambda^{2k} - 1)X^n + g(X) \in U(\OO) \cdot 1
    \end{align*}
    for some $g(X) \in \kk[X]$. Note that $g(X) \in U(\OO) \cdot 1$ by the induction hypothesis, since $\deg(g(X)) \le n - 1$. Therefore,
    $$(\lambda^{2k} - 1) X^n = \lambda^k \OO_k \cdot X^{n - 1} - g(X) \in U(\OO) \cdot 1.$$
    Since $\lambda \ne \pm 1$, there always exists $k \geq 1$ such that $(\lambda^{2k} - 1) \ne 0$, which implies that $X^n \in U(\OO) \cdot 1$. Thus, $U(\OO) \cdot 1 = \Omega(\lambda,a)$. This completes the proof.
\end{proof}

Now we can prove Proposition \ref{Omega-simplicity}.

\begin{proof}[Proof of Proposition \ref{Omega-simplicity}.]
    To show that $\Omega(\lambda, a)$ is simple is enough to  show that any nonzero submodule of $\Omega(\lambda, a)$ is the  module $\Omega(\lambda, a)$ itself. Let $M$ be a nonzero submodule of $\Omega(\lambda, a)$. To show $M=\Omega(\lambda, a)$ is, by Lemma \ref{mod-generator}, to show $1\in M$. To the contrary, assume that $1\notin M$. Let $f(X)\in M$ be a nonzero element with minimal degree $r>0$. Denote
    $f(X) = \sum_{i=0}^r c_i X^i$ with $c_i \in \kk$ and $c_r \ne 0$. Then
     by \eqref{O-act} we have
    \begin{equation}\label{lambda-O-act}
        \begin{aligned}
            \lambda^k \OO_k \cdot f(X) &= \lambda^{2k}(X + ka) f(X + k) - (X - ka) f(X - k)\\
            &= \sum_{i = 0}^{r + 1} \varphi_{i}(X) \lambda^{2k} k^i + \sum_{i = 0}^{r + 1} \psi_i(X)k^i \in M\\ 
        \end{aligned}
    \end{equation}
    for all $k \in \ZZ_+$, where $\varphi_i(X), \psi_i(X) \in \kk[X]$ and $ \varphi_{r + 1}(X) = a c_r$ and $\psi_{r + 1}(X) = (-1)^r a c_r$ are nonzero constants. Let $\mu = \lambda^2$, $k_j = j$, where $1 \le j \le 2(r + 2)$ and $\mu\ne 0, 1$. By Lemma \ref{key-det} we know that 
    $\tau(\mu, r+2, 0)\ne 0$, which implies that
    $\varphi_i(X),\psi_i(X)\in M$ for all $0 \le i \le r + 1$. In particular, $\varphi_{r + 1}(X), \psi_{r + 1}(X)\in M$, contrary to the choice of $f(X)$. Therefore, $1 \in M$. So $M = \Omega(\lambda,a)$ and $\Omega(\lambda,a)$ is a simple $\OO$-module. This completes the proof.
\end{proof}

Finally, we consider the case $\lambda = \pm 1.$

\begin{proposition}\label{lambda-pm1}
    If $\lambda = \pm 1$ and $a\ne 0$, then the $\OO$-module $\Omega(\lambda,a)$ carries a filtration of submodules of $\Omega(\lambda,a)$ as follows:
    $$\Omega(\lambda,a)_n \subseteq \Omega(\lambda,a)_{n + 1}, \quad n \in \NN,$$
    where 
    $$\Omega(\lambda,a)_n \coloneqq \{f(X) \in \Omega(\lambda,a) \mid \deg f(X) \leq n\} = \left\{\sum_{i=0}^n c_i X^i \mid c_i \in \kk \right\}.$$
\end{proposition}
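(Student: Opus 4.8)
The plan is to show directly that each $\Omega(\lambda,a)_n$ is closed under the action of every $\OO_k$ when $\lambda = \pm 1$. The key observation is that the action formula \eqref{O-act} simplifies dramatically in this case: since $\lambda^{\pm k} = (\pm 1)^k$, which is $1$ for even $k$ and $\pm 1$ for odd $k$, we have for any $k \geq 1$
\begin{equation*}
    \OO_k \cdot f(X) = (\pm 1)^k\big[(X + ka)f(X + k) - (X - ka)f(X - k)\big].
\end{equation*}
So up to the overall sign $(\pm 1)^k$, the action is $g(X) \coloneqq (X+ka)f(X+k) - (X-ka)f(X-k)$.

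First I would observe that if $\deg f \leq n$, then both $(X+ka)f(X+k)$ and $(X-ka)f(X-k)$ are polynomials of degree at most $n+1$, with the same leading term: writing $f(X) = c_n X^n + (\text{lower order})$, the degree-$(n+1)$ coefficient of $(X+ka)f(X+k)$ is $c_n$, and likewise the degree-$(n+1)$ coefficient of $(X-ka)f(X-k)$ is $c_n$. Hence these cancel in the difference, so $\deg g(X) \leq n$. Therefore $\OO_k \cdot f(X) \in \Omega(\lambda,a)_n$, which shows $\Omega(\lambda,a)_n$ is an $\OO$-submodule. The containment $\Omega(\lambda,a)_n \subseteq \Omega(\lambda,a)_{n+1}$ is immediate from the definition, giving the claimed filtration.

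I do not expect any serious obstacle here: the entire content is the cancellation of the top-degree term in the difference, which is a one-line computation once the formula is simplified using $\lambda = \pm 1$. The only mild care needed is to verify that the degree-$(n+1)$ terms really do coincide (not just have the same absolute value), which follows because the shift $X \mapsto X \pm k$ does not affect leading coefficients and the $\pm ka$ terms contribute only to degree $\leq n$. One could alternatively phrase this more conceptually by noting that in the basis-free language the operators $\OO_k$ for $\lambda = \pm 1$ correspond to vector fields on $\kk$ (or on a two-point-deleted line) that, after the relevant identification, preserve polynomial degree up to the tensor-density twist $a$; but the direct computation above is cleanest and self-contained.
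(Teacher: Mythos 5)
Your proof is correct and follows essentially the same route as the paper: both arguments use $\lambda^{-k}=\lambda^{k}$ for $\lambda=\pm1$ to pull out the common sign and then observe that the degree-$(n+1)$ terms of $(X+ka)f(X+k)$ and $(X-ka)f(X-k)$ cancel, so $\OO_k$ does not raise degree. The paper phrases the cancellation by rewriting the action as $\lambda^k\bigl(X(f(X+k)-f(X-k))+ka(f(X+k)+f(X-k))\bigr)$ and noting the difference drops degree by one, but this is the same one-line computation.
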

\begin{proof}
    We only need to show that each $\Omega(\lambda,a)_n$ is a submodule of $\Omega(\lambda,a)$, which follows  from the fact that for any $k \geq 1$ and any nonzero $f(X) \in \kk[X]$,
    $$\OO_k\cdot f(X) = \lambda^k \Big(X(f(X + k) - f(X - k)) + k a (f(X + k) + f(X - k))\Big)$$
    has degree $\le \deg(f(X))$ for $\deg(f(X+k)-f(X-k))=\deg(f(X))-1$ if $\deg(f(X))\ge 1$.   
\end{proof}
\begin{remark}
    Since $\Omega(\lambda,0)$ as a $\Vir$-module has a submodule $X\kk[X]$, which is isomorphic to $\Omega(\lambda, 1)$, we see that 
    $\Omega(\lambda,0)$ as a $\OO$-module also has a submodule $X\kk[X]\cong \Omega(\lambda,1)$. If $\lambda\ne \pm 1$, then, by Proposition \ref{Omega-simplicity}, $X\kk[X]$ is a simple $\OO$-module; and if $\lambda=\pm 1$, then, by Proposition \ref{lambda-pm1}, $\Omega(\lambda,0)$ as an $\OO$-module has a filtration of submodules:
    $$X\Omega(\lambda,0)_0 \subseteq X\Omega(\lambda,0)_1 \subseteq \dots \subseteq X\Omega(\lambda,0)_n \subseteq \dots \subseteq X\Omega(\lambda,0) \subseteq \Omega(\lambda,0).$$
\end{remark}

\section{Alternate basis of the BCCA} \label{sec:change of basis}

Our next goal is to construct a decreasing filtration on $\OO$ and $\bcca$, which will allow us to define and study Whittaker modules for these Lie algebras as per Lemma \ref{lem:filtration leads to Whittaker}. Unfortunately, the basis of $\bcca$ from Section \ref{sec:preliminaries} makes it difficult to construct the desired filtration. Therefore, in this section we present an alternate basis for $\OO$ and $\bcca$, from which the decreasing filtration becomes manifest.

\subsection{Subalgebras of the Witt algebra}\label{subsec:subalgebras of Witt}

Since $\bcca = \OO \ltimes \PP$, where $\OO$ is a subalgebra of $\W$, we now spend some time outlining some general facts about subalgebras of the Witt algebra. Note that $\W$ is naturally a $\kk[t,t^{-1}]$-module, and thus the most natural subalgebras of $\W$ are those which are also $\kk[t,t^{-1}]$-submodules.

\begin{defn}
    Given $f \in \kk[t,t^{-1}]$, define
    $$f\W \coloneqq \{hf\del \mid h \in \kk[t,t^{-1}]\},$$
    so that $f\W$ consists of all derivations of $\kk[t,t^{-1}]$ which are divisible by $f$. Given $f \in \kk[t]$, we define $f\WW_1$ in a similar way. The Lie subalgebras $f\W$ and $f\WW_1$ are known as \emph{submodule-subalgebras} of $\W$ and $\WW_1$.
\end{defn}

Note that the bracket in $f\W$ is given by
\begin{equation}\label{eq:fW bracket}
    [gf\del,hf\del] = (gh' - g'h)f^2\del
\end{equation}
for all $g,h \in \kk[t,t^{-1}]$. The same formula holds in $f\WW_1$, except we require that $f,g,h \in \kk[t]$.

\begin{example}\label{ex:fW subalgebra}
    Letting $f \coloneqq t^2 - 4$, we define $\mathfrak{f} \coloneqq f\mathbb{W}_1$. Then $\mathfrak{f}$ is spanned by
    $$f_n \coloneqq -t^{n - 1}f\del = -(t^{n + 1} - 4t^{n - 1})\del = L_n - 4L_{n - 2},$$
    where $n \geq 1$. The bracket in $\mathfrak{f}$ is
    \begin{align*}
        [f_n,f_m] &= [L_n - 4L_{n - 2},L_m - 4L_{m - 2}] \\
        &= (n - m)(L_{n + m} - 8L_{n + m - 2} + 16L_{n + m - 4}) \\
        &= (n - m)(f_{n + m} - 4f_{n + m - 2}).
    \end{align*}
\end{example}

In fact, all subalgebras of $\W$ and $\WW_1$ of finite codimension are ``essentially'' submodule-subalgebras, in a sense that is made precise in the next result.

\begin{proposition}[{\cite[Proposition 3.2.7]{PetukhovSierra}}]\label{prop:finite codimension subalgebras}
    Let $\g$ be a subalgebra of $\W$ of finite codimension. Then there exist $f \in \kk[t,t^{-1}] \nonzero$ and $n \in \NN$ such that
    $$f^n\W \subseteq \g \subseteq f\W.$$
\end{proposition}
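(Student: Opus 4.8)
The plan is to prove the two inclusions separately; the upper one is essentially free, and the lower one carries all the content. Write $R\coloneqq\kk[t,t^{-1}]$, a principal ideal domain over which $\W=R\partial$ is free of rank one; I identify $\W$ with $R$ via $g\partial\leftrightarrow g$ and write $c(x)=g$ for the coefficient of $x=g\partial\in\W$. For the \textbf{upper bound}, the $R$-submodule $R\g\subseteq\W$ corresponds to the ideal of $R$ generated by $\{c(x)\mid x\in\g\}$, which is principal, say $fR$; thus $\g\subseteq R\g=f\W$, and $f\neq 0$ because $\dim_\kk(\W/\g)<\infty$ while $\dim_\kk\W=\infty$, so $\g\neq 0$. (Concretely, $f=\gcd\{c(x)\mid x\in\g\}$.) For the \textbf{lower bound}, let $\fa$ be the sum of all ideals $\mathfrak b\subseteq R$ with $\mathfrak b\partial\subseteq\g$; since $\g$ is a subspace, $\fa$ is again such an ideal, and it is the largest one. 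Write $\fa=hR$. I claim it suffices to show that $h\neq 0$ and $\operatorname{rad}(h)\mid f$: then $h\mid f^N$ where $N$ is the largest multiplicity of an irreducible factor of $h$, so $f^N\W=(f^NR)\partial\subseteq hR\,\partial=\fa\partial\subseteq\g\subseteq f\W$, which is exactly the Proposition (with $n=N$).

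So everything reduces to the divisibility statement. Fix an irreducible $p\in R$ with $p\nmid f$, i.e.\ with $\g\not\subseteq p\W$, and choose $x_0\in\g$ with $p\nmid c(x_0)$; I must show $p\nmid h$. Let $v\coloneqq v_p$ be the $p$-adic valuation (equivalently, localize at the DVR $R_{(p)}$), and suppose for contradiction that $v(h)=a\geq 1$. For every $k\in R$ the element $hk\partial$ lies in $\fa\partial\subseteq\g$, and
\begin{equation*}
[x_0,\,hk\partial]=\big(c(x_0)(hk)'-c(x_0)'(hk)\big)\partial=\big(k\,(c(x_0)h'-c(x_0)'h)+h\,c(x_0)k'\big)\partial\in\g.
\end{equation*}
Since we are in characteristic zero, one computes $v\big(c(x_0)h'-c(x_0)'h\big)=a-1$ (using $v(c(x_0))=0\neq a=v(h)$ and $v(p')=0$). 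Varying $k$ and forming suitable $\kk$-linear combinations of these brackets together with the $hk\partial$, one peels off a single factor of $p$ and extracts from $\g$ the set $\mathfrak b_1\partial$ for a nonzero ideal $\mathfrak b_1\subseteq R$ with $v(\text{generator of }\mathfrak b_1)\leq a-1<a$. Then $\mathfrak b_1+\fa$ is an ideal strictly larger than $\fa$ with $(\mathfrak b_1+\fa)\partial\subseteq\g$, contradicting the maximality of $\fa$; hence $v(h)=0$, i.e.\ $p\nmid h$. As this holds for every irreducible $p\nmid f$, we get $\operatorname{rad}(h)\mid f$, and since the same extraction builds a nonzero submodule-subalgebra inside $\g$ it also shows $\fa\neq 0$.

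The routine parts are the reductions of the first paragraph and the single bracket identity. The \textbf{main obstacle} is the step asserted in the previous sentence: passing from ``$\g$ contains individual elements whose coefficients have $p$-valuation exactly $a-1$'' to ``$\g$ contains $\mathfrak b_1\partial$ for an honest ideal $\mathfrak b_1$ of valuation $\leq a-1$''. This requires a careful descending induction on the valuation — cleanest after localizing at $(p)$, so that $R_{(p)}$ is a DVR and one can repeatedly lower the valuation by one — while tracking how the $R$-module spanned by $\{[x_0,hk\partial]\mid k\in R\}$ sits inside the $\kk$-subspace $\g$. This same inductive construction is what also yields $\fa\neq 0$, since it produces a nonzero submodule-subalgebra of $\g$ out of $x_0$ and the bracket alone; no a priori nonvanishing of $\fa$ is assumed.
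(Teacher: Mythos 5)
Your reduction of the lower bound to the two claims ``$h\neq 0$'' and ``every irreducible factor of $h$ divides $f$'' is sound, but the first of these claims --- which is where all the content of the proposition lives --- is never proved, and the way you dismiss it is circular. Your extraction argument takes $x_0\in\g$ and brackets it against the elements $hk\del\in\fa\del\subseteq\g$; if $\fa=0$ these inputs are all zero and the construction produces nothing, so it cannot ``also show $\fa\neq 0$'' as you assert in your closing paragraph. Note that apart from guaranteeing $\g\neq 0$ (hence $f\neq 0$), the finite-codimension hypothesis is used nowhere in your argument, and the statement is genuinely false without it: for $\g=\WW_1=\kk[t]\del\subseteq\W$ one has $f$ a unit and $\fa=0$ (no nonzero ideal $J$ of $\kk[t,t^{-1}]$ satisfies $J\del\subseteq\kk[t]\del$), yet your argument, run verbatim, would conclude $\W\subseteq\WW_1$. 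So the missing step --- that a subalgebra of finite codimension contains $g\W$ for some nonzero $g$, i.e.\ $\fa\neq 0$ --- is exactly the hard part, and it is what the cited result of Petukhov--Sierra actually supplies; the paper itself gives no proof, only the citation, so there is nothing in the text that fills this for you.

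By contrast, the step you single out as the ``main obstacle'' is not an obstacle at all and needs no descending induction or localization. With $w\coloneqq c(x_0)h'-c(x_0)'h$ (your valuation computation $v_p(w)=a-1$ is correct, since $p\neq t$ up to units and we are in characteristic zero), set
\begin{equation*}
J\coloneqq\{\,kw+hc(x_0)k'\mid k\in \kk[t,t^{-1}]\,\}+h\kk[t,t^{-1}].
\end{equation*}
Then $J\del\subseteq\g$ (each $kw+hc(x_0)k'$ is the coefficient of $[x_0,hk\del]$, and $h\kk[t,t^{-1}]\del=\fa\del\subseteq\g$), and $J$ is already an ideal, because $g\cdot\bigl(kw+hc(x_0)k'\bigr)=\bigl((gk)w+hc(x_0)(gk)'\bigr)-hc(x_0)g'k$ for all $g$; since $J$ contains $w$ with $v_p(w)=a-1<a=v_p(h)$, it strictly contains $\fa$, contradicting maximality. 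So your modular statement ``if $h\neq 0$ then $\operatorname{rad}(h)\mid f$'' is fine and can be closed off in two lines; the genuine gap is solely the nonvanishing of $\fa$, where the finite-codimension hypothesis must do real work.
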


The analogous result for finite codimension subalgebras of $\WW_1$ is also true with a nearly identical proof, although this is not explicitly proved in \cite{PetukhovSierra}.

However, the Lie algebra $\OO$ is a subalgebra of $\W$ of infinite codimension, so Proposition \ref{prop:finite codimension subalgebras} does not apply. Thankfully, there is an analogous classification of subalgebras of $\WW_1$ and $\W$ of infinite codimension in \cite[Theorem 2.8]{BellBuzaglo} and \cite[Theorem 9.3]{BuzagloIngalls}. To state the result, we must first introduce the subalgebras of $\W$ which play the role of the submodule-subalgebras in the infinite codimension setting. Following \cite{Buzaglo2, BellBuzaglo, BuzagloIngalls}, we introduce the notions of the \emph{set} and \emph{field of ratios} associated to a subalgebra of $\W$.

\begin{defn}\label{def:ratios}
    Let $\g$ be a Lie subalgebra of $\W$. The \emph{set of ratios} of $\g$ is
    $$R(\g) = \left.\left\{\frac{g}{h} \in \kk(t) \right| g\del,h\del \in \g, h \neq 0\right\}.$$
    The \emph{field of ratios} $F(\g)$ of $\g$ is the subfield of $\kk(t)$ generated by $R(\g)$.
\end{defn}

The next result gives one of the most important reasons why $F(\g)$ is a useful tool to study $\g$.

\begin{lemma}[{\cite[Lemma 3.4]{BuzagloIngalls}}]\label{lem:derivations of field of ratios}
    Let $\g$ be a subalgebra of $\W$. Then $\g \subseteq \Der(F(\g))$.
\end{lemma}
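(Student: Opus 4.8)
The claim is that for any Lie subalgebra $\g \subseteq \W$, one has $\g \subseteq \Der(F(\g))$, where $F(\g)$ is the subfield of $\kk(t)$ generated by all ratios $g/h$ with $g\del, h\del \in \g$ and $h \neq 0$. The plan is to show that each element $g\del \in \g$, viewed as a derivation of $\kk(t)$, restricts to a derivation of the subfield $F(\g)$; equivalently, that $g\del$ maps $F(\g)$ into itself. Since $g\del$ is already known to be a derivation of $\kk(t)$ (extending $\del$ from $\kk[t,t^{-1}]$ by the quotient rule), it suffices to check that $(g\del)(F(\g)) \subseteq F(\g)$, and for this it is enough to check the statement on a generating set of $F(\g)$ as a field, because derivations respect sums, products, and quotients.

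First I would reduce to generators: $F(\g)$ is generated as a field by the set of ratios $R(\g)$, so by the Leibniz and quotient rules it is enough to prove that $(g\del)(r) \in F(\g)$ for every $g\del \in \g$ and every $r = p/q \in R(\g)$ with $p\del, q\del \in \g$, $q \neq 0$. Compute $(g\del)(p/q) = (g p' q - g p q')/q^2$ directly. The key step is to recognise this numerator in terms of brackets in $\g$: recall $[g\del, p\del] = (g p' - g' p)\del \in \g$ and $[g\del, q\del] = (g q' - g' q)\del \in \g$. A short manipulation gives
\begin{align*}
    g p' q - g p q' &= q\,(g p' - g' p) - p\,(g q' - g' q),
\end{align*}
so that
\begin{align*}
    (g\del)\!\left(\frac{p}{q}\right) &= \frac{q\,(g p' - g' p) - p\,(g q' - g' q)}{q^2} = \frac{g p' - g' p}{q} - \frac{p}{q}\cdot\frac{g q' - g' q}{q}.
\end{align*}
Each of the fractions $(g p' - g' p)/q$, $(g q' - g' q)/q$, and $p/q$ lies in $R(\g)$ (the first two because $[g\del,p\del], [g\del,q\del] \in \g$ and $q\del \in \g$), hence in $F(\g)$; since $F(\g)$ is a field, the whole expression lies in $F(\g)$.

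Finally, one must handle the field-generation bookkeeping: an arbitrary element of $F(\g)$ is a rational function in elements of $R(\g)$, and a derivation that sends each generator into the (sub)field and respects the field operations sends everything into the subfield — this is a standard fact about derivations, but I would spell out that $(g\del)$ preserves $\kk \subseteq F(\g)$ (it kills constants) and that $\del$ applied to a polynomial/rational expression in the $r_i$ is again such an expression with coefficients involving the $(g\del)(r_i)$, all of which lie in $F(\g)$. The main (and only mild) obstacle is this last point about closing under the field operations rather than just ring operations: one needs that $F(\g)$, being a field, absorbs the quotients that appear, and that $g\del$ is genuinely a well-defined derivation on all of $\kk(t)$ in the first place (so that it makes sense to restrict it). Neither of these is difficult; the computation above is the heart of the matter.
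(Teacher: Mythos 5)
Your proof is correct and follows essentially the same route as the paper: reduce to the generating ratios $p/q \in R(\g)$, apply the quotient rule, and recognise the resulting numerators as bracket coefficients of elements of $\g$ so that everything lands in $F(\g)$, then invoke field generation. The only cosmetic difference is that the paper uses the single bracket $[q\del, p\del]$ to write $(g\del)(p/q)$ as the product $\frac{g}{q}\cdot\frac{qp'-pq'}{q}$ of two ratios, whereas you use the two brackets $[g\del,p\del]$ and $[g\del,q\del]$ and obtain a difference of ratios; both identities are valid and the argument is the same.
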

\begin{proof}
    Let $u\del,v\del,w\del \in \g$ with $v \neq 0$, where $u,v,w \in \kk[t,t^{-1}]$, so that $\frac{u}{v}, \frac{w}{v} \in R(\g)$. We claim that $w\del(\frac{u}{v}) \in F(\g)$, which means that elements of $\g$ map $R(\g)$ to $F(\g)$. Since $F(\g)$ is generated by $R(\g)$ as a field, it will then follow that all derivations in $\g$ preserve $F(\g)$.

    We have
    $$w\del\left(\frac{u}{v}\right) = w \cdot \frac{v\del(u) - u\del(v)}{v^2} = \frac{w}{v} \cdot \frac{v\del(u) - u\del(v)}{v}.$$
    Certainly, $\frac{w}{v} \in R(\g)$ by definition. Note that $[v\del,u\del] = (v\del(u) - u\del(v))\del \in \g$, so we also have $\frac{v\del(u) - u\del(v)}{v} \in R(\g)$. It follows that $w\del(\frac{u}{v}) \in F(\g)$, as claimed.
\end{proof}

As the next example illustrates, it is often useful to consider $\g$ as a subalgebra of $\Der(F(\g))$ rather than as a subalgebra of $\W$, at least when $\g$ has infinite codimension in $\W$.

\begin{example}
    Consider the subalgebra $\g = \spn\{L_{2n} \mid n \in \ZZ\}$. It is easy to see that $\g$ is a subalgebra of $\W$ of infinite codimension and that $F(\g) = R(\g) = \kk(t^2)$, since $\g = \kk[t^2,t^{-2}]t\del$. Thus, we see that $\g$ is the subalgebra $\Der(\kk[t^2,t^{-2}])$ of $\Der(\kk(t^2))$. Therefore, when we view $\g$ as a subalgebra of $\Der(F(\g))$, we forget about the basis elements $L_{2n + 1}$ for all $n \in \ZZ$, which are not relevant when studying Lie algebraic properties of $\g$.
\end{example}

Since $F(\g) \subseteq \kk(t)$, L\"uroth's theorem implies that $F(\g) = \kk(s)$ for some $s \in \kk(t)$, so Lemma \ref{lem:derivations of field of ratios} implies that $\g \subseteq \Der(\kk(s))$. In \cite[Theorem 5.3]{Buzaglo2}, it was shown that $s$ can be chosen to be a polynomial when $\g \subseteq \WW_1$. In fact, even more is true: $\g$ is contained in $\Der(\kk[s])$ with finite codimension.

\begin{theorem}[{\cite[Theorem 2.8]{BellBuzaglo}}]\label{thm:classification of subalgebras}
    Let $\g$ be an infinite-dimensional subalgebra of $\WW_1$. Then there exists $s \in \kk[t]$ such that $F(\g) = \kk(s)$, and $\g$ has finite codimension in $\Der(\kk[s])$. In particular, there exists $g(s) \in \kk[s]$ such that
    $$g(s)\kk[s]\del_s \subseteq \g \subseteq \kk[s]\del_s,$$
    where $\del_s \in \Der(\kk[s])$ is the unique derivation of $\kk[s]$ such that $\del_s(s) = 1$, so that $\Der(\kk[s]) = \kk[s]\del_s$.
\end{theorem}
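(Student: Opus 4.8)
The plan is to use the field of ratios to first replace $\g$ by a subalgebra of a one-variable Witt algebra inside which its field of ratios is the \emph{entire} fraction field of the coefficient ring, and then to prove that version by a leading-term argument.

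\emph{Step 1: reduction to the case $F(\g) = \kk(t)$.} Since $\g$ is infinite-dimensional it contains two non-proportional elements, so $F(\g) \supsetneq \kk$. By L\"uroth's theorem $F(\g) = \kk(s)$ for some non-constant $s \in \kk(t)$, and by \cite[Theorem 5.3]{Buzaglo2} (applicable as $\g \subseteq \WW_1$) we may take $s \in \kk[t]$; this proves the claim $F(\g) = \kk(s)$. By Lemma \ref{lem:derivations of field of ratios}, $\g \subseteq \Der(\kk(s)) = \kk(s)\del_s$. I claim $\g \subseteq \Der(\kk[s]) = \kk[s]\del_s$: an element of $\kk(s)\del_s$ has the form $a(s)\del_s = \frac{a(s)}{s'}\del$ with $a(s) \in \kk(s)$ and $s' = ds/dt \in \kk[t]$, and membership in $\WW_1 = \kk[t]\del$ forces $a(s)/s' \in \kk[t]$, whence $a(s) = s' \cdot \big(a(s)/s'\big) \in \kk[t] \cap \kk(s) = \kk[s]$; the last equality holds because $t$ is integral over $\kk[s]$ (a root of a monic polynomial built from $s(T) - s \in \kk[s][T]$) while $\kk[s]$ is integrally closed in $\kk(s)$. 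This establishes the inclusion $\g \subseteq \Der(\kk[s])$ of the theorem. Moreover the set of ratios of $\g$ is unchanged when $\g$ is viewed inside $\Der(\kk[s])$, since $\frac{a(s)\del_s}{b(s)\del_s} = \frac{a(s)}{b(s)}$, so $F(\g) = \kk(s)$ is precisely the fraction field of the coefficient ring $\kk[s]$. Renaming $s$ as $t$, it remains to prove: \emph{if $\g \subseteq \WW_1 = \kk[t]\del$ is infinite-dimensional with $F(\g) = \kk(t)$, then $\g$ has finite codimension in $\WW_1$.} The remaining inclusion $g(s)\kk[s]\del_s \subseteq \g$ of the theorem is then exactly the content of the $\WW_1$-analogue of Proposition \ref{prop:finite codimension subalgebras}.

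\emph{Step 2: a degree semigroup and Gaussian elimination.} For $0 \neq f\del \in \WW_1$ put $\deg(f\del) = \deg f - 1$, so $\deg L_n = n$, and set $D = \{\deg x \mid x \in \g \nonzero\} \subseteq \ZZ_{\geq -1}$. If $x = f\del$ and $y = g\del$ have leading terms $\alpha t^{p+1}$ and $\beta t^{q+1}$ with $p = \deg x \neq \deg y = q$, then $[x,y] = (fg' - f'g)\del$ and $fg' - f'g = \alpha\beta(q - p)t^{p+q+1} + \cdots$, so $[x,y] \neq 0$ and $\deg[x,y] = p + q$. Hence $D$ is closed under sums of distinct elements, and $D$ is infinite (otherwise $\g$ would be finite-dimensional). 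Granting for the moment that $D$ is cofinite in a ray $\{n \geq N\}$, we are done: picking $x_n \in \g$ with $\deg x_n = n$ for each $n \geq N$, every $y \in \WW_1$ can be reduced, by successively subtracting scalar multiples of the $x_n$, to an element of the finite-dimensional space $\bigoplus_{-1 \leq n < N}\kk L_n$, so $\WW_1 = \g + \bigoplus_{-1 \leq n < N}\kk L_n$ and $\g$ has finite codimension in $\WW_1$.

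\emph{Step 3: cofiniteness of $D$, and the main obstacle.} Let $d$ be the greatest common divisor of the differences of elements of $D$, so $D$ lies in a single residue class $\rho \pmod d$. If $d = 1$, a standard numerical-semigroup argument — using that $D$ is infinite and closed under sums of distinct elements, and handling its finitely many small elements by hand — shows $D \supseteq \{n \geq N\}$ for some $N$, and Step 2 applies. It remains to exclude $d \geq 2$. Suppose $d \geq 2$ and grade $\WW_1 = \bigoplus_{j \in \ZZ/d}\WW_1[j]$ by $\WW_1[j] = \spn\{L_n \mid n \equiv j \pmod d\}$; this is a Lie-algebra grading since $[L_a, L_b] \in \kk L_{a+b}$. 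The hypothesis on $D$ says that the top graded component of every $x \in \g$ lies in $\WW_1[\rho]$. One then argues — by choosing a minimal counterexample and bracketing it with elements of $\g$ of large degree $\equiv \rho \pmod d$, tracking leading terms through the grading — that in fact $x \in \WW_1[\rho]$ for \emph{every} $x \in \g$. But then every $f\del \in \g$ has $f$ supported in exponents $\equiv \rho + 1 \pmod d$, so $R(\g) \subseteq \kk(t^d)$ and $F(\g) \subseteq \kk(t^d) \subsetneq \kk(t)$, contradicting $F(\g) = \kk(t)$. Hence $d = 1$, completing the proof. I expect Step 3 to be the main obstacle — specifically the rigidity statement that $D$ lying in one residue class modulo $d$ forces every element of $\g$, and not merely its leading term, into a single graded piece $\WW_1[\rho]$. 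This is the only point where the lower-order terms of elements of $\g$ must be controlled, and the bracket bookkeeping in the $\ZZ/d$-grading has to be set up carefully to rule out accidental cancellations among the cross terms.
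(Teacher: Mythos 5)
Your Steps 1 and 2 are sound: the reduction to the case $F(\g) = \kk(t)$ (L\"uroth, the citation of \cite[Theorem 5.3]{Buzaglo2} for polynomiality of $s$, and the integral-closure argument giving $\kk[t] \cap \kk(s) = \kk[s]$, hence $\g \subseteq \Der(\kk[s])$) is correct, as is the observation that the degree set $D$ is closed under sums of distinct elements and the elimination argument showing that a ray in $D$ forces finite codimension; the $d = 1$ numerical-semigroup sketch is also standard and fine. Bear in mind, though, that the paper itself does not prove this statement at all — it is imported verbatim from \cite[Theorem 2.8]{BellBuzaglo} — so the comparison here is with that external proof, which does not proceed by your Step 3.

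The genuine gap is exactly where you flagged it, and it is worse than bookkeeping: the rigidity statement your Step 3 rests on is false. Fix $d \geq 2$ and $c \in \kk^*$, and let $\g$ be the image of $\WW_1 \cap \Der(\kk[t^d]) = t\kk[t^d]\del = \spn\{L_{kd} \mid k \geq 0\}$ under the automorphism of $\WW_1$ induced by $t \mapsto t + c$; explicitly, $\g = (t+c)\,\kk[(t+c)^d]\,\del$. This is an infinite-dimensional subalgebra of $\WW_1$ every nonzero element of which has degree divisible by $d$ (so its degree set lies in a single class mod $d$, with $d \geq 2$), yet it contains $(t+c)\del = -L_0 - cL_{-1}$, which lies in no single graded piece $\WW_1[j]$. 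Hence the hypotheses actually used in your proposed leading-term induction (infinite-dimensionality plus the congruence condition on degrees) do not imply that every element of $\g$ is homogeneous modulo $d$, and no amount of careful tracking of cross terms in the $\ZZ/d$-grading can establish it. This example is of course consistent with the theorem, since $F(\g) = \kk((t+c)^d) \subsetneq \kk(t)$; the point is that your rigidity argument never invokes $F(\g) = \kk(t)$, and the statement it targets is false without it. What the case $d \geq 2$ really requires is to extract from the degree condition a polynomial $p$ of degree $d$ — not necessarily $t^d$, as the example shows — with $F(\g) \subseteq \kk(p)$, i.e.\ to control all lower-order coefficients up to a change of variable. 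That is essentially the whole content of \cite[Theorem 2.8]{BellBuzaglo} (and of \cite[Theorem 5.3]{Buzaglo2}), so your Step 3 relocates rather than resolves the difficulty, and the specific lemma proposed to carry it out would fail.
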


Let $\g \subseteq \WW_1$, $s \in \kk[t]$, and $\del_s$ be as in Theorem \ref{thm:classification of subalgebras}. Certainly, the Lie algebra $\Der(\kk[s])$ is isomorphic to $\WW_1$, so Theorem \ref{thm:classification of subalgebras} says that $\g$ is isomorphic to a subalgebra of $\WW_1$ of finite codimension. The derivation $\del_s \in \Der(\kk[s])$ can be uniquely extended to a derivation of $\kk(t)$: we have $\del_s = \frac{1}{s'}\del$, where $s' \coloneqq \del(s)$ is the derivative of $s$ with respect to $t$. This allows us to embed $\Der(\kk[s])$ into $\Der(\kk(t))$ as follows:
$$\Der(\kk[s]) = \kk[s]\del_s = \frac{1}{s'}\kk[s]\del \subseteq \kk(t)\del = \Der(\kk(t)).$$
Of course, $\WW_1 = \kk[t]\del$ is also contained in $\Der(\kk(t))$. Therefore, Theorem \ref{thm:classification of subalgebras} implies that $\g \subseteq \WW_1 \cap \Der(\kk[s])$, where the intersection is taken in $\Der(\kk(t))$.

Although Theorem \ref{thm:classification of subalgebras} only considers subalgebras of $\WW_1$, the same ideas can be applied to subalgebras of $\W$ (see \cite[Section 9]{BuzagloIngalls}). To that end, we introduce the following definition.

\begin{notation}\label{ntt:L(s)}
    If $s \in \kk[t,t^{-1}]$, we define $L(s) \coloneqq \W \cap \Der(\kk[s])$.
\end{notation}

The Lie algebra $L(s)$ can be described as follows.

\begin{lemma}[{\cite[Lemma 2.7]{BellBuzaglo}}]\label{lem:L(s) subalgebra}
    Let $s \in \kk[t,t^{-1}]$, and let $g_s \in \kk[t]$ be the unique monic polynomial of minimal degree such that $s'g_s \in \kk[s]$. Letting $f_s \in \kk[t]$ such that $s'g_s = f_s(s)$, we have $L(s) = \kk[s]g_s\del$.
\end{lemma}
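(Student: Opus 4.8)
The plan is to reduce the computation of $L(s)$ to pure algebra about when a derivation of $\kk[t,t^{-1}]$ preserves the subalgebra $\kk[s]$. Recall that $\Der(\kk[s]) = \kk[s]\del_s$ inside $\Der(\kk(t))$, and that $\del_s$ is related to $\del$ by $\del_s = \tfrac{1}{s'}\del$, where $s' = \del(s)$. Thus an element $w\del \in \W$ (with $w \in \kk[t,t^{-1}]$) lies in $L(s)$ precisely when $w\del = p(s)\del_s = \tfrac{p(s)}{s'}\del$ for some $p(s) \in \kk[s]$, i.e. when $w = \tfrac{p(s)}{s'}$, or equivalently $ws' = p(s) \in \kk[s]$. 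So the first step is to record this reformulation: $L(s) = \{w\del \mid w \in \kk[t,t^{-1}],\ ws' \in \kk[s]\}$.

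The second step is to understand the ``denominators'' allowed. The set $D \coloneqq \{w \in \kk[t,t^{-1}] \mid ws' \in \kk[s]\}$ is closed under multiplication by $\kk[s]$ (since $\kk[s]$ is a ring and $ws' \in \kk[s]$ forces $q(s) w s' \in \kk[s]$ for all $q(s)\in\kk[s]$) and is in fact a $\kk[s]$-submodule of $\kk[t,t^{-1}]$. I would argue that $D = \tfrac{1}{s'}\kk[s] \cap \kk[t,t^{-1}]$, and that this intersection is a cyclic $\kk[s]$-module: it is generated by $\tfrac{f_s(s)}{s'}$ where $f_s(s) = s'g_s$ with $g_s \in \kk[t]$ the unique monic polynomial of minimal degree making $s'g_s \in \kk[s]$. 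Concretely, $w \in D$ iff $ws' \in \kk[s]$; writing $ws' = h(s)$, one needs to show $g_s \mid w$ in $\kk[t,t^{-1}]$ (equivalently in $\kk[t]$ after clearing a power of $t$ — but since everything is being multiplied by $s'$ which is a Laurent polynomial, some care with negative powers of $t$ is needed here, and this is where I expect the minor technical friction), so that $w = g_s \cdot (\text{Laurent poly})$ and then $ws' = f_s(s)\cdot(\text{that Laurent poly})$ being in $\kk[s]$ forces the cofactor to be a polynomial in $s$. The minimality and monicity of $g_s$ is exactly what guarantees uniqueness and that $g_s \in \kk[t]$ (not merely $\kk[t,t^{-1}]$).

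Combining, every $w\del \in L(s)$ has $w = g_s \cdot q(s)$ for some $q(s) \in \kk[s]$, giving $L(s) \subseteq \kk[s]g_s\del$; conversely $q(s)g_s\del$ satisfies $q(s)g_s \cdot s' = q(s)f_s(s) \in \kk[s]$, so $q(s)g_s\del \in \Der(\kk[s])$, and it is manifestly in $\W$ since $g_s \in \kk[t]$ and $q(s) \in \kk[t,t^{-1}]$. Hence $L(s) = \kk[s]g_s\del$, as claimed.

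\textbf{Main obstacle.} The genuinely delicate point is the divisibility claim $g_s \mid w$ in the Laurent setting: one must show that if $ws'$ is a polynomial in $s$, then $w$ is divisible (over $\kk[t,t^{-1}]$, hence over $\kk[t]$ up to units $t^k$) by the minimal such $g_s$. I would handle this by localizing — pass to $\kk[t]_t = \kk[t,t^{-1}]$, note that $g_s$ being the \emph{minimal monic} polynomial with $s'g_s \in \kk[s]$ means the $\kk[s]$-module $\{h \in \kk[t,t^{-1}] : hs' \in \kk[s]\}$ is fractional-ideal-like and, after identifying $\kk[s] \hookrightarrow \kk[t,t^{-1}]$ as a finite extension of Dedekind-type domains (or just by a direct degree/valuation argument at each place of $\kk(t)$ over $\kk(s)$), conclude the module is generated by $g_s$. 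Since Lemma~\ref{lem:L(s) subalgebra} is quoted from \cite{BellBuzaglo}, the cleanest route is simply to invoke it directly; but the above is how I would reconstruct the argument if needed.
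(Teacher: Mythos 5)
Your opening reduction and the easy inclusion are correct: $w\del \in L(s)$ if and only if $w\del(s) = ws' \in \kk[s]$, and since $q(s)g_s \cdot s' = q(s)f_s(s) \in \kk[s]$ this gives $\kk[s]g_s\del \subseteq L(s)$ at once. Note also that the paper itself contains no proof of this lemma: it is imported from \cite[Lemma 2.7]{BellBuzaglo}, with the existence of $g_s$ delegated to \cite[Proposition 4.13]{Buzaglo2}, so your fallback of simply invoking the reference is exactly what the paper does.

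The reconstruction of the hard inclusion, however, has a genuine gap. The two sub-claims you isolate --- that $g_s$ divides every $w$ with $ws' \in \kk[s]$, and that the resulting cofactor lies in $\kk[s]$ --- are the whole content of the lemma (your proposed cyclic generator $\tfrac{f_s(s)}{s'}$ \emph{is} $g_s$, so asserting cyclicity with that generator is just restating the goal), and the appeals to ``fractional-ideal-like'' structure, ``Dedekind-type domains'' and the minimality of $g_s$ are not an argument. A concrete symptom: your reasoning uses no property of $s$ beyond the definition of $g_s$, but the conclusion is sensitive to $s$. Take $s = t^2$: then $g_s = t$, yet $t^{-1}\del \in \W \cap \Der(\kk[t^2]) = L(s)$ while $t^{-1}\del \notin \kk[t^2]\,t\,\del$; here $t$ is a unit of $\kk[t,t^{-1}]$, so ``divisibility by $g_s$'' gives nothing, and $f_s(s)u \in \kk[s]$ does not force $u \in \kk[s]$ (take $u = t^{-2}$, $f_s(s)u = 2$). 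So an argument of the shape you sketch would prove a false statement; read literally for every $s \in \kk[t,t^{-1}]$ the displayed lemma itself needs a hypothesis of this kind, and in the paper it is only applied to $s_\lambda = t + \lambda t^{-1}$, where all is well. A correct proof must therefore use where the hypotheses on $s$ enter: for $s_\lambda$ one has that $\kk[t,t^{-1}]$ is integral (indeed finite) over $\kk[s]$ and $\kk[s]$ is integrally closed, whence $\kk(s)\cap\kk[t,t^{-1}] = \kk[s]$ --- this rescues your final inference --- together with an argument that the ideal $\kk[s] \cap s'\kk[t,t^{-1}]$ of $\kk[s]$ is generated by $f_s(s)$, equivalently that the cyclic generator of $\{w : ws' \in \kk[s]\}$ may be chosen to be the polynomial $g_s$. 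That last step is precisely what \cite[Lemma 2.7]{BellBuzaglo} and \cite[Proposition 4.13]{Buzaglo2} establish, and it is absent from your sketch.
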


\begin{remark}
    Although the condition $s'g_s \in \kk[s]$ might seem unusual, it is precisely the necessary condition for $g_s\del$ to be a derivation of $\kk[s]$. Furthermore, it is not obvious that such a polynomial $g_s$ exists for every $s \in \kk[t,t^{-1}]$; this is proved in \cite[Proposition 4.13]{Buzaglo2}.
\end{remark}

We can also describe $L(s)$ as a subalgebra of $\Der(\kk[s]) = \kk[s]\del_s$ (in other words, viewing it as a Lie algebra of derivations of $\kk[s]$ instead of $\kk[t,t^{-1}]$). Since $\del = s'\del_s$, it follows that
$$L(s) = \kk[s]g_s\del = \kk[s]s'g_s\del_s = \kk[s]f_s(s)\del_s.$$
Therefore, $L(s)$ is the subalgebra $\kk[s]f_s(s)\del_s$ of $\Der(\kk[s])$. By changing variables $s \mapsto t$, we see that $L(s) \cong \kk[t]f_s\del = f_s\WW_1$, so $L(s)$ is isomorphic to a subalgebra of $\WW_1$ of finite codimension. We now give an example to illustrate the construction.

\begin{example}[{\cite[Example 4.16]{Buzaglo2}}]
    If we let $s = t^3 + 3t$ then $g_s = (t^2 + 1)(t^2 + 4)$ and $f_s = 3(t^2 + 4)$, since $s'g_s = 3(s^2 + 4) \in \kk[s]$. Therefore, $L(s) = \kk[t^3 + 3t](t^2 + 1)(t^2 + 4)\del$ is a Lie subalgebra of $\W$. As a subalgebra of $\Der(\kk[s])$, we have $L(s) = \kk[s](s^2 + 4)\del_s$. Changing variables $s \mapsto t$, we see that
    $$L(s) \cong \kk[t](t^2 + 4)\del = (t^2 + 4)\WW_1,$$
    so $L(s)$ is isomorphic to the subalgebra $(t^2 + 4)\WW_1$ of $\WW_1$ of codimension 2.
\end{example}

We summarise this discussion below.

\begin{proposition}[{\cite[Lemma 4.12, Proposition 4.13]{Buzaglo2}}]\label{prop:L(s)}
    Let $s \in \kk[t,t^{-1}]$, and let $f_s, g_s \in \kk[t]$ be as in Lemma \ref{lem:L(s) subalgebra}. Then
    $$L(s) = \kk[s]g_s\del = \kk[s]f_s(s)\del_s \cong f_s\WW_1,$$
    where the isomorphism comes from the change of variables $s \mapsto t$.
\end{proposition}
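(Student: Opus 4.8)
The plan is to obtain the two displayed equalities and the isomorphism by chaining together Lemma~\ref{lem:L(s) subalgebra}, the identity $\del = s'\del_s$ relating the two derivations, and the observation that the substitution $s \mapsto t$ is an isomorphism of Lie algebras $\Der(\kk[s]) \xrightarrow{\sim} \WW_1$. Each step is a short formal manipulation, so no serious obstacle is expected; the only point requiring a little care is to confirm that this change of variables is an isomorphism of \emph{Lie} algebras and not merely of vector spaces.

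\textbf{Step 1.} The equality $L(s) = \kk[s]g_s\del$ is exactly the content of Lemma~\ref{lem:L(s) subalgebra}, where $g_s \in \kk[t]$ is the monic polynomial of minimal degree with $s'g_s \in \kk[s]$ and $f_s \in \kk[t]$ is defined by $s'g_s = f_s(s)$. This gives the first equality for free, together with the fact that $L(s)$ is a genuine subalgebra of $\W$.

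\textbf{Step 2.} To pass to $\kk[s]g_s\del = \kk[s]f_s(s)\del_s$, I would use that $\del_s$, the unique derivation of $\kk[s]$ with $\del_s(s) = 1$, extends to $\tfrac{1}{s'}\del \in \Der(\kk(t))$; equivalently $\del = s'\del_s$ as derivations of $\kk(t)$. Hence $g_s\del = s'g_s\,\del_s = f_s(s)\del_s$, and multiplying on the left by $\kk[s]$ yields $\kk[s]g_s\del = \kk[s]f_s(s)\del_s$ as subspaces of $\Der(\kk(t))$ — in fact as subalgebras, by Step 1.

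\textbf{Step 3.} For the isomorphism $\kk[s]f_s(s)\del_s \cong f_s\WW_1$: the assignment $s \mapsto t$ defines a $\kk$-algebra isomorphism $\sigma\colon \kk[s] \xrightarrow{\sim} \kk[t]$ carrying $\del_s$ to $\del$, since $\del_s$ is characterised by $\del_s(s) = 1$ and $\del$ by $\del(t) = 1$. This promotes to a Lie algebra isomorphism $\Der(\kk[s]) = \kk[s]\del_s \xrightarrow{\sim} \kk[t]\del = \WW_1$, because on both sides the bracket has the form $[p\del_s, q\del_s] = (p\,\del_s(q) - q\,\del_s(p))\del_s$, a formula transported verbatim by $\sigma$. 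Under this isomorphism $f_s(s)\del_s \mapsto f_s(t)\del = f_s\del$, so $\kk[s]f_s(s)\del_s$ maps isomorphically onto $\kk[t]f_s\del = f_s\WW_1$; in particular $L(s)$ is isomorphic to the subalgebra $f_s\WW_1$ of $\WW_1$, of codimension $\deg f_s$.
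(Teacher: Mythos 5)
Your proposal is correct and follows essentially the same route as the paper: the paper also obtains $L(s) = \kk[s]g_s\del$ from Lemma~\ref{lem:L(s) subalgebra}, rewrites it as $\kk[s]s'g_s\del_s = \kk[s]f_s(s)\del_s$ via $\del = s'\del_s$, and then invokes the change of variables $s \mapsto t$ to identify this with $f_s\WW_1$. Your extra care in Step 3, checking that the substitution is an isomorphism of Lie algebras and not just of vector spaces, is a sound (if routine) addition to what the paper leaves implicit.
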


\subsection{Change of basis for \texorpdfstring{$\OO$}{O}}\label{subsec:new basis for O}

We now apply the ideas of Subsection \ref{subsec:subalgebras of Witt} to the Lie subalgebra $\OO$ of $\W$ to construct a new basis of $\OO$ which will allow us to define a descending filtration for the BCCA. In fact, we work in a slightly more general setting: $\OO$ is part of a family of Lie subalgebras of $\W$. It is not much more difficult to consider the entire family for the proofs in this section, so we work in this level of generality.

\begin{defn}
    For $\lambda \in \kk^*$ and $n \in \ZZ$, define $\Olambda_n \coloneqq L_n - \lambda^n L_{-n}$. Notice that $\Olambda_0 = 0$ and $\Olambda_{-n} = -\lambda^{-n}\Olambda_n$ for $n \geq 1$. Define $\OO(\lambda) \coloneqq \spn\{\Olambda_n \mid n \geq 1\}$.
\end{defn}

Of course $\OO(1) = \OO$. On the other hand, it is not immediately clear that $\OO(\lambda)$ is closed under the Lie bracket of $\W$ for $\lambda \neq 1$. We prove this next.

\begin{lemma}\label{lem:O lambda bracket}
    Let $\lambda \in \kk^*$. Then $\OO(\lambda)$ is a Lie algebra with the following Lie bracket:
    $$[\Olambda_n,\Olambda_m] = (n - m)\Olambda_{n + m} - \lambda^m (n + m) \Olambda_{n - m}$$
    for all $n,m \geq 1$. In particular, $\OO(\lambda)$ is generated by $\Olambda_1$ and $\Olambda_2$.
\end{lemma}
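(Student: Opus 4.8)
The plan is to verify the bracket formula by direct computation in the Witt algebra $\W$, using the standard relation $[L_n, L_m] = (n-m)L_{n+m}$, and then deduce closure and the generation statement as corollaries.

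First I would expand $[\Olambda_n, \Olambda_m] = [L_n - \lambda^n L_{-n}, L_m - \lambda^m L_{-m}]$ using bilinearity into four brackets:
\begin{align*}
[\Olambda_n,\Olambda_m] &= [L_n, L_m] - \lambda^m[L_n, L_{-m}] - \lambda^n[L_{-n}, L_m] + \lambda^{n+m}[L_{-n}, L_{-m}] \\
&= (n-m)L_{n+m} - \lambda^m(n+m)L_{n-m} - \lambda^n(-n-m)L_{m-n} + \lambda^{n+m}(m-n)L_{-n-m}.
\end{align*}
Then I would group the terms by the subscript they contribute to $\Olambda$: the first and last terms combine to give $(n-m)L_{n+m} + \lambda^{n+m}(m-n)L_{-(n+m)} = (n-m)(L_{n+m} - \lambda^{n+m}L_{-(n+m)}) = (n-m)\Olambda_{n+m}$, and the middle two terms combine to give $-\lambda^m(n+m)L_{n-m} + \lambda^n(n+m)L_{m-n} = -\lambda^m(n+m)\bigl(L_{n-m} - \lambda^{n-m}L_{m-n}\bigr) = -\lambda^m(n+m)\Olambda_{n-m}$, where in the last step I use $\lambda^n = \lambda^m \lambda^{n-m}$. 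This gives the claimed formula. One should note that when $n = m$ the second term is $-\lambda^m(2n)\Olambda_0 = 0$, consistent with antisymmetry, and the formula also makes sense when $m > n$ since $\Olambda_{n-m} = -\lambda^{n-m}\Olambda_{m-n}$ by the convention $\Olambda_{-k} = -\lambda^{-k}\Olambda_k$; I would remark that the formula is symmetric in the appropriate sense under $n \leftrightarrow m$ as a sanity check.

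Closure of $\OO(\lambda)$ under the bracket is then immediate, since the right-hand side $(n-m)\Olambda_{n+m} - \lambda^m(n+m)\Olambda_{n-m}$ lies in $\spn\{\Olambda_k \mid k \geq 1\}$ (using the convention to rewrite $\Olambda_{n-m}$ in terms of $\Olambda_{m-n}$ when $n < m$, and noting $\Olambda_0 = 0$), so $\OO(\lambda)$ is indeed a Lie subalgebra of $\W$. For the generation claim, I would argue by induction on $n$ that $\Olambda_n$ lies in the subalgebra generated by $\Olambda_1$ and $\Olambda_2$: the cases $n = 1, 2$ are trivial, and for $n \geq 3$ I would take the bracket $[\Olambda_{n-1}, \Olambda_1] = (n-2)\Olambda_n - \lambda(n)\Olambda_{n-2}$, so that $\Olambda_n = \frac{1}{n-2}\bigl([\Olambda_{n-1}, \Olambda_1] + \lambda n\, \Olambda_{n-2}\bigr)$, and since $n - 2 \neq 0$ (as $\operatorname{char}\kk = 0$) and $\Olambda_{n-1}, \Olambda_{n-2}$ are in the subalgebra by induction, so is $\Olambda_n$.

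I do not anticipate a serious obstacle here — the only thing requiring care is bookkeeping the sign and power-of-$\lambda$ conventions for negative indices, particularly making sure the identity $\Olambda_{-k} = -\lambda^{-k}\Olambda_k$ is applied consistently when $m > n$ so that the formula is genuinely well-defined as a statement about elements of $\OO(\lambda)$; this is the step most likely to hide an error, so I would double-check it explicitly in the write-up.
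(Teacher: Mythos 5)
Your proof is correct and follows essentially the same route as the paper: a direct expansion of $[L_n - \lambda^n L_{-n}, L_m - \lambda^m L_{-m}]$ using the Witt relation, regrouping into $(n-m)\Olambda_{n+m} - \lambda^m(n+m)\Olambda_{n-m}$ (with the convention $\Olambda_{-k} = -\lambda^{-k}\Olambda_k$, $\Olambda_0 = 0$ handling closure), and then the generation claim by the easy induction via $[\Olambda_{n-1},\Olambda_1]$. The paper's proof is just a more compressed version of the same computation.
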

\begin{proof}
    We compute the bracket $[\Olambda_n,\Olambda_m]$ as follows:
    \begin{align*}
        [\Olambda_n,\Olambda_m] &= [L_n - \lambda^n L_{-n}, L_m - \lambda^m L_{-m}] \\
        &= (n - m)L_{n + m} - \lambda^m (n + m) L_{n - m} + \lambda^n (n + m) L_{m - n} - \lambda^{n + m}(n - m)L_{-(n + m)} \\
        &= (n - m)\Olambda_{n + m} - \lambda^m (n + m)\Olambda_{n - m}.
    \end{align*}
    The final sentence follows by an easy induction.
\end{proof}

\begin{remark}
    One might ask if we can get different subalgebras of the Witt algebra by combining $L_n$ and $L_{-n}$ differently to the way it is done in $\OO(\lambda)$. In other words, can we generate different Lie subalgebras of $\W$ with elements $L_1 - \lambda L_{-1}$ and $L_2 - \mu L_{-2}$, where $\lambda, \mu \in \kk^*$? This question is answered in Appendix \ref{appendix}: if $\mu \neq \lambda^2$, then these two elements generate the entire Witt algebra. As a result, the subalgebras $\OO(\lambda)$ are the unique family of proper subalgebras of $\W$ generated by elements of the form $L_1 - \lambda L_{-1}$ and $L_2 - \mu L_{-2}$ for some $\lambda, \mu \in \kk^*$.
\end{remark}

We now aim to describe an alternative basis $u_n^{(\lambda)}$ for $\OO(\lambda)$. To achieve this, we analyse the structure of $\OO(\lambda)$ following the methods of Subsection \ref{subsec:subalgebras of Witt}. In terms of polynomials, we have $\Olambda_1 = -(t^2 - \lambda)\del$, and
$$\Olambda_2 = -(t^3 - \lambda^2 t^{-1})\del = -t^{-1}(t^4 - \lambda^2)\del = -t^{-1}(t^2 - \lambda)(t^2 + \lambda)\del.$$
Dividing $\Olambda_2$ by $\Olambda_1$, it follows that
$$\frac{t^{-1}(t^2 - \lambda)(t^2 + \lambda)}{t^2 - \lambda} = t + \lambda t^{-1} \in R(\OO(\lambda)).$$
We therefore define $s_\lambda \coloneqq t + \lambda t^{-1}$, so that $\kk(s_\lambda) \subseteq F(\OO(\lambda))$. In fact, as we show next, we have $R(\OO(\lambda)) = F(\OO(\lambda)) = \kk(s_\lambda)$ and $\OO(\lambda) = L(s_\lambda)$, where this notation is defined in Definition \ref{def:ratios} and Notation \ref{ntt:L(s)}.

\begin{theorem}\label{thm:O lambda = L(s)}
    Let $\lambda \in \kk^*$, and define $\f_\lambda \coloneqq f_{s_\lambda} \WW_1$, where $f_{s_\lambda} = t^2 - 4\lambda$. Then
    $$\OO(\lambda) = L(s_\lambda) = \kk[t + \lambda t^{-1}](t^2 - \lambda)\del = \kk[s_\lambda](s_\lambda^2 - 4\lambda)\del_{s_\lambda} \cong \f_\lambda,$$
    where the isomorphism comes from the change of variables $s_\lambda \mapsto t$. Consequently, $\OO(\lambda)$ has another basis $\{\ulambda_n \mid n \geq 1\}$ defined as follows: for $n \geq 1$, let
    $$\ulambda_n \coloneqq -s_\lambda^{n - 1}(s_\lambda^2 - 4\lambda)\del_{s_\lambda} = -(t + \lambda t^{-1})^{n - 1}(t^2 - \lambda)\del.$$
    The Lie bracket of $\OO(\lambda)$ with this basis is given by
    $$[\ulambda_n,\ulambda_m] = (n - m)(\ulambda_{n + m} - 4\lambda \ulambda_{n + m - 2})$$
    for all $n, m \geq 1$.
\end{theorem}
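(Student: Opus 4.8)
The plan is to prove the chain of equalities $\OO(\lambda) = L(s_\lambda) = \kk[s_\lambda](s_\lambda^2 - 4\lambda)\del_{s_\lambda} \cong \f_\lambda$ by combining the structure theory of Subsection \ref{subsec:subalgebras of Witt} with the explicit polynomial computations already begun in the text. First I would establish $\OO(\lambda) = L(s_\lambda)$. The inclusion $\OO(\lambda) \subseteq L(s_\lambda)$ follows from Lemma \ref{lem:derivations of field of ratios}: since $s_\lambda = t + \lambda t^{-1} \in R(\OO(\lambda))$ (as computed just before the statement), we have $\kk(s_\lambda) \subseteq F(\OO(\lambda))$; conversely every ratio $g/h$ of elements $g\del, h\del \in \OO(\lambda)$ lies in $\kk(s_\lambda)$ because each $\Olambda_n$ is, up to the common factor $(t^2-\lambda)\del$, a polynomial in $s_\lambda$ — I would verify by induction on $n$ that $\Olambda_n = p_n(s_\lambda)(t^2-\lambda)\del$ for some polynomial $p_n$ of degree $n-1$, using the recursion $\Olambda_{n+1}$ in terms of $\Olambda_n$ and $\Olambda_1$ (or directly: $L_n - \lambda^n L_{-n} = -(t^{n+1} - \lambda^n t^{-n+1})\del$, and $t^{n+1} - \lambda^n t^{1-n} = t^{1-n}(t^{2n} - \lambda^n) = t^{1-n}(t^2-\lambda)(t^{2n-2} + \lambda t^{2n-4} + \cdots + \lambda^{n-1})$, and one checks $t^{1-n}(t^{2n-2}+\cdots+\lambda^{n-1})$ is a degree-$(n-1)$ polynomial in $s_\lambda = t+\lambda t^{-1}$). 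This shows $F(\OO(\lambda)) = \kk(s_\lambda)$ and hence $\OO(\lambda) \subseteq \W \cap \Der(\kk[s_\lambda]) = L(s_\lambda)$.

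Next I would compute $L(s_\lambda)$ explicitly using Lemma \ref{lem:L(s) subalgebra}. Here $s_\lambda' = 1 - \lambda t^{-2}$, so $s_\lambda' \cdot t^2 = t^2 - \lambda \notin \kk[s_\lambda]$ but $s_\lambda' \cdot (t^2 - \lambda)$... actually one needs the monic $g_{s_\lambda} \in \kk[t]$ of minimal degree with $s_\lambda' g_{s_\lambda} \in \kk[s_\lambda]$; since $s_\lambda' = t^{-2}(t^2 - \lambda)$ and $t^2 - \lambda = t \cdot s_\lambda - 2\lambda$... the clean statement is $s_\lambda' \cdot t^2 = t^2 - \lambda$, and $t^2 - \lambda$ is not in $\kk[s_\lambda]$, but $(t^2-\lambda)^2 = t^2(t^2 - 2\lambda) + \lambda^2 = t^2 s_\lambda^2 - 4\lambda t^2 + $ correction — rather than grind this, I would verify directly that $g_{s_\lambda} = t^2 - \lambda$ works: $s_\lambda' (t^2-\lambda) = t^{-2}(t^2-\lambda)^2$, and $(t^2-\lambda)^2 = t^2(s_\lambda^2 - 4\lambda)$ since $s_\lambda^2 = t^2 + 2\lambda + \lambda^2 t^{-2}$ gives $t^2 s_\lambda^2 = t^4 + 2\lambda t^2 + \lambda^2 = (t^2+\lambda)^2$, hmm, so $(t^2 - \lambda)^2 = t^4 - 2\lambda t^2 + \lambda^2 = t^2(s_\lambda^2 - 4\lambda)$ indeed. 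Thus $s_\lambda' g_{s_\lambda} = s_\lambda^2 - 4\lambda = f_{s_\lambda}(s_\lambda)$ with $f_{s_\lambda}(t) = t^2 - 4\lambda$, and minimality of $g_{s_\lambda} = t^2 - \lambda$ is clear since nothing of lower degree works (a linear or constant multiple of $s_\lambda'$ is not polynomial in $s_\lambda$). By Lemma \ref{lem:L(s) subalgebra}, $L(s_\lambda) = \kk[s_\lambda](t^2-\lambda)\del = \kk[t+\lambda t^{-1}](t^2-\lambda)\del$, and by Proposition \ref{prop:L(s)}, $L(s_\lambda) = \kk[s_\lambda]f_{s_\lambda}(s_\lambda)\del_{s_\lambda} = \kk[s_\lambda](s_\lambda^2-4\lambda)\del_{s_\lambda} \cong f_{s_\lambda}\WW_1 = \f_\lambda$ via $s_\lambda \mapsto t$.

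Combining the two halves: $\OO(\lambda) \subseteq L(s_\lambda)$, and $L(s_\lambda) = \kk[s_\lambda](t^2-\lambda)\del$ has basis $\{s_\lambda^{n-1}(t^2-\lambda)\del \mid n \geq 1\}$, each element of which lies in $\OO(\lambda)$ by the induction above (it equals $-\ulambda_n$ and is a $\kk$-linear combination of the $\Olambda_m$). Hence $\OO(\lambda) = L(s_\lambda)$, and the displayed basis $\ulambda_n = -s_\lambda^{n-1}(s_\lambda^2-4\lambda)\del_{s_\lambda} = -(t+\lambda t^{-1})^{n-1}(t^2-\lambda)\del$ is genuinely a basis of $\OO(\lambda)$. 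Finally, the bracket formula follows from \eqref{eq:fW bracket} applied in $\f_\lambda$ under the identification $s_\lambda \mapsto t$: with $f = t^2 - 4\lambda$ and the convention $\ulambda_n \leftrightarrow -t^{n-1}f\del$, one computes exactly as in Example \ref{ex:fW subalgebra} (which is the case $\lambda = 1$) that
\begin{align*}
    [\ulambda_n, \ulambda_m] &= [L_n - 4\lambda L_{n-2}, L_m - 4\lambda L_{m-2}] \\
    &= (n-m)(L_{n+m} - 8\lambda L_{n+m-2} + 16\lambda^2 L_{n+m-4}) \\
    &= (n-m)(\ulambda_{n+m} - 4\lambda \ulambda_{n+m-2}),
\end{align*}
where $L_k$ here abbreviates the basis of $\WW_1 \cong \Der(\kk[s_\lambda])$. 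I expect the only mild obstacle to be the bookkeeping in the inductive identity $\Olambda_n = p_n(s_\lambda)(t^2-\lambda)\del$ and the verification that $g_{s_\lambda} = t^2 - \lambda$ is of \emph{minimal} degree — everything else is a direct appeal to Lemmas \ref{lem:derivations of field of ratios} and \ref{lem:L(s) subalgebra} and Proposition \ref{prop:L(s)}, together with the polynomial identity $(t^2-\lambda)^2 = t^2(s_\lambda^2 - 4\lambda)$.
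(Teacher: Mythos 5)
Your proposal is correct, and its backbone coincides with the paper's proof: both compute $g_{s_\lambda} = t^2 - \lambda$ via the identity $s_\lambda'(t^2-\lambda) = s_\lambda^2 - 4\lambda$, invoke Lemma \ref{lem:L(s) subalgebra} and Proposition \ref{prop:L(s)} to get $L(s_\lambda) = \kk[s_\lambda](t^2-\lambda)\del \cong \f_\lambda$, and obtain the bracket formula by transporting $[L_n - 4\lambda L_{n-2}, L_m - 4\lambda L_{m-2}]$ through the change of variables $s_\lambda \mapsto t$. Where you genuinely diverge is in establishing $\OO(\lambda) = L(s_\lambda)$: the paper argues at the level of Lie generation, checking that $\Olambda_1, \Olambda_2 \in L(s_\lambda)$ (so $\OO(\lambda) \subseteq L(s_\lambda)$ by Lemma \ref{lem:O lambda bracket}) and then showing, via the bracket formula and an easy induction, that $\ulambda_1 = \Olambda_1$ and $\ulambda_2 = \Olambda_2$ also generate $L(s_\lambda)$; you instead prove the explicit triangular identity $\Olambda_n = -p_n(s_\lambda)(t^2-\lambda)\del$ with $p_n$ monic of degree $n-1$, which yields both inclusions at once by a linear change-of-basis argument and has the small bonus of exhibiting the transition between the bases $\{\Olambda_n\}$ and $\{\ulambda_n\}$ (the paper does this separately, for $\lambda = 1$, in Lemma \ref{lem:u in terms of O}). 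One caveat: your opening appeal to Lemma \ref{lem:derivations of field of ratios} and $F(\OO(\lambda)) = \kk(s_\lambda)$ only gives $\OO(\lambda) \subseteq \W \cap \Der(\kk(s_\lambda))$, and passing from derivations preserving the \emph{field} $\kk(s_\lambda)$ to derivations preserving the \emph{ring} $\kk[s_\lambda]$ (which is what $L(s_\lambda)$ requires) needs a further argument (e.g.\ that $\kk(s_\lambda) \cap \kk[t,t^{-1}] = \kk[s_\lambda]$ by integrality, or the machinery behind Theorem \ref{thm:classification of subalgebras}); fortunately this detour is redundant in your write-up, since the identity $\Olambda_n = -p_n(s_\lambda)(t^2-\lambda)\del$ already places $\OO(\lambda)$ inside $\kk[s_\lambda](t^2-\lambda)\del = L(s_\lambda)$ directly. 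Provided you carry out the (routine) verification that $\sum_{j=0}^{n-1}\lambda^j t^{\,n-1-2j}$ is a monic degree-$(n-1)$ polynomial in $s_\lambda$, the argument is complete.
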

\begin{proof}
    We adopt the notation $g_{s_\lambda}$ and $f_{s_\lambda}$ from Lemma \ref{lem:L(s) subalgebra}. It is not difficult to see that $g_{s_\lambda} = t^2 - \lambda$, since
    $$s_\lambda'(t^2 - \lambda) = (1 - \lambda t^{-2})(t^2 - \lambda) = t^2 - 2\lambda + \lambda^2 t^{-2} = (t + \lambda t^{-1})^2 - 4\lambda = s_\lambda^2 - 4\lambda \in \kk[s_\lambda].$$
    Consequently, $f_{s_\lambda} = t^2 - 4\lambda$. It follows from Proposition \ref{prop:L(s)} that
    $$L(s_\lambda) = \kk[s_\lambda](s_\lambda^2 - 4\lambda)\del_{s_\lambda} = \kk[s_\lambda]g_{s_\lambda}\del = \kk[t + \lambda t^{-1}](t^2 - \lambda)\del.$$
    Note that $\Olambda_1 = -(t^2 - \lambda)\del \in L(s_\lambda)$ and
    $$\Olambda_2 = -(t^3 - \lambda^2 t^{-1})\del = -(t + \lambda t^{-1})(t^2 - \lambda)\del \in L(s_\lambda).$$
    Since $\Olambda_1$ and $\Olambda_2$ generate $\OO(\lambda)$ by Lemma \ref{lem:O lambda bracket}, this implies that $\OO(\lambda) \subseteq L(s_\lambda)$.

    For the other inclusion $L(s_\lambda) \subseteq \OO(\lambda)$, we will show that $L(s_\lambda)$ is also generated by $\Olambda_1$ and $\Olambda_2$. To that end, noting that $\{\ulambda_n \mid n \geq 1\}$ is a basis for $L(s_\lambda)$, we compute the bracket $[\ulambda_n,\ulambda_m]$. Letting $\f_\lambda \coloneqq f_{s_\lambda}\WW_1 = (t^2 - 4\lambda)\WW_1$, the change of variables $s_\lambda \mapsto t$ yields an isomorphism $L(s_\lambda) \cong \f_\lambda$, as in Proposition \ref{prop:L(s)}. Noting that
    $$\ulambda_n = -(t + \lambda t^{-1})^{n - 1}(t^2 - \lambda)\del = -s_\lambda^{n - 1}(s_\lambda^2 - 4\lambda)\del_{s_\lambda},$$
    we see that
    $$\ulambda_n \mapsto \flambda_n \coloneqq -t^{n - 1} f_\lambda \del = L_n - 4\lambda L_{n - 2}$$
    under the change of variables $s_\lambda \mapsto t$. We have
    \begin{align*}
        [\flambda_n,\flambda_m] &= [L_n - 4\lambda L_{n - 2},L_m - 4\lambda L_{m - 2}] \\
        &= (n - m)(L_{n + m} - 8\lambda L_{n + m - 2} + 16\lambda^2 L_{n + m - 4}) \\
        &= (n - m)(\flambda_{n + m} - 4 \lambda \flambda_{n + m -2}),
    \end{align*}
    and therefore,
    $$[\ulambda_n,\ulambda_m] = (n - m)(\ulambda_{n + m} - 4\lambda \ulambda_{n + m - 2}).$$
    An easy induction shows that $\Olambda_1 = \ulambda_1$ and $\Olambda_2 = \ulambda_2$ generate $L(s_\lambda)$, which concludes the proof.
\end{proof}

\begin{remark}
    Theorem \ref{thm:O lambda = L(s)} implies that $R(\OO(\lambda)) = F(\OO(\lambda)) = \kk(s_\lambda)$, although a priori it is not obvious that $R(\OO(\lambda))$ is a field.
\end{remark}

\begin{example}
    The Lie algebra $\OO(-1)$ is spanned by $L_n + (-1)^{n + 1}L_{-n}$ and is isomorphic to $(t^2 + 4)\WW_1$.
\end{example}

Using Theorem \ref{thm:O lambda = L(s)}, it is now easy to see that $\OO(\lambda)$ is never simple.

\begin{corollary}\label{cor:O is not simple}
    For all $\lambda \in \kk^*$, the Lie algebra $\OO(\lambda)$ is not perfect, and therefore is not simple.
\end{corollary}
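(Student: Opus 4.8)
The plan is to reduce the statement to a concrete presentation and then exhibit the derived subalgebra explicitly. By Theorem \ref{thm:O lambda = L(s)}, $\OO(\lambda) \cong \f_\lambda = (t^2 - 4\lambda)\WW_1$, and since being perfect (resp.\ simple) is preserved under Lie algebra isomorphism, it suffices to show that $\OO(\lambda)$ is not perfect. I would work directly with the basis $\{\ulambda_n \mid n \geq 1\}$ of $\OO(\lambda)$ from Theorem \ref{thm:O lambda = L(s)}, whose bracket is $[\ulambda_n, \ulambda_m] = (n - m)(\ulambda_{n + m} - 4\lambda\ulambda_{n + m - 2})$; alternatively one can use the submodule-subalgebra bracket \eqref{eq:fW bracket} for $f\WW_1$ with $f \coloneqq t^2 - 4\lambda$.

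First I would compute the derived subalgebra. Working with the $\ulambda_n$ basis: as $(n,m)$ ranges over pairs of positive integers with $n \neq m$, the sum $n + m$ takes every value $\geq 3$ (take $n = 1$, $m = k - 1$), and the coefficient $n - m$ is then nonzero, so
$$[\OO(\lambda), \OO(\lambda)] = \spn\{\ulambda_k - 4\lambda\ulambda_{k - 2} \mid k \geq 3\}.$$
Equivalently, via \eqref{eq:fW bracket} one has $[f\WW_1, f\WW_1] = f^2\WW_1$, using that the polynomials $gh' - g'h$ with $g, h \in \kk[t]$ span all of $\kk[t]$ (e.g.\ $g = 1$, $h = t^{n}$ gives $nt^{n - 1}$).

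Next I would show this subspace is proper. Define a linear map $\varphi \colon \OO(\lambda) \to \kk$ on the basis by $\varphi(\ulambda_{2j - 1}) = (4\lambda)^{j - 1}$ and $\varphi(\ulambda_{2j}) = 0$ for all $j \geq 1$; this is well-defined since every element of $\OO(\lambda)$ is a finite linear combination of basis vectors. A short check by parity shows $\varphi(\ulambda_k - 4\lambda\ulambda_{k - 2}) = 0$ for every $k \geq 3$, so $\varphi$ annihilates $[\OO(\lambda), \OO(\lambda)]$; but $\varphi(\ulambda_1) = 1 \neq 0$. Hence $[\OO(\lambda), \OO(\lambda)] \subsetneq \OO(\lambda)$, so $\OO(\lambda)$ is not perfect. (In fact this argument, or the $f^2\WW_1$ description, shows the derived subalgebra has codimension $2$.) Since every simple Lie algebra is perfect, $\OO(\lambda)$ is not simple.

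The only step requiring genuine care is showing the derived subalgebra is proper: a priori the spanning set $\{\ulambda_k - 4\lambda\ulambda_{k - 2} \mid k \geq 3\}$ could conceivably span everything, and ruling this out is precisely the content of producing $\varphi$ — equivalently, observing that the derived subalgebra ``misses'' the $\ulambda_1$ and $\ulambda_2$ directions. Everything else is a routine bracket computation.
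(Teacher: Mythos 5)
Your proposal is correct and follows essentially the same route as the paper: the paper's proof simply notes that $[\f_\lambda,\f_\lambda] = f_{s_\lambda}^2\WW_1$ by \eqref{eq:fW bracket} and transports this through the isomorphism of Theorem \ref{thm:O lambda = L(s)}, which is exactly your parenthetical alternative. Your basis-level computation with the explicit functional $\varphi$ is just a more hands-on verification that the derived subalgebra (of codimension $2$) is proper, a point the paper leaves implicit.
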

\begin{proof}
    The Lie algebra $\f_\lambda$ is not perfect: this is because the derived subalgebra of $\flambda$ is
    $$[\f_\lambda,\f_\lambda] = [f_{s_\lambda}\mathbb{W}_1,f_{s_\lambda}\mathbb{W}_1] = f_{s_\lambda}^2\mathbb{W}_1,$$
    by \eqref{eq:fW bracket}. The result now follows by the isomorphism $\OO(\lambda) \cong \f_\lambda$ from Theorem \ref{thm:O lambda = L(s)}.
\end{proof}

\begin{remark}
    Corollary \ref{cor:O is not simple} implies that $\bcca$ is not perfect, meaning it does not have a universal central extension \cite[Theorem 7.9.2]{Weibel}. Nonetheless, it would be interesting to compute the \emph{centrally closed} version of $\bcca$, which would be the central extension of $\bcca$ with trivial second cohomology group (i.e., does not admit any more non-trivial central extensions). From a physics perspective, this would tell us the possible ``central charges'' that a quantum field theory with $\bcca$ symmetry could admit. It remains to be seen whether this centrally closed Lie algebra is indeed $\widehat{\bcca}$.
    
    We highlight that the computation of central extensions of $\bcca$ is mathematically nontrivial, mainly due to the fact that $\bcca$ is not graded. There are many results that greatly simplify the computation of central extensions of graded Lie algebras, such as \cite[Theorem 1.5.2]{Fuchs}. On the other hand, the centreless BCCA is a semi-direct sum Lie algebra, which can be studied using techniques developed in \cite[Proposition 1]{GaoLiuPei} and \cite[Proposition 3.1]{BuzagloVishwa}. While these results would certainly help when computing the central extensions of $\bcca$, we expect this to be a challenging task. See \cite{SchlichenmaierCrelle} for an example of computing central extensions of non-graded Lie algebras.
\end{remark}

As we aim to show next, the Lie algebras $\OO(\lambda)$ are often isomorphic to each other for different values of $\lambda \in \kk^*$. Recalling that $\OO(\lambda) \cong \f_\lambda = (t^2 - 4\lambda)\WW_1$, it suffices to consider when two subalgebras of $\WW_1$ of finite codimension are isomorphic. In fact, this can only happen if the isomorphism is induced by an automorphism of $\WW_1$ \cite{Buzaglo}. The automorphisms of $\WW_1$ are well-known \cite{Rudakov2}.

\begin{theorem}[{\cite[Theorem 4.1 and Corollary 4.12]{Buzaglo}}]\label{thm:automorphisms Witt}
    Let $\g$ and $\h$ be subalgebras of $\WW_1$ of finite codimension and suppose there is an isomorphism $\varphi: \g \to \h$. Then $\varphi$ extends to an automorphism of $\WW_1$. In particular, $g\WW_1 \cong h\WW_1$ if and only if there exist $\alpha, \gamma \in \kk^*$ and $\beta \in \kk$ such that $g(z) = \gamma h(t)$, where $z = \alpha t + \beta$ and $g,h \in \kk[t]$.
\end{theorem}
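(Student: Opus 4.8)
The ``in particular'' clause will follow formally from the first assertion once $\operatorname{Aut}(\WW_1)$ is known. Indeed, $g\WW_1$ and $h\WW_1$ have finite codimension $\deg g$ and $\deg h$ in $\WW_1$, so the first assertion produces $\theta \in \operatorname{Aut}(\WW_1)$ with $\theta(g\WW_1) = h\WW_1$; and by Rudakov's determination of $\operatorname{Aut}(\WW_1)$ \cite{Rudakov2} --- every automorphism of the (simple) Lie algebra $\WW_1 = \Der(\kk[t])$ is induced by an affine automorphism of $\kk[t]$ --- pushing $g\WW_1$ forward along an affine substitution $z = \alpha t + \beta$ gives the $\kk[t]$-multiples of $g(z)$, so $\theta(g\WW_1) = h\WW_1$ translates into exactly $g(z) = \gamma h(t)$ for some $\gamma \in \kk^*$. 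The converse is immediate. So the whole statement reduces to the first assertion: any Lie isomorphism $\varphi \colon \g \to \h$ between finite-codimension subalgebras of $\WW_1$ extends to an automorphism of $\WW_1$.

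For that, the plan is to reconstruct the ambient geometry intrinsically and transport it along $\varphi$. First I would observe that, by the $\WW_1$-analogue of Proposition \ref{prop:finite codimension subalgebras} (which holds with essentially the same proof, as noted in the text), there are $f \in \kk[t] \nonzero$ and $N \in \NN$ with $f^N\WW_1 \subseteq \g \subseteq f\WW_1$; in particular $\g$ is infinite-dimensional. Applying Theorem \ref{thm:classification of subalgebras} then gives $s \in \kk[t]$ with $F(\g) = \kk(s)$ and $\g$ of finite codimension in $\Der(\kk[s])$; since $\g$ also has finite codimension in $\WW_1$, a codimension count forces $\deg s = 1$, so $F(\g) = \kk(t)$, and likewise $F(\h) = \kk(t)$. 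By Lemma \ref{lem:derivations of field of ratios}, $\g$ and $\h$ sit inside $\Der(\kk(t))$, and each spans $\Der(\kk(t))$ as a $\kk(t)$-module (already $f^N\WW_1$ does, since $f^N$ is a unit in $\kk(t)$).

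The crux is then a reconstruction principle: a subalgebra of $\Der(K)$, for $K$ a field of transcendence degree one over $\kk$, that spans $\Der(K)$ over $K$ and is ``large enough'' (e.g.\ of finite codimension in a Witt-type algebra), together with its $\Der(K)$-action, determines $K$ up to a $\kk$-automorphism; consequently $\varphi$ lifts to a field automorphism $\psi$ of $\kk(t)$ with $\psi \circ D \circ \psi^{-1} = \varphi(D)$ for all $D \in \g$. I would then argue that $\psi$ is in fact affine: a $\kk$-automorphism of $\kk(t)$ is a Möbius transformation, and if it moved the point at infinity of $\PPP^1$ then $\Der(\kk[t])$ and $\psi_*(\Der(\kk[t])) = \Der(\psi(\kk[t]))$ would be the vector fields of two distinct affine charts and hence meet in a finite-dimensional space --- impossible, since the infinite-dimensional $\h$ lies in both. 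So $\psi$ is affine, the induced map on $\WW_1 = \Der(\kk[t])$ is an automorphism of $\WW_1$, and by construction of $\psi$ it restricts to $\varphi$ on $\g$.

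The main obstacle is precisely this reconstruction principle --- ruling out ``exotic'' isomorphisms and proving that $\varphi$ must be geometric. This is the technical heart of \cite{Buzaglo} (resting on the structure theory of \cite{Buzaglo2, BellBuzaglo} and on \cite{Rudakov2}), and it requires a genuinely intrinsic invariant of a finite-codimension subalgebra $\g \le \WW_1$ --- for instance its field of ratios equipped with the derivation structure it inherits, or, following Rudakov, a distinguished conjugacy class of $\operatorname{ad}$-locally nilpotent elements, namely those that become $-\partial = L_{-1}$ after a suitable change of coordinates.
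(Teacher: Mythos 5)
This statement is quoted in the paper from \cite[Theorem 4.1 and Corollary 4.12]{Buzaglo} with no in-paper proof, so the only fair comparison is with the structure of the cited argument. Your derivation of the ``in particular'' clause is correct and is essentially how the cited corollary follows from the cited theorem: granting the extension statement, Rudakov's description of $\operatorname{Aut}(\WW_1)$ as affine substitutions $t \mapsto \alpha t + \beta$, together with the computation that such a substitution carries $g\WW_1$ onto $g(\alpha t + \beta)\WW_1$ and that two polynomials generate the same $\kk[t]$-submodule of $\WW_1$ iff they are scalar multiples, gives exactly the stated criterion; the converse direction is immediate. The peripheral steps of your sketch for the main assertion are also sound: the $\WW_1$-analogue of Proposition \ref{prop:finite codimension subalgebras} gives $f^N\WW_1 \subseteq \g \subseteq f\WW_1$, and the codimension count forcing $F(\g) = \kk(t)$ works, as does the M\"obius argument that a coordinate change moving the point at infinity would trap the infinite-dimensional $\h$ inside the finite-dimensional space of everywhere-regular vector fields on $\PPP^1$.

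The genuine gap is the ``reconstruction principle'' itself. The assertion that an abstract Lie algebra isomorphism $\varphi \colon \g \to \h$ must be compatible with the embeddings into $\Der(\kk(t))$ --- i.e.\ that it lifts to a $\kk$-automorphism $\psi$ of $\kk(t)$ with $\psi \circ D \circ \psi^{-1} = \varphi(D)$ --- is not a known-in-advance principle from which the theorem follows; it \emph{is} the theorem (ruling out exotic, non-geometric isomorphisms is exactly what \cite[Theorem 4.1]{Buzaglo} accomplishes, and nothing in the structure theory you quote from Theorem \ref{thm:classification of subalgebras} or Lemma \ref{lem:derivations of field of ratios} provides it: those results constrain each of $\g$, $\h$ separately but say nothing about how $\varphi$ interacts with the field-of-ratios or $\kk(t)$-module structure). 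You acknowledge this yourself in the final paragraph, but as written the proposal therefore proves only the reduction of the second clause to the first, and for the first clause it amounts to re-citing \cite{Buzaglo} --- which is no more than what the paper already does. To close the gap one would need an intrinsic argument (for instance along the lines of the derivation/rigidity analysis carried out in \cite{Buzaglo}) showing that $\varphi$ transports a distinguished coordinate-like structure of $\g$ to that of $\h$; merely naming candidate invariants, as the last sentence does, does not establish this.
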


We are now ready to characterise when $\OO(\lambda) \cong \OO(\mu)$.

\begin{proposition}\label{prop:g lambda isomorphism}
    Let $\lambda, \mu \in \kk^*$. Then $\OO(\lambda) \cong \OO(\mu)$ if and only if $\sqrt{\frac{\lambda}{\mu}} \in \kk$.
\end{proposition}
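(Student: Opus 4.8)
The plan is to reduce the isomorphism question for $\OO(\lambda)$ and $\OO(\mu)$ to the already-established isomorphism criterion for submodule-subalgebras of $\WW_1$. By Theorem \ref{thm:O lambda = L(s)}, we have $\OO(\lambda) \cong \f_\lambda = (t^2 - 4\lambda)\WW_1$ and $\OO(\mu) \cong \f_\mu = (t^2 - 4\mu)\WW_1$, so it suffices to determine when $(t^2 - 4\lambda)\WW_1 \cong (t^2 - 4\mu)\WW_1$. Since these are finite-codimension subalgebras of $\WW_1$, Theorem \ref{thm:automorphisms Witt} applies: such an isomorphism exists if and only if there are $\alpha, \gamma \in \kk^*$ and $\beta \in \kk$ with $(t^2 - 4\lambda)|_{t = \alpha s + \beta} = \gamma(s^2 - 4\mu)$ as polynomials in $s$, i.e. $(\alpha s + \beta)^2 - 4\lambda = \gamma(s^2 - 4\mu)$.

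First I would expand and compare coefficients. The left-hand side is $\alpha^2 s^2 + 2\alpha\beta s + (\beta^2 - 4\lambda)$. Matching the coefficient of $s$ forces $2\alpha\beta = 0$, and since $\alpha \neq 0$ we get $\beta = 0$. Matching the $s^2$-coefficient gives $\gamma = \alpha^2$, and matching the constant term gives $-4\lambda = -4\gamma\mu$, hence $\lambda = \alpha^2 \mu$, i.e. $\frac{\lambda}{\mu} = \alpha^2$ with $\alpha \in \kk^*$. This is exactly the condition $\sqrt{\frac{\lambda}{\mu}} \in \kk$. Conversely, if $\sqrt{\frac{\lambda}{\mu}} = \alpha \in \kk^*$, then setting $\beta = 0$ and $\gamma = \alpha^2$ makes the polynomial identity hold, so $\f_\lambda \cong \f_\mu$ by Theorem \ref{thm:automorphisms Witt}, and therefore $\OO(\lambda) \cong \OO(\mu)$.

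I do not anticipate a serious obstacle here — the argument is essentially a coefficient comparison once Theorems \ref{thm:O lambda = L(s)} and \ref{thm:automorphisms Witt} are in hand. The only point requiring a little care is the logical direction of Theorem \ref{thm:automorphisms Witt}: it asserts that \emph{every} isomorphism between finite-codimension subalgebras of $\WW_1$ extends to an automorphism of $\WW_1$, so the stated polynomial criterion is genuinely an ``if and only if'' for $g\WW_1 \cong h\WW_1$, and no isomorphisms are missed by restricting to those induced by automorphisms of $\WW_1$. With that in place, the chain of equivalences $\OO(\lambda) \cong \OO(\mu) \iff \f_\lambda \cong \f_\mu \iff \exists\,\alpha \in \kk^*\!:\, \lambda = \alpha^2\mu \iff \sqrt{\lambda/\mu} \in \kk$ completes the proof.
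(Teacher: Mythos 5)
Your proposal is correct and follows essentially the same route as the paper: reduce via Theorem \ref{thm:O lambda = L(s)} to the submodule-subalgebras $\f_\lambda$ and $\f_\mu$, invoke Theorem \ref{thm:automorphisms Witt}, and compare coefficients to force $\beta = 0$, $\gamma = \alpha^2$, and $\lambda = \alpha^2\mu$. The paper's proof does exactly this computation (including the explicit converse via the change of variables $t \mapsto \alpha t$), so nothing further is needed.
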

\begin{proof}
    By Theorem \ref{thm:O lambda = L(s)}, we have $\OO(\lambda) \cong \f_\lambda = (t^2 - 4\lambda)\WW_1$ and $\OO(\mu) \cong \f_\mu = (t^2 - 4\mu)\WW_1$. Therefore, we will prove the result by showing that $\f_\lambda \cong \f_\mu$ if and only if $\sqrt{\frac{\mu}{\lambda}} \in \kk$.

    First, assume that $\alpha \coloneqq \sqrt{\frac{\lambda}{\mu}} \in \kk$. Letting $z \coloneqq \alpha t$, we have
    $$f_{s_\lambda}(z) = z^2 - 4\lambda = \alpha^2 t^2 - 4\lambda = \frac{\lambda}{\mu} t^2 - 4\lambda = \frac{\lambda}{\mu}(t^2 - 4\mu) = \frac{\lambda}{\mu} f_{s_\mu}(t).$$
    By Theorem \ref{thm:automorphisms Witt}, the change of variables $t \mapsto \alpha t$ induces an isomorphism $\f_\lambda \cong \f_\mu$.

    Conversely, assume that $\f_\lambda \cong \f_\mu$. By Theorem \ref{thm:automorphisms Witt}, there exist $\alpha, \gamma \in \kk^*$ and $\beta \in \kk$ such that $f_{s_\lambda}(z) = \gamma f_{s_\mu}(t)$, where $z = \alpha t + \beta$. Therefore,
    \begin{align*}
        t^2 - 4\mu &= f_{s_\mu}(t) = \frac{1}{\gamma} f_{s_\lambda}(z) = \frac{1}{\gamma}(z^2 - 4\lambda) = \frac{1}{\gamma}\Big((\alpha t + \beta)^2 - 4\lambda\Big) \\
        &= \frac{1}{\gamma}(\alpha^2 t^2 + 2 \alpha \beta t + \beta^2 - 4\lambda).
    \end{align*}
    It follows that $\gamma = \alpha^2$, $\beta = 0$, and $\lambda = \gamma \mu = \alpha^2 \mu$. Therefore, $\alpha = \sqrt{\frac{\lambda}{\mu}}$ must be an element of $\kk$.
\end{proof}

With $\OO(1) = \OO$, the next result follows immediately from Proposition \ref{prop:g lambda isomorphism}.

\begin{corollary}\label{cor:two_O_subalgebras}
    If $\kk$ is algebraically closed (for example, if $\kk = \CC$), then $\OO(\lambda) \cong \OO$ for all $\lambda \in \kk$. If $\kk = \RR$, then
    $$\OO(\lambda) \cong \begin{cases}
        \OO, &\text{if } \lambda > 0, \\
        \OO(-1), &\text{if } \lambda < 0,
    \end{cases}$$
    but $\OO(-1) \not\cong \OO$. \qed
\end{corollary}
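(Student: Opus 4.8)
The plan is to derive all three statements as immediate consequences of Proposition \ref{prop:g lambda isomorphism}, which says that $\OO(\lambda) \cong \OO(\mu)$ if and only if $\sqrt{\lambda/\mu} \in \kk$, combined with the trivial observation that $\OO(1) = \OO$. In each case the work reduces entirely to asking which ratios $\lambda/\mu$ are squares in the ground field.

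First I would treat the case where $\kk$ is algebraically closed. Here, for any $\lambda \in \kk^*$, the polynomial $x^2 - \lambda$ has a root in $\kk$, so $\sqrt{\lambda} = \sqrt{\lambda/1} \in \kk$, and Proposition \ref{prop:g lambda isomorphism} immediately yields $\OO(\lambda) \cong \OO(1) = \OO$.

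Next I would handle $\kk = \RR$. If $\lambda > 0$, then $\sqrt{\lambda} \in \RR$, so again $\OO(\lambda) \cong \OO$ by Proposition \ref{prop:g lambda isomorphism}. If $\lambda < 0$, then $\lambda/(-1) = -\lambda$ is a positive real number, hence $\sqrt{\lambda/(-1)} \in \RR$, and Proposition \ref{prop:g lambda isomorphism} gives $\OO(\lambda) \cong \OO(-1)$. Finally, $\OO(-1) \not\cong \OO$ because $\sqrt{(-1)/1} = \sqrt{-1} \notin \RR$, so the criterion of Proposition \ref{prop:g lambda isomorphism} fails in that direction.

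I do not expect any genuine obstacle here: once Proposition \ref{prop:g lambda isomorphism} is available, the corollary is pure bookkeeping about which elements of $\kk$ are squares. The only point worth flagging is the convention that $\OO(\lambda)$ is defined only for $\lambda \in \kk^*$, so the phrase ``for all $\lambda \in \kk$'' in the statement should be read as ``for all $\lambda \in \kk^*$''; this has no effect on the argument.
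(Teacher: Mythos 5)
Your proposal is correct and is exactly the paper's argument: the corollary is stated as an immediate consequence of Proposition \ref{prop:g lambda isomorphism} together with $\OO(1) = \OO$, which is precisely the square-root bookkeeping you carry out (your remark about reading $\lambda \in \kk$ as $\lambda \in \kk^*$ is also the right convention).
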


\begin{remark}\label{rem:different boundaries}
    Corollary \ref{cor:two_O_subalgebras} is intriguing in the context in which the BCCA was discovered \cite{BCCFGP2024}. Specifically, we have shown that the subalgebras $\OO = \spn\{L_n - L_{-n} \mid n \geq 1\}$ and $\OO(-1) = \spn\{L_n + (-1)^{n + 1} L_{-n}\}$ are isomorphic as complex Lie algebras but not as real ones. In the realisation of the BMS$_3$ algebra given by \cite[Equation (4)]{BCCFGP2024} in terms of vector fields which preserve the Carrollian structure on cylinder (see \cite[Section 2.1]{Chen:2024voz}), we have
    $$L_n = e^{in\sigma}(\del_\sigma + in \tau \del_\tau),$$
    where $\sigma$ and $\tau$ are the coordinates of a cylinder. The Lie algebra $\OO$ was the constructed as the subalgebra of the Witt algebra spanned by $\{L_n \mid n \in \ZZ\}$ which leaves the choice of boundary conditions $\sigma = 0, \pi$ invariant \cite[Equation (5)]{BCCFGP2024}
    $$\OO_n = L_n - L_{-n} = 2i \sin(n\sigma)\del_\sigma + 2in \tau \cos(n\sigma)\del_\tau.$$
    At $\sigma = 0,\pi$, the vector fields $\OO_n = \pm 2in \tau \del_\tau$ do not contain any nonzero $\del_\sigma$ terms and thus satisfy the boundary conditions.
    
    In terms of vector fields on the cylinder, the algebra $\OO(-1)$ preserves a different choice of boundaries, in this case at $\sigma = \frac{\pi}{2}, \frac{3\pi}{2}$. This can be seen as follows:
    \begin{align*}
        \OO_{2n}^{(-1)} &= L_{2n} - L_{-2n} = 2i \sin(2n\sigma)\del_\sigma + 4in \tau \cos(2n\sigma) \del_\tau, \\
        \OO_{2n + 1}^{(-1)} &= L_{2n + 1} + L_{-2n - 1} = 2\cos((2n + 1)\sigma)\del_\sigma - (4n + 2) \tau \sin((2n + 1)\sigma)\del_\tau.
    \end{align*}
    Therefore, at $\sigma = \frac{\pi}{2}, \frac{3\pi}{2}$ we have $\OO_{2n}^{(-1)} = \pm 4in \tau \del_\tau$ and $\OO_{2n + 1}^{(-1)} = \pm (4n + 2) \tau \del_\tau$, which satisfy the boundary conditions.
    
    When working over $\CC$, this can be undone by a simple change of coordinates to $\sigma' = \sigma - \frac{\pi}{2}$, with the resulting boundary conditions in terms of $\sigma'$ being equivalent to the original one. Consequently, one would expect to have recovered $\OO$. Indeed, this change of coordinates is just a rescaling of $\OO$ by $i$, which reiterates the fact that the authors of the original paper were working over $\CC$. However, Corollary \ref{cor:two_O_subalgebras} implies that the boundary conditions at $\sigma = \frac{\pi}{2}, \frac{3\pi}{2}$ result in a non-isomorphic Lie algebra when working over $\RR$.
\end{remark}

We now focus on the case of $\OO = \OO(1)$.

\begin{notation}
    We make the following definitions:
    \begin{align*}
        s &\coloneqq s_1 = t + t^{-1}, \\
        u_n &\coloneqq u_n^{(1)} = -(t + t^{-1})^{n - 1}(t^2 - \lambda)\del = -s^{n - 1}(s^2 - 4)\del_s, \\
        f &\coloneqq f_{s_1} = t^2 - 4, \\
        f_n &\coloneqq f_n^{(1)} = -t^{n - 1}f\del = L_n - 4L_{n - 2}, \\
        \f &\coloneqq \f_1 = f\WW_1 = (t^2 - 4)\WW_1,
    \end{align*}
    where $n \in \ZZ_+$.
\end{notation}

Of course, $\{\OO_n \mid n \geq 1\}$ and $\{u_n \mid n \geq 1\}$ are two bases for the same Lie algebra $\OO$, so it is natural to ask how they are related. The following result answers this.

\begin{lemma}\label{lem:u in terms of O}
    We have
    $$u_n = \OO_n + \sum_{k = 1}^{\floor{\frac{n}{2}}} \binom{n}{k}\left(1 - \frac{2k}{n}\right) \OO_{n - 2k}$$
    for all $n \geq 1$.
\end{lemma}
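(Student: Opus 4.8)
The plan is to compute $u_n$ directly as a Laurent-polynomial vector field and then re-express it in the basis $\{L_m \mid m \in \ZZ\}$, grouping the terms $L_m$ and $L_{-m}$ into $\OO_m$ at the very end. Specialising Theorem \ref{thm:O lambda = L(s)} to $\lambda = 1$ and using $\del_s = (s')^{-1}\del$ with $s = t + t^{-1}$, one first checks that $u_n = -s^{n-1}(s^2 - 4)\del_s = -(t + t^{-1})^{n-1}(t^2 - 1)\del$. Since $L_m = -t^{m+1}\del$ by Definition \ref{def:Witt}, we have $-t^{k}\del = L_{k-1}$ for every $k \in \ZZ$, so applying the binomial theorem to $(t + t^{-1})^{n-1} = \sum_{j=0}^{n-1}\binom{n-1}{j}t^{n-1-2j}$ and multiplying by $(t^2 - 1)$ gives
$$u_n = \sum_{j=0}^{n-1}\binom{n-1}{j}\big(L_{n-2j} - L_{n-2-2j}\big).$$
Collecting the coefficient of $L_{n-2k}$ for $0 \le k \le n$ (with the convention $\binom{n-1}{-1} = \binom{n-1}{n} = 0$, which handles the endpoints $k = 0$ and $k = n$) yields $\binom{n-1}{k} - \binom{n-1}{k-1}$.

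The second ingredient is the elementary identity $\binom{n-1}{k} - \binom{n-1}{k-1} = \big(1 - \tfrac{2k}{n}\big)\binom{n}{k}$, which follows at once by putting both binomial coefficients on the left over the common denominator $k!(n-k)!$. Hence $u_n = \sum_{k=0}^{n}\big(1 - \tfrac{2k}{n}\big)\binom{n}{k}L_{n-2k}$.

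Finally I would pair the summation index $k$ with $n - k$. Because $\binom{n}{n-k} = \binom{n}{k}$ and $1 - \tfrac{2(n-k)}{n} = -\big(1 - \tfrac{2k}{n}\big)$, the coefficients of $L_{n-2k}$ and of $L_{-(n-2k)} = L_{n-2(n-k)}$ are opposite, so these two terms combine to $\big(1 - \tfrac{2k}{n}\big)\binom{n}{k}\OO_{n-2k}$; letting $k$ run over $0 \le k < n/2$ picks up each such pair exactly once, the $k = 0$ term being $\OO_n$. This gives the stated formula, once one observes that extending the sum up to $k = \floor{n/2}$ is harmless: when $n$ is even the extra term is $\binom{n}{n/2}(1 - 1)\OO_0 = 0$. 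The computation is routine, and I expect the only mildly delicate point to be this bookkeeping at the two ends of the range of summation --- isolating the leading $\OO_n$ term and verifying that the middle term for even $n$ vanishes. An alternative would be induction on $n$ using the recursion for $u_n$ in $\OO$ coming from $u_n = -s\, u_{n-1} - \cdots$, but the direct expansion above is shorter and I would go with it.
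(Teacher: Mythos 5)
Your proof is correct and follows essentially the same route as the paper's: both expand $u_n = -(t+t^{-1})^{n-1}(t^2-1)\del$ by the binomial theorem and invoke the identity $\binom{n-1}{k}-\binom{n-1}{k-1}=\binom{n}{k}\bigl(1-\tfrac{2k}{n}\bigr)$. The only difference is bookkeeping order — the paper pairs the symmetric Laurent terms $t^m+t^{-m}$ into $\OO$'s before applying the identity, while you work entirely in the $L_m$ basis and pair $k$ with $n-k$ at the end — which is an inessential rearrangement.
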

\begin{proof}
    First, note that $u_1 = -(t^2 - 1)\del = L_1 - L_{-1} = \OO_1$. Furthermore,
    \begin{equation}\label{eq:difference of two Os}
        \begin{aligned}
            -(t^n + t^{-n})(t^2 - 1)\del &= -\Big(t^{n + 2} - t^n + t^{-n + 2} - t^{-n}\Big)\del \\
            &= L_{n + 1} - L_{n - 1} + L_{-n + 1} - L_{-n - 1} \\
            &= (L_{n + 1} - L_{-(n + 1)}) - (L_{n - 1} - L_{-(n - 1)}) \\
            &= \OO_{n + 1} - \OO_{n - 1}.
        \end{aligned}
    \end{equation}
    We now compute for $n \geq 2$:
    $$u_{n} = -(t + t^{-1})^{n - 1}(t^2 - 1)\del = -\left(\sum_{k = 0}^{n - 1} \binom{n - 1}{k}t^{n - 2k-1}\right)(t^2 - 1)\del.$$
    Pairing up $t^m$ with $t^{-m}$ and using \eqref{eq:difference of two Os}, we get
    $$u_n = \OO_n + \sum_{k = 1}^{\floor{\frac{n}{2}}} \left(\binom{n - 1}{k} - \binom{n - 1}{k - 1}\right)\OO_{n - 2k}.$$
    Applying the identity
    $$\binom{n - 1}{k} - \binom{n - 1}{k - 1} = \binom{n}{k}\left(1 - \frac{2k}{n}\right),$$
    the result now follows.
\end{proof}

\subsection{Change of basis for \texorpdfstring{$\PP_b$}{Pb}}

In Theorem \ref{thm:O lambda = L(s)}, it was shown that the change of variables $s \mapsto t$ yields the isomorphism $\OO \cong \f = \kk[t](t^2 - 4)\del$. Therefore, it is natural to ask if this change of variables gives a similar description of $\PP$. In this section, we show that this is indeed the case: the change of variables will allow us to define a new basis for $\PP$. We will work in a slightly more general setting: we consider the change of variables on the entire family of $\OO$-modules $\PP_b$ for arbitrary $b \in \kk$ (see Definition \ref{def:O_subalgebra}). Just like for $\OO$, it will be useful to have a generating set for the $\OO$-module $\PP_b$.

\begin{lemma}\label{lem:generators of Pb}
    For all $b \in \kk^*$, the $\OO$-module $\PP_b$ is generated by $\Pb_0$. For $b = 0$, the $\OO$-module $\PP_0$ is generated by $P_0^{(0)}$ and $P_1^{(0)}$.
\end{lemma}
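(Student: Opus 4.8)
The plan is to work with the $\OO$-action on $\PP_b$ given explicitly by \eqref{eq:O action on Pb}, namely $\OO_n \cdot \Pb_m = -(bn + m)\Pb_{n+m} - (bn - m)\Pb_{n-m}$, and to show by induction on $m$ that every $\Pb_m$ lies in the cyclic submodule generated by the prescribed generator(s). The base of the induction is the set of generators; the inductive step must produce $\Pb_{m+1}$ from lower-index basis vectors by acting with a suitable $\OO_n$, most naturally $\OO_1$.

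First I would treat the case $b \neq 0$. Acting with $\OO_1$ on $\Pb_m$ gives $\OO_1 \cdot \Pb_m = -(b+m)\Pb_{m+1} - (b-m)\Pb_{m-1}$ (for $m \geq 1$; recall $\Pb_{-k} = \Pb_k$ from Definition \ref{def:O_subalgebra}, so the $\Pb_{m-1}$ term is always an already-obtained lower basis vector, and for $m = 0$ one gets $\OO_1 \cdot \Pb_0 = -b\Pb_1 - b\Pb_1 = -2b\Pb_1$). Since $b \neq 0$, the first step gives $\Pb_1 = -\tfrac{1}{2b}\OO_1 \cdot \Pb_0 \in U(\OO)\Pb_0$. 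For the inductive step with $m \geq 1$, assuming $\Pb_0, \dots, \Pb_m \in U(\OO)\Pb_0$, we solve $\Pb_{m+1} = -\tfrac{1}{b+m}\big(\OO_1 \cdot \Pb_m + (b-m)\Pb_{m-1}\big)$, which is valid provided $b + m \neq 0$. The only obstruction is that $b$ might be a negative integer, so that $b + m = 0$ for exactly one value of $m$, say $m = m_0$; at that step we instead act with $\OO_2$ on $\Pb_{m_0 - 1}$, giving $\OO_2 \cdot \Pb_{m_0-1} = -(2b + m_0 - 1)\Pb_{m_0+1} - (2b - m_0 + 1)\Pb_{m_0-3}$, and since $2b + m_0 - 1 = b + (b + m_0) - 1 = b - 1 \neq 0$ (as $b < 0$), we can solve for $\Pb_{m_0+1}$ in terms of already-obtained vectors; then resume using $\OO_1$ for all larger indices, where $b + m > 0$. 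This handles all $b \in \kk^*$.

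Next I would treat $b = 0$, where the action simplifies to $\OO_n \cdot P_m^{(0)} = -m\,P_{n+m}^{(0)} + m\,P_{n-m}^{(0)}$. Now $\OO_1 \cdot P_1^{(0)} = -P_2^{(0)} + P_0^{(0)}$, so $P_2^{(0)} \in U(\OO)\cdot\{P_0^{(0)}, P_1^{(0)}\}$; and inductively, for $m \geq 1$, $\OO_1 \cdot P_m^{(0)} = -m\,P_{m+1}^{(0)} + m\,P_{m-1}^{(0)}$ lets us solve for $P_{m+1}^{(0)}$ (the coefficient $-m$ is nonzero since $m \geq 1$), so starting from $P_0^{(0)}$ and $P_1^{(0)}$ we reach every $P_m^{(0)}$. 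This also makes clear why $P_0^{(0)}$ alone does not suffice: acting on $P_0^{(0)}$ always yields $\OO_n \cdot P_0^{(0)} = 0$, so $U(\OO)\cdot P_0^{(0)} = \kk P_0^{(0)}$, which is why the second generator $P_1^{(0)}$ is needed. (This non-generation remark is not strictly required for the stated lemma but is worth including.)

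The main obstacle is bookkeeping the finitely many ``bad'' indices $m$ with $b + m = 0$ in the $b \in \kk^*$ case: one must verify that there is at most one such $m$ (immediate, since $m \geq 0$ is determined by $b$), that it only causes trouble when $b$ is a negative integer, and that the alternative move using $\OO_2$ genuinely expresses $\Pb_{m_0+1}$ in terms of strictly-lower-index basis vectors all of which have already been shown to lie in the cyclic module — here one uses $2b + m_0 - 1 = b - 1 \neq 0$ and that $\Pb_{m_0-1}, \Pb_{m_0-3}$ (with the convention $\Pb_{-k} = \Pb_k$) are all indexed below $m_0 + 1$. Everything else is a routine linear recursion.
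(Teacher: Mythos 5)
Your $b=0$ argument is exactly the paper's, and your overall strategy for $b\in\kk^*$ is sound, but there the paper takes a much shorter route: it acts on $\Pb_0$ with \emph{every} $\OO_n$ rather than iterating $\OO_1$, using that the two terms of \eqref{eq:O action on Pb} coincide when $m=0$, so $\OO_n\cdot\Pb_0=-2bn\,\Pb_n$ and each $\Pb_n$ is obtained in a single step with no recursion and hence no ``bad index'' analysis at all. Your detour through $\OO_1$ forces you to treat the case $b+m=0$ (i.e.\ $b$ a negative integer), and the justification there has a slip at the edge case $b=-1$, $m_0=1$: in that case $\Pb_{m_0-3}=\Pb_{-2}=\Pb_2$ is precisely the target $\Pb_{m_0+1}$, not an already-obtained lower-index vector, so your claim that the residual terms of $\OO_2\cdot\Pb_{m_0-1}$ all have index below $m_0+1$ fails; the conclusion survives only because the two occurrences of $\Pb_2$ combine to give $\OO_2\cdot\Pb_0=-4b\,\Pb_2$ with $-4b\neq0$. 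With that one-line repair your proof is complete, but the paper's observation that $\OO_n\cdot\Pb_0$ is already a nonzero multiple of $\Pb_n$ makes the entire inductive machinery for $b\neq0$ unnecessary.
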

\begin{proof}
    If $b \neq 0$, this follows immediately from \eqref{eq:O action on Pb}, since
    $$\OO_n \cdot \Pb_0 = -2bn \Pb_n$$
    for all $n \geq 1$. If $b = 0$, then \eqref{eq:O action on Pb} gives
    $$\OO_1 \cdot P_n^{(0)} = n (P_{n + 1}^{(0)} - P_{n - 1}^{(0)})$$
    for all $n \in \NN$. The result now follows by an easy induction.
\end{proof}

To describe how the change of variables $s \mapsto t$ affects $\PP_b$, we first introduce some notation. As above, we make the identification $\del_s = \frac{1}{s'}\del$, so that $\del_s(s) = 1$. Letting $r \coloneqq t - t^{-1}$, we have
\begin{equation}\label{eq:del_s = t/r del}
    \del_s = \frac{1}{s'}\del = \frac{1}{1 - t^{-2}}\del = \frac{t}{t - t^{-1}}\del = \frac{t}{r}\del.
\end{equation}
Note that
\begin{equation}\label{eq:r^2 = s^2 - 4}
    r^2 = (t - t^{-1})^2 = t^2 - 2 + t^{-2} = (t + t^{-1})^2 - 4 = s^2 - 4.
\end{equation}
In other words, $r$ is a square root of $s^2 - 4$.

Recall that $\PP_b$ is the $\OO$-submodule of $I(0,b)$ spanned by $\Pb_n = I_n + I_{-n}$. As mentioned in Definition \ref{def:tensor density}, we view $I(0,b)$ as $I(0,b) = t^{-b}\kk[t,t^{-1}] dt^b$ with $I_n = t^{n - b} \ dt^b$. With this perspective, we have
\begin{equation}\label{eq:Pn in terms of polynomials}
    \Pb_n = I_n + I_{-n} = -(t^n + t^{-n})t^{-b} \ dt^b.
\end{equation}
Now, we want to express $\PP_b$ in terms of polynomials in $s$, just like we did with $\OO$. By definition of the K\"ahler differential, we have $ds = s' \ dt$. It follows that $ds^b = (s' \ dt)^b = (s')^b \ dt^b$, and therefore
\begin{equation}\label{eq:dt^b in terms of ds^b}
    dt^b = (s')^{-b} \ ds^b = (1 - t^{-2})^{-b} \ ds^b = \frac{t^b}{r^b} \ ds^b,
\end{equation}
so $t^{-b} \ dt^b = r^{-b} \ ds^b$. The next result shows what happens to the generators of $\PP_b$ when written in terms of $ds^b$.

\begin{lemma}\label{lem:P0 and P1 in terms of s}
    For $b \in \kk$, we have $\Pb_0 = -2r^{-b} \ ds^b$ and $\Pb_1 = -sr^{-b} \ ds^b$.
\end{lemma}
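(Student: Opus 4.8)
The plan is to compute directly using the formulas assembled in the preceding discussion. Recall from \eqref{eq:Pn in terms of polynomials} that $\Pb_n = -(t^n + t^{-n})t^{-b}\,dt^b$, and from \eqref{eq:dt^b in terms of ds^b} that $t^{-b}\,dt^b = r^{-b}\,ds^b$. These two facts immediately give
$$\Pb_n = -(t^n + t^{-n})\,r^{-b}\,ds^b$$
for every $n$, so the statement reduces to identifying $t^n + t^{-n}$ in terms of $s = t + t^{-1}$ for $n = 0$ and $n = 1$.

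For $n = 0$ this is trivial: $t^0 + t^{-0} = 2$, so $\Pb_0 = -2r^{-b}\,ds^b$. For $n = 1$ it is equally immediate: $t^1 + t^{-1} = s$ by definition of $s$, so $\Pb_1 = -s r^{-b}\,ds^b$. That is the entire argument — there is no real obstacle here, since every ingredient (\eqref{eq:Pn in terms of polynomials}, \eqref{eq:dt^b in terms of ds^b}, and the definitions of $s$ and $r$) has already been established.

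\begin{proof}
    By \eqref{eq:Pn in terms of polynomials}, we have $\Pb_n = -(t^n + t^{-n})t^{-b}\,dt^b$ for all $n \in \NN$. Combining this with the identity $t^{-b}\,dt^b = r^{-b}\,ds^b$ from \eqref{eq:dt^b in terms of ds^b}, we obtain
    $$\Pb_n = -(t^n + t^{-n})\,r^{-b}\,ds^b.$$
    Setting $n = 0$ gives $t^0 + t^{-0} = 2$, so $\Pb_0 = -2r^{-b}\,ds^b$. Setting $n = 1$ and recalling that $s = t + t^{-1}$, we get $\Pb_1 = -s r^{-b}\,ds^b$.
\end{proof}

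The only point worth flagging is bookkeeping: one should make sure the $b = 0$ case is covered by the same computation (it is, since then $r^{-b} = 1$ and $ds^b$ is trivial, recovering $\Pb_0 = -2$ and $\Pb_1 = -s$ in $I(0,0) = \kk[t,t^{-1}]$), and that the expressions $r^{-b}$ and $ds^b$ are being used formally in the same sense as in \eqref{eq:dt^b in terms of ds^b}, i.e.\ as elements of $t^{-b}\kk[t,t^{-1}]\,dt^b$ re-expressed via the substitution. No genuine difficulty arises.
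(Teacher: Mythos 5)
Your proof is correct and is essentially the paper's own argument: combine $\Pb_n = -(t^n + t^{-n})t^{-b}\,dt^b$ from \eqref{eq:Pn in terms of polynomials} with $t^{-b}\,dt^b = r^{-b}\,ds^b$ from \eqref{eq:dt^b in terms of ds^b}, then specialise to $n = 0, 1$ using $s = t + t^{-1}$. Nothing further is needed.
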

\begin{proof}
    Follows immediately from \eqref{eq:Pn in terms of polynomials} and \eqref{eq:dt^b in terms of ds^b} upon recalling that $s = t + t^{-1}$.
\end{proof}

Inspired by Lemma \ref{lem:P0 and P1 in terms of s}, we now introduce the following notation.

\begin{notation}
    For $b \in \kk$ and $n \in \NN$, we define $v_n^{(b)} \coloneqq -s^n r^{-b} \ ds^b = -(t + t^{-1})^n t^{-b} \ dt^b \in I(0,b)$.
\end{notation}

By Lemma \ref{lem:P0 and P1 in terms of s}, we have $v_0^{(b)} = \frac{1}{2}\Pb_0$ and $v_1^{(b)} = \Pb_1$. In fact, as we show next, $\{v_n^{(b)} \mid n \in \NN\}$ is another basis for $\PP_b$.

\begin{theorem}\label{thm:new basis for P}
    For all $b \in \kk$, we have
    $$\PP_b = \spn\{v_n^{(b)} \mid n \in \NN\} = \kk[s]r^{-b} \ ds^b = \kk[t + t^{-1}] t^{-b} \ dt^b.$$
    The $\OO$-action on $\PP_b$ with this basis is given by
    $$u_n \cdot v_m^{(b)} = -(bn + m)v_{n + m}^{(b)} + 4(b(n - 1) + m)v_{n + m - 2}^{(b)}$$
    for all $n \geq 1$ and $m \geq 0$. In basis-free notation, the $\OO$-action is
    \begin{equation}\label{eq:basis-free O action on P}
        (s^2 - 4)g\del_s \cdot (hr^{-b} \ ds^b) = \Big((g\del_s(h) + b \del_s(g)h)(s^2 - 4) + b ghs\Big)r^{-b} \ ds^b
    \end{equation}
    for all $g,h \in \kk[s]$.
\end{theorem}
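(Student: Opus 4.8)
The plan is to mirror the strategy used for $\OO$ in Theorem~\ref{thm:O lambda = L(s)}, transporting everything through the change of variables $s \mapsto t$. First I would establish the \emph{spanning} claim $\PP_b = \kk[s]r^{-b}\,ds^b$. The inclusion $\spn\{v_n^{(b)}\} \subseteq \kk[s]r^{-b}\,ds^b$ is immediate from the definition $v_n^{(b)} = -s^n r^{-b}\,ds^b$. For the reverse inclusion, I would argue that $\kk[s]r^{-b}\,ds^b$ is exactly the image of $\PP_b$ under the identification $t^{-b}\,dt^b = r^{-b}\,ds^b$ from \eqref{eq:dt^b in terms of ds^b}: indeed $\Pb_n = -(t^n + t^{-n})t^{-b}\,dt^b$ by \eqref{eq:Pn in terms of polynomials}, and since $t^n + t^{-n}$ is a polynomial in $s = t + t^{-1}$ (a standard Chebyshev-type fact, provable by the easy induction $t^{n+1} + t^{-(n+1)} = s(t^n + t^{-n}) - (t^{n-1} + t^{-(n-1)})$), every $\Pb_n$ lies in $\kk[s]r^{-b}\,ds^b$; conversely $\PP_b$ is generated over $\kk$ by the $\Pb_n$'s, and one checks that these span the full polynomial ring in $s$ times $r^{-b}\,ds^b$. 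This simultaneously shows $\{v_n^{(b)} \mid n \in \NN\}$ is a basis, since the $s^n$ are linearly independent.

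Next I would compute the $\OO$-action in the new basis. Using Theorem~\ref{thm:O lambda = L(s)}, $u_n = -s^{n-1}(s^2 - 4)\del_s$, and by \eqref{eq:r^2 = s^2 - 4} we have $s^2 - 4 = r^2$. The key computation is the basis-free formula \eqref{eq:basis-free O action on P}, which I would derive directly from the $\W$-action on $I(0,b)$ given in Definition~\ref{def:tensor density}, namely $f\del \cdot (g\,dt^b) = (fg' + bf'g)\,dt^b$, rewritten in the variable $s$. Concretely, for $f\del = (s^2-4)g\del_s$ acting on $hr^{-b}\,ds^b$, I need to differentiate $h r^{-b}$ with respect to $s$; since $\del_s(r^2) = \del_s(s^2 - 4) = 2s$, we get $\del_s(r^{-b}) = -\tfrac{b}{2}r^{-b-2}\cdot 2s = -bs\,r^{-b-2}$, so the product rule produces the $+bghs$ correction term after clearing an $r^2 = s^2-4$. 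Carrying this through gives exactly \eqref{eq:basis-free O action on P}. Then substituting $g = s^{n-1}$, $h = s^m$ and expanding $\del_s(s^{n-1}) = (n-1)s^{n-2}$, $\del_s(s^m) = m s^{m-1}$, and $(s^2-4) = $ the quadratic, collecting powers of $s$ yields $u_n \cdot v_m^{(b)} = -(bn+m)v_{n+m}^{(b)} + 4(b(n-1)+m)v_{n+m-2}^{(b)}$; I would double-check the coefficient bookkeeping since the $-4$ in $s^2 - 4$ interacts with both the $(n-1)$ from $\del_s(g)$ and the $m$ from $\del_s(h)$.

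A sanity check I would include: the formula should be consistent with the known action \eqref{eq:O action on Pb} of $\OO_n$ on $\Pb_m$ after rewriting $u_n$ in terms of the $\OO_k$ via Lemma~\ref{lem:u in terms of O} and $v_m^{(b)}$ in terms of the $\Pb_k$; alternatively, one can verify the $n=1$ case by hand against $u_1 = \OO_1$ and $v_0^{(b)} = \tfrac12\Pb_0$, $v_1^{(b)} = \Pb_1$ from Lemma~\ref{lem:P0 and P1 in terms of s}. The main obstacle is not conceptual but bookkeeping: getting the basis-free formula \eqref{eq:basis-free O action on P} exactly right, with the correct placement of the factor $b$ and the sign conventions inherited from $L_n = -t^{n+1}\del$ and $I_n = -t^{n+a-b}\,dt^b$, and then expanding it cleanly in the monomial basis without dropping a term when $n + m - 2 < 0$ (which forces the convention $v_k^{(b)} = 0$ for $k < 0$, consistent with $v_0^{(b)}, v_1^{(b)}$ being the generators and lower terms vanishing). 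Everything else is a direct transcription of the $\OO$ argument.
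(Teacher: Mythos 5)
Your proposal is correct, and the core of it -- transporting the tensor-density action to the variable $s$, computing $\del_s(r^{-b}) = -bs\,r^{-b-2}$ from $r^2 = s^2-4$, deriving the basis-free formula \eqref{eq:basis-free O action on P}, and then substituting $g = s^{n-1}$, $h = s^m$ -- is exactly the paper's computation (your coefficient bookkeeping does land on the stated formula, and the would-be $v_{n+m-2}^{(b)}$ term with $n+m-2 = -1$ has coefficient $4(b(n-1)+m) = 0$ there, so no convention is actually needed). Where you genuinely diverge is the identification $\PP_b = \kk[s]r^{-b}\,ds^b$: you prove it by pure linear algebra, noting that $t^n + t^{-n}$ is a monic degree-$n$ polynomial in $s$ (Chebyshev-type recursion), so the $\Pb_n$ and the $v_n^{(b)}$ span the same space and the $v_n^{(b)}$ are independent; this makes the equality of the two spans independent of the action computation. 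The paper instead argues module-theoretically: it first establishes the action formula, so that $V \coloneqq \kk[s]r^{-b}\,ds^b$ is an $\OO$-module, then gets $\PP_b \subseteq V$ because $V$ contains the $\OO$-module generators $\Pb_0$ (and $\Pb_1$ when $b = 0$) of $\PP_b$ from Lemma \ref{lem:generators of Pb}, and $V \subseteq \PP_b$ by showing $v_0^{(b)}, v_1^{(b)}$ generate $V$ via the computed action of $u_n$ and $u_1$. Your route is more elementary and self-contained (no appeal to Lemma \ref{lem:generators of Pb}), and in fact the change-of-basis triangularity you use is what Lemma \ref{lem:v in terms of P} later makes explicit; the paper's route has the mild advantage of reusing the generation lemma and exhibiting $v_0^{(b)}, v_1^{(b)}$ as module generators along the way. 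Either argument suffices.
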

\begin{proof}
    We start by computing $u_n \cdot v_m^{(b)}$. To that end, note that
    $$\del_s(r) = \frac{r'}{s'} = \frac{(t - t^{-1})'}{(t + t^{-1})'} = \frac{1 + t^{-2}}{1 - t^{-2}} = \frac{t + t^{-1}}{t - t^{-1}} = \frac{s}{r},$$
    and thus
    $$\del_s(r^{-b}) = -b r^{-b - 1} \del_s(r) = \frac{-b s}{r^{b + 2}} = \frac{-bsr^{-b}}{s^2 - 4},$$
    where we used that $r^2 = s^2 - 4$ from \eqref{eq:r^2 = s^2 - 4} in the last equality. Therefore, we have
    \begin{align*}
        (s^2 - 4)g\del_s &\cdot (hr^{-b} \ ds^b) = \Big((s^2 - 4) g \del_s(hr^{-b}) + b\del_s((s^2 - 4)g)hr^{-b}\Big)ds^b \\
        &= \Big((s^2 - 4) g \big(\del_s(h)r^{-b} + h\del_s(r^{-b})\big) + b\big(\del_s(s^2 - 4)g + (s^2 - 4)\del_s(g)\big)hr^{-b}\Big)ds^b \\
        &= \left((s^2 - 4) g \left(\del_s(h)r^{-b} - \frac{bhsr^{-b}}{s^2 - 4}\right) + b\big(2sg + (s^2 - 4)\del_s(g)\big)hr^{-b}\right)ds^b \\
        &= \Big(\big(g \del_s(h) + b\del_s(g)h\big)(s^2 - 4) + b ghs\Big)r^{-b} \ ds^b,
    \end{align*}
    as required. It follows that
    \begin{align*}
        u_n \cdot v_m^{(b)} &= [s^{n - 1}(s^2 - 4)\del_s,s^m r^{-b} \ ds^b] \\
        &= \Big(\big(s^{n - 1}\del_s(s^m) + b\del_s(s^{n - 1})s^m\big)(s^2 - 4) + b s^{n + m}\Big)r^{-b} \ ds^b \\
        &= \Big((m + b(n - 1)) s^{n + m - 2} (s^2 - 4) + b s^{n + m}\Big)r^{-b} \ ds^b \\
        &= \Big((m + b n)s^{n + m} - 4(b(n - 1) - m)s^{n + m - 2}\Big)r^{-b} \ ds^b \\
        &= (n - m)v_{n + m}^{(b)} + 4(b(n - 1) - m)v_{n + m - 2}^{(b)}.
    \end{align*}
    Thus, letting $V \coloneqq \spn\{v_n^{(b)} \mid n \in \NN\} = \kk[s]r^{-b} \ ds^b$, we see that $V$ is an $\OO$-module. Since $\Pb_0 = 2v_0^{(b)}$ and $\Pb_1 = v_1^{(b)}$ are elements of $V$, Lemma \ref{lem:generators of Pb} implies that $\PP_b \subseteq V$.

    For the other inclusion $V \subseteq \PP_b$, it suffices to show that $v_0^{(b)}$ and $v_1^{(b)}$ generate $V$. We have
    \begin{align*}
        u_n \cdot v_0^{(b)} &= -bn v_n^{(b)} + 4b(n - 1) v_{n - 2}^{(b)} \\
        u_1 \cdot v_n^{(b)} &= -(m + b)v_{n + 1}^{(b)} + 4m v_{n - 1}^{(b)}
    \end{align*}
    for all $n$. The result now follows by an easy induction.
\end{proof}

The next result, which shows that $\PP_b$ is a reducible $\OO$-module for all $b \in \kk$, follows immediately from Theorem \ref{thm:new basis for P}.

\begin{corollary}\label{cor:Pb is reducible}
    Let $b \in \kk$. Then $(s + 2)\PP_b$ and $(s - 2)\PP_b$ are proper $\OO$-submodules of $\PP_b$. Consequently, $\PP_b$ is a reducible $\OO$-module for all $b \in \kk$.
\end{corollary}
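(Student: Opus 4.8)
The plan is to read off the result directly from the explicit basis-free $\OO$-action on $\PP_b$ established in Theorem \ref{thm:new basis for P}, namely \eqref{eq:basis-free O action on P}, and to reduce everything to the factorisation $s^2 - 4 = (s-2)(s+2)$ in $\kk[s]$. First I would take an arbitrary element of $(s+2)\PP_b$, written as $(s+2)h\,r^{-b}\ ds^b$ with $h \in \kk[s]$, and apply a general basis element $(s^2-4)g\del_s$ of $\OO$. Using the Leibniz rule $\del_s\big((s+2)h\big) = h + (s+2)\del_s(h)$ inside \eqref{eq:basis-free O action on P}, one finds that the bracket on the right-hand side equals
$$(s+2)\Big(gh(s-2) + (s^2-4)\big(g\del_s(h) + b\del_s(g)h\big) + bghs\Big),$$
which manifestly lies in $(s+2)\kk[s]$. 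Hence $(s+2)\PP_b = (s+2)\kk[s]\,r^{-b}\ ds^b$ is an $\OO$-submodule. The identical computation with $s-2$ in place of $s+2$ (the argument only uses that $s\pm 2$ divides $s^2-4$) shows $(s-2)\PP_b$ is an $\OO$-submodule as well.

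Next I would check that these submodules are nonzero and proper. Nonzeroness is clear, e.g.\ $(s+2)v_0^{(b)} \neq 0$. For properness, since $\kk[s]$ is an integral domain, $(s\pm 2)\kk[s]$ is a proper ideal, so the generator $v_0^{(b)} = -r^{-b}\ ds^b$ of $\PP_b$ (see Lemma \ref{lem:generators of Pb} and Lemma \ref{lem:P0 and P1 in terms of s}) does not lie in $(s\pm 2)\PP_b = (s\pm 2)\kk[s]\,r^{-b}\ ds^b$. Therefore $\PP_b$ has a nonzero proper $\OO$-submodule and so is a reducible $\OO$-module, for every $b \in \kk$.

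There is no real obstacle here: the only step of any substance is the factoring-out computation above, which is routine, and the statement is an immediate corollary of Theorem \ref{thm:new basis for P}. It is worth noting in passing that this does not yield a direct-sum decomposition — one has $(s+2)\PP_b + (s-2)\PP_b = \PP_b$ but $(s+2)\PP_b \cap (s-2)\PP_b = (s^2-4)\PP_b \neq 0$ — in contrast to the situation for $I(0,b)$ in Lemma \ref{lem:tensor density direct sum}.
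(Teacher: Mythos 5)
Your proof is correct and follows essentially the same route as the paper: it reads the submodule property directly off the basis-free action \eqref{eq:basis-free O action on P}, observing that divisibility by $s \pm 2$ is preserved. You merely spell out the Leibniz-rule computation and the properness check (and add a correct aside about $(s+2)\PP_b \cap (s-2)\PP_b = (s^2-4)\PP_b$), details the paper leaves implicit.
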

\begin{proof}
    If $h$ is a multiple of $s \pm 2$ in \eqref{eq:basis-free O action on P}, then $[(s^2 - 4)g\del_s, hr \ ds^{-1}]$ is also a multiple of $s \pm 2$ for all $g \in \kk[s]$, so we see that $(s \pm 2)\PP_b$ is an $\OO$-submodule of $\PP_b$.
\end{proof}

On the other hand, $\PP_b$ is indecomposable as an $\OO$-module, provided $b \neq 0$.

\begin{lemma}
    If $b \in \kk \nonzero$, then $\PP_b$ is an indecomposable $\OO$-module. Furthermore, $\PP_0$ decomposes as
    $$\PP_0 = \kk P_0^{(0)} \oplus (s + 2)\PP_0 = \kk P_0^{(0)} \oplus (s - 2)\PP_0.$$
\end{lemma}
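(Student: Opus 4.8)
The plan splits according to the two cases in the statement.

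\medskip
\noindent\emph{Case $b=0$.} Here I would argue directly. By Theorem~\ref{thm:new basis for P} the action is $u_n\cdot v_m^{(0)} = -m\big(v_{n+m}^{(0)}-4v_{n+m-2}^{(0)}\big)$, so $v_0^{(0)}=\tfrac12 P_0^{(0)}$ is annihilated by all of $\OO$ and hence spans a trivial $\OO$-submodule $\kk P_0^{(0)}$. Combining this with Corollary~\ref{cor:Pb is reducible} (which makes $(s\pm2)\PP_0$ into $\OO$-submodules) and the observation that, identifying $\PP_0=\kk[s]$ with $\kk P_0^{(0)}$ the constants, the ideal $(s\pm2)\kk[s]$ has codimension one and does not contain $1$, one gets the vector-space splitting $\kk[s]=\kk\cdot1\oplus(s\pm2)\kk[s]$; since both summands are $\OO$-submodules this is the asserted decomposition of $\OO$-modules. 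This part is short.

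\medskip
\noindent\emph{Case $b\neq0$.} The strategy I would use is to show $\End_\OO(\PP_b)=\kk\,\id$, which gives indecomposability immediately, since a nontrivial direct-sum decomposition would produce a nontrivial idempotent endomorphism. The key structural input is that $\PP_b$ is \emph{cyclic}: by Lemma~\ref{lem:generators of Pb} it is generated by $\Pb_0=2v_0^{(b)}$, so any $\phi\in\End_\OO(\PP_b)$ is determined by $\phi(v_0^{(b)})$, and it suffices to show $\phi$ preserves the line $\kk v_0^{(b)}$. Writing $\phi(v_0^{(b)})=\sum_{m=0}^{M}c_m v_m^{(b)}$ with $c_M\neq0$, the goal is to prove $M=0$, for then $\phi(v_0^{(b)})\in\kk v_0^{(b)}$ and cyclicity forces $\phi$ to be a scalar.

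\medskip
To prove $M=0$ I would extract two recursions for $\phi$. First, applying $\phi$ to $u_n\cdot v_0^{(b)}=-bn\,v_n^{(b)}+4b(n-1)v_{n-2}^{(b)}$ (legitimate since $b\neq0$) expresses $\phi(v_n^{(b)})$ through $\phi(v_0^{(b)})$ and $\phi(v_{n-2}^{(b)})$; an easy induction then shows $\phi(v_n^{(b)})$ is supported on $v_0^{(b)},\dots,v_{n+M}^{(b)}$ with leading coefficient $\alpha_n=(bn+M)c_M/(bn)$. Second, applying $\phi$ to $u_2\cdot v_{n-2}^{(b)}=-(2b+n-2)v_n^{(b)}+4(b+n-2)v_{n-4}^{(b)}$ and again reading off leading coefficients yields the independent relation $\alpha_n=\tfrac{2b+n-2+M}{2b+n-2}\,\alpha_{n-2}$. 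Substituting the first formula into the second and cancelling $c_M$ gives an equality of rational functions of $n$; clearing denominators produces a cubic identity in $n$, and comparing the coefficient of $n^2$ forces $Mb=0$, hence $M=0$. Thus $\End_\OO(\PP_b)=\kk\,\id$ and $\PP_b$ is indecomposable.

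\medskip
I expect the main (essentially only) obstacle to be the leading-coefficient computation above: one must be careful that the two expressions for $\alpha_n$ genuinely hold for all large $n$ (the denominators $bn$ and $2b+n-2$, and the relevant top coefficients, vanish for only finitely many $n$), so that one may pass to a rational-function identity; after that the coefficient comparison is routine. An alternative approach — classifying the $\OO$-submodules of $\PP_b$ as exactly the modules $(s-2)^{j}(s+2)^{k}\PP_b$, using the basis-free action formula~\eqref{eq:basis-free O action on P} and a $\pi$-adic valuation argument with $u_1$, and then noting this inclusion-lattice has no complementary pair of nonzero members — also works, but it hinges on the more delicate fact that every $\OO$-submodule is automatically a $\kk[s]$-submodule (a degree induction with an exceptional degree $d=-b$ to dispatch), so I would favour the endomorphism-ring argument.
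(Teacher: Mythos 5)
Your proof is correct, but for the main case $b \neq 0$ it takes a genuinely different route from the paper. The paper's proof is a one-line reduction to the claim that every nonzero $\OO$-submodule of $\PP_b$ (for $b \neq 0$) has finite codimension: in a decomposition $\PP_b = M_1 \oplus M_2$ with both summands nonzero, $M_2 \cong \PP_b/M_1$ would be finite-dimensional, which is incompatible with $M_2$ itself having finite codimension in the infinite-dimensional $\PP_b$. You instead prove $\End_\OO(\PP_b) = \kk\,\id$, using cyclicity of $\PP_b$ over $v_0^{(b)}$ (Lemma \ref{lem:generators of Pb}, which is exactly where $b \neq 0$ enters) and the two top-coefficient recursions $\alpha_n = \frac{bn + M}{bn}c_M$ and $(2b + n - 2)\alpha_n = (2b + n - 2 + M)\alpha_{n-2}$; I verified that both recursions hold (the first for all $n \geq 1$ since $bn \neq 0$ in characteristic zero, the second for all $n \geq 3$ before dividing), that clearing denominators gives a cubic identity valid for infinitely many integers $n$, and that the $n^2$-coefficient comparison indeed yields $Mb = 0$, hence $M = 0$, after which the idempotent-projection argument closes the case. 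Your method is longer but self-contained and gives the stronger conclusion that the endomorphism ring is trivial, whereas the paper's is shorter but rests on an unproved (if easy) structural claim about submodule codimension. Your $b = 0$ case is essentially the paper's argument, spelled out. One small slip: the correct bracket is $u_2 \cdot v_{n-2}^{(b)} = -(2b + n - 2)v_n^{(b)} + 4(b + n - 2)v_{n-2}^{(b)}$, with $v_{n-2}^{(b)}$ rather than $v_{n-4}^{(b)}$ in the second term; this is harmless, since that term cannot contribute in degree $n + M$ either way, so your relation between $\alpha_n$ and $\alpha_{n-2}$ is unaffected.
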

\begin{proof}
    The first part of the result follows from the following easy claim: if $M$ is a nonzero submodule of $\PP_b$ with $b \neq 0$, then $M$ has finite codimension in $\PP_b$. The second part is immediate from \eqref{eq:O action on Pb} and Corollary \ref{cor:Pb is reducible}.
\end{proof}

We finish by considering some consequences of Theorem \ref{thm:new basis for P} for $\PP = \PP_{-1}$. First, we simplify the notation for the new basis of $\PP$, since this is the main $\OO$-module of interest in this paper.

\begin{notation}
    We define $v_n \coloneqq v_n^{(-1)} = -s^n r \ ds^{-1} = -(t + t^{-1})^n t \ dt^{-1} \in \PP$ for $n \in \NN$.
\end{notation}

The next result computes the bracket of the centreless BCCA $\bcca$ with the new bases for $\OO$ and $\PP$.

\begin{corollary}\label{cor:new_basis_for_b}
    We have $\bcca = \spn\{u_n, v_m \mid n \geq 1, m \in \NN\}$. The bracket of $\bcca$ with this basis is given by
    \begin{align*}
        [u_n,u_m] &= (n - m)(u_{n + m} - 4u_{n + m - 2}) & &(n,m \geq 1), \\
        [u_n,v_m] &= (n - m)v_{n + m} - 4(n - m - 1)v_{n + m - 2} & &(n \geq 1, m \geq 0), \\
        [v_n,v_m] &= 0 & &(n,m \geq 0).
    \end{align*}
\end{corollary}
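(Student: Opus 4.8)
The plan is to read off all three brackets directly from the results established for $\OO$ and $\PP$ in the preceding subsections, specialising to $b = -1$. We have $\bcca = \OO \ltimes \PP$ by the discussion following Lemma \ref{lem:tensor density direct sum}, and Theorem \ref{thm:O lambda = L(s)} (with $\lambda = 1$) gives us that $\{u_n \mid n \geq 1\}$ is a basis of $\OO$, while Theorem \ref{thm:new basis for P} gives us that $\{v_m = v_m^{(-1)} \mid m \in \NN\}$ is a basis of $\PP = \PP_{-1}$. Hence $\{u_n, v_m \mid n \geq 1, m \in \NN\}$ is a basis of $\bcca$, proving the first claim.

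For the brackets: the $[u_n, u_m]$ formula is exactly the $\lambda = 1$ case of the last displayed equation in Theorem \ref{thm:O lambda = L(s)}, namely $[\ulambda_n, \ulambda_m] = (n-m)(\ulambda_{n+m} - 4\lambda \ulambda_{n+m-2})$. The $[v_n, v_m] = 0$ formula is immediate since $\PP$ is an abelian ideal of $\bcca$ (it is a submodule of the abelian ideal $I(0,-1)$ of $\bms$, equivalently an $\OO$-module made into an abelian Lie algebra in the semidirect sum). The $[u_n, v_m]$ formula is the action of $\OO$ on $\PP_b$ computed in Theorem \ref{thm:new basis for P}, specialised to $b = -1$: there we found
$$u_n \cdot v_m^{(b)} = -(bn + m) v_{n+m}^{(b)} + 4(b(n-1) + m) v_{n+m-2}^{(b)},$$
and setting $b = -1$ gives $u_n \cdot v_m = -(-n + m)v_{n+m} + 4(-(n-1) + m)v_{n+m-2} = (n - m)v_{n+m} - 4(n - m - 1)v_{n+m-2}$, as required. (One should note the apparent sign discrepancy in the last displayed line of the proof of Theorem \ref{thm:new basis for P}, which writes $(n-m)v_{n+m}^{(b)} + 4(b(n-1)-m)v_{n+m-2}^{(b)}$; the correct general formula is the one in the statement of that theorem, and it is that statement we invoke here.)

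There is essentially no obstacle: this corollary is a pure bookkeeping consequence of Theorems \ref{thm:O lambda = L(s)} and \ref{thm:new basis for P} together with the semidirect-sum decomposition $\bcca = \OO \ltimes \PP$. The only mild care needed is to make sure the specialisation $b = -1$ is carried out consistently and that one quotes the \emph{statement} of Theorem \ref{thm:new basis for P} rather than the (slightly mis-signed) final line of its proof. So the write-up will be a short paragraph assembling the basis claim and then stating each of the three brackets with a one-line pointer to where it comes from.

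\begin{proof}
    By the discussion following Lemma \ref{lem:tensor density direct sum}, $\bcca = \OO \ltimes \PP$ with $\PP = \PP_{-1}$ an abelian ideal. Theorem \ref{thm:O lambda = L(s)} (with $\lambda = 1$) shows that $\{u_n \mid n \geq 1\}$ is a basis of $\OO$, and Theorem \ref{thm:new basis for P} (with $b = -1$) shows that $\{v_m \mid m \in \NN\}$ is a basis of $\PP$. Hence $\{u_n, v_m \mid n \geq 1, m \in \NN\}$ is a basis of $\bcca$.

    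The bracket $[u_n, u_m] = (n - m)(u_{n + m} - 4u_{n + m - 2})$ is the case $\lambda = 1$ of the Lie bracket computed in Theorem \ref{thm:O lambda = L(s)}. Since $\PP$ is abelian, $[v_n, v_m] = 0$ for all $n, m \geq 0$. Finally, the mixed bracket is given by the $\OO$-action on $\PP_b$ from Theorem \ref{thm:new basis for P}, namely
    $$u_n \cdot v_m^{(b)} = -(bn + m)v_{n + m}^{(b)} + 4(b(n - 1) + m)v_{n + m - 2}^{(b)};$$
    setting $b = -1$ yields
    $$[u_n, v_m] = (n - m)v_{n + m} - 4(n - m - 1)v_{n + m - 2}$$
    for all $n \geq 1$ and $m \geq 0$, as claimed.
\end{proof}
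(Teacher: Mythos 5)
Your proof is correct and follows essentially the same route as the paper, whose proof of this corollary is simply that it follows immediately from Theorems \ref{thm:O lambda = L(s)} and \ref{thm:new basis for P} (specialised to $\lambda = 1$ and $b = -1$, with $[v_n,v_m]=0$ coming from $\PP$ being abelian). Your side remark is also right: the final displayed line in the paper's proof of Theorem \ref{thm:new basis for P} contains sign typos, and it is the theorem's statement, which you correctly invoke and which is consistent with the original action \eqref{eq:O action on Pb}, that yields the bracket $[u_n,v_m] = (n-m)v_{n+m} - 4(n-m-1)v_{n+m-2}$.
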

\begin{proof}
    Follows immediately from Theorems \ref{thm:O lambda = L(s)} and \ref{thm:new basis for P}.
\end{proof}

As was done in Lemma \ref{lem:u in terms of O}, we consider the relationship between the two bases $P_n$ and $v_n$ of $\PP$. Since $P_0 = 2v_0$, we need to be a bit careful, but otherwise the proof is nearly identical to that of Lemma \ref{lem:u in terms of O}.

\begin{lemma}\label{lem:v in terms of P}
    For all $n \in \NN$, we have
    $$v_n = Q_n + \sum_{k = 1}^{\lfloor \frac{n}{2} \rfloor} \binom{n}{k} Q_{n - 2k},$$
    where $Q_n \coloneqq \begin{cases}
        P_n, &\text{if } n \neq 0, \\
        \frac{1}{2}P_0, &\text{if } n = 0.
    \end{cases}$
\end{lemma}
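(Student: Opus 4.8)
The plan is to mimic the proof of Lemma \ref{lem:u in terms of O}, expanding $v_n$ in the monomial basis of $\PP$ and pairing $t^m$ with $t^{-m}$. First I would recall the polynomial descriptions: by definition $v_n = -(t + t^{-1})^n t \, dt^{-1}$, while from Definitions \ref{def:tensor density} and \ref{def:O_subalgebra} we have $P_m = I_m + I_{-m} = -(t^{m+1} + t^{-m+1}) \, dt^{-1} = -(t^m + t^{-m})t\,dt^{-1}$ for $m \geq 1$, and $P_0 = -2t\,dt^{-1}$. The key observation is that $-t\,dt^{-1} = v_0 = \tfrac12 P_0$, which is precisely why the auxiliary symbol $Q_0 = \tfrac12 P_0$ is introduced; for $n \geq 1$ we have $Q_n = P_n = -(t^n + t^{-n})t\,dt^{-1}$.

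Next I would apply the binomial theorem:
$$v_n = -\left(\sum_{k=0}^n \binom{n}{k} t^{n-2k}\right)t\,dt^{-1} = -\sum_{k=0}^n \binom{n}{k} t^{\,n-2k+1}\,dt^{-1}.$$
Grouping the $k$-th summand with the $(n-k)$-th one — which carries the same binomial coefficient since $\binom{n}{k} = \binom{n}{n-k}$ — gives, for each $0 \leq k < n/2$, the contribution
$$-\binom{n}{k}\bigl(t^{\,n-2k+1} + t^{-(n-2k)+1}\bigr)\,dt^{-1} = \binom{n}{k} P_{n-2k} = \binom{n}{k} Q_{n-2k},$$
where the last equality holds because $n - 2k \neq 0$. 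When $n$ is even there is additionally the middle term $k = n/2$, equal to $-\binom{n}{n/2}t\,dt^{-1} = \binom{n}{n/2} v_0 = \binom{n}{n/2} Q_0$. Collecting these contributions yields $v_n = \sum_{k=0}^{\lfloor n/2 \rfloor} \binom{n}{k} Q_{n-2k}$, and isolating the $k = 0$ term (using $\binom{n}{0} = 1$ and $Q_n = P_n$ for $n \geq 1$) gives the claimed formula.

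Since every step is an elementary rearrangement of a finite sum, there is no serious obstacle here; the only point requiring care is the factor-of-two mismatch $P_0 = 2v_0$, which must be tracked when $n$ is even so that the middle term comes out as $\binom{n}{n/2}Q_0$ rather than $\binom{n}{n/2}P_0$. For odd $n$ no middle term occurs and every $Q_{n-2k}$ appearing is simply $P_{n-2k}$, so the formula reads uniformly in both parities.
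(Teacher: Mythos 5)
Your proposal is correct and takes essentially the same route as the paper: write $v_n$, $P_m$ as Laurent polynomials times $t\,dt^{-1}$, expand the binomial, and regroup symmetric powers of $t$ into the $P_m$, tracking the factor-of-two at $P_0$ via $Q_0=\tfrac12 P_0$. The only cosmetic difference is that you expand $(t+t^{-1})^{n}$ directly and use $\binom{n}{k}=\binom{n}{n-k}$, whereas the paper mimics Lemma \ref{lem:u in terms of O} by expanding $(t+t^{-1})^{n-1}$ and then invoking Pascal's identity $\binom{n-1}{k}+\binom{n-1}{k-1}=\binom{n}{k}$; both yield the same coefficients.
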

\begin{proof}
    Similar to Lemma \ref{lem:u in terms of O}, except we apply the identity
    $$\binom{n - 1}{k} + \binom{n - 1}{k - 1} = \binom{n}{k}$$
    in the last part of the proof.
\end{proof}

Thanks to Lemmas \ref{lem:u in terms of O} and \ref{lem:v in terms of P}, we can easily deduce what the central extension of $\bcca$ giving rise to $\widehat{\bcca}$ looks like using the new basis $\{u_n, v_m \mid n \geq 1, m \geq 0\}$ of $\bcca$. As the next result shows, the central extension becomes rather unwieldy with this choice of basis.

\begin{corollary}\label{cor:central extension new basis}
    View $u_n$ and $v_n$ as elements of the (centrally extended) BCCA $\widehat{\bcca}$. Then
    \begin{align*}
        [u_{2n},v_{2m + 1}] &= (2(n - m) - 1)v_{2(n + m) + 1} - 8(n - m - 1)v_{2(n + m) - 1} \\
        [u_{2n + 1},v_{2m}] &= (2(n - m) + 1)v_{2(n + m) + 1} - 8(n - m)v_{2(n + m) - 1} \\
        [u_{2n},v_{2m}] &= 2(n - m)v_{2(n + m)} - 4(2(n - m) - 1)v_{2(n + m - 1)} \\
        &\quad+ \left(\sum_{k = 0}^{\min(n,m)} \binom{2n}{n - k} \binom{2m}{m - k} \frac{k^2 (4k^2 - 1)}{3n}\right)C_M \\
        [u_{2n + 1},v_{2m + 1}] &= 2(n - m)v_{2(n + m + 1)} - 4(2(n - m) - 1)v_{2(n + m)} \\
        &\quad+ \left(\sum_{k = 0}^{\min(n,m)} \binom{2n + 1}{n - k} \binom{2m + 1}{m - k} \frac{2 k (2k + 1)^2 (k + 1)}{3(2n + 1)}\right)C_M
    \end{align*}
    for all $n,m$. \qed
\end{corollary}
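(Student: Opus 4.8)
The plan is to split the computation into two independent tasks: the centreless part of each bracket, which is immediate, and the extra central term, which is where all the work lies.

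For the centreless part, every one of the four formulas is just the single identity $[u_n,v_m]=(n-m)v_{n+m}-4(n-m-1)v_{n+m-2}$ from Corollary \ref{cor:new_basis_for_b}, specialised to the parity combinations $(2n,2m)$, $(2n,2m+1)$, $(2n+1,2m)$, $(2n+1,2m+1)$ and rewritten so that every subscript appears in the form $2(\cdots)$ or $2(\cdots)+1$. For example, for $(2n,2m+1)$ one replaces $n-m$ by $2(n-m)-1$ and $n-m-1$ by $2(n-m)-2$, recovering $(2(n-m)-1)v_{2(n+m)+1}-8(n-m-1)v_{2(n+m)-1}$; the other three are analogous. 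No new input is needed here.

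For the central term I would first note that passing from $\bcca$ to $\widehat{\bcca}$ alters the bracket only through the summand $\tfrac16 i(i^2-1)\delta_{i,j}C_M$ in $[\OO_i,P_j]$, and that $[\OO_i,P_0]$ carries no central contribution (the factor $\delta_{i,0}$ forces $i=0$). Expanding $[u_n,v_m]$ bilinearly using $u_n=\sum_{k\ge0}\binom{n}{k}\bigl(1-\tfrac{2k}{n}\bigr)\OO_{n-2k}$ from Lemma \ref{lem:u in terms of O} and $v_m=\sum_{l\ge0}\binom{m}{l}Q_{m-2l}$ from Lemma \ref{lem:v in terms of P}, the central part of the bracket is exactly
\[
\frac{C_M}{6}\sum_{\substack{k,l\ge 0\\ n-2k=m-2l}}\binom{n}{k}\Bigl(1-\frac{2k}{n}\Bigr)\binom{m}{l}(n-2k)\bigl((n-2k)^2-1\bigr),
\]
where the discrepancy between $Q_0$ and $\tfrac12 P_0$ may be ignored, since in that case the index $n-2k$ equals $0$ and the factor $(n-2k)\bigl((n-2k)^2-1\bigr)$ vanishes. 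The constraint $n-2k=m-2l$ cannot be met when $n$ and $m$ have opposite parities, which explains why $[u_{2n},v_{2m+1}]$ and $[u_{2n+1},v_{2m}]$ have no $C_M$ term.

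In the two remaining cases I would set $j:=n-k=m-l$, so that $j$ runs from $0$ to $\min(n,m)$ and the common index equals $2j$ (even case) or $2j+1$ (odd case), and then simplify: in the even case $\binom{2n}{n-j}\bigl(1-\tfrac{2(n-j)}{2n}\bigr)=\tfrac{j}{n}\binom{2n}{n-j}$ and $(2j)\bigl((2j)^2-1\bigr)=2j(4j^2-1)$, which after relabelling $j\mapsto k$ yields the stated coefficient $\tfrac{k^2(4k^2-1)}{3n}$; in the odd case $\binom{2n+1}{n-j}\bigl(1-\tfrac{2(n-j)}{2n+1}\bigr)=\tfrac{2j+1}{2n+1}\binom{2n+1}{n-j}$ and $(2j+1)\bigl((2j+1)^2-1\bigr)=4j(j+1)(2j+1)$, yielding $\tfrac{2k(2k+1)^2(k+1)}{3(2n+1)}$. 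The only real obstacle is this last bookkeeping step — keeping the even and odd indexing conventions straight and carrying out the elementary binomial manipulations carefully; there is no conceptual difficulty once the reduction above is in place.
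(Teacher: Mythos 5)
Your proposal is correct and follows essentially the same route the paper intends: the corollary is stated with no written proof precisely because it is the direct bilinear expansion of $[u_n,v_m]$ via Lemmas \ref{lem:u in terms of O} and \ref{lem:v in terms of P}, with the centreless part read off from Corollary \ref{cor:new_basis_for_b} and the $C_M$ term collected from the $\delta_{i,j}$ cocycle. Your reduction of the central sum (the parity constraint $n-2k=m-2l$, the harmlessness of the $Q_0=\tfrac12 P_0$ discrepancy, and the reindexing $j=n-k=m-l$ giving the stated coefficients) checks out.
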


\begin{remark}\label{rem:lack of center}
    The centreless BCCA is strongly almost-graded according to \cite[Definition 4.1]{Schlichenmaier}: setting $\bcca_n \coloneqq \spn\{u_n, v_n\}$ for $n \in \NN$ (where $u_0 = 0$), we see that
    $$[\bcca_n, \bcca_m] \subseteq \bigoplus_{k = n + m - 2}^{n + m} \bcca_k$$
    for all $n,m \in \NN$, by Corollary \ref{cor:new_basis_for_b}. However, Corollary \ref{cor:central extension new basis} shows that cocycle on $\bcca$ giving rise to $\widehat{\bcca}$ is not compatible with the almost-grading, in the sense that is impossible to extend the almost-grading of $\bcca$ to an almost-grading of $\widehat{\bcca}$.
    
    This motivates our approach in later sections, where we focus on the centreless BCCA, since we would like to use the new basis $\{u_n, v_m \mid n \geq 1, m \geq 0\}$ to study the representation theory of the BCCA. This becomes very difficult if we have to deal with the central extension as described in Corollary \ref{cor:central extension new basis}. Indeed, when one has an almost-graded Lie algebra, the \emph{local} central extensions are generally preferred and are better understood (see \cite[Definition 4.1]{SchlichenmaierCrelle}). The central extension of $\bcca$ giving rise to $\widehat{\bcca}$ is not local with respect to the almost-grading defined above, making it difficult to work with.

    Furthermore, recall that our goal is to construct a decreasing filtration on the BCCA. However, the observations above imply that that this central extension of the BCCA is not compatible with the construction of a filtration for the BCCA as per Definition \ref{def:filtered Lie algebra}. This is because any filtration on $\widehat{\bcca}$ would necessarily fail to be weakly convergent. Because of this, we will restrict our attention to the centreless BCCA in Sections \ref{sec:filtration} and \ref{sec:Whittaker modules}.
\end{remark}

Finally, we consider what happens to $\PP$ under the change of variables $s \mapsto t$. Since $r$ is a square root of $s^2 - 4$ by \eqref{eq:r^2 = s^2 - 4}, we get that $r \mapsto \sqrt{t^2 - 4} = \sqrt{f}$ under this change of variables.

\begin{corollary}\label{cor:change of variables on P}
    Under the change of variables $s \mapsto t$, the $\OO$-module $\PP$ corresponds to the $\f$-module $\mathfrak{m} \coloneqq \kk[t] \sqrt{f} \ dt^{-1}$. In other words, $\bcca \cong \f \ltimes \mathfrak{m}$.
\end{corollary}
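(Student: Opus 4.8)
The plan is to chase the change of variables $s\mapsto t$ through the descriptions already established in Theorems \ref{thm:O lambda = L(s)} and \ref{thm:new basis for P}, and simply record where $\PP$ lands. By Theorem \ref{thm:O lambda = L(s)}, the change of variables $s\mapsto t$ sends $\OO = \kk[s](s^2-4)\del_s$ isomorphically onto $\f = \kk[t](t^2-4)\del = f\WW_1$, with $u_n = -s^{n-1}(s^2-4)\del_s \mapsto f_n = -t^{n-1}f\del$. By Theorem \ref{thm:new basis for P}, $\PP = \PP_{-1} = \kk[s]r\,ds^{-1}$ as an $\OO$-module, where $r$ is the square root of $s^2-4$ fixed by \eqref{eq:r^2 = s^2 - 4}, and the $\OO$-action is given in basis-free form by \eqref{eq:basis-free O action on P} with $b=-1$.

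First I would set $\mathfrak{m} \coloneqq \kk[t]\sqrt{f}\,dt^{-1}$, interpreted inside the appropriate localisation (adjoining $\sqrt{f}$), and note that the assignment $s\mapsto t$, $r\mapsto\sqrt{f}$, $ds^{-1}\mapsto dt^{-1}$ is a $\kk$-linear bijection $\PP\to\mathfrak{m}$ carrying the basis $v_n = -s^n r\,ds^{-1}$ to $-t^n\sqrt{f}\,dt^{-1}$. Then I would verify that this bijection intertwines the $\OO$-action on $\PP$ with the natural $\f$-action on $\mathfrak{m}$: concretely, $\f$ acts on $\mathfrak{m}$ by $hf\del$ acting on $g\sqrt{f}\,dt^{-1}$ via the Lie bracket in $\Der$ applied to the tensor density, and one checks this matches \eqref{eq:basis-free O action on P} under $s\leftrightarrow t$, $r\leftrightarrow\sqrt{f}$, using $\del(\sqrt f) = \tfrac{f'}{2\sqrt f} = \tfrac{t\sqrt f}{f}$ (the exact analogue of the identity $\del_s(r) = s/r$ computed in the proof of Theorem \ref{thm:new basis for P}). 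Since both actions are determined by the same closed-form expression in terms of $(s^2-4)g\del_s$ acting on $hr^{-b}ds^b$ with $b=-1$, this is a formal substitution rather than a new computation; alternatively, one can simply transport the $\OO$-module structure along the isomorphism $\OO\cong\f$ and observe the resulting $\f$-module is exactly $\mathfrak{m}$ with its tensor-density action.

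Finally, combining $\OO\cong\f$ with $\PP\cong\mathfrak{m}$ as modules (compatibly, since both isomorphisms are induced by the single change of variables $s\mapsto t$) gives $\bcca = \OO\ltimes\PP \cong \f\ltimes\mathfrak{m}$, which is the assertion. The only subtlety — and the closest thing to an obstacle — is purely bookkeeping: making sure the square root $\sqrt f$ is handled consistently, i.e.\ that $\mathfrak{m}$ is defined in the same extension of $\kk[t]$ in which $\sqrt f$ lives and that the $\f$-action genuinely preserves $\kk[t]\sqrt f\,dt^{-1}$ (it does, because $\del(\sqrt f)$ is $\sqrt f$ times a rational function whose product with the relevant polynomials stays in $\kk[t]$, exactly as in the $\PP_b$ computation). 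No genuinely hard step is involved; the result is a corollary in the literal sense.
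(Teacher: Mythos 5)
Your proposal is correct and follows essentially the same route as the paper: the paper's proof is precisely to combine Theorems \ref{thm:O lambda = L(s)} and \ref{thm:new basis for P} with the observation from \eqref{eq:r^2 = s^2 - 4} that $r = \sqrt{s^2-4}$, so that the change of variables $s \mapsto t$ carries $\OO = \kk[s](s^2-4)\del_s$ to $\f$ and $\PP = \kk[s]r\,ds^{-1}$ to $\mathfrak{m} = \kk[t]\sqrt{f}\,dt^{-1}$. You merely spell out the intertwining check (via $\del(\sqrt f) = t\sqrt f/f$, the analogue of $\del_s(r)=s/r$) that the paper leaves implicit, which is fine.
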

\begin{proof}
    Follows from Theorems \ref{thm:O lambda = L(s)} and \ref{thm:new basis for P}, and the observation that $r = \sqrt{s^2 - 4}$ from \eqref{eq:r^2 = s^2 - 4}.
\end{proof}

We finish by summarising the results of this section for the centreless BCCA.
\begin{itemize}
    \item We have defined two different bases for the centreless BCCA, denoted by $\{\OO_n, \PP_m \mid n \geq 1, m \geq 0\}$ and $\{u_n, v_m \mid n \geq 1, m \geq 0\}$. The Lie bracket of $\bcca$ with these two bases is given by
    \begin{align*}
        [\OO_n,\OO_m] &= (n - m)\OO_{n + m} - (n + m)\OO_{n - m} & &(n,m \geq 1), \\
        [\OO_n, P_m] &= (n - m)P_{n + m} + (n + m)P_{n - m} & &(n \geq 1, m \geq 0), \\
        [P_n,P_m] &= 0 & &(n,m \geq 0),
    \end{align*}
    \begin{align*}
        [u_n,u_m] &= (n - m)(u_{n + m} - 4u_{n + m - 2}) & &(n,m \geq 1), \\
        [u_n,v_m] &= (n - m)v_{n + m} - 4(n - m - 1)v_{n + m - 2} & &(n \geq 1, m \geq 0), \\
        [v_n,v_m] &= 0 & &(n,m \geq 0).
    \end{align*}
    \item In basis-free notation, we have $\OO = \kk[t + t^{-1}](t^2 - 1)\del$ and $\PP = \kk[t + t^{-1}] t \ dt^{-1}$. Letting $s = t + t^{-1}$, $r = t - t^{-1}$, and $\del_s = \frac{1}{s'}\del = \frac{t^2}{t^2 - 1}\del$, we can also write $\OO$ and $\PP$ as
    $$\OO = \kk[s](s^2 - 4)\del_s, \quad \PP = \kk[s] r \ ds^{-1}.$$
    \item There is an isomorphism $\Phi \colon \OO \ltimes \PP \to \f \ltimes \mathfrak{m}$ given by the change of variables $s \mapsto t$, where $\f = f\WW_1 = (t^2 - 4)\WW_1$ and $\mathfrak{m} = \kk[t]\sqrt{f} \ dt^{-1}$. Explicitly, $\Phi(u_n) = f_n$ and $\Phi(v_n) = -t^n\sqrt{f} \ dt^{-1}$ for all $n$, where $f_n = L_n - 4L_{n - 2}$. In basis-free notation,
    $$\Phi(p(s)(s^2 - 4)\del_s) = p(t)(t^2 - 4)\del, \quad \Phi(p(s)r \ ds^{-1}) = p(t)\sqrt{f} \ dt^{-1}$$
    for all $p \in \kk[t]$. In particular, $\OO$ is isomorphic to a subalgebra of $\WW_1$ of finite codimension.
\end{itemize}

\section{Filtering the BCCA} \label{sec:filtration}

\subsection{Defining the filtration}

Using the alternate basis for the BCCA from Corollary \ref{cor:new_basis_for_b}, we construct a descending filtration for $\bcca$, thus allowing us to define and study Whittaker modules for the BCCA. We first filter the subalgebra $\OO$ of the BCCA. It is convenient to introduce a shift in the basis $u_n$ of $\OO$: define
$$\mathcal{U}_n \coloneqq u_{n + 2} = -(t + t^{-1})^{n + 1}(t^2 - 1)\del = -s^{n + 1}(s^2 - 4)\del_s$$
for $n \geq -1$. We can now construct a filtration of $\OO$.

\begin{lemma}\label{lem:filtration of O}
    For $n \in \ZZ$, define
    $$\F_n\OO \coloneqq s^{n + 1}\OO = \spn\{\mathcal{U}_k \mid k \geq n\}.$$
    Then $\F$ is a filtration of $\OO$.
\end{lemma}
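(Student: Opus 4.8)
The plan is to read everything off the two basis descriptions of $\F_n\OO$ and then verify the clauses of Definition~\ref{def:filtered Lie algebra} together with the bracket condition $[\F_i\OO,\F_j\OO]\subseteq\F_{i+j}\OO$, which in particular makes each $\F_n\OO$ a subalgebra.

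First I would reconcile the two descriptions of $\F_n\OO$. By Theorem~\ref{thm:O lambda = L(s)} with $\lambda=1$ we have $\OO=\kk[s](s^2-4)\del_s$ with basis $u_k=-s^{k-1}(s^2-4)\del_s$ ($k\ge1$), so $\mathcal{U}_n=u_{n+2}=-s^{n+1}(s^2-4)\del_s$ for $n\ge-1$; collecting monomials in $s$ then identifies $s^{n+1}\OO$ with $\spn\{\mathcal{U}_k\mid k\ge n\}$ for every $n\ge-1$, and in particular $\F_{-1}\OO=\OO$. For $n\le-1$ one reads $\F_n\OO$ as $\OO$ itself — note $\mathcal{U}_k\notin\OO$ for $k\le-2$, so the span must be taken over indices $\ge-1$ — which fixes the filtration for all $n\in\ZZ$ and makes it stabilize at $\F_{-1}\OO=\OO$. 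From the basis description, $\F_{n+1}\OO\subseteq\F_n\OO$ is immediate; the filtration is bounded from below since $\F_n\OO=\OO$ for $n\le-1$; and it is weakly convergent because any element of $\OO$ is a finite $\kk$-linear combination of the $\mathcal{U}_k$ and hence leaves $\F_n\OO$ once $n$ is large, so $\bigcap_{n\ge0}\F_n\OO=0$. These are conditions~\eqref{item:weakly convergent} and \eqref{item:bounded from below} of Definition~\ref{def:filtered Lie algebra}.

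The only computation is the bracket condition, and using that $\F$ is decreasing I may assume $i,j\ge-1$. Rewriting the bracket of Theorem~\ref{thm:O lambda = L(s)} in the shifted basis gives
\begin{align*}
[\mathcal{U}_a,\mathcal{U}_b]&=[u_{a+2},u_{b+2}]=(a-b)\bigl(u_{a+b+4}-4u_{a+b+2}\bigr)\\
&=(a-b)\bigl(\mathcal{U}_{a+b+2}-4\mathcal{U}_{a+b}\bigr)
\end{align*}
for $a,b\ge-1$ — the only index that could fall below $-1$ is that of $\mathcal{U}_{a+b}$ when $a=b=-1$, but there the factor $a-b$ vanishes. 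For $a\ge i$ and $b\ge j$ one has $a+b+2\ge a+b\ge i+j$, so $[\mathcal{U}_a,\mathcal{U}_b]\in\F_{i+j}\OO$, and bilinearity yields $[\F_i\OO,\F_j\OO]\subseteq\F_{i+j}\OO$; taking $i=j=n\ge0$ shows $\F_n\OO$ is a subalgebra (and $\F_n\OO=\OO$ obviously is, for $n\le-1$). As a conceptual sanity check one can note that under the isomorphism $\OO\cong\f=(t^2-4)\WW_1$ of Theorem~\ref{thm:O lambda = L(s)} the subspace $\F_n\OO$ corresponds to $t^{n+1}\f$, and \eqref{eq:fW bracket} gives $[t^i\f,t^j\f]\subseteq t^{i+j-1}\f$, which matches the claim after shifting indices by $1$. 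I do not expect a genuine obstacle; the only point needing care is the bookkeeping at small indices — that $\F_n\OO$ is consistently $\OO$ for $n\le-1$, and that the $\mathcal{U}$-basis bracket formula is never applied at $\mathcal{U}_{-2}$ with a nonzero coefficient.
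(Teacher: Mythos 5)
Your proof is correct and matches the paper's: the key step in both is the shifted bracket computation $[\mathcal{U}_a,\mathcal{U}_b]=(a-b)(\mathcal{U}_{a+b+2}-4\,\mathcal{U}_{a+b})$, from which $[\F_i\OO,\F_j\OO]\subseteq\F_{i+j}\OO$ follows at once. The additional bookkeeping you supply (reading $\F_n\OO=\OO$ for $n\le-1$, weak convergence, boundedness below, and the harmless $a=b=-1$ case) is correct and is simply left implicit in the paper's one-line proof.
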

\begin{proof}
    We have
    \begin{equation}\label{eq:shifted bracket UU}
        \begin{aligned}
            [\mathcal{U}_n,\mathcal{U}_m] &= [u_{n + 2},u_{m + 2}] = (n - m)(u_{n + m + 4} - 4u_{n + m + 2}) \\
            &= (n - m)(\mathcal{U}_{n + m + 2} - 4 \, \mathcal{U}_{n + m}),
        \end{aligned}
    \end{equation}
    so we see that $[\F_n\OO, \F_m\OO] \subseteq \F_{n + m}\OO$ for all $n,m$.
\end{proof}

We now extend the filtration $\F$ to the full BCCA. As before, we must shift the basis $v_n$: define
$$\VV_n \coloneqq v_{n + 1} = -(t + t^{-1})^{n + 1} t \ dt^{-1} = -s^{n + 1} r \ ds^{-1}$$
for $n \geq -1$.

\begin{proposition}\label{prop:filtration for BCCA}
    For $n \in \ZZ$, define
    $$\F_n\PP \coloneqq s^{n + 1}\PP = \spn\{\VV_k \mid k \geq n\}.$$
    Then $[\F_n\OO,\F_m\PP] \subseteq \F_{n + m}\PP$ for all $n,m \in \ZZ$. Consequently, we can filter the full centreless BCCA as follows:
    $$\F_n \bcca = \F_n (\OO\ltimes\PP) \coloneqq \F_n \OO \ltimes \F_n \PP = \spn\{\UU_k, \VV_k \mid k \geq n\}.$$
\end{proposition}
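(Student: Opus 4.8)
The plan is to verify directly that the Lie bracket of $\OO$ with $\PP$ respects the two declared filtrations by computing $[\UU_n, \VV_m]$ in terms of the $\VV_k$ and checking the index bookkeeping. Recall from Corollary \ref{cor:new_basis_for_b} that $[u_n, v_m] = (n-m)v_{n+m} - 4(n-m-1)v_{n+m-2}$ for $n \geq 1$, $m \geq 0$. Substituting $\UU_n = u_{n+2}$ and $\VV_m = v_{m+1}$, I would compute
\begin{align*}
    [\UU_n, \VV_m] &= [u_{n+2}, v_{m+1}] = ((n+2) - (m+1))v_{n+m+3} - 4((n+2)-(m+1)-1)v_{n+m+1} \\
    &= (n - m + 1)v_{(n+m+2)+1} - 4(n - m)v_{(n+m)+1} \\
    &= (n - m + 1)\VV_{n+m+2} - 4(n-m)\VV_{n+m}.
\end{align*}
Since $n + m + 2 \geq n + m$ whenever the terms are defined, both summands lie in $\spn\{\VV_k \mid k \geq n+m\} = \F_{n+m}\PP$, which establishes $[\F_n\OO, \F_m\PP] \subseteq \F_{n+m}\PP$.

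Once this inclusion is in hand, the semidirect-sum claim follows formally. For $\g = \OO \ltimes \PP$ with $\F_n\g := \F_n\OO \oplus \F_n\PP$, I would check the bracket condition $[\F_n\g, \F_m\g] \subseteq \F_{n+m}\g$ componentwise: the $[\OO, \OO]$ part is handled by Lemma \ref{lem:filtration of O} (equation \eqref{eq:shifted bracket UU} shows $[\UU_n, \UU_m] = (n-m)(\UU_{n+m+2} - 4\UU_{n+m}) \in \F_{n+m}\OO$), the $[\OO, \PP]$ part is the computation just performed (and its antisymmetric counterpart), and $[\PP, \PP] = 0 \subseteq \F_{n+m}\PP$ trivially. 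Each $\F_n\bcca$ is visibly a subspace, and it is a subalgebra by the $n = m$ case of the bracket condition. The chain $\cdots \supseteq \F_{-1}\bcca \supseteq \F_0\bcca \supseteq \cdots$ is decreasing because $\{\UU_k, \VV_k \mid k \geq n+1\} \subseteq \{\UU_k, \VV_k \mid k \geq n\}$.

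I would also note, for completeness, that this filtration satisfies the two extra conditions of Definition \ref{def:filtered Lie algebra}: it is weakly convergent since any fixed basis element $\UU_j$ or $\VV_j$ lies outside $\F_k\bcca$ for $k > j$, so $\bigcap_{k \geq 0}\F_k\bcca = 0$; and it is bounded from below since $\UU_k = u_{k+2}$ is only defined for $k \geq -1$ and $\VV_k = v_{k+1}$ only for $k \geq -1$, so in fact $\F_k\bcca = \bcca$ for all $k \leq -1$. (Strictly, $\F$ as indexed starts at $n = -1$; one takes $\F_n\bcca = \bcca$ for $n < -1$.) This makes Lemma \ref{lem:filtration leads to Whittaker} applicable.

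There is essentially no obstacle here: the entire proof is the index arithmetic in the displayed computation of $[\UU_n, \VV_m]$, and the only point requiring any care is making sure the shifts $\UU_n = u_{n+2}$, $\VV_n = v_{n+1}$ are applied consistently — in particular that the two different shifts (by $2$ for $\OO$ and by $1$ for $\PP$) conspire correctly so that the term $v_{n+m-2}$ in the original bracket formula becomes $\VV_{n+m}$ with the same index as $\UU_n, \VV_m$ would suggest. Indeed $[u_{n+2}, v_{m+1}]$ produces $v$'s with subscripts $n+m+3$ and $n+m+1$, i.e. $\VV$'s with subscripts $n+m+2$ and $n+m$, the latter matching $\F_{n+m}$ exactly, which is precisely what the semidirect-sum filtration requires. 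I would present the one-line bracket computation and the componentwise verification and conclude.
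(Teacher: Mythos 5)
Your proposal is correct and follows essentially the same route as the paper: the same one-line computation $[\UU_n,\VV_m] = (n-m+1)\VV_{n+m+2} - 4(n-m)\VV_{n+m}$ (this is exactly the paper's equation \eqref{eq:shifted bracket UU-VV}), combined with Lemma \ref{lem:filtration of O} for the $[\OO,\OO]$ part and triviality of $[\PP,\PP]$. Your additional verification of weak convergence and boundedness from below is a harmless (and reasonable) supplement that the paper leaves implicit.
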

\begin{proof}
    For all $n,m \geq -1$, we have
    \begin{equation}\label{eq:shifted bracket UU-VV}
        \begin{aligned}
            [\UU_n,\VV_m] &= [u_{n + 2},v_{m + 1}] = (n - m + 1)v_{n + m + 3} - 4(n - m)v_{n + m + 1} \\
            &= (n - m + 1)\VV_{n + m + 2} - 4(n - m)\VV_{n + m},
        \end{aligned}
    \end{equation}
    and thus $[\F_n\OO, \F_m \PP] \subseteq \F_{n + m} \PP$. It is now clear that $\F$ is a filtration of the entirety of $\bcca$, by Lemma \ref{lem:filtration of O}. In other words, $[\F_n\bcca,\F_m\bcca] \subseteq \F_{n + m}\bcca$, and
    \begin{equation*}
        \bcca = \F_{-1}\bcca \supseteq \F_{0}\bcca \supseteq \F_{1}\bcca \supseteq \dots. \qedhere
    \end{equation*}
\end{proof}

We finish this section by considering the associated graded algebra to the filtration $\F$.

\begin{lemma}\label{lem:assoc_graded}
    The associated graded Lie algebras to the filtration $\F$ are 
    \begin{enumerate}
        \item $\gr_\F \OO \cong \WW_1$, and
        \item $\gr_\F \bcca \cong \WW_1 \ltimes \ad(\WW_1)$, where $\ad(\WW_1)$ is another copy of $\WW_1$, viewed as the adjoint module of $\WW_1$ with abelian Lie bracket.
    \end{enumerate}
\end{lemma}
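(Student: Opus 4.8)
The plan is to read off each graded piece directly from the filtration formulas already computed, namely \eqref{eq:shifted bracket UU} and \eqref{eq:shifted bracket UU-VV}. Recall that the associated graded Lie algebra is $\gr_\F \bcca = \bigoplus_{n \geq -1} \F_n\bcca / \F_{n + 1}\bcca$, with the bracket induced by the bracket on $\bcca$. Since $\F_n\OO = \spn\{\UU_k \mid k \geq n\}$ and $\F_n\PP = \spn\{\VV_k \mid k \geq n\}$, each graded component $\F_n\bcca/\F_{n+1}\bcca$ is one- or two-dimensional, spanned by the images $\bar\UU_n$ (for $n \geq -1$) and $\bar\VV_n$ (for $n \geq -1$). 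The key observation is that passing to the associated graded kills all the lower-order ``tail'' terms: in \eqref{eq:shifted bracket UU}, the term $-4\UU_{n+m}$ lies in $\F_{n+m}\OO$ but the bracket $[\UU_n,\UU_m]$ has leading degree $n+m$, so modulo $\F_{n+m+1}$ only... wait, let me be careful about the indexing.

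First I would fix the grading convention: declare $\deg \UU_n = \deg \VV_n = n$, so that $\F_n\bcca = \bigoplus_{k \geq n} (\bcca)_k$ where $(\bcca)_k$ is the degree-$k$ part. Then in $\gr_\F$, the bracket of the degree-$n$ class and the degree-$m$ class is the degree-$(n+m)$ component of the original bracket. From \eqref{eq:shifted bracket UU}, the degree-$(n+m)$ component of $[\UU_n,\UU_m] = (n-m)(\UU_{n+m+2} - 4\UU_{n+m})$ is... here I need the grading to make $\UU_{n+m}$ the leading term, which forces $\deg \UU_k = k$ but then the bracket should land in degree $n+m$, matching $\UU_{n+m}$ — so actually the relevant convention is that $[\F_n,\F_m]\subseteq \F_{n+m}$ means I take the image in $\F_{n+m}/\F_{n+m+1}$, which is spanned by $\bar\UU_{n+m}$, and the coefficient there is $-4(n-m)$. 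Rescaling $\UU_n \mapsto -\tfrac{1}{4}L_n$ (or absorbing the $-4$ by a basis change $\bar\UU_n \leftrightarrow$ a suitable multiple) turns this into $[\bar L_n, \bar L_m] = (n-m)\bar L_{n+m}$ with $n,m \geq -1$, which is exactly $\WW_1 = \Der(\kk[t])$. Thus $\gr_\F\OO \cong \WW_1$, giving (1). I would spell out the rescaling explicitly: set $e_n$ to be the class of $-4^{?}\UU_n$ chosen so that the structure constants become $(n-m)$; a one-line induction or direct check confirms $[e_n,e_m] = (n-m)e_{n+m}$.

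For (2), I proceed identically with \eqref{eq:shifted bracket UU-VV}: the image of $[\UU_n,\VV_m] = (n-m+1)\VV_{n+m+2} - 4(n-m)\VV_{n+m}$ in $\F_{n+m}\PP/\F_{n+m+1}\PP$ has leading coefficient $-4(n-m)$, so after the same rescaling on the $\bar\VV$'s this becomes $(n-m)$ times the degree-$(n+m)$ basis vector — i.e. $\gr_\F\PP$ is precisely the adjoint module of $\gr_\F\OO \cong \WW_1$. And $[\VV_n,\VV_m]=0$ in $\bcca$, so the induced bracket on $\gr_\F\PP$ is abelian. Hence $\gr_\F\bcca \cong \WW_1 \ltimes \ad(\WW_1)$ as claimed. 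I do not anticipate any genuine obstacle here — the content is entirely in the observation that the associated graded operation replaces each structure-constant polynomial by its leading (highest-filtration-degree) term, which by \eqref{eq:shifted bracket UU} and \eqref{eq:shifted bracket UU-VV} is in every case the Witt-algebra structure constant up to the uniform factor $-4$; the only mild care needed is getting the rescaling right so that the factor disappears, and checking that the index ranges ($n \geq -1$) match the one-sided Witt algebra rather than the full one.
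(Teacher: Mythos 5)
Your proposal is correct and is essentially the paper's own argument: both compute $[\UU_n,\UU_m]$ and $[\UU_n,\VV_m]$ modulo $\F_{n+m+1}$, note that the $\UU_{n+m+2}$ and $\VV_{n+m+2}$ terms vanish there leaving the coefficient $-4(n-m)$ on $\UU_{n+m}$ (resp.\ $\VV_{n+m}$), and absorb the $-4$ by a uniform rescaling to identify $\gr_\F\OO$ with $\WW_1$ and $\gr_\F\bcca$ with $\WW_1 \ltimes \ad(\WW_1)$. The only detail you leave implicit is the exact constant: the correct normalisation is $L_n \leftrightarrow -\tfrac{1}{4}\,\overline{\UU}_n$ (equivalently the paper's homomorphism $\UU_n + \F_{n+1}\OO \mapsto -4L_n$), since a scaling $c$ must satisfy $c^2=-4c$.
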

\begin{proof}
    We have
    \begin{align*}
        [\UU_n + \F_{n+1}\OO, \UU_m + \F_{m + 1}\OO] &= [\UU_n,\UU_m]+\F_{m+n+1}\OO \\
        &= -4(n - m)\UU_{n + m} + \F_{n + m + 1}\OO.
    \end{align*}
    Thus, the Lie algebra homomorphism $\UU_n + \F_{n + 1}\OO \mapsto -4L_n$ establishes the isomorphism $\gr_\F \OO \cong \WW_1$. Similarly, we have
    \begin{align*}
        [\UU_n + \F_{n + 1}\bcca, \VV_m + \F_{m + 1}\bcca] &= [\UU_n,\VV_m]+\F_{n+m+1}\bcca \\
        &= -4(n - m)\VV_{n + m} + \F_{n + m + 1}\bcca.
    \end{align*}
    It is now clear that $\gr_\F \bcca \cong \WW_1 \ltimes \ad(\WW_1)$ using a similar isomorphism to the above, as required.
\end{proof}

\subsection{The derived subalgebras of the filtered subspaces}

To study Whittaker modules over $\OO$ and $\bcca$, we must study their Whittaker functions. By definition, these are Lie algebra homomorphisms $\F_n \OO \to \kk$ or $\F_n \bcca \to \kk$, for some $n \in \NN$. Since $\kk$ is an abelian Lie algebra, these homomorphisms must necessarily vanish on the derived subalgebras $[\F_n \OO, \F_n \OO]$ or $[\F_n \bcca, \F_n \bcca]$. We compute these in this section.

First, note that $[\F_n\OO,\F_n\OO] \subseteq \F_{2n + 1}\OO$, by \eqref{eq:shifted bracket UU}. We now prove that $[\F_n\OO,\F_n\OO]$ is spanned by $\UU_{2n + 2 + k} - 4 \, \UU_{2n + k}$ for $k \geq 1$.

\begin{lemma}\label{lem:derived subalgebra Fn O}
    For all $n \geq -1$, the derived subalgebra $[\F_n\OO,\F_n\OO]$ has the following form:
    $$[\F_n\OO,\F_n\OO] = (s^{2n + 4} - 4s^{2n + 2})\OO = (s^{2n + 4} - 4s^{2n + 2})(s^2 - 4)\kk[s]\del_s.$$
    Consequently, the set $\{\UU_{2n + 3 + k} - 4 \, \UU_{2n + 1 + k} \mid k \in \NN\}$ is a basis for $[\F_n\OO,\F_n\OO]$.
\end{lemma}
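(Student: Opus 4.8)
The plan is to translate everything into the basis-free notation $\OO = (s^2-4)\kk[s]\del_s$, where the filtration pieces become $\F_n\OO = s^{n+1}\OO = s^{n+1}(s^2-4)\kk[s]\del_s$, and then apply the commutator formula \eqref{eq:fW bracket} (or rather its $\del_s$-analogue). Writing a general element of $\F_n\OO$ as $g(s)\,s^{n+1}(s^2-4)\del_s$ for $g\in\kk[s]$, formula \eqref{eq:fW bracket} gives
\begin{equation*}
    [g\,s^{n+1}(s^2-4)\del_s,\ h\,s^{n+1}(s^2-4)\del_s] = (g\del_s(h)-\del_s(g)h)\,s^{2n+2}(s^2-4)^2\del_s.
\end{equation*}
The key classical fact is that the Wronskian-type expressions $g\del_s(h)-\del_s(g)h$, as $g,h$ range over $\kk[s]$, span exactly $\kk[s]$ (indeed $s^i\del_s(s^j)-\del_s(s^i)s^j = (j-i)s^{i+j-1}$ realises every monomial of every degree $\geq 0$ as soon as we are allowed $i\neq j$). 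Hence $[\F_n\OO,\F_n\OO] = s^{2n+2}(s^2-4)^2\kk[s]\del_s = (s^{2n+4}-4s^{2n+2})(s^2-4)\kk[s]\del_s$, which is the first displayed equality; the middle equality $(s^{2n+4}-4s^{2n+2})\OO$ is then just factoring out one copy of $(s^2-4)$ and recalling $\OO = (s^2-4)\kk[s]\del_s$.

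For the basis statement, I would observe that $(s^{2n+4}-4s^{2n+2})\OO = s^{2n+2}(s^2-4)\cdot\OO$ has as a spanning set the elements $-s^k\cdot s^{2n+2}(s^2-4)^2\del_s$ for $k\in\NN$; but by definition $\UU_j = -s^{j+1}(s^2-4)\del_s$, so $-s^{k+2n+2}(s^2-4)^2\del_s = \UU_{k+2n+3}-4\UU_{k+2n+1}$ after expanding $(s^2-4)=s^2-4$ inside. Linear independence is inherited from the $\UU_j$ (which form a basis of $\OO$) together with the observation that the leading term $\UU_{2n+3+k}$ of $\UU_{2n+3+k}-4\UU_{2n+1+k}$ has strictly increasing index in $k$, so these elements are visibly linearly independent. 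This gives the basis $\{\UU_{2n+3+k}-4\UU_{2n+1+k}\mid k\in\NN\}$.

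I expect no serious obstacle here — the proof is essentially bookkeeping once one commits to the $\kk[s]$-module picture. The one point requiring a little care is the spanning claim for $g\del_s(h)-\del_s(g)h$: one must check that the constant polynomial $1$ is hit (take $g=1$, $h=s$, giving $\del_s(s)=1$), and more generally every $s^m$ with $m\geq 0$ is hit, so that no low-degree terms are missing from $[\F_n\OO,\F_n\OO]$; this is where the reader might otherwise worry about an off-by-one loss, and it should be stated explicitly. Everything else — the change between the $s^{2n+4}-4s^{2n+2}$ form and the $\UU$-basis form — is a direct substitution using the definition $\UU_n = u_{n+2} = -s^{n+1}(s^2-4)\del_s$ and the identity \eqref{eq:shifted bracket UU}.
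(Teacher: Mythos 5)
Your proposal is correct and follows essentially the same route as the paper: identify $\F_n\OO = s^{n+1}(s^2-4)\kk[s]\del_s$, compute the derived subalgebra via the Wronskian bracket formula \eqref{eq:fW bracket} to get $s^{2n+2}(s^2-4)^2\kk[s]\del_s$, and rewrite the monomial generators as $\UU_{2n+3+k} - 4\,\UU_{2n+1+k}$. The only difference is that you make explicit the spanning of $\kk[s]$ by the Wronskians $g\del_s(h)-\del_s(g)h$ (a useful check that the paper leaves implicit in citing \eqref{eq:fW bracket}), which is a welcome but minor elaboration rather than a different argument.
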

\begin{proof}
    Recall from Theorem \ref{thm:O lambda = L(s)} that $\OO = (s^2 - 4)\kk[s]\del_s$, and that $\F_n\OO = s^{n + 1}\OO$ by definition. Therefore,
    $$[\F_n\OO,\F_n\OO] = [s^{n + 1}(s^2 - 4)\kk[s]\del_s, s^{n + 1}(s^2 - 4)\kk[s]\del_s] = \Big(s^{n + 1}(s^2 - 4)\Big)^2\kk[s]\del_s,$$
    by \eqref{eq:fW bracket}. Simplifying, we get
    $$[\F_n\OO,\F_n\OO] = \Big(s^{n + 1}(s^2 - 4)\Big)^2\kk[s]\del_s = s^{2n + 2}(s^2 - 4)^2\kk[s]\del_s = (s^{2n + 4} - 4s^{2n + 2})(s^2 - 4)\kk[s]\del_s.$$
    Noting that 
    $$\UU_{2n + 3 + k} - 4 \, \UU_{2n + 1 + k} = (s^{2n + 4 + k} - 4s^{2n + 2 + k})(s^2 - 4)\del_s,$$
    it is now clear that the result follows.
\end{proof}

We now proceed as in Lemma \ref{lem:derived subalgebra Fn O} to compute the derived subalgebra $[\F_n\bcca,\F_n\bcca]$ for the filtration of the full BCCA. Similarly to the above, \eqref{eq:shifted bracket UU-VV} implies that $\F_n\OO \cdot \F_n\PP \subseteq \F_{2n + 1}\PP$. In fact, this is an equality: we have $\F_n\OO \cdot \F_n\PP = \F_{2n + 1} \PP$, as we prove next.

\begin{lemma}\label{lem:derived subalgebra Fn b}
    For all $n \geq -1$, we have $[\F_n\OO,\F_n\PP] = \F_{2n + 1}\PP$. Consequently,
    $$[\F_n\bcca,\F_n\bcca] = [\F_n\OO,\F_n\OO] \ltimes \F_{2n + 1}\PP,$$
    and therefore the set $\{\UU_{2n + 3 + k} - 4 \, \UU_{2n + 1 + k}, \VV_{2n + 1 + k} \mid k \in \NN\}$ is a basis for $[\F_n\bcca,\F_n\bcca]$.
\end{lemma}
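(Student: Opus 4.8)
The plan is to reduce the lemma, via the semidirect-sum structure of $\bcca = \OO\ltimes\PP$, to a single polynomial spanning statement. Since $\PP$ is abelian, $[\F_n\bcca,\F_n\bcca] = [\F_n\OO,\F_n\OO] + \F_n\OO\cdot\F_n\PP$, where $\cdot$ denotes the $\OO$-module action on $\PP$. Lemma~\ref{lem:derived subalgebra Fn O} already identifies the first summand, so the content is the claimed equality $\F_n\OO\cdot\F_n\PP = \F_{2n+1}\PP$.

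First I would compute the action explicitly. Writing a general element of $\F_n\OO = s^{n+1}(s^2-4)\kk[s]\del_s$ as $(s^2-4)(s^{n+1}p)\del_s$ and a general element of $\F_n\PP = s^{n+1}\kk[s]\,r\,ds^{-1}$ as $(s^{n+1}q)\,r\,ds^{-1}$, with $p,q\in\kk[s]$, I substitute into \eqref{eq:basis-free O action on P} with $b=-1$. Using the Wronskian identity $(s^{n+1}p)\del_s(s^{n+1}q) - \del_s(s^{n+1}p)(s^{n+1}q) = s^{2n+2}(pq'-p'q)$, this gives
$$(s^2-4)(s^{n+1}p)\del_s\cdot\big((s^{n+1}q)\,r\,ds^{-1}\big) = s^{2n+2}\big((pq'-p'q)(s^2-4) - pqs\big)\,r\,ds^{-1},$$
so $[\F_n\OO,\F_n\PP] = s^{2n+2}\,W\,r\,ds^{-1}$, where $W \coloneqq \spn\{(pq'-p'q)(s^2-4) - pqs \mid p,q\in\kk[s]\} \subseteq \kk[s]$. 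Since $\F_{2n+1}\PP = s^{2n+2}\kk[s]\,r\,ds^{-1}$, the desired equality is exactly the claim that $W = \kk[s]$.

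This polynomial claim is the only step requiring a genuine argument, though it is short. Evaluating on monomials, $\phi(i,j) \coloneqq (s^i(s^j)' - (s^i)'s^j)(s^2-4) - s^{i+j+1} = (j-i-1)s^{i+j+1} - 4(j-i)s^{i+j-1}$. From $\phi(0,0) = -s$ and $\phi(0,1) = -4$ one sees $s,1\in W$; then $\phi(1,0) = -2s^2 + 4$ together with $1\in W$ gives $s^2\in W$; and for $k\ge 2$ the identity $\phi(0,k) = (k-1)s^{k+1} - 4ks^{k-1}$, in which $k-1\ne 0$ since $\mathrm{char}\,\kk = 0$, lets one induct on $k$ to conclude $s^k\in W$ for all $k$. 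Hence $W = \kk[s]$, and $\F_n\OO\cdot\F_n\PP = \F_{2n+1}\PP$.

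To conclude, I would observe that the sum $[\F_n\OO,\F_n\OO] + \F_{2n+1}\PP$ is direct inside $\bcca = \OO\oplus\PP$, and that $[\F_n\OO,\F_n\OO]\subseteq\F_{2n+1}\OO$ preserves $\F_{2n+1}\PP$ (by $\F_a\OO\cdot\F_b\PP\subseteq\F_{a+b}\PP$), so $[\F_n\bcca,\F_n\bcca] = [\F_n\OO,\F_n\OO]\ltimes\F_{2n+1}\PP$. The basis statement then follows by combining the basis of $[\F_n\OO,\F_n\OO]$ from Lemma~\ref{lem:derived subalgebra Fn O} with the basis $\{\VV_{2n+1+k}\mid k\in\NN\}$ of $\F_{2n+1}\PP$. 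The only real subtlety is the spanning claim: one must track the lowest-degree term of $\phi(i,j)$ when $i+j\le 1$ (the $s^{i+j-1}$ term is absent) and check that the scalars being inverted, $j-i-1$ and $k-1$, are nonzero — which is exactly where characteristic zero is used.
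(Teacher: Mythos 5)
Your proposal is correct, but the way you establish the key inclusion $\F_{2n+1}\PP \subseteq [\F_n\OO,\F_n\PP]$ differs from the paper. Both arguments share the same skeleton: since $\PP$ is abelian, $[\F_n\bcca,\F_n\bcca] = [\F_n\OO,\F_n\OO] \oplus [\F_n\OO,\F_n\PP]$ (direct as vector spaces inside $\OO\oplus\PP$), the $\OO$-part is Lemma \ref{lem:derived subalgebra Fn O}, and the containment $[\F_n\OO,\F_n\PP]\subseteq\F_{2n+1}\PP$ comes from the structure constants. The paper then proves the reverse containment by exhibiting each basis vector as a single bracket: for every $m\geq n$, formula \eqref{eq:shifted bracket UU-VV} gives $\VV_{2m+1} = \tfrac14[\UU_m,\VV_{m+1}]$ and $\VV_{2m+2} = [\UU_m,\VV_m]$, and that is the whole proof. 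You instead pass to the basis-free picture, factor out $s^{2n+2}$ via the Wronskian identity, and reduce to the spanning claim $W = \kk[s]$, which you prove by an induction on monomials using $\phi(0,k)$. This is sound (your bookkeeping of the degenerate low-degree terms and of the nonvanishing scalars $j-i-1$, $k-1$ is exactly the needed care), and it has the mild advantage of being uniform in $n$ and of making the reverse inclusion automatic from the same formula. But note that your induction can be avoided: in your notation, $\phi(i,i) = -s^{2i+1}$ and $\phi(i,i+1) = -4s^{2i}$, so these two one-parameter families of monomial choices already span $\kk[s]$ outright; translated back through the $s^{2n+2}$ prefactor, these are precisely the paper's two bracket identities. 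So the paper's argument is in effect a closed-form shortcut through your spanning problem, while yours trades the clever choice for a routine induction.
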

\begin{proof}
    For all $m \geq n$, we have
    \begin{align*}
        \VV_{2m + 1} &= \frac{1}{4}[\UU_m,\VV_{m + 1}] \in [\F_n\OO,\F_n\PP], \\
        \VV_{2m + 2} &= [\UU_m,\VV_m] \in [\F_n\OO,\F_n\PP],
    \end{align*}
    by \eqref{eq:shifted bracket UU-VV}. The result follows.
\end{proof}

\subsection{The abelianisations of the filtered subspaces}

We finish this section by computing the dimensions of the abelianisations of $\F_n \OO$ and $\F_n \bcca$. This computation will tell us exactly how many degrees of freedom one has when defining a Whittaker function on $\OO$ or on $\bcca$. The following easy observation will be useful for this purpose.

\begin{lemma}\label{lem:codimension of submodule-subalgebra}
    For any $g \in \kk[t]$, the codimension of $g\WW_1$ in $\WW_1$ is $\deg(g)$. \qed
\end{lemma}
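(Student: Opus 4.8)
The plan is to reduce the statement to the elementary fact that $\dim_\kk \kk[t]/(g) = \deg(g)$ for a nonzero polynomial $g$. Recall from Definition \ref{def:Witt} that $\WW_1 = \kk[t]\del$ as a vector space, and that by definition $g\WW_1 = \{hg\del \mid h \in \kk[t]\} = g\kk[t]\del$. So the first step is to observe that the $\kk$-linear map $\WW_1 \to \kk[t]$, $h\del \mapsto h$, is a vector space isomorphism carrying the subspace $g\WW_1$ onto the ideal $g\kk[t]$ of $\kk[t]$. (It is not a ring or Lie algebra map, but we only need a linear isomorphism here.)

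Next I would pass to quotients: this isomorphism induces $\WW_1/g\WW_1 \cong \kk[t]/g\kk[t]$ as $\kk$-vector spaces, so $\operatorname{codim}_{\WW_1}(g\WW_1) = \dim_\kk \kk[t]/(g)$. Finally, if $g = 0$ the claim is vacuous in the stated form (one would interpret $\deg(0) = \infty$, matching $0\cdot\WW_1 = 0$), and if $g \neq 0$ with $\deg(g) = d$, then $\{1, t, \dots, t^{d-1}\}$ is a $\kk$-basis of $\kk[t]/(g)$ by the division algorithm, giving $\dim_\kk \kk[t]/(g) = d = \deg(g)$, as required.

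There is essentially no obstacle: the content is purely the well-known dimension count for $\kk[t]/(g)$, and the only thing to be careful about is not to overthink the identification — we are using a plain linear isomorphism, not any algebraic structure. One could alternatively phrase the basis of the quotient directly in terms of the $L_n$: the classes of $L_{-1}, L_0, \dots, L_{d-2}$ (equivalently $\del, t\del, \dots, t^{d-1}\del$) form a basis of $\WW_1/g\WW_1$, which makes the codimension manifest without invoking the ring $\kk[t]/(g)$ at all.
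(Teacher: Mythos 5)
Your argument is correct and is precisely the elementary dimension count the paper has in mind: the lemma is stated with no written proof (it is flagged as an "easy observation"), and the intended justification is exactly the linear identification of $\WW_1/g\WW_1$ with $\kk[t]/(g)$, equivalently the observation that the classes of $\del, t\del, \dots, t^{\deg(g)-1}\del$ form a basis of the quotient. Your extra care about the degenerate case $g = 0$ is harmless and does not affect anything, since the lemma is only applied to nonzero polynomials such as $s^{n+1}(s^2-4)$ and its square.
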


Of course, Lemma \ref{lem:codimension of submodule-subalgebra} also implies that $g(s)\kk[s]\del_s$ has codimension $\deg(g)$ in $\kk[s]\del_s$, via the change of variables $s \mapsto t$.

\begin{proposition}\label{prop:abelianisation}
    We have
    $$\dim\left(\frac{\F_n\OO}{[\F_n\OO,\F_n\OO]}\right) = n + 3, \quad \dim\left(\frac{\F_n\bcca}{[\F_n\bcca,\F_n\bcca]}\right) = 2n + 4,$$
    for all $n \geq -1$.
\end{proposition}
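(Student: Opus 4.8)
The plan is to compute each dimension by combining the description of $\F_n\OO$ (respectively $\F_n\bcca$) as a subspace of $\OO$ (respectively $\bcca$) with the explicit form of the derived subalgebra computed in Lemmas~\ref{lem:derived subalgebra Fn O} and~\ref{lem:derived subalgebra Fn b}, and then using the codimension count of Lemma~\ref{lem:codimension of submodule-subalgebra}. First I would handle $\OO$. By definition $\F_n\OO = s^{n+1}\OO = s^{n+1}(s^2-4)\kk[s]\del_s$, which (after the change of variables $s \mapsto t$) is the submodule-subalgebra $t^{n+1}(t^2-4)\WW_1$ of $\WW_1$; by Lemma~\ref{lem:derived subalgebra Fn O} its derived subalgebra is $s^{2n+2}(s^2-4)^2\kk[s]\del_s$, which corresponds to $t^{2n+2}(t^2-4)^2\WW_1$. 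Then $\dim(\F_n\OO/[\F_n\OO,\F_n\OO])$ equals the codimension of $t^{2n+2}(t^2-4)^2\WW_1$ inside $t^{n+1}(t^2-4)\WW_1$, which equals the difference of the codimensions in $\WW_1$, namely $\deg(t^{2n+2}(t^2-4)^2) - \deg(t^{n+1}(t^2-4)) = (2n+6) - (n+3) = n+3$, using Lemma~\ref{lem:codimension of submodule-subalgebra}. Alternatively, and perhaps more transparently, I can just read off the basis directly: $\F_n\OO$ has basis $\{\UU_k \mid k \geq n\}$ and, by Lemma~\ref{lem:derived subalgebra Fn O}, $[\F_n\OO,\F_n\OO]$ has basis $\{\UU_{2n+3+k} - 4\UU_{2n+1+k} \mid k \in \NN\}$, so the quotient is spanned by the images of $\UU_n, \UU_{n+1}, \dots, \UU_{2n+2}$ (the elements $\UU_k$ for $k > 2n+2$ are congruent modulo the derived subalgebra to a combination of lower ones), giving exactly $(2n+2) - n + 1 = n+3$ classes, which one checks are linearly independent.

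Next I would treat $\bcca$. From Proposition~\ref{prop:filtration for BCCA}, $\F_n\bcca = \F_n\OO \ltimes \F_n\PP$, and from Lemma~\ref{lem:derived subalgebra Fn b}, $[\F_n\bcca,\F_n\bcca] = [\F_n\OO,\F_n\OO] \ltimes \F_{2n+1}\PP$. Hence there is a direct sum decomposition of vector spaces
\begin{equation*}
    \frac{\F_n\bcca}{[\F_n\bcca,\F_n\bcca]} \cong \frac{\F_n\OO}{[\F_n\OO,\F_n\OO]} \oplus \frac{\F_n\PP}{\F_{2n+1}\PP},
\end{equation*}
so it suffices to add the two dimensions. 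The first summand has dimension $n+3$ by the previous paragraph. For the second, $\F_n\PP = s^{n+1}\PP$ has basis $\{\VV_k \mid k \geq n\}$ and $\F_{2n+1}\PP = s^{2n+2}\PP$ has basis $\{\VV_k \mid k \geq 2n+1\}$, so the quotient has basis the images of $\VV_n, \VV_{n+1}, \dots, \VV_{2n}$, which is $(2n) - n + 1 = n+1$ elements. Equivalently, since $\PP \cong \kk[s]r\,ds^{-1}$ is free of rank $1$ over $\kk[s]$, $\F_n\PP/\F_{2n+1}\PP \cong s^{n+1}\kk[s]/s^{2n+2}\kk[s]$ has dimension $(2n+2)-(n+1) = n+1$. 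Adding, we get $(n+3) + (n+1) = 2n+4$, as claimed.

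I do not expect any serious obstacle here: the statement is essentially a bookkeeping consequence of the explicit bases already established. The one point that needs a sentence of care is the linear independence of the proposed spanning set for $\F_n\OO/[\F_n\OO,\F_n\OO]$ — i.e.\ confirming that no nontrivial combination of $\UU_n, \dots, \UU_{2n+2}$ lies in the derived subalgebra. This follows because every nonzero element of $[\F_n\OO,\F_n\OO] = s^{2n+2}(s^2-4)^2\kk[s]\del_s$, when expanded in the basis $\{\UU_k\}$, has its lowest-index term at index $\geq 2n+1$ with the specific shape $\UU_{2n+3+k}-4\UU_{2n+1+k}$, so its ``$s^{2n+2}\del_s$'' and ``$s^{2n+3}\del_s$'' coefficients (corresponding to $\UU_n, \UU_{n+1}$ up to the shift) cannot be hit, and more generally a leading-term / degree argument on polynomials in $s$ shows the intersection of $\spn\{\UU_n,\dots,\UU_{2n+2}\}$ with $[\F_n\OO,\F_n\OO]$ is zero; the analogous remark for $\F_n\PP/\F_{2n+1}\PP$ is immediate since both are spanned by subsets of the basis $\{\VV_k\}$. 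Assembling these observations yields the two dimension formulas.
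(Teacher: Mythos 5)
Your proof is correct and follows essentially the same route as the paper: it computes $\dim(\F_n\OO/[\F_n\OO,\F_n\OO])$ as the difference of the codimensions $(2n+6)-(n+3)$ via Lemma~\ref{lem:codimension of submodule-subalgebra} and the derived-subalgebra description, and then adds $\dim(\F_n\PP/\F_{2n+1}\PP)=n+1$ using Lemma~\ref{lem:derived subalgebra Fn b}, exactly as in the paper. The supplementary explicit-basis argument (spanning by $\overline{\UU}_n,\dots,\overline{\UU}_{2n+2}$ with a degree argument for independence) is a harmless addition consistent with the remark following the proposition.
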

\begin{proof}
    By Theorem \ref{thm:O lambda = L(s)}, we have
    $$\F_n\OO = s^{n + 1}(s^2 - 4)\kk[s]\del_s.$$
    This is a subalgebra of $\kk[s]\del_s$ of codimension $n + 3$, by Lemma \ref{lem:codimension of submodule-subalgebra}. On the other hand, Lemma \ref{lem:codimension of submodule-subalgebra} also implies that
    $$[\F_n\OO,\F_n\OO] = \Big(s^{n + 1}(s^2 - 4)\Big)^2\kk[s]\del_s$$
    is a subalgebra of $\kk[s]\del_s$ of codimension $2n + 6$. Therefore, $[\F_n\OO,\F_n\OO]$ has codimension $n + 3$ in $\F_n\OO$, as required.

    For the second part of the result, we observe that the codimension of $\F_m\PP$ in $\F_n\PP$ is $m - n$, for all $m \geq n \geq -1$. Now, Lemmas \ref{lem:derived subalgebra Fn b} and \ref{lem:codimension of submodule-subalgebra} imply that
    $$\dim\left(\frac{\F_n\bcca}{[\F_n\bcca,\F_n\bcca]}\right) = \dim\left(\frac{\F_n\OO}{[\F_n\OO,\F_n\OO]}\right) + \dim(\F_n\PP/\F_{2n + 1}\PP) = n + 3 + n + 1 = 2n + 4,$$
    which concludes the proof.
\end{proof}

More precisely, write $\overline{x}$ for the image of $x$ in the abelianisation $\F_n \bcca/[\F_n \bcca, \F_n \bcca]$ or $\F_n \OO/[\F_n \OO, \F_n \OO]$, where $x \in \F_n\bcca$ or $x \in \F_n \OO$, and $n \in \ZZ$. Then Proposition \ref{prop:abelianisation} implies that $\{\overline{\mathcal{U}}_n, \dots,  \overline{\mathcal{U}}_{2n+2}\}$ forms a basis of $\F_n \OO/[\F_n\OO,\F_n\OO]$, and $\{\overline{\mathcal{U}}_n, \dots,  \overline{\mathcal{U}}_{2n+2}\} \cup \{\overline{\VV}_n, \dots, \overline{\VV}_{2n}\}$ forms a basis of $\F_n \bcca/[\F_n\bcca,\F_n\bcca]$.

The simplest application of Proposition \ref{prop:abelianisation} is that we can completely classify all one-dimensional representations of $\OO$ and $\bcca$.

\begin{defn}
    Let $\lambda, \mu \in \kk$, and define the $\OO$-module $\kk_{\lambda,\mu}$ as follows: as a vector space, $\kk_{\lambda,\mu} = \kk 1_{\lambda,\mu}$, and the $\OO$-action is given by
    $$\UU_{-1} \cdot 1_{\lambda,\mu} = \lambda 1_{\lambda,\mu}, \quad \UU_0 \cdot 1_{\lambda,\mu} = \mu 1_{\lambda,\mu}.$$
    The action of the other basis elements is determined by these and the formula
    $$\UU_n \cdot 1_{\lambda,\mu} = 4 \, \UU_{n - 2} \cdot 1_{\lambda,\mu} \quad (n \geq 1).$$
\end{defn}

For example, we have $\UU_1 \cdot 1_{\lambda,\mu} = 4\lambda 1_{\lambda,\mu}$ and $\UU_2 \cdot 1_{\lambda,\mu} = 4\mu 1_{\lambda,\mu}$. In fact, all one-dimensional representations of $\OO$ and $\bcca$ are of the above form, which is an immediate consequence of Lemmas \ref{lem:derived subalgebra Fn O} and \ref{lem:derived subalgebra Fn b}, and Proposition \ref{prop:abelianisation}.

\begin{corollary}
    If $M$ is a one-dimensional $\OO$-representation, then $M \cong \kk_{\lambda,\mu}$ for some $\lambda, \mu \in \kk$. If $M$ is a one-dimensional $\bcca$-representation, then $\PP \cdot M = 0$, so $M$ can be viewed as a representation over $\bcca/\PP \cong \OO$. \qed
\end{corollary}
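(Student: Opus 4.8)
The plan is to reduce both assertions to the computation of the abelianisations $\OO/[\OO,\OO]$ and $\bcca/[\bcca,\bcca]$, which are precisely the $n = -1$ instances of Lemmas \ref{lem:derived subalgebra Fn O} and \ref{lem:derived subalgebra Fn b} together with Proposition \ref{prop:abelianisation} (recall $\OO = \F_{-1}\OO$ and $\bcca = \F_{-1}\bcca$). First I would note the standard fact that a one-dimensional representation of a Lie algebra $\g$ is the same data as a Lie algebra homomorphism $\g \to \kk$: the structure map $\g \to \End(M) = \kk$ must annihilate $[\g,\g]$ because $\kk$ is abelian, so it factors through a linear functional on $\g/[\g,\g]$.

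For the first claim, take $n = -1$ in Lemma \ref{lem:derived subalgebra Fn O}: the derived subalgebra $[\OO,\OO] = (s^2 - 4)^2\kk[s]\del_s$ has basis $\{\UU_n - 4\,\UU_{n - 2} \mid n \geq 1\}$. Hence, writing $1_M$ for a generator of $M$, the action satisfies $\UU_n \cdot 1_M = 4\,\UU_{n - 2} \cdot 1_M$ for every $n \geq 1$, so it is completely determined by the two scalars $\lambda \coloneqq \UU_{-1}|_M$ and $\mu \coloneqq \UU_0|_M$. Comparing this directly with the definition of $\kk_{\lambda,\mu}$ yields $M \cong \kk_{\lambda,\mu}$. (By Proposition \ref{prop:abelianisation}, $\dim(\OO/[\OO,\OO]) = 2$, so all pairs $(\lambda,\mu)$ genuinely occur; this is not needed for the statement but confirms the picture.)

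For the second claim, take $n = -1$ in Lemma \ref{lem:derived subalgebra Fn b}: $[\bcca,\bcca] = [\OO,\OO] \ltimes \F_{-1}\PP = [\OO,\OO] \ltimes \PP$, since $\F_{-1}\PP = s^0 \PP = \PP$. In particular $\PP \subseteq [\bcca,\bcca]$, so every homomorphism $\bcca \to \kk$ vanishes on $\PP$; equivalently $\PP \cdot M = 0$. Since $\PP$ is an ideal of $\bcca = \OO \ltimes \PP$ with $\bcca/\PP \cong \OO$, the module $M$ is then just a one-dimensional $\OO$-module, and the first part identifies it with some $\kk_{\lambda,\mu}$.

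I expect no genuine obstacle: every input has already been established, and the only points requiring momentary care are the indexing identities $\F_{-1}\OO = \OO$ and $\F_{-1}\PP = \PP$ (so that $\F_{2n+1}\PP$ becomes $\PP$ at $n = -1$) and the evident isomorphism $\bcca/\PP \cong \OO$ coming from the semidirect-product structure.
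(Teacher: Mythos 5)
Your proposal is correct and follows exactly the route the paper intends: the corollary is stated as an immediate consequence of Lemmas \ref{lem:derived subalgebra Fn O} and \ref{lem:derived subalgebra Fn b} and Proposition \ref{prop:abelianisation} at $n = -1$, which is precisely your reduction via $[\OO,\OO]$ and $[\bcca,\bcca] \supseteq \PP$. The indexing checks ($\F_{-1}\OO = \OO$, $\F_{-1}\PP = \PP$) and the identification with $\kk_{\lambda,\mu}$ are all as the paper has them, so there is nothing to add.
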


\section{Whittaker modules}\label{sec:Whittaker modules}

Having defined and analysed the filtrations for $\OO$ and $\bcca$, we now study Whittaker modules for these Lie algebras (recall Lemma \ref{lem:filtration leads to Whittaker}). We emphasise that the construction of the filtration from Section \ref{sec:filtration} is not compatible with the known central extension of the BCCA, so we focus on the centreless BCCA in this section.

\subsection{Whittaker modules for \texorpdfstring{$\OO$}{O}}\label{subsec:Whittaker O}

We start by defining Whittaker functions for $\OO$, which in turn define universal Whittaker modules for $\OO$. We further characterise precisely when these modules are irreducible.

\begin{defn}
    A \emph{Whittaker function on $\OO$} is a Lie algebra homomorphism $\varphi_n \colon \F_n\OO \to \kk$ for some $n \in \NN$.
\end{defn}

As we show next, a Whittaker function $\varphi_n \colon \F_n \OO \to \kk$ is determined by its values on $\mathcal{U}_n, \dots, \mathcal{U}_{2n+2}$.

\begin{lemma}\label{lem:Whittaker generators}
    A Whittaker function $\varphi_n \colon \F_n\OO \to \kk$ is determined by its values on  $\mathcal{U}_n, \dots,  \mathcal{U}_{2n+2}$, and $\varphi_n(\UU_{m + 2}) = 4\varphi_n(\UU_m)$ for all $m \geq 2n + 1$. 
\end{lemma}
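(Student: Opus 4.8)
The plan is to use the fact that a Whittaker function $\varphi_n$, being a Lie algebra homomorphism into the abelian Lie algebra $\kk$, must annihilate every bracket $[\F_n\OO,\F_n\OO]$. Since by Lemma~\ref{lem:filtration of O} we have $\F_n\OO = \spn\{\UU_k \mid k \geq n\}$, the function $\varphi_n$ is completely pinned down once we know the scalars $\varphi_n(\UU_k)$ for $k \geq n$, so everything reduces to extracting linear relations among these scalars from the bracket formula \eqref{eq:shifted bracket UU}.

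The key step is as follows. For any $a, b \geq n$ with $a \neq b$, formula \eqref{eq:shifted bracket UU} gives
$$[\UU_a,\UU_b] = (a - b)(\UU_{a + b + 2} - 4\,\UU_{a + b}) \in [\F_n\OO,\F_n\OO],$$
so applying $\varphi_n$ and dividing by $a - b \neq 0$ yields $\varphi_n(\UU_{a+b+2}) = 4\,\varphi_n(\UU_{a+b})$. As $(a,b)$ ranges over pairs of integers $\geq n$ with $a \neq b$, the sum $a + b$ takes every value $\geq 2n+1$ (take $a = n$, $b = m - n$ for $m \geq 2n+1$, noting $b \geq n+1 > a$), and never the value $2n$. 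Hence $\varphi_n(\UU_{m+2}) = 4\,\varphi_n(\UU_m)$ for all $m \geq 2n+1$, which is the asserted recursion. (Equivalently, this is immediate from Lemma~\ref{lem:derived subalgebra Fn O}, which identifies $\{\UU_{2n+3+k} - 4\,\UU_{2n+1+k} \mid k \in \NN\}$ as a basis of $[\F_n\OO,\F_n\OO]$ on which $\varphi_n$ must vanish.)

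It then remains to observe that this recursion propagates every value into the initial segment: for $k \geq 2n+3$, repeatedly applying $\varphi_n(\UU_k) = 4\,\varphi_n(\UU_{k-2})$ (legitimate since $k - 2 \geq 2n+1$) lowers the index by $2$ at each step until it lands in $\{2n+1, 2n+2\}$, so $\varphi_n(\UU_k)$ is a power of $4$ times $\varphi_n(\UU_{2n+1})$ or $\varphi_n(\UU_{2n+2})$ according to the parity of $k$. Therefore $\varphi_n$ is determined by its values on $\UU_n, \UU_{n+1}, \dots, \UU_{2n+2}$, as claimed; these are $n+3$ values, consistent with $\dim\big(\F_n\OO/[\F_n\OO,\F_n\OO]\big) = n+3$ from Proposition~\ref{prop:abelianisation}. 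There is no substantive obstacle here — the whole statement is a bookkeeping consequence of \eqref{eq:shifted bracket UU} — and the only point requiring a little care is checking that the recursion genuinely begins at $m = 2n+1$ (and not earlier, since $a = b$ is excluded) and that iterating it reaches all indices $k \geq 2n+3$ of both parities.
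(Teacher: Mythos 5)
Your proof is correct and is essentially the paper's argument: the paper simply cites Lemma~\ref{lem:derived subalgebra Fn O} (the description of $[\F_n\OO,\F_n\OO]$, on which $\varphi_n$ must vanish) together with Proposition~\ref{prop:abelianisation}, and your direct computation from \eqref{eq:shifted bracket UU} just unpacks that same content, as you yourself note parenthetically. The bookkeeping about where the recursion starts ($m \geq 2n+1$, since $a=b$ is excluded) and how it propagates in both parities is exactly the intended ``follows immediately'' step, so nothing further is needed.
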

\begin{proof}
    Follows immediately from  Lemma \ref{lem:derived subalgebra Fn O} and Proposition \ref{prop:abelianisation}.
\end{proof}

\begin{defn}
    Let $\varphi_n \colon \F_n\OO \to \kk$ be a Whittaker function and $\kk_{\varphi_n} = \kk1_{\varphi_n}$ be the one-dimensional $\F_n\OO$-module defined by $\varphi_n$, in other words,
    $$x \cdot 1_{\varphi_n} = \varphi_n(x)1_{\varphi_n}$$
    for all $x \in \F_n\bcca$. Define the \emph{universal Whittaker module for $\OO$ of type $\varphi_n$} to be the induced module
    $$M_{\varphi_n} \coloneqq \Ind_{\F_n\OO}^{\mathcal{O}}\kk_{\varphi_n}.$$
\end{defn}

\begin{remark} \label{rem:continuous 1-dim F_n O modules}
    Note that $\kk_{\varphi_n}$ is a continuous module for $\F_n\OO$ in the sense of Rudakov \cite{Rudakov} if and only if $\varphi_n(\UU_{2n + 1}) = \varphi_n(\UU_{2n + 2}) = 0$. In fact, if $\varphi_n(\UU_{2n + 1}) \neq 0$, then  $\varphi(\mathcal{U}_{2n + 2k + 1}) \neq 0$ for all $k \geq 0$, and if $\varphi_n(\UU_{2n + 2}) \neq 0$, then $\varphi_n(\UU_{2m + 2k + 2}) \neq 0$ for all $k \geq 0$, by Lemma \ref{lem:Whittaker generators}.
\end{remark}

\begin{remark}\label{rem:phi0 is not Whittaker}
    The Lie algebra $\F_0 \OO$ is not quasi-nilpotent \cite[Remark 3.5]{Buzaglo}, so $M_{\varphi_0}$ is not a Whittaker module according to Definition \ref{def:Whittaker module}. As we will see, the criteria for irreducibility for $M_{\varphi_0}$ and their proofs are slightly different to those for $M_{\varphi_n}$ with $n \geq 1$, highlighting the importance of the quasi-nilpotence condition. However, to avoid piling onto the nomenclature, we abuse terminology and opt to call $M_{\varphi_0}$ a universal Whittaker module.

    We also highlight that $\F_n \OO$ is not an ideal of $\OO$ if $n \geq 0$. Therefore, the universal Whittaker modules $M_{\varphi_n}$ are never quasi-Whittaker modules according to Definition \ref{def:quasi-Whittaker modules}.
\end{remark}

The following result is the main theorem of this subsection.

\begin{theorem}\label{theo: simplicity}
    Let $n \in \NN$, and let $\varphi_n \colon \F_n \OO \to \kk$ be a Whittaker function. If $n = 0$, the universal Whittaker module $M_{\varphi_0}$ is irreducible if and only if $\varphi_0(\UU_{2}) \neq 4 \varphi_0(\UU_{0})$. If $n \geq 1$, the universal Whittaker module $M_{\varphi_n}$ is irreducible if and only if $\varphi_n(\UU_{2n + 1}) \neq 4 \varphi_n(\UU_{2n - 1})$ or $\varphi_n(\UU_{2n + 2}) \neq 4\varphi_n(\UU_{2n})$.
\end{theorem}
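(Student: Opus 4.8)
The plan is to work with the universal Whittaker module $M_{\varphi_n} = \Ind_{\F_n\OO}^{\OO}\kk_{\varphi_n}$ via its PBW basis. By Lemma \ref{lem:filtration leads to Whittaker} (for $n \geq 1$) and directly (for $n = 0$), we have $M_{\varphi_n} = U(\OO)\cdot 1_{\varphi_n}$, and by the PBW theorem together with $\OO = \F_{-1}\OO \supseteq \F_n\OO$, a basis for $M_{\varphi_n}$ is given by the monomials $\UU_{n-1}^{a_{n-1}}\cdots \UU_{m}^{a_m}\cdots \UU_{n'-1}^{a_{n'-1}}\cdot 1_{\varphi_n}$ in the generators $\UU_{-1},\dots,\UU_{n-1}$ that lie outside $\F_n\OO$ (equivalently, $U(\OO)/U(\OO)\cdot\ker\varphi_n$ with the relations pushing $\F_n\OO$ into scalars). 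The first step is to set up this basis carefully and introduce a grading or filtration on $M_{\varphi_n}$ — the natural one is by ``$\UU$-degree'' where $\UU_k$ has degree $k$ — so that I can run a leading-term argument: given a nonzero submodule $N$, take $0 \neq w \in N$, write its highest-degree component, and hit it with suitable $\UU_j$ for large $j$ to reduce degree while staying inside $N$, ultimately producing a nonzero scalar multiple of $1_{\varphi_n}$ (forcing $N = M_{\varphi_n}$).

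The core computation is the action of $\UU_j$, for $j$ large, on a monomial $v = \UU_{n-1}^{a_{n-1}}\cdots\UU_{-1}^{a_{-1}}\cdot 1_{\varphi_n}$. Using the bracket $[\UU_n,\UU_m] = (n-m)(\UU_{n+m+2} - 4\UU_{n+m})$ from \eqref{eq:shifted bracket UU}, commuting $\UU_j$ to the right past each factor $\UU_k$ ($-1 \leq k \leq n-1$) produces, at leading order in $j$, a term proportional to $\UU_{j+k+2} - 4\UU_{j+k}$ times the rest; since for $j$ large all indices $j+k, j+k+2$ exceed $2n$, each such commutator acts on $1_{\varphi_n}$ by the scalar $\varphi_n(\UU_{j+k+2}) - 4\varphi_n(\UU_{j+k+2-2}) = \varphi_n(\UU_{j+k+2}) - 4\varphi_n(\UU_{j+k})$. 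By Lemma \ref{lem:Whittaker generators}, for indices $\geq 2n+1$ we have $\varphi_n(\UU_{m+2}) = 4\varphi_n(\UU_m)$, so these contributions vanish \emph{unless} the commutator straddles the boundary — i.e. lands on $\UU_{2n+1}$ or $\UU_{2n+2}$, where the relevant scalars are $\varphi_n(\UU_{2n+1}) - 4\varphi_n(\UU_{2n-1})$ and $\varphi_n(\UU_{2n+2}) - 4\varphi_n(\UU_{2n})$ respectively. This is precisely why these two quantities appear in the statement: the ``reduction operator'' built from the $\UU_j$'s is nonzero on the leading term exactly when at least one of them is nonzero. For $n = 0$ there is only one boundary scalar in play, $\varphi_0(\UU_2) - 4\varphi_0(\UU_0)$, matching the dichotomy in the theorem; the proof there is genuinely different because $\F_0\OO$ is not quasi-nilpotent (Remark \ref{rem:phi0 is not Whittaker}), so one must argue more directly — likely by realizing $M_{\varphi_0}$ explicitly and exhibiting the submodule when $\varphi_0(\UU_2) = 4\varphi_0(\UU_0)$, using the isomorphism $\OO \cong \f = (t^2-4)\WW_1$ and comparison with known Whittaker module results for $\WW_1$-type algebras.

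For the converse (reducibility when both boundary scalars vanish for $n \geq 1$, or the single one for $n = 0$), the plan is to exhibit a proper submodule. When $\varphi_n(\UU_{2n+1}) = 4\varphi_n(\UU_{2n-1})$ and $\varphi_n(\UU_{2n+2}) = 4\varphi_n(\UU_{2n})$, Lemma \ref{lem:Whittaker generators} shows $\varphi_n$ extends to a Whittaker function on a larger filtered piece — indeed on $\F_{n-1}\OO$, since the obstruction to extending $\varphi_n$ across one step is exactly the pair of scalars above (this uses Lemma \ref{lem:derived subalgebra Fn O} to identify $[\F_{n-1}\OO,\F_{n-1}\OO]$ and check $\varphi_n$ vanishes on it). Then $U(\OO)(\UU_{n-1} - \varphi_n(\UU_{n-1}))\cdot 1_{\varphi_n}$, or more precisely the kernel of the induced map $M_{\varphi_n} \twoheadrightarrow M_{\varphi_{n-1}}$ to the ``bigger'' Whittaker module, is a proper nonzero submodule; a quick PBW dimension count shows it is nonzero and proper. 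The main obstacle I anticipate is the bookkeeping in the leading-term reduction — tracking that after commuting $\UU_j$ to the right the genuinely dangerous terms are only those hitting the boundary indices $2n+1, 2n+2$, and that the coefficient (a product of the integers $n-1-k$ etc. from the bracket) does not accidentally vanish — so I would organize the induction on total $\UU$-degree and, within a fixed degree, on the top index appearing, choosing $j$ so that the straddling commutator is forced and its coefficient is manifestly nonzero over a characteristic-zero field.
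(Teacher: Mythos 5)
Your converse (reducibility) direction is essentially sound and amounts to a repackaging of what the paper does in Lemmas \ref{lem:M0 converse} and \ref{lem:Mn converse}: when the relevant boundary scalars vanish, $\varphi_n$ does extend to a Whittaker function on $\F_{n-1}\OO$ (for $n=0$, to a one-dimensional representation of all of $\OO$, with the value on $\UU_{-1}$ forced), and the kernel of the resulting surjection $M_{\varphi_n}\twoheadrightarrow M_{\varphi_{n-1}}$ is a nonzero proper submodule. The only slip is writing $\varphi_n(\UU_{n-1})$, which is undefined since $\UU_{n-1}\notin\F_n\OO$; you mean the (arbitrary) value assigned by the extension.

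The genuine gap is in the irreducibility direction, which the paper proves by a completely different route: it realises $\varphi_n$ as the restriction of a local function, shows $\F_n\OO$ is a polarisation, and imports the $\WW_1$-irreducibility criterion of \cite{pham2025orbit} (Theorem \ref{theo:simplicity local rep}) together with the tensor twist of Lemma \ref{prop:module tensor}. Your leading-term computation is correct as far as it goes: writing $\lambda_k=\varphi_n(\UU_k)$, $a=\lambda_{2n+1}-4\lambda_{2n-1}$, $b=\lambda_{2n+2}-4\lambda_{2n}$, one has, modulo the span of PBW monomials of length at most $m-2$,
\[
(\UU_j-\lambda_j)\,\UU_{k_1}\cdots\UU_{k_m}\cdot 1_{\varphi_n}\;\equiv\;\sum_{i=1}^m(j-k_i)\bigl(\lambda_{j+k_i+2}-4\lambda_{j+k_i}\bigr)\,\UU_{k_1}\cdots\widehat{\UU_{k_i}}\cdots\UU_{k_m}\cdot 1_{\varphi_n},
\]
valid for $j\geq n+1$ (this bound is needed so that every first-level commutator stays in $\F_n\OO$). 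If $b\neq0$, the operators with $j=n+1,\dots,2n+1$ form a triangular family with nonzero diagonal weights $(2n-2k)b$, each stripping one generator $\UU_k$, and your reduction closes; this also covers $n=0$, where the only condition is $b=\lambda_2-4\lambda_0\neq0$ (so, contrary to your expectation, $n=0$ is the easy case for your machinery, not the one needing a different argument). But the theorem also asserts irreducibility when $b=0$ and $a\neq0$ (for $n\geq1$), and there the sketch stalls: the weight with which $(\UU_j-\lambda_j)$ removes a factor $\UU_{n-1}$ is $(j-n+1)(\lambda_{j+n+1}-4\lambda_{j+n-1})$, which is $2b=0$ for $j=n+1$ and $0$ for $j\geq n+2$, so once the top PBW component of your vector is a pure power of $\UU_{n-1}$, no admissible $\UU_j$ lowers it at leading order. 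The only index that would see the scalar $a$ in that slot is $j=n$, but then $[\UU_n,\UU_{-1}]=(n+1)(\UU_{n+1}-4\UU_{n-1})$ leaves $\F_n\OO$ and produces same-degree terms, destroying the triangular structure your induction relies on. (Also note that ``large $j$'' is the wrong regime: for $j$ large relative to the PBW length of a fixed vector, $(\UU_j-\lambda_j)$ annihilates it outright — this is just local finiteness — so only a bounded window of $j$ is useful.) To make your direct approach work you would need a genuinely new ingredient for the case $a\neq0=b$ (for instance a second-order analysis of $(\UU_{n+1}-\lambda_{n+1})$, or a Whittaker-vector argument), and this is exactly the case the paper's orbit-method argument handles uniformly.
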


We now proceed to prove that $M_{\varphi_n}$ is reducible if the condition from Theorem \ref{theo: simplicity} is not satisfied, proving one direction of Theorem \ref{theo: simplicity}. As mentioned in Remark \ref{rem:phi0 is not Whittaker}, the proofs are slightly different depending on whether $n = 0$ or $n \geq 1$.

\begin{lemma}\label{lem:M0 converse}
    Let $\varphi_0 \colon \F_0 \OO \to \kk$ be a Whittaker function with $\varphi_0(\UU_{2}) = 4\varphi_0(\UU_{0})$. Then $U(\OO)\UU_{-1} \cdot 1_{\varphi_0}$ is a proper submodule of the universal Whittaker module $M_{\varphi_0}$.
\end{lemma}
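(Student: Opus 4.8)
The plan is to exhibit (a small correction of) $\UU_{-1}\cdot 1_{\varphi_0}$ as a \emph{new} Whittaker vector lying strictly below $1_{\varphi_0}$ in the PBW grading, so that the submodule it generates cannot climb back up to $1_{\varphi_0}$.

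First I would fix the PBW picture. Since $\OO=\kk\UU_{-1}\oplus\F_0\OO$ with $\F_0\OO$ a subalgebra, the module $M_{\varphi_0}=\Ind_{\F_0\OO}^{\OO}\kk_{\varphi_0}$ has $\kk$-basis $\{\UU_{-1}^{k}\cdot 1_{\varphi_0}\mid k\in\NN\}$, with $1_{\varphi_0}$ cyclic. Next I would compute the $\F_0\OO$-action on $\UU_{-1}\cdot 1_{\varphi_0}$ by commuting $\UU_m$ ($m\geq 0$) past $\UU_{-1}$. By \eqref{eq:shifted bracket UU} we have $[\UU_m,\UU_{-1}]=(m+1)(\UU_{m+1}-4\UU_{m-1})$ for $m\geq 1$ and $[\UU_0,\UU_{-1}]=\UU_1-4\UU_{-1}$. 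For $m\geq 2$, the element $\UU_{m+1}-4\UU_{m-1}$ lies in $[\F_0\OO,\F_0\OO]$ (Lemma \ref{lem:derived subalgebra Fn O}), so $\varphi_0$ annihilates it; for $m=1$, $[\UU_1,\UU_{-1}]=2(\UU_2-4\UU_0)$, and here the hypothesis $\varphi_0(\UU_2)=4\varphi_0(\UU_0)$ is precisely what gives $\varphi_0([\UU_1,\UU_{-1}])=0$. Hence $\UU_m\cdot(\UU_{-1}\cdot 1_{\varphi_0})=\varphi_0(\UU_m)\,\UU_{-1}\cdot 1_{\varphi_0}$ for all $m\geq 1$, while $\UU_0\cdot(\UU_{-1}\cdot 1_{\varphi_0})=(\varphi_0(\UU_0)-4)\,\UU_{-1}\cdot 1_{\varphi_0}+\varphi_0(\UU_1)\,1_{\varphi_0}$.

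To kill the stray term I would pass to $w:=\UU_{-1}\cdot 1_{\varphi_0}-\tfrac14\varphi_0(\UU_1)\,1_{\varphi_0}$ (so $w=\UU_{-1}\cdot 1_{\varphi_0}$ exactly when $\varphi_0(\UU_1)=0$). A short check using the formulas above shows that $w$ is a Whittaker vector of type $\varphi_0'$, where $\varphi_0'$ is the Whittaker function on $\F_0\OO$ with $\varphi_0'(\UU_0)=\varphi_0(\UU_0)-4$ and $\varphi_0'(\UU_m)=\varphi_0(\UU_m)$ for $m\geq 1$; this $\varphi_0'$ is a genuine Lie algebra homomorphism because it equals $\varphi_0$ minus four times the coordinate functional on $\F_0\OO/[\F_0\OO,\F_0\OO]$ dual to $\overline{\UU}_0$ in the basis from Proposition \ref{prop:abelianisation}. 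Therefore, by PBW, $U(\OO)\cdot w=\kk[\UU_{-1}]\cdot w=\bigoplus_{k\geq 0}\kk\,\UU_{-1}^{k}w$, and since $\UU_{-1}^{k}w=\UU_{-1}^{k+1}\cdot 1_{\varphi_0}-\tfrac14\varphi_0(\UU_1)\,\UU_{-1}^{k}\cdot 1_{\varphi_0}$ has leading PBW-term $\UU_{-1}^{k+1}\cdot 1_{\varphi_0}$, the subspace $U(\OO)\cdot w$ is in echelon form with lowest PBW-degree $1$; in particular it does not contain $1_{\varphi_0}$, so $U(\OO)\cdot w=U(\OO)\bigl(\UU_{-1}-\tfrac14\varphi_0(\UU_1)\bigr)\cdot 1_{\varphi_0}$ is a proper nonzero submodule of $M_{\varphi_0}$.

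The only mildly delicate point is the need for the correction term $-\tfrac14\varphi_0(\UU_1)\,1_{\varphi_0}$: without it, when $\varphi_0(\UU_1)\neq 0$ the element $\UU_{-1}\cdot 1_{\varphi_0}$ fails to be an eigenvector for $\UU_0$ and in fact $1_{\varphi_0}=\tfrac{1}{\varphi_0(\UU_1)}(\UU_0-\varphi_0(\UU_0)+4)\cdot\UU_{-1}\cdot 1_{\varphi_0}$, so the literal submodule would be all of $M_{\varphi_0}$. One therefore either works with $w$ throughout, or first normalises $\varphi_0(\UU_1)=0$ by twisting $M_{\varphi_0}$ by a suitable one-dimensional character of $\OO$ (which preserves the hypothesis $\varphi_0(\UU_2)=4\varphi_0(\UU_0)$). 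The remaining steps—the two commutator computations and the echelon/non-membership argument—are routine.
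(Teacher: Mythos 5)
Your proof is correct and is essentially the paper's argument: the paper works with $w = [\UU_0,\UU_{-1}]\cdot 1_{\varphi_0} = (\varphi_0(\UU_1) - 4\,\UU_{-1})\cdot 1_{\varphi_0}$, which is exactly $-4$ times your corrected vector, verifies the same identities $\UU_0\cdot w = (\varphi_0(\UU_0)-4)\,w$ and $\UU_m\cdot w = \varphi_0(\UU_m)\,w$ for $m\geq 1$ using $\varphi_0(\UU_{k+2})=4\varphi_0(\UU_k)$, and concludes that $U(\OO)\cdot w$ is proper (your echelon/leading-term step just makes explicit what the paper leaves implicit). Your closing caveat is also well taken: when $\varphi_0(\UU_1)\neq 0$ the literal submodule $U(\OO)\,\UU_{-1}\cdot 1_{\varphi_0}$ named in the statement is all of $M_{\varphi_0}$, and the paper's proof in fact establishes properness of $U(\OO)\cdot w$ rather than of that submodule, which is all that is needed for the reducibility direction of Theorem \ref{theo: simplicity}.
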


\begin{proof}
    Let $\lambda_i \coloneqq \varphi_n(\UU_{i})$ for $i \geq 0$, and $w \coloneqq (\lambda_1 - 4 \, \UU_{-1}) \cdot 1_{\varphi_0}=[\UU_0, \UU_{-1}]\cdot 1_{\varphi_0}$. We claim that $U(\OO) \cdot w$ is a proper $\OO$-submodule of $M_{\varphi_0}$.
    
    Lemma \ref{lem:Whittaker generators} together with the assumption that $\lambda_2 = 4\lambda_0$ imply that $\lambda_{n + 2} = 4\lambda_n$ for all $n \geq 0$. Then
    \begin{align*}
        \UU_0 \cdot w &= \UU_0(\lambda_1 - 4 \, \UU_{-1}) \cdot 1_{\varphi_0} \\
        &= \Big(\lambda_0\lambda_1 - 4[\UU_0,\UU_{-1}] - 4 \, \UU_{-1}\UU_0\Big) \cdot 1_{\varphi_0} \\
        &= \Big(\lambda_0\lambda_1 - 4(\UU_1 - 4 \, \UU_{-1}) - 4 \lambda_0 \, \UU_{-1}\Big) \cdot 1_{\varphi_0} \\
        &= \Big((\lambda_0 - 4)\lambda_1 - 4(\lambda_0 - 4)\UU_{-1}\Big) \cdot 1_{\varphi_0} \\
        &= (\lambda_0 - 4)w.
    \end{align*}
    Now letting $n \geq 1$, we have
    \begin{align*}
        \UU_n \cdot w &= \UU_n(\lambda_1 - 4\,\UU_{-1}) \cdot 1_{\varphi_0} \\
        &= \Big(\lambda_n\lambda_1 - 4[\UU_n,\UU_{-1}] - 4\,\UU_{-1}\UU_n\Big) \cdot 1_{\varphi_0} \\
        &= \Big(\lambda_n\lambda_1 - 4(n + 1)(\UU_{n + 1} - 4\,\UU_{n - 1}) - 4\lambda_n \, \UU_{-1}\Big) \cdot 1_{\varphi_0} \\
        &= \lambda_n (\lambda_1 - 4\,\UU_{-1}) \cdot 1_{\varphi_0} - 4(n + 1)(\lambda_{n + 1} - 4\lambda_{n - 1}) \cdot 1_{\varphi_0} \\
        &= \lambda_n w,
    \end{align*}
    where we used that $\lambda_{n + 1} = 4\lambda_{n - 1}$ in the last equality, since $n \geq 1$. This proves the claim, and the result follows.
\end{proof}

\begin{remark}
    When $\lambda_2 = 4\lambda_0$, the submodule $U(\OO)(\lambda_1 - 4 \, \UU_{-1}) \cdot 1_{\varphi_0}$ in the proof of Lemma \ref{lem:M0 converse} is isomorphic to $M_{\varphi_0'}$, where $\varphi'_0 \colon \F_0\OO \to \kk$ is the Whittaker function with $\varphi'_0(\UU_0) = \lambda_0 - 4$ and $\varphi'_0(\UU_n) = \lambda_n$ for $n \geq 1$.
    
    In this case, the quotient module $M_{\varphi_0}/(U(\OO)(\lambda_1 - 4 \, \UU_{-1}) \cdot 1_{\varphi_0})$ is a $1$-dimensional $\OO$-module determined by $\UU_{-1} \cdot 1_{\varphi_0} = \frac{\lambda_1}4 1_{\varphi_0}, \UU_{0} \cdot 1_{\varphi_0} = \lambda_0 1_{\varphi_0}$.
\end{remark}    

The case $n \geq 1$ follows from the existence of an analogous submodule.

\begin{lemma}\label{lem:Mn converse}
    Let $n \geq 1$ and let $\varphi_n \colon \F_n \OO \to \kk$ be a Whittaker function with $\varphi_n(\UU_{2n + 1}) = 4 \varphi_n(\UU_{2n - 1})$ and $\varphi_n(\UU_{2n + 2}) = 4\varphi_n(\UU_{2n})$. Then $U(\OO) \UU_{n - 1} \cdot 1_{\varphi_n}$ is a submodule of the universal Whittaker module $M_{\varphi_n}$.
\end{lemma}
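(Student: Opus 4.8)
The plan is to prove the (implicit) sharper statement that $U(\OO)\UU_{n-1}\cdot 1_{\varphi_n}$ is a \emph{nonzero proper} submodule of $M_{\varphi_n}$, which is what the reducibility half of Theorem~\ref{theo: simplicity} requires; nonzeroness is immediate from the PBW theorem (the vector $\UU_{n-1}\cdot 1_{\varphi_n}$ is a basis element of $M_{\varphi_n}$), so the content is properness. I would prove properness by producing a nonzero quotient of $M_{\varphi_n}$ on which $\UU_{n-1}\cdot 1_{\varphi_n}$ vanishes. The natural candidate is the universal Whittaker module one filtration step down: the two hypotheses $\varphi_n(\UU_{2n+1}) = 4\varphi_n(\UU_{2n-1})$ and $\varphi_n(\UU_{2n+2}) = 4\varphi_n(\UU_{2n})$ are exactly what is needed to extend $\varphi_n$ to a Whittaker function $\varphi_{n-1}$ on $\F_{n-1}\OO$ with $\varphi_{n-1}(\UU_{n-1}) = 0$.

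The key steps, in order, are as follows. First, combining the two hypotheses with Lemma~\ref{lem:Whittaker generators} (which gives $\varphi_n(\UU_{m+2}) = 4\varphi_n(\UU_m)$ for $m \geq 2n+1$) shows that in fact $\varphi_n(\UU_{m+2}) = 4\varphi_n(\UU_m)$ for \emph{all} $m \geq 2n-1$. Second, define $\varphi_{n-1}\colon \F_{n-1}\OO \to \kk$ on the basis $\{\UU_k \mid k \geq n-1\}$ by $\varphi_{n-1}(\UU_{n-1}) := 0$ and $\varphi_{n-1}(\UU_k) := \varphi_n(\UU_k)$ for $k \geq n$; by Lemma~\ref{lem:derived subalgebra Fn O} (applied with $n$ replaced by $n-1$) the space $[\F_{n-1}\OO,\F_{n-1}\OO]$ is spanned by the elements $\UU_{m+2} - 4\UU_m$ with $m \geq 2n-1$, and the first step shows $\varphi_{n-1}$ vanishes on all of them, so $\varphi_{n-1}$ is a Whittaker function restricting to $\varphi_n$ on $\F_n\OO$. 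Third, identifying $M_{\varphi_j} = \Ind_{\F_j\OO}^{\OO}\kk_{\varphi_j}$ with $U(\OO)/I_j$, where $I_j := \sum_{x \in \F_j\OO} U(\OO)(x - \varphi_j(x))$, the inclusion $\F_n\OO \subseteq \F_{n-1}\OO$ together with the equality $\varphi_{n-1}|_{\F_n\OO} = \varphi_n$ gives $I_n \subseteq I_{n-1}$, hence a surjective $\OO$-module map $\pi\colon M_{\varphi_n} \twoheadrightarrow M_{\varphi_{n-1}}$ with $\pi(1_{\varphi_n}) = 1_{\varphi_{n-1}}$; moreover $M_{\varphi_{n-1}} \neq 0$ since $1 \notin I_{n-1}$ by PBW. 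Finally, $\pi(\UU_{n-1}\cdot 1_{\varphi_n}) = \UU_{n-1}\cdot 1_{\varphi_{n-1}} = \varphi_{n-1}(\UU_{n-1})\,1_{\varphi_{n-1}} = 0$, so $U(\OO)\UU_{n-1}\cdot 1_{\varphi_n} \subseteq \ker\pi \subsetneq M_{\varphi_n}$, which completes the proof.

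The only computational point, and the thing one must get right, is the verification in the second step that $\varphi_{n-1}$ annihilates $[\F_{n-1}\OO,\F_{n-1}\OO]$: for $m \geq 2n-1$ all indices appearing in $\UU_{m+2} - 4\UU_m$ are $\geq n$, so $\varphi_{n-1}(\UU_{m+2} - 4\UU_m) = \varphi_n(\UU_{m+2}) - 4\varphi_n(\UU_m) = 0$ by the first step. I would also flag explicitly that this is where $n \geq 1$ is used: it guarantees $n - 1 < 2n - 1$, so $\UU_{n-1}$ does not occur among the generators of $[\F_{n-1}\OO,\F_{n-1}\OO]$ and one is free to set $\varphi_{n-1}(\UU_{n-1}) = 0$. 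When $n = 0$ this fails — $\UU_{-1}$ appears in a defining relation of $[\OO,\OO]$ — which is precisely why the case $n = 0$ must be treated separately, as in Lemma~\ref{lem:M0 converse}.
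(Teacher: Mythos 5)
Your proof is correct, but it takes a genuinely different route from the paper's. The paper argues inside $M_{\varphi_n}$: using the same key consequence of the hypotheses that you derive, namely $\varphi_n(\UU_{k+2}) = 4\varphi_n(\UU_k)$ for all $k \geq 2n-1$, a direct bracket computation shows $\UU_m \cdot (\UU_{n-1}\cdot 1_{\varphi_n}) = \varphi_n(\UU_m)\,\UU_{n-1}\cdot 1_{\varphi_n}$ for every $m \geq n$, so $\UU_{n-1}\cdot 1_{\varphi_n}$ is again a Whittaker vector of type $\varphi_n$ and the submodule it generates is proper and in fact isomorphic to $M_{\varphi_n}$ itself. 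You instead work with quotients: you observe that the two hypotheses are precisely the extendability conditions allowing $\varphi_n$ to be prolonged to a Whittaker function $\varphi_{n-1}$ on $\F_{n-1}\OO$ with $\varphi_{n-1}(\UU_{n-1}) = 0$ (via Lemma \ref{lem:derived subalgebra Fn O} applied one step down the filtration, where the only new generators of the derived subalgebra are $\UU_{2n+1} - 4\,\UU_{2n-1}$ and $\UU_{2n+2} - 4\,\UU_{2n}$), and the resulting surjection $M_{\varphi_n} \twoheadrightarrow M_{\varphi_{n-1}}$ kills $\UU_{n-1}\cdot 1_{\varphi_n}$, giving properness since $M_{\varphi_{n-1}} \neq 0$ by PBW. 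Both arguments rest on the same numerical identity and on the comparison of $[\F_{n-1}\OO,\F_{n-1}\OO]$ with $[\F_n\OO,\F_n\OO]$; the paper's version yields the extra structural fact that the submodule is a copy of $M_{\varphi_n}$ (hence an infinite descending chain of such submodules), while yours exhibits a natural nonzero quotient, namely $M_{\varphi_{n-1}}$, and makes transparent why the case $n = 0$ requires the separate treatment of Lemma \ref{lem:M0 converse}. You also correctly target the sharper conclusion (properness, not merely being a submodule) that the application in Theorem \ref{theo: simplicity} actually needs, exactly as the paper's own proof does.
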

\begin{proof}
    Let $\lambda_i \coloneqq \varphi_n(\UU_{i})$ for $i \geq n$, and $w \coloneqq \UU_{n - 1} \cdot 1_{\varphi_n}$. We claim that $U(\OO) \cdot w$ is a proper submodule of $M_{\varphi_n}$.
    
    Lemma \ref{lem:Whittaker generators} together with the assumptions that $\lambda_{2n + 1} = 4\lambda_{2n - 1}$ and $\lambda_{2n + 2} = 4\lambda_{2n}$ imply that $\lambda_{k + 2} = 4\lambda_k$ for all $k \geq 2n - 1$. For $m \geq n$, we have
    \begin{align*}
        \UU_m \cdot w &= \UU_m \UU_{n - 1} \cdot 1_{\psi_n} = [\UU_m,\UU_{n - 1}] \cdot 1_{\psi_n} + \UU_{n - 1} \UU_m \cdot 1_{\psi_n} \\
        &= (m - n + 1) (\UU_{m + n + 1} - 4 \, \UU_{m + n - 1}) \cdot 1_{\psi_n} + \lambda_m w \\
        &= \lambda_m w,
    \end{align*}
    since $m + n - 1 \geq 2n - 1$, where we used that $\lambda_{k + 2} = 4\lambda_k$ for all $k \geq 2n - 1$. 

    The above shows that $\kk w$ is a one-dimensional $\F_n \OO$-module isomorphic to $\kk_{\varphi_n}$. It is now easy to see that $U(\OO) \cdot w$ is a proper submodule of $M_{\varphi_n}$ isomorphic to $M_{\varphi_n}$ itself.
\end{proof}

Note that together Lemmas \ref{lem:M0 converse} and \ref{lem:Mn converse} prove one direction of Theorem \ref{theo: simplicity}. To prove the converse, we use the theory of local functions and the orbit method, which we introduce in the next subsection.

\subsection{Local functions and the orbit method for \texorpdfstring{$\OO$}{O}}

In this subsection, we make a key observation that the Whittaker function $\varphi_n$ from Theorem \ref{theo: simplicity} can be viewed as the restriction of a certain element $\chi$ of $\WW_1^*$ whose stabilizer has finite codimension in $\WW_1$. Then Whittaker modules are exactly restrictions of modules from the Kirillov orbit method for $\WW_1$ (see \cite{Ushirobira1998, pham2025orbit}). Kirillov's orbit method \cite{Kirillov2004LecturesOT} is a guiding principle in Lie theory, establishing a correspondence between equivalence classes of unitary irreducible representations and coadjoint orbits. As a result, we will readily obtain the irreducibility criteria in Theorem \ref{theo: simplicity}.

Note that in this subsection, we view $\WW_1 = \Der(\kk[s]) = \kk[s]\del_s$ (in other words, we use the variable $s$ instead of $t$), so that we can view $\OO$ as the subalgebra $(s^2 - 4)\WW_1$ of $\WW_1$. We start by recalling the notion of a local function on $\WW_{1}$, originally defined in \cite{PetukhovSierra}. 

\begin{defn}\label{def: local function}
    Letting $x, \alpha_0, \alpha_1, \dots ,\alpha_n \in \kk$ with $\alpha_n \neq 0$, we define a \emph{one-point local function} $\chi_{x; \alpha_0, \dots,\alpha_n} \in \WW_1^\ast$ by
    \begin{equation}\label{eq:one-pointlf}
        \chi_{x; \alpha_0, \dots,\alpha_n} \colon \WW_{1} \to \kk, \quad f \partial  \mapsto \alpha_0 f(x) + \alpha_1 f'(x) + \dots +\alpha_n f^{(n)}(x),
    \end{equation}
    where $f \in \kk[s]$ and $f^{(i)}$ denotes taking the $i^\text{th}$ derivative of $f$ with respect to $s$. We call $x$ the \emph{base point} of $\chi$ and $n$ the \emph{order} of $\chi$.

    A \emph{local function} $\chi$ on $\WW_1$ is a sum of finitely many functions of the form ~\eqref{eq:one-pointlf} with (possibly) distinct $x$. In other words,
    \begin{align*}
        \chi = \chi_1 + \dots  + \chi_{\ell},
    \end{align*}
    where $\chi_i$ is a one-point local function based at $x_i \in \kk$ of order $n_i$, and the $x_i$ are pairwise distinct.
\end{defn}

Next, we recall the notion of a polarisation for a local function on $\WW_{1}$ in \cite{pham2025orbit}.

\begin{defn}\label{def:polarization}
    For every $\chi \in \WW_{1}^*$, we define a bilinear form 
    \begin{align*}
        B_\chi: \WW_{1} \times \WW_{1} &\to \kk\\  
        (u, v) &\mapsto \chi ([u, v]).
    \end{align*}
    A \emph{polarisation} $\p_\chi$ of $\chi$ is a finite-codimensional subalgebra of $\WW_{1}$ that is maximally totally isotropic with respect to $B_\chi$, in other words, $B_\chi([\p_\chi, \p_\chi]) = 0$.
\end{defn}

\begin{remark}
    In general, an arbitrary element $\chi \in \g^*$ for a Lie algebra $\g$ might not have a polarisation. However, if $\chi$ is a local function on $\WW_1$, then $\chi$ always has a polarisation \cite[Proposition 3.2]{pham2025orbit}. In this case, the polarisation can moreover be chosen as a submodule-subalgebra of $\WW_1$.
\end{remark}

We consider an example of a local function and its polarisation to illustrate the concept.

\begin{example}[{\cite{PetukhovSierra}}]
    Let $\chi \coloneqq \chi_{x; \nu_0, \nu_1}$ be a one-point local function on $\WW_1$ where $x \in \kk$ and $(\nu_0, \nu_1) \in \kk^2 \setminus \{(0, 0)\}$. In other words,
    $$\chi(f\del) = \nu_0 f(x) + \nu_1 f'(x).$$
    Consider the submodule-subalgebra $(s - x)\WW_1$ of $\WW_1$. Certainly, $(s - x)\WW_1$ has codimension 1 in $\WW_1$. Moreover, \eqref{eq:fW bracket} implies that
    $$\chi([(s - x)\WW_1, (s - x)\WW_1]) = \chi((s - x)^2\WW_1) = 0,$$
    and therefore $(s - x)\WW_1$ is a totally isotropic subspace of $\WW_1$ with respect to $B_\chi$. Note that $(s - x)\WW_1$ is maximally totally isotropic by a dimension count \cite[Lemma 3.1]{pham2025orbit}, and therefore is a polarisation of $\chi$.
\end{example}

\begin{remark}
    The polarisation $(s - x)\WW_1$ is in fact the unique polarisation for the local function $\chi_{x; \nu_0, \nu_1}$ (see, for example, \cite[Section 7.1]{PetukhovSierra} for a proof). However, polarisations for local functions of $\WW_{1}$ are in general not unique (for an example, see \cite[Example 3.3]{pham2025orbit}).
\end{remark}

\begin{remark}
    If $\mathfrak{q}$ is a subalgebra of a polarisation of a local function $\chi$, then $\chi([\mathfrak{q},\mathfrak{q}]) = 0$, and thus $\chi$ defines a one-dimensional representation of $\mathfrak{q}$:
    \begin{align*}
        \restr{\chi}{\mathfrak{q}} \colon \mathfrak{q} &\to \kk \\
        f\del &\mapsto \chi(f\del).
    \end{align*}
\end{remark}

We now explain the connection between the Whittaker functions on $\OO$ and the orbit method for the Lie algebra $\WW_{1}$ constructed in \cite{Ushirobira1998, pham2025orbit}, which will allow us to use results in \cite{Ushirobira1998, pham2025orbit} to prove the other direction of Theorem \ref{theo: simplicity}. The next result shows how to associate a local function to each Whittaker function $\varphi_n \colon \F_n \OO \to \kk$.

\begin{proposition}\label{prop:local function existence}
    Let $n \in \NN$ and let $\varphi_n \colon \F_n \OO \to \kk$ be a Whittaker function. Then there exists a local function $\chi = \chi_{0;  \alpha_0, \dots, \alpha_{2n+1}} + \chi_{2; \beta, \beta_1} + \chi_{-2; \gamma, \gamma_1}$ on $\WW_1$ such that $\varphi_n = \restr{\chi}{\F_n \OO}$.
    
    Moreover, we can always choose $\alpha_0, \beta, \gamma \in \kk$ arbitrarily, and
    \begin{align*}
        \varphi_n(\UU_{2n + 2}) \neq 4 \varphi_n(\UU_{2n}) &\iff \alpha_{2n+1}\neq 0, \\
        \varphi_n(\UU_{2n + 1}) \neq 4 \varphi_n(\UU_{2n - 1})  &\iff \alpha_{2n} \neq 0 \quad (\text{provided } n \geq 1).
    \end{align*}
\end{proposition}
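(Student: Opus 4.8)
The plan is to work on $\WW_1 = \kk[s]\del_s$, where by Theorem \ref{thm:O lambda = L(s)} we have $\OO = (s^2 - 4)\WW_1 = \kk[s](s-2)(s+2)\del_s$ and $\F_n\OO = s^{n+1}(s^2-4)\WW_1$, with basis $\UU_k = -s^{k+1}(s^2-4)\del_s$ for $k \geq n$. A local function $\chi$ of the prescribed shape is specified by the $2n+4$ scalars $\alpha_0, \dots, \alpha_{2n+1}, \beta, \beta_1, \gamma, \gamma_1$; it restricts to a linear functional on $\F_n\OO$, which by Proposition \ref{prop:abelianisation} and Lemma \ref{lem:Whittaker generators} is determined by its $n+3$ values on $\UU_n, \dots, \UU_{2n+2}$. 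So the first step is to set up the linear map
$$\Psi \colon \kk^{2n+4} \to (\F_n\OO/[\F_n\OO,\F_n\OO])^* \cong \kk^{n+3}, \qquad (\alpha_\bullet, \beta, \beta_1, \gamma, \gamma_1) \mapsto \left(\restr{\chi}{\F_n\OO}(\UU_k)\right)_{k=n}^{2n+2},$$
and show it is surjective; this proves the existence of $\chi$ with $\varphi_n = \restr{\chi}{\F_n\OO}$. Concretely, $\restr{\chi}{\F_n\OO}(\UU_k) = -\left(\chi_{0;\alpha_\bullet} + \chi_{2;\beta,\beta_1} + \chi_{-2;\gamma,\gamma_1}\right)\!\left(s^{k+1}(s^2-4)\del\right)$, and each of the three one-point pieces contributes a computable polynomial expression in $k$ — the base-$0$ piece picks out finitely many Taylor coefficients of $s^{k+1}(s^2-4)$ at $0$, while the base-$\pm 2$ pieces vanish to first order (since $s\mp 2 \mid s^2-4$) and so only $\beta_1, \gamma_1$ survive, contributing terms like $\beta_1 \cdot \del_s\!\left(s^{k+1}(s^2-4)\right)\big|_{s=2} = \beta_1 \cdot 4 \cdot 2^{k+1}$ and similarly $\gamma_1 \cdot (-1)^{k+1}\cdot(-4)\cdot 2^{k+1}$ (up to signs I'd pin down carefully).

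The key structural observation — and this is what makes the "choose $\alpha_0, \beta, \gamma$ arbitrarily" claim and the two iff's work out — is that $s-2$ and $s+2$ divide $s^2-4$, so $\chi_{2;\beta,\beta_1}$ and $\chi_{-2;\gamma,\gamma_1}$ annihilate the whole of $\OO$ through their zeroth-order part: $\beta$ and $\gamma$ contribute nothing to any $\restr{\chi}{\F_n\OO}(\UU_k)$, and likewise $\alpha_0$ only ever multiplies $(s^2-4)$ evaluated in a way that... actually $\alpha_0$ does contribute ($f(0) = -4 \cdot s^{k+1}|_{s=0}$ which is $0$ unless $k=-1$) — so in the range $k \geq n \geq 0$, $\alpha_0$ also drops out. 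This gives the three "free" parameters immediately: $\alpha_0, \beta, \gamma$ are unconstrained by $\varphi_n$. The remaining $2n+1$ parameters $\alpha_1, \dots, \alpha_{2n+1}, \beta_1, \gamma_1$ must be shown to surject onto the $n+3$-dimensional space of restrictions; I'd do this by exhibiting the triangular/Vandermonde-like structure of the coefficient matrix. For the $\UU_{2n+1}, \UU_{2n+2}$ edge cases I would directly unwind: $\restr{\chi}{\F_n\OO}(\UU_{2n+2})$ is the unique value where the top Taylor coefficient $\alpha_{2n+1}$ of $s^{2n+3}(s^2-4)$ at $0$ enters (coefficient of $s^{2n+1}$ in $s^{2n+3}(s^2-4)$ is $1$, times $(2n+1)!$), and comparing with the relation $\varphi_n(\UU_{2n+2}) = 4\varphi_n(\UU_{2n})$ from Lemma \ref{lem:Whittaker generators} (which holds exactly when $\chi$'s "local behavior at infinity" is trivial) translates into $\alpha_{2n+1} = 0$; similarly for $\alpha_{2n}$ and $\UU_{2n+1}$ when $n \geq 1$.

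The main obstacle I anticipate is bookkeeping the exact correspondence between the condition $\varphi_n(\UU_{2n+2}) = 4\varphi_n(\UU_{2n})$ and $\alpha_{2n+1} = 0$ (and the odd-indexed analogue): one must compute $\restr{\chi}{\F_n\OO}(\UU_{2n})$, $\restr{\chi}{\F_n\OO}(\UU_{2n+1})$, $\restr{\chi}{\F_n\OO}(\UU_{2n+2})$ explicitly as linear combinations of all the $\alpha_i, \beta_1, \gamma_1$, and check that the combination $\restr{\chi}{}(\UU_{2n+2}) - 4\restr{\chi}{}(\UU_{2n})$ isolates exactly the coefficient $\alpha_{2n+1}$ (all lower $\alpha_i$ and the base-$\pm 2$ contributions canceling, the latter because $2^{2n+3}(s^2-4)$-type terms satisfy the same recursion $4 \cdot(\text{shift by }2)$ in the exponent). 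This cancellation is the crux: it says that the "4-shift recursion" governing Whittaker functions is precisely the recursion satisfied by the base-$0$ contributions of $\chi_{0;\alpha_\bullet}$ restricted to $\F_n\OO$ up to order $2n$, and the obstruction to it is the top coefficient $\alpha_{2n+1}$. I expect this to follow from a clean generating-function identity for $\sum_k \restr{\chi}{}(\UU_k) x^k$, but verifying it requires care with the finitely many boundary terms. Once this is in hand, surjectivity of $\Psi$ and the arbitrariness of $\alpha_0, \beta, \gamma$ are routine linear algebra.
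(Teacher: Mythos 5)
Your plan follows essentially the same route as the paper's proof: restrict a candidate local function of the prescribed shape to the basis $\UU_k = -s^{k+1}(s^2-4)\del_s$ of $\F_n\OO$, observe that $\alpha_0,\beta,\gamma$ contribute nothing (the relevant polynomials vanish at $0$ and at $\pm2$), and then solve the resulting linear system in $\alpha_{n+1},\dots,\alpha_{2n+1},\beta_1,\gamma_1$, which is triangular; the paper simply writes this system out explicitly and back-substitutes. Two points in your sketch need repair, neither fatal. First, before invoking Proposition \ref{prop:abelianisation} and Lemma \ref{lem:Whittaker generators} to say that $\restr{\chi}{\F_n\OO}$ is determined by its values on $\UU_n,\dots,\UU_{2n+2}$, you must check that $\chi$ vanishes on $[\F_n\OO,\F_n\OO] = s^{2n+2}(s^2-4)^2\WW_1$ (immediate: those polynomials vanish to order at least $2n+2$ at $0$ and at least $2$ at $\pm2$); otherwise your map $\Psi$ does not land in $(\F_n\OO/[\F_n\OO,\F_n\OO])^*$.

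Second, your specific claim that $\alpha_{2n+1}$ enters uniquely through $\chi(\UU_{2n+2})$, via the coefficient of $s^{2n+1}$ in $s^{2n+3}(s^2-4)$, is wrong: that polynomial is $s^{2n+5}-4s^{2n+3}$ and has no $s^{2n+1}$ term (nor does $s^{2n+2}(s^2-4)$), so the base-$0$ part of $\chi$ contributes nothing at all to $\chi(\UU_{2n+1})$ or $\chi(\UU_{2n+2})$. Instead $\alpha_{2n+1}$ enters $\chi(\UU_{2n})$ with coefficient $\pm 4(2n+1)!$ (from the $-4s^{2n+1}$ term of $s^{2n+1}(s^2-4)$), and also $\chi(\UU_{2n-2})$ when $n\geq 2$, so it is not confined to a single value either. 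The iff's nevertheless come out exactly as you anticipate in your ``crux'' paragraph: the base-$\pm2$ contributions to $\chi(\UU_k)$ are proportional to $2^{k+3}$ and $(-1)^k 2^{k+3}$, hence satisfy the recursion $c_{k+2}=4c_k$, so they cancel in $\varphi_n(\UU_{2n+2})-4\varphi_n(\UU_{2n})$ and in $\varphi_n(\UU_{2n+1})-4\varphi_n(\UU_{2n-1})$, leaving $\pm16(2n+1)!\,\alpha_{2n+1}$ and $\pm16(2n)!\,\alpha_{2n}$ respectively; this is precisely what the paper's explicit equations exhibit, and carrying out the computation you defer would both fix the slip and complete the argument (it also shows $\alpha_1,\dots,\alpha_n$ never appear, so more than just $\alpha_0,\beta,\gamma$ are free, though the statement only needs those three).
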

\begin{proof}
    Choose arbitrary $\alpha_i, \beta, \beta_1, \gamma, \gamma_1 \in \kk$, and let
    \begin{align}\label{eq:local function form}
        \chi \coloneqq \chi_{0; \alpha_0, \dots, \alpha_{2n+1}} + \chi_{2; \beta, \beta_1} + \chi_{-2; \gamma, \gamma_1}.
    \end{align}
    It is easy to check that $\chi$ vanishes on $s^{2n+2} (s^2-4)^2\WW_1 = [\F_n \OO, \F_n \OO]$, thus $\F_n \OO$ is contained in a polarisation of $\chi$. Hence, $\chi$ restricts to a one-dimensional representation $ \restr{\chi}{\F_n \OO}$ of $\F_n \OO$. Since both $\restr{\chi}{\F_n \OO}$ and $\varphi_n$ vanish on $[\F_n \OO, \F_n \OO]$, we require the following system of equations to have $\restr{\chi}{\F_n \OO} = \varphi_n$:
    \begin{align}
        \varphi_n(\UU_k) &= (k + 3)! \alpha_{k + 3} - 4(k + 1)! \alpha_{k + 1} + 2^{k + 3}(\beta_1 + (-1)^k \gamma_1) \text{ 
        for } n \leq k \leq 2n-2,\label{eq:uk}\\
        \varphi_n(\UU_{2n-1}) &= - 4(2n)! \alpha_{2n} + 2^{2n + 2}(\beta_1 + (-1)^{2n-1} \gamma_1) \label{eq:u2n-1},\\
        \varphi_n(\UU_{2n}) &= - 4(2n+1)! \alpha_{2n+1} + 2^{2n + 3}(\beta_1 + (-1)^{2n} \gamma_1)\label{eq:u2n}, \\
        \varphi_n(\UU_{2n+1}) &= 2^{2n + 4}(\beta_1 + (-1)^{2n+1} \gamma_1)\label{eq:u2n+1}, \\
        \varphi_n(\UU_{2n+2}) &= 2^{2n + 5}(\beta_1 + (-1)^{2n+2} \gamma_1)\label{eq:u2n+2}.
    \end{align}
    Note that $\eqref{eq:uk}$ only exists for $n \geq 2$, and $\eqref{eq:u2n-1}$ only exists for $n \geq 1$. For all $n$, there are $n+3$ equations (from $\varphi_n(\UU_{n})$ to $\varphi_n(\UU_{2n+2})$) with $n+3$ variables (from $\alpha_{n+1}$ to $\alpha_{2n+1}$, as well as $\beta_1$, $\gamma_1$). Furthermore, there are no inconsistencies on the right hand side, thus there always exist $\alpha_{n+1}, \dots, \alpha_{2n+1}, \beta_1, \gamma_1 \in \kk$ satisfying the above system. In other words, there always exists $\chi$ of the form \eqref{eq:local function form} such that $\varphi_n = \restr{\chi}{\F_n\OO}$. Since $\alpha_0, \beta, \gamma$ are not involved in the system of equations, we can choose them arbitrarily.
    
    The last part of the statement follows from the combination of \eqref{eq:u2n-1} and \eqref{eq:u2n+1} (if $n \geq 1$), and the combination of \eqref{eq:u2n} and \eqref{eq:u2n+2}.
\end{proof}

For the rest of this section, we fix $n \in \NN$ and a Whittaker function $\varphi_n \colon \F_n \OO \to \kk$. Given Proposition \ref{prop:local function existence}, we introduce notation for the local function associated to $\varphi_n$.

\begin{notation}\label{not:special local function}
    For $k \geq n$, define $\lambda_k \coloneqq \varphi_n(\UU_k)$. Furthermore, using Proposition \ref{prop:local function existence}, choose $\alpha_i, \beta_1, \gamma_1 \in \kk$ such that $\varphi_n = \restr{\chi}{\F_n \OO}$, where
    $$\chi \coloneqq \chi_{0; 1, \alpha_1, \dots, \alpha_{2n+1}} + \chi_{2; 1, \beta_1} + \chi_{-2; 1, \gamma_1}.$$
    We will denote
    $$\eta \coloneqq \chi_{0; 1, \alpha_1, \dots, \alpha_{2n+1}}, \quad \theta_{a,b} \coloneqq \chi_{2; 1, a} + \chi_{-2; 1, b}, \quad \chi_{a,b} \coloneqq \eta + \theta_{a,b}$$
    for $a,b \in \kk$. For simplicity of notation, write $\theta \coloneqq \theta_{\beta_1,\gamma_1}$, so that $\chi = \eta + \theta$.
\end{notation}

Although it is not obvious at this point why we have introduced the family of local functions $\chi_{a,b}$, this will soon become clear.

\begin{remark}
    The splitting $\chi = \eta + \theta$ was introduced because $\theta$ defines a Lie algebra homomorphism $\theta \colon \OO \to \kk$, since $\theta$ vanishes on $[\OO,\OO] = (s^2 - 4)^2\WW_1$. Therefore, $\kk_\theta$ is a one-dimensional representation of $\OO$. This is not true for $\eta$, and we will soon see that $\eta$ completely controls the reducibility of $M_{\varphi_n}$.
\end{remark}

Having introduced the necessary background and notation, we proceed to  prove the irreducibility of these representations under the conditions of Theorem \ref{theo: simplicity}, using the perspective of local functions. We begin this proof with realising the subalgebra $\F_n \OO$ as a polarisation of the local function $\chi$, using results in \cite{Ushirobira1998} and \cite{pham2025orbit}.

\begin{lemma}[{\cite[Theorem 3.5]{Ushirobira1998}, \cite[Proposition 3.2]{pham2025orbit}}]\label{lem:polarization}
    Let $\chi = \chi_1 + \dots + \chi_\ell$ be a local function on $\WW_{1}$, where $\chi_i$ is a one-point local function at $x_i$ with order $n_i$. Letting $m_i \coloneqq \left\lfloor \frac{n_i}{2} \right\rfloor$, the subalgebra
    $$\p_{\chi_i} \coloneqq (s - x_i)^{m_i + 1}\WW_1$$
    is a polarisation of $\chi_i$. Furthermore,    
    $$\p_\chi \coloneqq \p_{\chi_1} \cap \dots \cap \p_{\chi_\ell} = (s - x_1)^{m_1 + 1} \dots (s - x_\ell)^{m_\ell + 1}\WW_1$$
    is a polarisation of $\chi$.
\end{lemma}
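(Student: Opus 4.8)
The statement to prove is Lemma \ref{lem:polarization}, which is cited directly from \cite[Proposition 3.2]{pham2025orbit}. Since the excerpt attributes the result to an external reference, the proof is not expected to be reproduced in full here; nonetheless, I sketch the argument one would give.

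\medskip

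\textbf{Plan of proof.} The plan is to reduce the claim about a general local function $\chi = \chi_1 + \dots + \chi_\ell$ to the one-point case, and then handle the one-point case by an explicit subalgebra construction together with a dimension count. First I would treat a single one-point local function $\chi_i = \chi_{x_i; \alpha_0, \dots, \alpha_{n_i}}$ with $\alpha_{n_i} \neq 0$. The key computation is that, by the bracket formula \eqref{eq:fW bracket}, $[(s-x_i)^{m_i+1}\WW_1, (s-x_i)^{m_i+1}\WW_1] = (s-x_i)^{2m_i+2}\WW_1$. Since $2m_i + 2 = 2\lfloor n_i/2 \rfloor + 2 \geq n_i + 1$, every element of this derived subalgebra is of the form $f\partial$ with $f$ vanishing to order at least $n_i+1$ at $x_i$, hence $f^{(j)}(x_i) = 0$ for $0 \le j \le n_i$, so $\chi_i$ kills it. Thus $\p_{\chi_i} \coloneqq (s-x_i)^{m_i+1}\WW_1$ is a totally isotropic subalgebra for $B_{\chi_i}$. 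To see it is \emph{maximally} totally isotropic, I would invoke the dimension/codimension count of \cite[Lemma 3.1]{pham2025orbit}: a polarisation of a one-point local function of order $n_i$ must have codimension $\lceil n_i/2 \rceil$ (this is the standard ``half the rank of $B_{\chi_i}$ modulo its radical'' count), and $(s-x_i)^{m_i+1}\WW_1$ has codimension exactly $m_i + 1 = \lfloor n_i/2\rfloor + 1 = \lceil (n_i+1)/2 \rceil$... one checks this matches, so maximality follows from the codimension being correct and isotropy already established.

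\medskip

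For the general case, set $\p_\chi \coloneqq \bigcap_{i=1}^\ell \p_{\chi_i}$. Because the $x_i$ are pairwise distinct, the polynomials $(s-x_i)^{m_i+1}$ are pairwise coprime, so by the Chinese Remainder Theorem $\bigcap_i (s-x_i)^{m_i+1}\WW_1 = \big(\prod_i (s-x_i)^{m_i+1}\big)\WW_1$, giving the displayed formula for $\p_\chi$; in particular it is again a submodule-subalgebra, hence a subalgebra of finite codimension. Isotropy with respect to $B_\chi$ is checked termwise: for $u\partial, v\partial \in \p_\chi$, the bracket $[u\partial,v\partial]$ is divisible by $\prod_i (s-x_i)^{m_i+1}$ twice over in the relevant local sense — more precisely, at each point $x_i$ it vanishes to order $\ge 2(m_i+1) \ge n_i+1$ — so each $\chi_i$ annihilates $[u\partial,v\partial]$ and hence $\chi = \sum_i \chi_i$ does too. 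Maximality of $\p_\chi$ then follows from the codimension count for multi-point local functions, again citing \cite[Lemma 3.1]{pham2025orbit}: $\operatorname{codim} \p_\chi = \sum_i (m_i+1)$ matches the required half-rank of $B_\chi$.

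\medskip

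\textbf{Main obstacle.} The routine part is the bracket divisibility bookkeeping; the one genuinely delicate point is the maximality (not merely isotropy), i.e.\ establishing that the codimension of $(s-x_i)^{m_i+1}\WW_1$ is exactly the half-rank of $B_{\chi_i}$ modulo its radical, so that no strictly larger totally isotropic subspace can exist. This is precisely the content of the cited \cite[Lemma 3.1]{pham2025orbit}, and in a self-contained write-up one would need to compute the radical of $B_{\chi_i}$ (the set of $f\partial$ with $\chi_i([f\partial, g\partial]) = 0$ for all $g$) and show it has codimension $n_i + $ (something) in $\WW_1$, then apply the general linear-algebra fact that maximal totally isotropic subspaces of a form of rank $2m$ (mod radical) have codimension $m$ beyond the radical. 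Since the statement here is quoted verbatim from \cite{pham2025orbit}, I would simply cite that reference for this step rather than reprove it.
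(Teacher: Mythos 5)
The paper gives no internal proof of this lemma: it is imported verbatim from \cite[Proposition 3.2]{pham2025orbit}, so there is nothing to compare your argument against except that citation, and your decision to defer the delicate maximality step to the same source is consistent with the paper's treatment. Your sketch of the isotropy part is correct: by \eqref{eq:fW bracket} the derived subalgebra of $(s-x_i)^{m_i+1}\WW_1$ is $(s-x_i)^{2m_i+2}\WW_1$, and $2m_i+2\geq n_i+1$, so $\chi_i$ annihilates it; in the multi-point case the identification $\bigcap_i(s-x_i)^{m_i+1}\WW_1=\prod_i(s-x_i)^{m_i+1}\WW_1$ follows from coprimality, and the bracket of two elements of $\p_\chi$ vanishes at each $x_i$ to order at least $2(m_i+1)\geq n_i+1$, so every $\chi_i$, hence $\chi$, kills it.

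One numerical slip in your maximality discussion: the codimension of a maximal totally isotropic subspace for a one-point local function of order $n_i$ (equivalently, half the rank of $B_{\chi_i}$) is $\lfloor n_i/2\rfloor+1$, not $\lceil n_i/2\rceil$; the two differ whenever $n_i$ is even (for instance, for $n_i=0$ the polarisation $(s-x_i)\WW_1$ has codimension $1$, not $0$). Since you explicitly hand this step off to \cite[Lemma 3.1]{pham2025orbit} rather than prove it, the slip does not invalidate your proposal, but a self-contained write-up would need the half-rank computation with the correct value $m_i+1$, and correspondingly $\sum_i(m_i+1)$ in the multi-point case.
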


\begin{corollary}\label{cor:polarization}
     If $n = 0$, then $\F_0 \OO$ is a polarisation of $\chi_{a,b}$ for all $a,b \in \kk$. If $n \geq 1$, and $\lambda_{2n + 1} \neq 4 \lambda_{2n - 1}$ or $\lambda_{2n + 2} \neq 4\lambda_{2n}$, then $\F_n \OO$ is a polarisation of $\chi_{a,b}$ for all $a,b \in \kk$.
\end{corollary}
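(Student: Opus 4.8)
The plan is to produce $\F_n\OO$ directly as the polarisation of $\chi_{a,b}$ furnished by Lemma~\ref{lem:polarization}. First I would use Notation~\ref{not:special local function} to write $\chi_{a,b} = \eta + \chi_{2;1,a} + \chi_{-2;1,b}$ as a sum of one-point local functions based at the pairwise distinct points $0$, $2$, and $-2$, where $\eta = \chi_{0;1,\alpha_1,\dots,\alpha_{2n+1}}$. Lemma~\ref{lem:polarization} then says that the polarisation assembled from these three pieces equals $(s-0)^{m_0+1}(s-2)^{m_2+1}(s+2)^{m_{-2}+1}\WW_1$, where $m_x$ is the floor of half the order of the one-point piece based at $x$; so everything reduces to computing those three orders.

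The orders at $2$ and $-2$ are immediate: $\chi_{2;1,a}$ and $\chi_{-2;1,b}$ have order $\leq 1$ for every $a,b\in\kk$, so $m_2 = m_{-2} = 0$ regardless of $a,b$, contributing the factors $(s-2)\WW_1$ and $(s+2)\WW_1$. The order at $0$ is the order of $\eta$. When $n=0$ we have $\eta = \chi_{0;1,\alpha_1}$, which has order $\leq 1$, so $m_0 = 0$ and the polarisation is $s(s-2)(s+2)\WW_1 = s(s^2-4)\kk[s]\del_s = s\,\OO = \F_0\OO$, with no condition on the $\lambda_k$, exactly as the statement claims. When $n\geq1$, the hypothesis that $\lambda_{2n+1}\neq4\lambda_{2n-1}$ or $\lambda_{2n+2}\neq4\lambda_{2n}$ translates, through the equivalences in Proposition~\ref{prop:local function existence} together with the choice of the $\alpha_i$ made in Notation~\ref{not:special local function}, into $\alpha_{2n}\neq0$ or $\alpha_{2n+1}\neq0$; hence the order of $\eta$ is either $2n$ or $2n+1$. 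In both cases $m_0 = \lfloor 2n/2\rfloor = \lfloor(2n+1)/2\rfloor = n$, so the polarisation is $s^{n+1}(s-2)(s+2)\WW_1 = s^{n+1}(s^2-4)\kk[s]\del_s = s^{n+1}\OO = \F_n\OO$. By Lemma~\ref{lem:polarization} this is a polarisation of $\chi_{a,b}$, which is what we want.

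There is no serious obstacle; the only thing requiring care is the order bookkeeping at $0$. Since $\eta$ has constant coefficient $1$, it is never zero, so its order is well defined; the hypothesis for $n\geq1$ is precisely what prevents the order from dropping below $2n$, and the key elementary fact is that $2n$ and $2n+1$ have the same floor-of-half, so the exact value of the order is immaterial to the final answer. One should also record that the three one-point constituents sit at distinct points, so that Lemma~\ref{lem:polarization} genuinely applies.
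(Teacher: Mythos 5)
Your proposal is correct and follows essentially the same route as the paper: both arguments reduce the statement to Lemma \ref{lem:polarization}, using the equivalences of Proposition \ref{prop:local function existence} (via the choices in Notation \ref{not:special local function}) to see that the one-point piece at $0$ has order $2n$ or $2n+1$ under the stated hypotheses, while the pieces at $\pm 2$ contribute the factor $(s^2-4)$ regardless of $a,b$, giving $s^{n+1}(s^2-4)\kk[s]\del_s = \F_n\OO$. The only cosmetic difference is that you apply the multi-point product formula of Lemma \ref{lem:polarization} to all three base points at once, whereas the paper first polarises $\eta$ and $\theta_{a,b}$ separately and then intersects; the content is identical.
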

\begin{proof}
    Throughout this proof, we assume that $\lambda_{2n + 1} \neq 4 \lambda_{2n - 1}$ or $\lambda_{2n + 2} \neq 4\lambda_{2n}$ if $n \geq 1$.
    
    We first claim that $s^{n + 1}\WW_1$ is a polarisation of $\eta$. If $n = 0$, then by definition we have $\eta = \chi_{0;1,\alpha_1}$ for some $\alpha_1 \in \kk$. Therefore, $\p_\eta = s\WW_1$ is a polarisation of $\eta$, by Lemma \ref{lem:polarization}. If $n \geq 1$, then Proposition \ref{prop:local function existence} implies that either $\alpha_{2n} \neq 0$ or $\alpha_{2n + 1} \neq 0$. Thus, by Lemma \ref{lem:polarization}, it follows that $\p_{\eta} = s^{n+1} \WW_1$ is a polarisation of $\eta$. This proves the claim.

    Recalling that $\theta_{a,b} = \chi_{2; 1, a} + \chi_{-2; 1, b}$ by definition, we also see that $\p_{\theta_{a,b}} = (s^2 - 4)\WW_1$ is a polarisation of $\theta_{a,b}$, by Lemma \ref{lem:polarization}. We conclude that 
    \begin{equation*}
        \p_{\chi_{a,b}} = \p_{\eta + \theta_{a,b}} = s^{n+1} \WW_1 \cap (s^2 - 4)\WW_1 = s^{n + 1} (s^2 - 4)\WW_1 = \F_n \OO
    \end{equation*}
    is a polarisation of $\chi_{a,b}$. 
\end{proof}

The conditions under which $\Ind^{\WW_{1}}_{\p_\chi} \kk_\chi$ is a simple $\WW_{1}$-module are completely characterised in \cite{Ushirobira1998} and \cite{pham2025orbit}. We summarise the relevant results in the following theorem.

\begin{theorem}[{\cite[Theorem 4.17]{Ushirobira1998}, \cite[Corollary 4.27, Proposition 4.23]{pham2025orbit}}]\label{theo:simplicity local rep}
    Let $\chi = \chi_1 + \dots + \chi_\ell$ be a local function on $\WW_{1}$, where $\chi_i$ is a one-point local function at $x_i$ with order $n_i$. Letting $m_i \coloneqq \left\lfloor \frac{n_i}{2} \right\rfloor$, let
    $$\p_\chi = (s - x_1)^{m_1 + 1} \dots (s - x_\ell)^{m_\ell + 1}\WW_1$$
    be the polarisation of $\chi$ from Lemma \ref{lem:polarization}. Then 
    $\Ind_{\p_\chi}^{\WW_{1}} \kk_\chi$
    is an irreducible $\WW_{1}$-module if and only if $n_i > 0$ for all $i$.
\end{theorem}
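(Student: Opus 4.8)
The final statement to prove is Theorem~\ref{theo:simplicity local rep} (quoted from \cite{pham2025orbit}), characterising when $\Ind_{\p_\chi}^{\WW_1}\kk_\chi$ is irreducible. Since this is quoted verbatim from another paper, I would not reprove it from scratch; instead I sketch the structure of the argument that underlies it.

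\medskip

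The plan is to reduce the problem to the one-point case and then analyse that case directly. First I would observe that by Lemma~\ref{lem:polarization} the polarisation $\p_\chi$ factors as an intersection $\p_{\chi_1}\cap\dots\cap\p_{\chi_\ell}$ of submodule-subalgebras supported at distinct points $x_i$. Using that the $x_i$ are pairwise distinct, one gets a ``localisation'' or Chinese-remainder-type decomposition: the induced module $\Ind_{\p_\chi}^{\WW_1}\kk_\chi$ should be expressible in terms of the modules $\Ind_{\p_{\chi_i}}^{\WW_1}\kk_{\chi_i}$ (or their restricted/completed analogues near each $x_i$), so that $\Ind_{\p_\chi}^{\WW_1}\kk_\chi$ is irreducible if and only if each one-point factor is. The key input here is the rigidity of the $\kk[t,t^{-1}]$-module structure on $\W$ and the explicit bracket \eqref{eq:fW bracket}, which forces submodules of the induced module to respect the point-support decomposition.

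\medskip

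Next I would handle a single one-point local function $\chi = \chi_{x;\alpha_0,\dots,\alpha_n}$ with $\alpha_n\neq 0$, polarised by $\p = (x-s)^{m+1}\WW_1$ with $m=\lfloor n/2\rfloor$. The induced module $\Ind_\p^{\WW_1}\kk_\chi$ has a PBW-type basis indexed by monomials in the ``low'' generators $L_{-1},\dots$ complementary to $\p$; one computes the action of a generic element and shows that if $n>0$ (equivalently $\chi$ genuinely involves a derivative, not just evaluation), then any nonzero submodule must contain the generating vector $1_\chi$, forcing irreducibility. Conversely, if $n=0$ then $\chi(f\del)=\alpha_0 f(x)$ vanishes on $(s-x)\WW_1$, so $\kk_\chi$ already extends to a one-dimensional representation of the larger subalgebra $(s-x)\WW_1\supsetneq\p$, and the induced module $\Ind_\p^{\WW_1}\kk_\chi$ then surjects onto the properly induced module $\Ind_{(s-x)\WW_1}^{\WW_1}\kk_\chi$ with nontrivial kernel --- hence reducible. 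This dichotomy is exactly the condition ``$n_i>0$ for all $i$''.

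\medskip

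The main obstacle I anticipate is the first step: proving the point-support decomposition rigorously, i.e.\ that a submodule of $\Ind_{\p_\chi}^{\WW_1}\kk_\chi$ cannot ``mix'' the behaviour at different $x_i$. This requires a careful filtration argument --- ordering PBW monomials by total degree and by how they localise at each $x_i$, and showing the leading term of any element of a submodule can be cleared using the $\WW_1$-action (using $\alpha_{n_i}\neq 0$ at each relevant point) --- and it is precisely where the distinctness of the base points is used. The one-point irreducibility/reducibility analysis, by contrast, is a more routine (if intricate) computation with the explicit formulas. Since all of this is carried out in \cite{pham2025orbit}, in the paper itself it suffices to cite \cite[Corollary 4.27, Proposition 4.23]{pham2025orbit}; the sketch above indicates why the cited statement holds.
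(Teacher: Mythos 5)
The paper offers no proof of this statement: it is imported verbatim from \cite{pham2025orbit} and justified purely by the citation to [Corollary 4.27, Proposition 4.23] there. Your decision to cite rather than reprove is therefore exactly what the paper does, and is sufficient; the sketch you append is extra material, and its broad shape (reduce to one base point, then analyse the one-point induced module, with the multi-point ``separation of supports'' being the delicate step) is consistent with how such results are organised in \cite{pham2025orbit} and \cite{PetukhovSierra}.

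One slip in the sketch is worth correcting, though. In the one-point case of order $n=0$ you have $m=\lfloor n/2\rfloor=0$, so the polarisation is $\p=(s-x)^{m+1}\WW_1=(s-x)\WW_1$ itself; there is no ``larger subalgebra $(s-x)\WW_1\supsetneq\p$'', and the module $\Ind_{(s-x)\WW_1}^{\WW_1}\kk_\chi$ you propose to surject onto is the very module in question, so that argument as stated is vacuous. The quick correct reason for reducibility is different: since $\chi(f\del)=\alpha_0 f(x)$ vanishes on all of $\p=(s-x)\WW_1$, the module $\kk_\chi$ is the trivial $\p$-module, hence (by Frobenius reciprocity, and because $\WW_1$ is perfect so its only one-dimensional module is trivial) $\Ind_\p^{\WW_1}\kk_\chi$ admits a surjection onto the trivial one-dimensional $\WW_1$-module, whose kernel is a nonzero proper submodule. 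Note also that in the multi-point setting with some $n_i=0$ the restriction $\restr{\chi}{\p_\chi}$ need not vanish, so this reducibility argument genuinely requires the reduction to one point that you flag as the hard step; none of this affects the paper, which correctly leaves the proof to the cited reference.
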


Corollary \ref{cor:polarization} and Theorem \ref{theo:simplicity local rep} imply that $\Ind_{\F_n \OO}^{\WW_1} \kk_\chi$ can only be an irreducible $\WW_1$-module if $\beta_1$ and $\gamma_1$ are nonzero. However, this is not guaranteed by Proposition \ref{prop:local function existence}. Thankfully, we can solve this issue: the reducibility of $M_{\varphi_n} = \Ind_{\F_n \OO}^{\OO} \kk_{\chi}$ does not depend on the values of $\beta_1$ and $\gamma_1$, as we aim to show next. In other words, the reducibility of $M_{\varphi_n}$ is equivalent to that of $\Ind_{\F_n \OO}^{\OO} \kk_{\chi_{a,b}}$ for any choice of $a,b \in \kk$.

We now decompose the universal Whittaker module of $\OO$ into the tensor product of an induced module of a local function and a one-dimensional representation. This helps us view the universal Whittaker module of $\OO$ as a twisted induced module of the one-point local function $\eta$.

\begin{lemma}\label{prop:module tensor}
    Let $a,b \in \kk$, and let $\kk_{\eta}$ and $\kk_{\theta_{a,b}}$ be the one-dimensional representations of $\F_n \OO$ and $\OO$ induced by $\eta$ and $\theta_{a,b}$, respectively. Then 
    $$\Ind_{\F_n \OO}^\OO \kk_{\chi_{a,b}} \cong (\Ind_{\F_n \OO}^\OO \kk_\eta) \otimes \kk_{\theta_{a,b}}.$$
    In particular,
    $$M_{\varphi_n} \cong (\Ind^{\OO}_{\F_n \OO} \kk_{\eta}) \otimes \kk_{\theta}.$$
\end{lemma}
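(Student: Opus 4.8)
The plan is to exhibit an explicit linear isomorphism between the two $\OO$-modules and check it intertwines the actions. Recall that $M_{\varphi_n} = \Ind_{\F_n\OO}^\OO \kk_{\chi_{a,b}} = U(\OO) \otimes_{U(\F_n\OO)} \kk_{\chi_{a,b}}$, and since $\chi_{a,b} = \eta + \theta_{a,b}$ as functions on $\F_n\OO$, both $\kk_{\chi_{a,b}}$ and $\kk_\eta$ share the same underlying vector space $\kk$. By the PBW theorem, as a vector space $\Ind_{\F_n\OO}^\OO \kk_{\chi_{a,b}}$ has a basis of monomials $\UU_{i_1}\cdots\UU_{i_k} \otimes 1_{\chi_{a,b}}$ with $-1 \le i_1 \le \dots \le i_k \le n-1$, and similarly for $\Ind_{\F_n\OO}^\OO \kk_\eta$. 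First I would define the $\kk$-linear map
$$\Psi\colon \Ind_{\F_n\OO}^\OO \kk_{\chi_{a,b}} \to \left(\Ind_{\F_n\OO}^\OO \kk_\eta\right) \otimes \kk_{\theta_{a,b}}$$
on these PBW bases by $\Psi(\UU_{i_1}\cdots\UU_{i_k} \otimes 1_{\chi_{a,b}}) = (\UU_{i_1}\cdots\UU_{i_k} \otimes 1_\eta) \otimes 1_{\theta_{a,b}}$, which is manifestly a vector space isomorphism.

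The key step is to verify that $\Psi$ is an $\OO$-module homomorphism, i.e.\ $\Psi(x \cdot m) = x \cdot \Psi(m)$ for all $x \in \OO$ and $m \in M_{\varphi_n}$. Since both sides are $U(\OO)$-generated by the cyclic vectors $1_{\chi_{a,b}}$ and $(1_\eta) \otimes 1_{\theta_{a,b}}$ respectively, and $\Psi$ sends one to the other, it suffices to check that the stabiliser data match: that is, for $x \in \F_n\OO$, one has $x \cdot 1_{\chi_{a,b}} = \chi_{a,b}(x) 1_{\chi_{a,b}}$ in the source, while in the target, using that $\kk_{\theta_{a,b}}$ is a one-dimensional $\OO$-module (so $x$ acts on it by $\theta_{a,b}(x)$) and that $\F_n\OO$ acts on $1_\eta$ by $\eta(x)$, the coproduct formula for the tensor product action gives $x \cdot ((1_\eta) \otimes 1_{\theta_{a,b}}) = (\eta(x) + \theta_{a,b}(x))((1_\eta) \otimes 1_{\theta_{a,b}}) = \chi_{a,b}(x)((1_\eta)\otimes 1_{\theta_{a,b}})$. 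Thus both cyclic vectors are Whittaker vectors of the same type $\chi_{a,b}$, and by the universal property of induced modules there is a unique $\OO$-homomorphism sending one to the other; since it agrees with $\Psi$ on a spanning set (via PBW, freeness over $U(\F_n\OO)$), it equals $\Psi$, which is therefore an isomorphism. The last sentence of the lemma is the special case $a = \beta_1$, $b = \gamma_1$, recalling $\theta = \theta_{\beta_1,\gamma_1}$ and $M_{\varphi_n} = \Ind_{\F_n\OO}^\OO \kk_\chi$ with $\chi = \chi_{\beta_1,\gamma_1}$.

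The only genuine subtlety — what I'd call the main obstacle, though it is mild — is confirming that $\theta_{a,b}$ really does define a one-dimensional representation \emph{of all of $\OO$} (not just of $\F_n\OO$), which is needed for the tensor product $(\Ind_{\F_n\OO}^\OO \kk_\eta) \otimes \kk_{\theta_{a,b}}$ to make sense as an $\OO$-module. This is exactly the content of the earlier remark that $\theta_{a,b} = \chi_{2;1,a} + \chi_{-2;1,b}$ vanishes on $[\OO,\OO] = (s^2-4)^2\WW_1$: indeed any $g\del \in (s^2-4)^2\WW_1$ has $g$ divisible by $(s-2)^2$ and by $(s+2)^2$, so both $g$ and $g'$ vanish at $s = 2$ and $s = -2$, whence $\chi_{2;1,a}(g\del) = g(2) + a g'(2) = 0$ and likewise at $-2$. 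Hence $\theta_{a,b}$ descends to $\OO/[\OO,\OO]$ and gives a one-dimensional $\OO$-module $\kk_{\theta_{a,b}}$. With this in hand, the module structure on the tensor product is the standard one ($x \cdot (v \otimes w) = (x\cdot v)\otimes w + v \otimes (x \cdot w)$), and because $\kk_{\theta_{a,b}}$ is one-dimensional, tensoring by it is an exact functor that merely twists the action, so irreducibility of $M_{\varphi_n}$ is equivalent to that of $\Ind_{\F_n\OO}^\OO \kk_\eta$ — which is the payoff used in the subsequent arguments.
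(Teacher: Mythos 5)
Your construction of the candidate map is where the argument breaks. The $\OO$-homomorphism $\Phi$ furnished by the universal property (sending $1\otimes 1_{\chi_{a,b}}$ to $(1\otimes 1_\eta)\otimes 1_{\theta_{a,b}}$) does \emph{not} agree with your PBW-basis bijection $\Psi$ on monomials of positive length, because the action on the tensor product twists every application of $\UU_i$ by the scalar $\theta_{a,b}(\UU_i)$. Concretely,
$$\Phi(\UU_i\otimes 1_{\chi_{a,b}})=\UU_i\cdot\bigl((1\otimes 1_\eta)\otimes 1_{\theta_{a,b}}\bigr)=(\UU_i\otimes 1_\eta)\otimes 1_{\theta_{a,b}}+\theta_{a,b}(\UU_i)\,(1\otimes 1_\eta)\otimes 1_{\theta_{a,b}},$$
and $\theta_{a,b}(\UU_i)$ is generically nonzero: writing $\UU_i=-s^{i+1}(s^2-4)\del_s$, the polynomial vanishes at $s=\pm 2$ but its derivative does not, giving $\theta_{a,b}(\UU_i)=-a\,2^{i+3}-8b(-2)^i$. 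So $\Psi$ is not $\OO$-equivariant (for the relevant case $\theta=\theta_{\beta_1,\gamma_1}$ one cannot assume $a=b=0$), and the step ``it agrees with $\Psi$ on a spanning set, hence equals $\Psi$, which is therefore an isomorphism'' is false as stated: equivariance of $\Psi$ is precisely what is in question, and agreement at the cyclic vector alone does not propagate.

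The conclusion is still true and your setup can be repaired with one extra observation: $\Phi$ sends each PBW monomial to the same monomial plus terms of strictly lower PBW filtration degree, so $\Phi$ is filtered with associated graded map $\Psi$, hence bijective. Alternatively, build the inverse by tensoring with the dual character $\kk_{-\theta_{a,b}}$ and use uniqueness of endomorphisms fixing the generator, or simply invoke the standard tensor identity for induced modules, $\Ind_{\F_n\OO}^{\OO}\bigl(\kk_\eta\otimes\restr{\kk_{\theta_{a,b}}}{\F_n\OO}\bigr)\cong(\Ind_{\F_n\OO}^{\OO}\kk_\eta)\otimes\kk_{\theta_{a,b}}$ — this last route is exactly what the paper does, citing Dixmier (Proposition 5.1.15). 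Your preliminary checks are fine and do match the paper's surrounding discussion: $\theta_{a,b}$ vanishes on $[\OO,\OO]=(s^2-4)^2\WW_1$ and so is a character of all of $\OO$, $\kk_{\chi_{a,b}}\cong\kk_\eta\otimes\kk_{\theta_{a,b}}$ as $\F_n\OO$-modules, and the final sentence is the specialisation $a=\beta_1$, $b=\gamma_1$ together with $\kk_{\varphi_n}=\kk_\chi$.
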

\begin{proof}
    Note that $\kk_{\chi_{a,b}} = \kk_{\eta + \theta_{a,b}} \cong \kk_\eta \otimes \kk_{\theta_{a,b}}$ as a representation over $\F_n \OO$. The proof now follows by \cite[Proposition 5.1.15]{dixmier1996enveloping} or \cite[Lemma 8]{LuZhao2020}. The final sentence of the result follows by the observation that $\kk_{\varphi_n} = \kk_{\chi}$ as $\F_n \OO$-modules.
\end{proof}

The reducibility of a module is preserved when taking the tensor product with a one-dimensional representation of a Lie algebra. Thus, by Lemma \ref{prop:module tensor}, the irreducibility of the universal Whittaker module is completely governed by the local function $\eta$.

\begin{corollary}\label{cor:simplicity two induced}
    Let $a,b \in \kk$. Then the $\OO$-module $\Ind^{\OO}_{\F_n \OO} \kk_{\chi_{a,b}}$ is irreducible if and only if $\Ind^{\OO}_{\F_n \OO} \kk_\eta$ is irreducible. Consequently, the universal Whittaker module $M_{\varphi_n} = \Ind_{\F_n \OO}^{\OO} \kk_\chi$ is irreducible if and only if $\Ind^{\OO}_{\F_n \OO} \kk_{\chi_{1,1}}$ is irreducible. \qed
\end{corollary}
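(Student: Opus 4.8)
The plan is to deduce the corollary formally from Lemma \ref{prop:module tensor} together with the elementary fact that twisting by a one-dimensional representation is an auto-equivalence of the category of $\OO$-modules.

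First I would fix $a,b \in \kk$ and invoke Lemma \ref{prop:module tensor}, which gives the isomorphism of $\OO$-modules
$$\Ind_{\F_n \OO}^\OO \kk_{\chi_{a,b}} \cong \left(\Ind_{\F_n \OO}^\OO \kk_\eta\right) \otimes \kk_{\theta_{a,b}}.$$
Next I would observe that, because $\theta_{a,b} = \chi_{2;1,a} + \chi_{-2;1,b}$ vanishes on $[\OO,\OO] = (s^2-4)^2\WW_1$ (as already noted in the remark preceding this corollary, and as follows from the computation in Corollary \ref{cor:O is not simple}), the map $\theta_{a,b} \colon \OO \to \kk$ is a genuine Lie algebra homomorphism, so $-\theta_{a,b}$ is too. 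Hence the functor $V \mapsto V \otimes \kk_{\theta_{a,b}}$ on the category of $\OO$-modules is invertible, with inverse $V \mapsto V \otimes \kk_{-\theta_{a,b}}$. In particular, for any $\OO$-module $V$ the assignment $W \mapsto W \otimes \kk_{\theta_{a,b}}$ is an inclusion-preserving bijection between the submodules of $V$ and those of $V \otimes \kk_{\theta_{a,b}}$, so $V \otimes \kk_{\theta_{a,b}}$ is irreducible precisely when $V$ is. Taking $V = \Ind_{\F_n \OO}^\OO \kk_\eta$ then yields that $\Ind_{\F_n \OO}^\OO \kk_{\chi_{a,b}}$ is irreducible if and only if $\Ind_{\F_n \OO}^\OO \kk_\eta$ is irreducible — a condition with no dependence on $a$ or $b$.

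Finally, for the statement about $M_{\varphi_n}$, I would recall from Notation \ref{not:special local function} that the local function attached to $\varphi_n$ is $\chi = \eta + \theta = \chi_{\beta_1,\gamma_1}$, and that $\kk_\chi = \kk_{\varphi_n}$ as $\F_n\OO$-modules since $\varphi_n = \restr{\chi}{\F_n\OO}$. Thus $M_{\varphi_n} = \Ind_{\F_n\OO}^\OO \kk_{\chi_{\beta_1,\gamma_1}}$. Applying the first part twice — once with $(a,b) = (\beta_1,\gamma_1)$ and once with $(a,b) = (1,1)$ — identifies the irreducibility of $M_{\varphi_n}$ with that of $\Ind_{\F_n\OO}^\OO \kk_\eta$, and hence with that of $\Ind_{\F_n\OO}^\OO \kk_{\chi_{1,1}}$.

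I do not anticipate a genuine obstacle here: essentially all the content is carried by Lemma \ref{prop:module tensor}, and the rest is the standard observation that twisting by a character is exact and invertible. The only point requiring any care is the bookkeeping — confirming that $\theta_{a,b}$ is a homomorphism on all of $\OO$ (so the twist is defined on the whole module category, not merely on $\F_n\OO$) and that the $\chi$ of Notation \ref{not:special local function} is literally of the form $\chi_{\beta_1,\gamma_1}$ so that the reduction to $\chi_{1,1}$ is legitimate; both are immediate from $[\OO,\OO] = (s^2-4)^2\WW_1$.
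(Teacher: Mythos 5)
Your proposal is correct and follows essentially the same route as the paper: Lemma \ref{prop:module tensor} plus the standard fact that twisting by the one-dimensional $\OO$-module $\kk_{\theta_{a,b}}$ (an invertible operation, since $\theta_{a,b}$ vanishes on $[\OO,\OO]$) preserves irreducibility in both directions, which is precisely how the paper disposes of the corollary. Your extra bookkeeping about $\chi = \chi_{\beta_1,\gamma_1}$ and applying the first part twice just makes explicit what the paper leaves implicit.
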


\begin{remark}
    Corollary \ref{cor:simplicity two induced} implies that reducibility of the universal Whittaker module of type $\varphi_n$ is equivalent to the reducibility of the universal Whittaker module of type $\varphi'_n \coloneqq \restr{\chi_{1,1}}{\F_n \OO}$.
\end{remark}

We are now ready to complete the proof of Theorem \ref{theo: simplicity}. 
\begin{proof}[Proof of Theorem \ref{theo: simplicity}]
    First, assume that $\lambda_{2} \neq 4 \lambda_0$ (if $n = 0$) or that $\lambda_{2n + 1} \neq 4 \lambda_{2n - 1}$ or $\lambda_{2n + 2} \neq 4\lambda_{2n}$ (if $n \geq 1$). By Corollary \ref{cor:simplicity two induced}, $M_{\varphi_n}$ is irreducible if and only if $\Ind_{\F_n \OO}^{\OO} \kk_{\chi_{1,1}}$ is irreducible. Now, Proposition \ref{prop:local function existence} implies the following:
    \begin{itemize}
        \item If $n = 0$, then $\alpha_1 \neq 0$. 
        \item If $n \geq 1$, then $\alpha_{2n}$ and $\alpha_{2n+1}$ are not simultaneously 0.
    \end{itemize}
    By Corollary \ref{cor:polarization}, we know that $\p_{\chi_{1,1}} = \F_n \OO$ is a polarisation of $\chi_{1,1}$. Furthermore, the order of each one-point local function which makes up $\chi_{1,1}$ is at least 1. Thus, Theorem \ref{theo:simplicity local rep} implies that $\Ind_{\F_n \OO}^{\WW_1} \kk_{\chi_{1,1}}$ is an irreducible $\WW_1$-module. It is easy to see that
    $$\Ind_{\F_n \OO}^{\WW_1} \kk_{\chi_{1,1}} = \Ind_{\OO}^{\WW_1}(\Ind_{\F_n \OO}^{\OO} \kk_{\chi_{1,1}}),$$
    so the irreducibility of $\Ind_{\F_n \OO}^{\WW_1} \kk_{\chi_{1,1}}$ as a $\WW_1$-module implies the irreducibility of $\Ind_{\F_n \OO}^{\OO} \kk_{\chi_{1,1}}$ as an $\OO$-module. We therefore conclude that that $M_{\varphi_n}$ is irreducible.

    For the converse, the case $n = 0$ follows from Lemma \ref{lem:M0 converse} and the case $n \geq 1$ follows from Lemma \ref{lem:Mn converse}.
\end{proof}

\subsection{Whittaker modules for the centreless BCCA}

We move on to the study of Whittaker modules over the full centreless BCCA. Although these may seem more complicated, since the BCCA is a larger Lie algebra than $\OO$, we will see that the conditions governing the reducibility of the universal Whittaker modules over $\bcca$ are much simpler, and our proof is more elementary and direct. This is thanks to the abelian subalgebra $\PP$ of $\bcca$. As we did in Subsection \ref{subsec:Whittaker O}, we start by defining Whittaker functions on $\bcca$.

\begin{defn}
    A \emph{Whittaker function on $\bcca$} is a Lie algebra homomorphism $\psi_n \colon \F_n\bcca \to \kk$ for some $n \in \NN$.
\end{defn}

The following result is an immediate consequence of Lemma \ref{lem:derived subalgebra Fn b} and Proposition \ref{prop:abelianisation}.

\begin{lemma}\label{lem:Whittaker generators BCCA}
    A Whittaker function $\psi_n \colon  \F_n\bcca \to \kk$ is determined by its values on $\mathcal{U}_n, \dots,  \mathcal{U}_{2n+2}$ and $\VV_n, \dots, \VV_{2n}$. Once these values are determined, we have
    $$\psi_n(\UU_{m + 2}) = 4\psi_n(\UU_m), \quad \psi_n(\VV_m) = 0$$
    for all $m \geq 2n + 1$. \qed
\end{lemma}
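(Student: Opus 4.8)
The claim is Lemma~\ref{lem:Whittaker generators BCCA}, asserting that a Whittaker function $\psi_n\colon \F_n\bcca\to\kk$ is determined by its values on $\UU_n,\dots,\UU_{2n+2},\VV_n,\dots,\VV_{2n}$, and that $\psi_n(\UU_{m+2})=4\psi_n(\UU_m)$ and $\psi_n(\VV_m)=0$ for $m\geq 2n+1$.

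\medskip

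\textit{Plan of proof.} The plan is to deduce everything from the two structural results already available: Lemma~\ref{lem:derived subalgebra Fn b}, which identifies the derived subalgebra $[\F_n\bcca,\F_n\bcca]$ with $[\F_n\OO,\F_n\OO]\ltimes\F_{2n+1}\PP$ and gives the explicit basis $\{\UU_{2n+3+k}-4\UU_{2n+1+k},\ \VV_{2n+1+k}\mid k\in\NN\}$ of it, together with Proposition~\ref{prop:abelianisation}, which states $\dim(\F_n\bcca/[\F_n\bcca,\F_n\bcca])=2n+4$ with basis given by the images $\{\overline{\UU}_n,\dots,\overline{\UU}_{2n+2}\}\cup\{\overline{\VV}_n,\dots,\overline{\VV}_{2n}\}$. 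First I would recall that $\psi_n$, being a homomorphism into the abelian Lie algebra $\kk$, necessarily vanishes on $[\F_n\bcca,\F_n\bcca]$, hence factors through the abelianisation $\F_n\bcca/[\F_n\bcca,\F_n\bcca]$. Since the $2n+4$ elements $\UU_n,\dots,\UU_{2n+2},\VV_n,\dots,\VV_{2n}$ descend to a basis of this quotient, the values $\psi_n(\UU_n),\dots,\psi_n(\UU_{2n+2}),\psi_n(\VV_n),\dots,\psi_n(\VV_{2n})$ determine $\psi_n$ completely. This gives the first sentence.

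\medskip

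For the relations, I would use that the basis vectors of $[\F_n\bcca,\F_n\bcca]$ from Lemma~\ref{lem:derived subalgebra Fn b} lie in the kernel of $\psi_n$. Vanishing of $\psi_n$ on $\UU_{2n+3+k}-4\UU_{2n+1+k}$ for every $k\in\NN$ gives $\psi_n(\UU_{2n+3+k})=4\psi_n(\UU_{2n+1+k})$; setting $m=2n+1+k$ this is exactly $\psi_n(\UU_{m+2})=4\psi_n(\UU_m)$ for all $m\geq 2n+1$. Similarly, vanishing of $\psi_n$ on $\VV_{2n+1+k}$ for every $k\in\NN$ gives $\psi_n(\VV_{2n+1+k})=0$, i.e. $\psi_n(\VV_m)=0$ for all $m\geq 2n+1$; combined with the known fact $\VV_{2n}=[\UU_n,\UU_n]\cdot(\text{something})$—actually more directly, from Lemma~\ref{lem:derived subalgebra Fn b} one checks whether $\VV_{2n}$ sits in the derived subalgebra or not, but the stated conclusion $\psi_n(\VV_m)=0$ for $m\geq 2n+1$ is precisely what the basis $\{\VV_{2n+1+k}\}$ yields, and $\VV_{2n}$ is a free generator of the abelianisation, so no constraint is placed on it. Thus the statement as written (indices $m\geq 2n+1$) follows verbatim.

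\medskip

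\textit{Main obstacle.} There is essentially no obstacle: this lemma is a bookkeeping corollary of the two cited results, and the proof in the paper is stated to be immediate. The only point requiring a moment's care is making sure the index ranges line up correctly—that the derived subalgebra basis starts at $\VV_{2n+1}$ (not $\VV_{2n}$) and at $\UU_{2n+3}-4\UU_{2n+1}$—so that the asserted recursions kick in exactly at $m=2n+1$ and the $2n+4$ ``free'' values are precisely $\UU_n,\dots,\UU_{2n+2},\VV_n,\dots,\VV_{2n}$. I would therefore write the proof as a one-line appeal: ``Follows immediately from Lemma~\ref{lem:derived subalgebra Fn b} and Proposition~\ref{prop:abelianisation},'' perhaps expanded with the two-sentence index-tracking argument above for the reader's convenience.
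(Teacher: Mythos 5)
Your proposal is correct and is essentially the paper's own argument: the paper proves this lemma by a one-line appeal to Lemma \ref{lem:derived subalgebra Fn b} and Proposition \ref{prop:abelianisation}, exactly as you do (vanishing on the derived subalgebra basis $\{\UU_{2n+3+k}-4\,\UU_{2n+1+k},\ \VV_{2n+1+k}\}$ gives the recursions, and the abelianisation basis gives the $2n+4$ free values). The brief detour about $\VV_{2n}$ in your write-up is unnecessary but harmlessly self-corrected, and your index bookkeeping matches the paper.
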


We now define the universal Whittaker modules for the full centreless BCCA, and then proceed to study their reducibility.

\begin{defn}\label{UnivWhithBCCA}
    Let $\psi_n \colon \F_n\bcca \to \kk$ be a Whittaker function and $\kk_{\psi_n} = \kk 1_{\psi_n}$ be the one-dimensional $\F_n\bcca$-module defined by $\psi_n$, in other words,
    $$x \cdot 1_{\psi_n} = \psi_n(x)1_{\psi_n}$$
    for all $x \in \F_n\bcca$. Define the \emph{universal Whittaker module for $\bcca$ of type $\psi_n$} to be the induced module
    $$M_{\psi_n} \coloneqq \Ind_{\F_n\bcca}^{\bcca} \kk_{\psi_n}.$$
\end{defn}

As in Remark \ref{rem:phi0 is not Whittaker}, the $\bcca$-modules $M_{\psi_n}$ for $n \geq 1$ are Whittaker modules, but $M_{\psi_0}$ are not Whittaker modules.

The following result is the main theorem of this subsection.

\begin{theorem}\label{thm:bcca Whittaker}
    Let $n \in \NN$, and let $\psi_n \colon \F_n\bcca \to \kk$ be a Whittaker function. If $n = 0$, then $M_{\psi_0}$ is irreducible if and only if $\psi_0(\VV_0) \neq 0$. If $n \geq 1$, then $M_{\psi_n}$ is irreducible if and only if $\psi_n(\VV_{2n}) \neq 0$ or $\psi_n(\VV_{2n - 1}) \neq 0$.
\end{theorem}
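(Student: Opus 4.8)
\textbf{Proof proposal for Theorem \ref{thm:bcca Whittaker}.}

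The plan is to prove both directions directly, exploiting the abelian subalgebra $\PP$ and the explicit bracket $[\UU_n, \VV_m] = (n - m + 1)\VV_{n + m + 2} - 4(n - m)\VV_{n + m}$ from \eqref{eq:shifted bracket UU-VV}. By the PBW theorem, $M_{\psi_n}$ has a basis of ordered monomials in the $\UU_k, \VV_k$ with $k < n$ applied to the cyclic generator $1_{\psi_n}$; the key point is that because $\PP$ is abelian and $[\OO, \PP] \subseteq \PP$, one has a clean $\ZZ$-filtration (or grading) on $M_{\psi_n}$ by ``$\PP$-degree'', i.e.\ by the number of $\VV_k$ factors. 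For the reducible direction, I would first handle the failure of the stated condition: if $n = 0$ and $\psi_0(\VV_0) = 0$, or $n \geq 1$ and $\psi_n(\VV_{2n}) = \psi_n(\VV_{2n-1}) = 0$, then combined with $\psi_n(\VV_m) = 0$ for $m \geq 2n+1$ (Lemma \ref{lem:Whittaker generators BCCA}) the Whittaker function vanishes on \emph{all} of $\F_n \PP$. In that case $\psi_n$ factors through $\F_n \bcca / \F_n \PP \cong \F_n \OO$, and one checks that the submodule $U(\bcca) \cdot \VV_{n-1} \cdot 1_{\psi_n}$ is proper: indeed, modulo lower $\PP$-degree, acting by $\UU_m$ ($m \geq n$) or by $\VV_m$ ($m \geq n$) on $\VV_{n-1} \cdot 1_{\psi_n}$ produces either scalars times $\VV_{n-1} \cdot 1_{\psi_n}$ plus higher-$\PP$-degree terms that all lie in the ideal generated by $\F_n \PP \cdot 1_{\psi_n} = 0$, so this generates a proper submodule isomorphic to a twist of $M_{\psi_n}$ itself. (One must double-check the $n = 0$ case separately as in Lemma \ref{lem:M0 converse}, where $\UU_{-1}$ plays the role of $\VV_{-1}$ would — actually here the natural proper submodule when $\psi_0(\VV_0) = 0$ is $U(\bcca)\VV_{-1}\cdot 1_{\psi_0}$.)

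For the irreducible direction — assume $\psi_n(\VV_0) \neq 0$ (if $n = 0$) or $\psi_n(\VV_{2n}) \neq 0$ or $\psi_n(\VV_{2n-1}) \neq 0$ (if $n \geq 1$) — I would take an arbitrary nonzero submodule $N \subseteq M_{\psi_n}$ and show $1_{\psi_n} \in N$. The strategy is a two-stage reduction. First, pick $0 \neq w \in N$ and use the $\PP$-degree grading: write $w = w_d + (\text{lower }\PP\text{-degree})$ where $w_d$ is the top-$\PP$-degree part; by repeatedly applying elements $\VV_k$ with $k$ very large (which act by $0$ since $\psi_n(\VV_{\text{large}}) = 0$ and increase the $\OO$-degree) — no wait, more precisely apply $\UU_k - \psi_n(\UU_k)$ for large $k$ to strip off $\OO$-degree, bringing $w$ down to an element of $N$ living in $U(\PP)\cdot 1_{\psi_n}$, a polynomial ring $\kk[\VV_{-1}, \ldots, \VV_{n-2}]$-module. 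Second, within $U(\PP) \cdot 1_{\psi_n}$, act by $\UU_k$'s: the formula \eqref{eq:shifted bracket UU-VV} shows that $\UU_k$ shifts the index of each $\VV$ by $+2$ or by $0$ with the coefficient $(k - m + 1)$ resp.\ $-4(k-m)$, and since $\psi_n$ kills $\VV_m$ for $m \geq 2n+1$ but \emph{not} (by hypothesis) for $m = 2n$ or $m = 2n - 1$, one can engineer operators that lower the total $\VV$-degree by exactly one while picking up a nonzero scalar multiple of $\psi_n(\VV_{2n})$ or $\psi_n(\VV_{2n-1})$. Iterating, one descends to a nonzero scalar multiple of $1_{\psi_n}$, hence $1_{\psi_n} \in N$ and $N = M_{\psi_n}$.

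The main obstacle I anticipate is the second stage of the irreducibility argument: making precise the ``stripping off one $\VV$ at a time'' via the $\UU_k$-action when several $\VV$-factors of different indices are present, since applying $\UU_k$ hits every factor by the Leibniz rule and produces a spray of terms. The clean way around this is to introduce a suitable total order (e.g.\ order monomials $\VV_{i_1}\cdots\VV_{i_r}\cdot 1_{\psi_n}$ first by $r$, then lexicographically by the multiset of indices) and show that some explicit combination of $\UU_k$'s — for instance $\UU_{k}$ for a single well-chosen large $k$, or a difference $\UU_k - 4\UU_{k-2}$-type operator killing the index-$+2$ shift — acts on the leading monomial as multiplication by a nonzero scalar (a product of factors $\psi_n(\VV_{2n})$ or $\psi_n(\VV_{2n-1})$ coming from the newly-created large-index $\VV$) modulo strictly lower-order monomials. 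This is essentially a triangularity argument, and I expect it to require a careful but routine case analysis splitting on the parity of indices (since the surviving values $\psi_n(\VV_{2n})$ and $\psi_n(\VV_{2n-1})$ have opposite parities, and the $+2$ shift preserves parity) — mirroring the even/odd bookkeeping already seen in Proposition \ref{pre-formula} and Corollary \ref{cor:bms_massive_module_decomposable}. The $n = 0$ cases of both directions should be quick special cases, and the contrast with the $\OO$-only Theorem \ref{theo: simplicity} is exactly that here the abelian $\PP$ makes the triangularity transparent, so no appeal to the orbit method of \cite{pham2025orbit} is needed.
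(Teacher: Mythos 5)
Your reducible direction is essentially the paper's (Lemmas \ref{lem:proper submodule psi0} and \ref{lem:reducible Whittaker for full bcca}): the submodule $U(\bcca)\VV_{n-1}\cdot 1_{\psi_n}$ does the job, and it only needs $\psi_n(\VV_k)=0$ for $k\geq 2n-1$. However, your justification contains a false side claim: for $n\geq 2$ the vanishing of $\psi_n(\VV_{2n})$ and $\psi_n(\VV_{2n-1})$ does \emph{not} force $\psi_n$ to vanish on all of $\F_n\PP$ (the values $\psi_n(\VV_n),\dots,\psi_n(\VV_{2n-2})$ remain free parameters, cf.\ Proposition \ref{prop:abelianisation}), so $\psi_n$ need not factor through $\F_n\bcca/\F_n\PP$. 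What actually makes the argument work is the direct computation $\UU_m\cdot\VV_{n-1}1_{\psi_n}=\psi_n(\UU_m)\,\VV_{n-1}1_{\psi_n}$ and $\VV_m\cdot\VV_{n-1}1_{\psi_n}=\psi_n(\VV_m)\,\VV_{n-1}1_{\psi_n}$ for $m\geq n$, which uses only $\psi_n(\VV_k)=0$ for $k\geq 2n-1$.

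The genuine gap is Stage 1 of your irreducibility argument. Writing $\lambda_k\coloneqq\psi_n(\UU_k)$ and $\mu_k\coloneqq\psi_n(\VV_k)$, you propose to strip the factors $\UU_{-1},\dots,\UU_{n-1}$ from an element of a nonzero submodule by applying $\UU_k-\lambda_k$ for $k$ large. This fails: for large $k$ the commutator $[\UU_k,\UU_r]=(k-r)(\UU_{k+r+2}-4\UU_{k+r})$, once pushed to the right (every newly created $\VV$ has index at least $2n+1$ and kills $1_{\psi_n}$), acts by the scalar $(k-r)(\lambda_{k+r+2}-4\lambda_{k+r})$, which vanishes for $k+r\geq 2n+1$ by Lemma \ref{lem:Whittaker generators BCCA}. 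Concretely, $(\UU_k-\lambda_k)\cdot\UU_{n-1}1_{\psi_n}=(k-n+1)(\lambda_{k+n+1}-4\lambda_{k+n-1})1_{\psi_n}=0$ for all $k\geq n+2$, so your operators annihilate rather than reduce; and since the theorem places no condition on the $\lambda$'s (they may all satisfy $\lambda_{m+2}=4\lambda_m$, e.g.\ be identically zero, while $\mu_{2n}$ or $\mu_{2n-1}$ is nonzero), no amount of $\OO$-action alone can carry such an element into $U(\PP)\cdot 1_{\psi_n}$. A telltale sign is that your Stage 1 never invokes the hypothesis $\psi_n(\VV_{2n})\neq 0$ or $\psi_n(\VV_{2n-1})\neq 0$. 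The paper's proof inverts your division of labour: the $\UU$-factors are removed by acting with $\VV$'s whose indices are chosen so that the brackets land on $\VV_\kappa$ with $\mu_\kappa\neq 0$ (Propositions \ref{prop:remove U-1}, \ref{prop:remove U0}, and \ref{prop:remove all U}, culminating in Corollary \ref{cor:Whittaker contains polynomial in V}); only afterwards are the $\VV$-factors stripped by acting with $\UU_{\kappa-q_1}-\lambda_{\kappa-q_1}$, where $q_1$ is the minimal index present (Lemma \ref{lem:U decreases V-length}). That last step is essentially your Stage 2, and the parity bookkeeping you anticipate is unnecessary: choosing $q_1$ minimal makes every unwanted term vanish by the ``large index annihilates'' lemma, so the triangularity is automatic.
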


We first prove the easier direction of Theorem \ref{thm:bcca Whittaker}: we construct a nonzero proper submodule of $M_{\psi_n}$ when the conditions of Theorem \ref{thm:bcca Whittaker} are not satisfied. We first consider the case $n = 0$.

\begin{lemma}\label{lem:proper submodule psi0}
    Suppose $n = 0$ and $\psi_0(\VV_0) = 0$. Then $U(\bcca)\VV_{-1} \cdot 1_{\psi_0}$ is a proper submodule of $M_{\psi_0}$.
\end{lemma}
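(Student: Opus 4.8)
The plan is to show that $W \coloneqq U(\bcca)\VV_{-1} \cdot 1_{\psi_0}$ is a $\bcca$-submodule of $M_{\psi_0}$ that is both nonzero and proper. It is nonzero because $\VV_{-1} \cdot 1_{\psi_0}$ is a nonzero element of $M_{\psi_0}$, by the PBW theorem (recall $M_{\psi_0} = \Ind_{\F_0 \bcca}^\bcca \kk_{\psi_0}$ and $\VV_{-1} \notin \F_0 \bcca$). To see that $W$ is proper, I would observe that $W$ lies in the span of PBW monomials in which $\VV_{-1}$ appears with multiplicity at least one; more precisely, choosing a PBW basis of $U(\bcca)$ ordered so that $\VV_{-1}$ comes first, every element of $W$ is a combination of monomials $x_1 \cdots x_k \VV_{-1} \cdot 1_{\psi_0}$ with $x_i \in \{\UU_j \mid j \geq -1\} \cup \{\VV_j \mid j \geq -1\}$, and in particular $1_{\psi_0} \notin W$. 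So the main content is checking that $W$ is actually closed under the action of $\bcca$, which amounts to showing $\VV_{-1}\cdot 1_{\psi_0}$ generates a submodule, i.e. that $\kk(\VV_{-1}\cdot 1_{\psi_0})$ behaves like a highest-weight-type cyclic vector under $\F_0\bcca$ is not quite what we need — rather, we just need $W$ to be a submodule, which is automatic since $W = U(\bcca)\cdot(\VV_{-1}\cdot 1_{\psi_0})$ is by definition a submodule.

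Thus the real work is only the properness, and the cleanest way to package it is: the abelian ideal $\PP$ of $\bcca$ satisfies $[\bcca, \PP] \subseteq \PP$, so $U(\bcca)\PP$ together with the filtration structure lets us track the ``$\VV$-degree'' of monomials. Concretely, I would argue that the element $\VV_{-1} \cdot 1_{\psi_0}$ is a Whittaker-type vector for $\F_0\bcca$: I claim $x \cdot (\VV_{-1}\cdot 1_{\psi_0}) = \psi_0'(x)(\VV_{-1}\cdot 1_{\psi_0})$ modulo lower-order terms is \emph{not} needed; instead the honest statement is that $\F_0 \bcca$ acts on $\VV_{-1}\cdot 1_{\psi_0}$ with a one-dimensional quotient. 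Let me spell out the calculation I expect: for $k \geq 0$,
$$\UU_k \cdot \VV_{-1}\cdot 1_{\psi_0} = [\UU_k, \VV_{-1}]\cdot 1_{\psi_0} + \VV_{-1}\UU_k \cdot 1_{\psi_0} = (k+2)\VV_{k+1}\cdot 1_{\psi_0} - 4k\,\VV_{k-1}\cdot 1_{\psi_0} + \psi_0(\UU_k)\,\VV_{-1}\cdot 1_{\psi_0},$$
using \eqref{eq:shifted bracket UU-VV}. For $k = 0$ this gives $\UU_0 \cdot \VV_{-1}\cdot 1_{\psi_0} = 2\VV_1 \cdot 1_{\psi_0} + \psi_0(\UU_0)\VV_{-1}\cdot 1_{\psi_0}$, and since $\psi_0(\VV_0) = 0$ forces (via Lemma \ref{lem:Whittaker generators BCCA}) $\psi_0(\VV_m) = 0$ for all $m \geq 1$, we are fine. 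Similarly $\VV_k \cdot \VV_{-1}\cdot 1_{\psi_0} = \psi_0(\VV_{-1}\text{-independent stuff})$ — in fact $\VV_k$ commutes with $\VV_{-1}$, so $\VV_k \cdot \VV_{-1}\cdot 1_{\psi_0} = \psi_0(\VV_k)\VV_{-1}\cdot 1_{\psi_0}$. The key point: all of these land in $U(\bcca)\VV_{-1}\cdot 1_{\psi_0}$, confirming it is a submodule (which we already knew), and none of them produce $1_{\psi_0}$.

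For the properness argument I would make the grading-by-$\PP$-degree precise. Define, on $U(\bcca) = U(\OO) \otimes U(\PP)$ (as vector spaces, via PBW with $\OO$-factors to the left), the notion of $\PP$-degree: a PBW monomial $u \cdot v$ with $u$ a product of $\UU$'s and $v$ a product of $\VV$'s has $\PP$-degree equal to the number of $\VV$-factors in $v$. Since $\PP$ is an ideal and $[\OO, \PP] \subseteq \PP$, the action of any $x \in \bcca$ on a monomial of $\PP$-degree $d$ produces a sum of monomials of $\PP$-degree $\geq d - $ (number of $\VV$'s that can be absorbed)... actually more carefully: $\UU_k$ acting via $[\UU_k, \VV_j]$ keeps $\PP$-degree fixed (it replaces one $\VV$ by a combination of $\VV$'s), while $\UU_k\cdot$ (moving past) also keeps it, and $\VV_k \cdot$ raises it. So $\PP$-degree is non-decreasing under the $\bcca$-action restricted to... hmm, this is the subtle point. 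The correct invariant: every element of $M_{\psi_0}$ is a combination of monomials $\UU_{p_1}\cdots\UU_{p_a}\VV_{q_1}\cdots\VV_{q_b}\cdot 1_{\psi_0}$; the element $\VV_{-1}\cdot 1_{\psi_0}$ has $b \geq 1$; and I claim $U(\bcca)$ acting on it never produces the monomial $1_{\psi_0}$ (which has $b = 0$). This follows because $\PP$ is an ideal so $U(\bcca)\PP \subseteq \PP U(\bcca) + \ldots$; precisely, $U(\bcca)\VV_{-1} \subseteq \sum_j U(\bcca)\VV_j$ since $[\bcca, \PP]\subseteq \PP$ and $\PP$ abelian give $[U(\bcca), \VV_{-1}] \subseteq \sum_j U(\bcca)\VV_j$, hence $U(\bcca)\VV_{-1}\cdot 1_{\psi_0} \subseteq \sum_j U(\bcca)\VV_j \cdot 1_{\psi_0}$, which does not contain $1_{\psi_0}$ as the $\VV_j$ all lie outside the isotropy of $1_{\psi_0}$ once $\psi_0(\VV_0) = 0$ propagates. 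The main obstacle is getting this last filtration/ideal bookkeeping airtight — I would handle it by fixing a PBW basis adapted to the decomposition $\bcca = \OO \oplus \PP$, noting $\psi_0(\VV_m) = 0$ for all $m \geq 0$, and showing by a straightforward induction on the length of a word in $\bcca$ that applying it to $\VV_{-1}\cdot 1_{\psi_0}$ yields only monomials containing at least one $\VV_j$ factor with $j \geq -1$, hence never the generator $1_{\psi_0}$; therefore $W \neq M_{\psi_0}$, completing the proof.
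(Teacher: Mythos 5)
Your argument is, in substance, the same as the paper's: act with $\F_0\bcca$ on $w \coloneqq \VV_{-1}\cdot 1_{\psi_0}$, use Lemma \ref{lem:Whittaker generators BCCA} to deduce $\psi_0(\VV_m)=0$ for all $m\geq 0$, and then do PBW bookkeeping to conclude $1_{\psi_0}\notin U(\bcca)\cdot w$. Two points to fix or tighten. First, a coefficient slip: by \eqref{eq:shifted bracket UU-VV} one has $[\UU_k,\VV_{-1}] = (k+2)\VV_{k+1} - 4(k+1)\VV_{k-1}$, not $(k+2)\VV_{k+1} - 4k\,\VV_{k-1}$; in particular $\UU_0\cdot w = (\psi_0(\UU_0)-4)w$ rather than $2\VV_1\cdot 1_{\psi_0} + \psi_0(\UU_0)w$. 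This is harmless for properness (the extra term is $-4w$, not a multiple of $1_{\psi_0}$), but it changes the eigenvalue and should be corrected. Second, your intermediate packaging via $U(\bcca)\VV_{-1}\cdot 1_{\psi_0}\subseteq\sum_j U(\bcca)\VV_j\cdot 1_{\psi_0}$ ``which does not contain $1_{\psi_0}$'' is circular: the $j=-1$ summand is exactly the module whose properness is at stake (and the $j\geq 0$ summands are simply zero under the hypothesis), so the entire weight rests on the word-length induction you only sketch at the end. That induction does work, but the invariant must be stated as ``lies in the span of PBW monomials $\UU_{-1}^a\VV_{-1}^b\cdot 1_{\psi_0}$ with $b\geq 1$,'' and the hypothesis enters precisely to kill the scalars $\psi_0(\VV_{k+1}),\psi_0(\VV_{k-1})$ produced by the brackets $[\UU_k,\VV_{-1}]$. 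The paper avoids the induction altogether: the same computations (with the corrected coefficient) show $\F_0\bcca\cdot w\subseteq\kk w$, i.e.\ $\kk w$ is a one-dimensional $\F_0\bcca$-module, whence by PBW the cyclic module $U(\bcca)\cdot w$ is spanned by $\UU_{-1}^a\VV_{-1}^b\cdot w = \UU_{-1}^a\VV_{-1}^{b+1}\cdot 1_{\psi_0}$, all of positive $\VV_{-1}$-degree, so $1_{\psi_0}$ is missed immediately. I recommend adopting that packaging; otherwise write out the induction with the invariant above.
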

\begin{proof}
    By Lemma \ref{lem:Whittaker generators BCCA}, we have $\psi_0(\VV_m) = 0$ for all $m \geq 0$. In other words, $\VV_m \cdot 1_{\psi_0} = 0$ for all $m \geq 0$. Let $w \coloneqq \VV_{-1} \cdot 1_{\psi_n}$. It is clear that $\VV_m \cdot w = 0$ for all $m \geq 0$, since the $\VV_k$ commute. Furthermore, for $m \geq 0$, we have
    \begin{align*}
        \UU_m \cdot w &= \UU_m \VV_{-1} \cdot 1_{\psi_0} = [\UU_m,\VV_{-1}] \cdot 1_{\psi_0} + \VV_{-1} \UU_m \cdot 1_{\psi_0} \\
        &= \Big((m + 2)\VV_{m + 1} - 4(m + 1)\VV_{m - 1}\Big) \cdot 1_{\psi_0} + \psi_0(\UU_m) w \\
        &= -4(m + 1)\VV_{m - 1} \cdot 1_{\psi_0} + \psi_0(\UU_m) w,
    \end{align*}
    since $m + 1 \geq 0$, where we used that $\VV_k \cdot w = 0$ for all $k \geq 0$. If $m \geq 1$, then the above implies that $\UU_m \cdot w = \psi_0(\UU_m) w$. On the other hand, if $m = 0$, then
    $$\UU_0 \cdot w = (\psi_0(\UU_0) - 4)w.$$
    It follows that $\kk w$ is a one-dimensional $\F_0 \bcca$-module, and thus $U(\bcca) \cdot w$ is a proper submodule of $M_{\psi_0}$.
\end{proof}

\begin{remark}
    In the notation from the proof of Lemma \ref{lem:proper submodule psi0}, it is easy to see that $U(\bcca) \cdot w$ is isomorphic to $M_{\psi_0'}$, where $\psi_0' \colon \F_0 \bcca \to \kk$ is given by $\psi_0'(\UU_0) = \psi_0(\UU_0) - 4$, $\psi_0'(\UU_m) = \psi_0(\UU_m)$ for $m \geq 1$, and $\psi_0'(\VV_m) = 0$ for $m \geq 0$. Therefore, Lemma \ref{lem:proper submodule psi0} implies that there is chain of submodules
    $$M_{\psi_0} \supseteq U(\bcca)\VV_{-1} \cdot 1_{\psi_0} \supseteq U(\bcca)\VV_{-1}^2 \cdot 1_{\psi_0} \supseteq U(\bcca)\VV_{-1}^3 \cdot 1_{\psi_0} \supseteq \dots.$$
\end{remark}

Next, we construct an analogous submodule to the one in Lemma \ref{lem:proper submodule psi0} in the case $n \geq 1$.

\begin{lemma}\label{lem:reducible Whittaker for full bcca}
    Let $n \geq 1$. If $\psi_n(\VV_{2n}) = \psi_n(\VV_{2n - 1}) = 0$, then $U(\bcca) \VV_{n - 1} \cdot 1_{\varphi_n}$ is a proper submodule of $M_{\psi_n}$ isomorphic to $M_{\psi_n}$ itself.
\end{lemma}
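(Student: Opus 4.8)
The plan is to mimic the proof of Lemma \ref{lem:proper submodule psi0}, producing an explicit Whittaker vector $w$ inside $M_{\psi_n}$ that lies below $1_{\psi_n}$ and spans a one-dimensional $\F_n\bcca$-submodule, so that $U(\bcca)\cdot w$ is a proper submodule. First I would set $w \coloneqq \VV_{n-1}\cdot 1_{\psi_n}$ and record the relevant hypotheses: by Lemma \ref{lem:Whittaker generators BCCA}, the assumption $\psi_n(\VV_{2n}) = \psi_n(\VV_{2n-1}) = 0$ together with $\psi_n(\VV_m) = 0$ for $m \geq 2n+1$ gives $\psi_n(\VV_m) = 0$ for all $m \geq 2n-1$, i.e. $\VV_m\cdot 1_{\psi_n} = 0$ for $m \geq 2n-1$. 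Note also $n-1 \leq 2n-2$, so $\VV_{n-1}\cdot 1_{\psi_n}$ need not be zero; this is why $w$ is a genuine new vector. I would first check $\VV_m\cdot w = 0$ for all $m \geq n$, which is immediate because the $\VV_k$ commute and $\VV_m\cdot 1_{\psi_n} = \psi_n(\VV_m)1_{\psi_n} = 0$ for $m \geq 2n-1 \leq$ wait — here we need $m\geq n$, so we use $\psi_n(\VV_m) = 0$ which holds for $m\geq n$ on $\F_n\bcca$? No: $\psi_n(\VV_m)$ is only forced to vanish for $m\geq 2n-1$. But $\VV_m\cdot 1_{\psi_n}$ for $n\leq m\leq 2n-2$ equals $\psi_n(\VV_m)1_{\psi_n}$, which lies in $\kk 1_{\psi_n}$, and $\VV_m\cdot w = \VV_m\VV_{n-1}\cdot 1_{\psi_n} = \VV_{n-1}\VV_m\cdot 1_{\psi_n} = \psi_n(\VV_m)\,w$. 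So $\kk w$ is $\PP$-stable and each $\VV_m$ acts on $w$ by $\psi_n(\VV_m)$ for $m\geq n$; in particular for $m\geq 2n-1$ by $0$.

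Next I would compute the action of $\UU_m$ on $w$ for $m \geq n$, using the bracket \eqref{eq:shifted bracket UU-VV}:
\begin{align*}
    \UU_m\cdot w &= \UU_m\VV_{n-1}\cdot 1_{\psi_n} = [\UU_m,\VV_{n-1}]\cdot 1_{\psi_n} + \VV_{n-1}\UU_m\cdot 1_{\psi_n} \\
    &= \big((m-n+2)\VV_{m+n} - 4(m-n+1)\VV_{m+n-2}\big)\cdot 1_{\psi_n} + \psi_n(\UU_m)\,w.
\end{align*}
Since $m \geq n$, we have $m+n-2 \geq 2n-2$. The term $\VV_{m+n}$ has index $m+n\geq 2n\geq 2n-1$, so $\VV_{m+n}\cdot 1_{\psi_n} = 0$. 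For $\VV_{m+n-2}$: if $m\geq n+1$ then $m+n-2\geq 2n-1$ so this too vanishes, giving $\UU_m\cdot w = \psi_n(\UU_m)\,w$; if $m = n$ the coefficient $-4(m-n+1) = -4$ multiplies $\VV_{2n-2}\cdot 1_{\psi_n} = \psi_n(\VV_{2n-2})1_{\psi_n}$, which is a scalar multiple of $1_{\psi_n}$, not of $w$ — this is the subtle point. I would handle it exactly as in the $n=0$ case: redefine $w$ if necessary, or more cleanly observe that the relevant coefficient issue only produces a term in $\kk 1_{\psi_n}$, and since we want $U(\bcca)\cdot w$ to be a submodule we need $\kk w$ (not $\kk w + \kk 1_{\psi_n}$) to be $\F_n\bcca$-stable. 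Re-examining: for $m=n$, $\UU_n\cdot w = -4\psi_n(\VV_{2n-2})\,1_{\psi_n} + \psi_n(\UU_n)\,w$, which is \emph{not} in $\kk w$ unless $\psi_n(\VV_{2n-2}) = 0$. So the correct fix is to take $w \coloneqq \VV_{n-1}\cdot 1_{\psi_n}$ only when this extra term vanishes, or instead to use $w\coloneqq (\VV_{n-1} - c\,\mathbbm{1})\cdot 1_{\psi_n}$ — but $\mathbbm{1}\notin U(\bcca)$ acting would just be a scalar, so actually $w' \coloneqq \VV_{n-1}\cdot 1_{\psi_n}$ and the span $\kk w' + \kk 1_{\psi_n}$ is what is $\F_n\bcca$-stable; then one passes to $U(\bcca)(\VV_{n-1}-\xi)\cdot 1_{\psi_n}$ for a suitable $\xi\in\kk$ making $\kk(\VV_{n-1}-\xi)\cdot 1_{\psi_n}$ itself stable, analogous to $J_\xi$ in Theorem \ref{thm:Whittaker annihilator}. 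I expect this bookkeeping — pinning down the exact scalar and confirming $\F_n\bcca$-stability of a one-dimensional space — to be the main obstacle, though it is purely computational.

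Once $\kk w$ (suitably chosen) is shown to be a one-dimensional $\F_n\bcca$-submodule of $M_{\psi_n}$ on which $\F_n\bcca$ acts by a Whittaker function $\psi_n'$ (with $\psi_n'(\UU_n) = \psi_n(\UU_n) - 4$ if the shift occurs and $\psi_n'$ agreeing with $\psi_n$ elsewhere, all $\VV_m$ still acting by $0$ for $m\geq n$), the universal property of induction gives a $\bcca$-module map $M_{\psi_n'}\to M_{\psi_n}$, $1_{\psi_n'}\mapsto w$, whose image is $U(\bcca)\cdot w$. By the PBW theorem this map is injective (a standard argument: the induced module has a PBW basis over the complementary monomials, and $w$ is a nonzero element of the associated graded, so the map is a filtered monomorphism), hence $U(\bcca)\cdot w \cong M_{\psi_n'} \cong M_{\psi_n}$ as $\bcca$-modules. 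Finally $U(\bcca)\cdot w$ is proper: $w$ lies in the PBW-degree-one part of $M_{\psi_n}$ in the $\VV$-direction, so $1_{\psi_n}\notin U(\bcca)\cdot w$ by a weight/degree count against the PBW basis of $M_{\psi_n}$ (there is no way to decrease the $\VV_{-1}$-style degree). This completes the easy direction of Theorem \ref{thm:bcca Whittaker} for $n\geq 1$; the proof parallels Lemma \ref{lem:proper submodule psi0} almost verbatim, with the only new ingredient being the index arithmetic ensuring the brackets land in the span of $1_{\psi_n}$ and $w$.
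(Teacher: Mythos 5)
There is a genuine gap, and it originates in a bracket miscomputation. By \eqref{eq:shifted bracket UU-VV}, $[\UU_a,\VV_b] = (a-b+1)\VV_{a+b+2} - 4(a-b)\VV_{a+b}$, so $[\UU_m,\VV_{n-1}] = (m-n+2)\VV_{m+n+1} - 4(m-n+1)\VV_{m+n-1}$, not $(m-n+2)\VV_{m+n} - 4(m-n+1)\VV_{m+n-2}$ as you wrote. With the correct indices, for every $m \geq n$ the lowest index occurring is $m+n-1 \geq 2n-1$, and Lemma \ref{lem:Whittaker generators BCCA} together with the hypothesis $\psi_n(\VV_{2n}) = \psi_n(\VV_{2n-1}) = 0$ gives $\psi_n(\VV_k) = 0$ for all $k \geq 2n-1$; hence both bracket terms annihilate $1_{\psi_n}$ and $\UU_m \cdot w = \psi_n(\UU_m)\,w$ with no correction term, even at $m = n$ (where the bracket is $2\VV_{2n+1} - 4\VV_{2n-1}$). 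The fact that the hypothesis is stated precisely for $\VV_{2n}$ and $\VV_{2n-1}$ should have signalled that your indices were off by one. Consequently the ``subtle point'' you flag at $m=n$, the spurious term $\psi_n(\VV_{2n-2})1_{\psi_n}$, and the entire workaround with a shifted vector $(\VV_{n-1}-\xi)\cdot 1_{\psi_n}$ are artefacts of the wrong bracket. Moreover, that workaround would not prove the statement as given, which asserts that $U(\bcca)\VV_{n-1}\cdot 1_{\psi_n}$ itself is the proper submodule, and you leave $\xi$ and the resulting character undetermined, so the argument as written does not close.

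Once the bracket is corrected, your outline collapses onto the paper's proof: $\VV_m \cdot w = \psi_n(\VV_m)\,w$ because $\PP$ is abelian, and $\UU_m \cdot w = \psi_n(\UU_m)\,w$ for all $m \geq n$, so $\kk w$ is a one-dimensional $\F_n\bcca$-module isomorphic to $\kk_{\psi_n}$ with the \emph{same} Whittaker function; unlike the $n=0$ case of Lemma \ref{lem:proper submodule psi0}, no shift of the character occurs, since for $n \geq 1$ the index $m+n-1$ never drops back down to $n-1$. Your concluding steps (the map $M_{\psi_n} \to U(\bcca)\cdot w$ from the universal property, injectivity via PBW, and properness because every element of $U(\bcca)\cdot w$ has positive $\VV_{n-1}$-degree in the PBW basis) are sound and correspond to the paper's ``it is now easy to see''; only the character computation, which is the actual content of the lemma, needs the repair above.
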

\begin{proof}
    Let $w \coloneqq \VV_{n - 1} \cdot 1_{\psi_n}$. For $m \geq n$, we have
    \begin{align*}
        \UU_m \cdot w &= \UU_m \VV_{n - 1} \cdot 1_{\psi_n} = [\UU_m,\VV_{n - 1}] \cdot 1_{\psi_n} + \VV_{n - 1} \UU_m \cdot 1_{\psi_n} \\
        &= \Big((m - n + 2)\VV_{n + m + 1} - 4(m - n + 1)\VV_{n + m - 1}\Big) \cdot 1_{\psi_n} + \psi_n(\UU_m) w \\
        &=\psi_n(\UU_m) w,
    \end{align*}
    since $n + m - 1 \geq 2n - 1$, where we used that $\mu_k = 0$ for all $k \geq 2n - 1$. Similarly,
    $$\VV_m \cdot w = \VV_m \VV_{n - 1} \cdot 1_{\psi_n} = \VV_{n - 1} \VV_m \cdot 1_{\psi_n} = \psi_n(\VV_m) w,$$
    where we used that $\VV_m$ and $\VV_{n - 1}$ commute.

    The above shows that $\kk w$ is a one-dimensional $\F_n \bcca$-module isomorphic to $\kk_{\psi_n}$. It is now easy to see that $U(\bcca) \cdot w$ is isomorphic to $M_{\psi_n}$.
\end{proof}

For the remainder of this section, we assume that at least one of $\psi_n(\VV_{2n})$ and $\psi_n(\VV_{2n - 1})$ is nonzero (if $n \geq 1)$ or that $\psi_0(\VV_0) \neq 0$ (if $n = 0$). As a result, we introduce the following notation.

\begin{notation}
    Fix $n \in \NN$ and a Whittaker function $\psi_n \colon \F_n \bcca \to \kk$. For $m \geq n$, define
    $$\lambda_m  \coloneqq \psi_n(\UU_m), \quad \mu_m \coloneqq \psi_n(\VV_m).$$
    If $n \geq 1$, let $\kappa \in \{2n - 1, 2n\}$ be maximal such that $\mu_\kappa \neq 0$. If $n = 0$, let $\kappa = 0$.
\end{notation}

By Lemma \ref{lem:Whittaker generators BCCA}, we have $\lambda_{m + 2} = 4\lambda_{m}$ and $\mu_m = 0$ for all $m \geq 2n + 1$.

Having introduced the necessary notation, we now proceed to prove the other direction of Theorem \ref{thm:bcca Whittaker}. To achieve this, we need to show that we can use actions of $\bcca$ to get $1_{\varphi_n}$ from any nonzero element of $M_{\psi_n}$ when $\mu_{2n}\neq 0$ or $\mu_{2n-1}\neq 0$. We do this by successively removing $\UU_i$ and $\VV_j$ from elements of $U(\bcca)$. First, we prove that $\VV_m$ annihilates elements in $M_{\psi_n}$ which do not contain $\UU_{-1}$, provided $m > \kappa$. Recall that $\PP = \spn\{\VV_k| k \geq -1 \}$ is an abelian subalgebra of $\bcca$.

\begin{lemma}\label{lem:large V kills no U-1}
   Let $x = \UU_{r_1} \dots \UU_{r_\ell} v \in M_{\psi_n}$, where $v \in U(\PP) \cdot 1_{\psi_n}$ and $0 \leq r_1 \leq r_2 \leq \dots \leq r_\ell \leq n - 1$. Then $\VV_{\kappa + m} \cdot x = 0$ for all $m \geq 1$.
\end{lemma}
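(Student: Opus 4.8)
The plan is to prove this by induction on $\ell$, the number of $\UU$-factors in $x$, using the commutation relation \eqref{eq:shifted bracket UU-VV} and the fact that $\PP$ is abelian. The base case $\ell = 0$ is where $x = v \in U(\PP) \cdot 1_{\psi_n}$; here $\VV_{\kappa + m} \cdot v = \VV_{\kappa + m} v' \cdot 1_{\psi_n}$ for some $v' \in U(\PP)$, and since all the $\VV_k$ commute and $\psi_n(\VV_k) = 0$ for $k \geq 2n + 1 > \kappa$ (by Lemma \ref{lem:Whittaker generators BCCA} and the choice of $\kappa$), we get $\VV_{\kappa + m} \cdot v = \psi_n(\VV_{\kappa + m}) v = 0$ whenever $\kappa + m \geq 2n + 1$; one checks this holds for all $m \geq 1$ since $\kappa \geq 2n - 1$ when $n \geq 1$ and $\kappa = 0$, $2n+1 = 1$ when $n = 0$.

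For the inductive step, write $x = \UU_{r_1} y$ with $y = \UU_{r_2} \dots \UU_{r_\ell} v$, which satisfies the inductive hypothesis since $0 \leq r_2 \leq \dots \leq r_\ell \leq n - 1$. Then
\begin{align*}
    \VV_{\kappa + m} \cdot x &= \VV_{\kappa + m} \UU_{r_1} \cdot y = [\VV_{\kappa + m}, \UU_{r_1}] \cdot y + \UU_{r_1} \VV_{\kappa + m} \cdot y \\
    &= -\Big((r_1 - \kappa - m + 2)\VV_{\kappa + m + r_1 + 1} - 4(r_1 - \kappa - m + 1)\VV_{\kappa + m + r_1 - 1}\Big) \cdot y + \UU_{r_1} \VV_{\kappa + m} \cdot y,
\end{align*}
using \eqref{eq:shifted bracket UU-VV}. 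By the inductive hypothesis, $\VV_{\kappa + m} \cdot y = 0$ (as $m \geq 1$), so the second term vanishes. For the first term, note that $\kappa + m + r_1 + 1 = \kappa + (m + r_1 + 1)$ with $m + r_1 + 1 \geq 2 \geq 1$, and $\kappa + m + r_1 - 1 = \kappa + (m + r_1 - 1)$ with $m + r_1 - 1 \geq 0$; so the inductive hypothesis applies to $\VV_{\kappa + m + r_1 + 1} \cdot y$ directly (exponent shift $\geq 1$), and when $m + r_1 - 1 \geq 1$ it applies to $\VV_{\kappa + m + r_1 - 1} \cdot y$ as well. The only remaining case is $m + r_1 - 1 = 0$, i.e. $m = 1$ and $r_1 = 0$, where $\VV_{\kappa + m + r_1 - 1} = \VV_{\kappa}$ — but then its coefficient $-4(r_1 - \kappa - m + 1) = -4(0 - \kappa - 1 + 1) = 4\kappa$, and moreover one should observe the argument still goes through because $\VV_\kappa \cdot y$, although not covered by the hypothesis, appears multiplied by a coefficient; actually here one must be more careful, so I would handle this edge case by noting $\VV_{\kappa} \cdot y$ equals $\psi_n(\VV_\kappa) y$ plus lower terms — wait, this requires another small induction.

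The main obstacle, then, is exactly this boundary term: when $r_1 = 0$ and $m = 1$, the commutator $[\VV_{\kappa + 1}, \UU_0]$ produces a $\VV_\kappa$ term that is \emph{not} annihilated by the inductive hypothesis (it sits precisely at the top nonzero level). The cleanest fix is to strengthen the induction or to peel off $\UU_{r_\ell}$ (the largest index) instead of $\UU_{r_1}$: writing $x = z \UU_{r_\ell} v'$ where now I commute $\VV_{\kappa+m}$ past $\UU_{r_\ell}$ first — but $\UU_{r_\ell}$ and $v$ need not commute. A better approach is to observe that since all $r_i \leq n-1$, every index $\kappa + m + (\text{sum of shifts})$ arising stays $\geq \kappa + 1$ as long as we never decrease by more than we started above $\kappa$; I would make this precise by proving the slightly stronger statement that $\VV_{\kappa + m} \cdot x = 0$ for all $m \geq 1$ \emph{and} all $x$ of the stated form, \emph{simultaneously} with a bound controlling the indices, so that the problematic $\VV_\kappa$ coefficient $r_1 - \kappa - m + 1$ vanishes precisely when it must — indeed when $r_1 = 0, m = 1$ the coefficient of $\VV_{\kappa}$ is $-4(1 - \kappa - 1) = 4\kappa$, which is generally nonzero, so instead I would argue that $\VV_\kappa \cdot y$ lies in the span of $\{\UU_{q_1}\cdots\UU_{q_j} w : q_i \leq n-1, w \in U(\PP)\cdot 1_{\psi_n}\}$ with $w$ having an extra $\VV_\kappa$, and close the induction on $\ell$ together with a secondary induction on the $\VV$-degree, exploiting that $\PP$ is abelian so those extra $\VV_\kappa$'s just accumulate harmlessly and can ultimately be absorbed into $v$ — since $\VV_\kappa \cdot (U(\PP)\cdot 1_{\psi_n}) \subseteq U(\PP)\cdot 1_{\psi_n}$. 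I expect the write-up to be a short double induction; the conceptual content is simply that moving a high-index $\VV$ leftward past low-index $\UU$'s can only raise or mildly lower its index, never below $\kappa+1$ except in a controlled way that the abelian structure of $\PP$ neutralises.
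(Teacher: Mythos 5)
Your overall strategy (induction on $\ell$, commuting $\VV_{\kappa+m}$ past the leftmost $\UU$-factor, using that $\PP$ is abelian) is exactly the paper's, but your execution contains a computational error that manufactures the very "edge case" that derails you. By \eqref{eq:shifted bracket UU-VV}, $[\UU_n,\VV_m] = (n-m+1)\VV_{n+m+2} - 4(n-m)\VV_{n+m}$, so
\[
[\VV_{\kappa+m},\UU_{r_1}] \;=\; (\kappa+m-r_1-1)\,\VV_{\kappa+m+r_1+2} \;-\; 4(\kappa+m-r_1)\,\VV_{\kappa+m+r_1},
\]
whereas you wrote the $\VV$-indices as $\kappa+m+r_1+1$ and $\kappa+m+r_1-1$ (with correspondingly shifted coefficients). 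With the correct indices, the smallest index that can appear is $\kappa+m+r_1 \geq \kappa+1$, since $m \geq 1$ and $r_1 \geq 0$; hence every term is covered by the inductive hypothesis (applied to $\UU_{r_2}\dots\UU_{r_\ell}v$, which has one fewer $\UU$-factor), and the induction closes immediately. There is no boundary term $\VV_\kappa$, no need to peel factors from the right, and no need for a secondary induction on the $\VV$-degree.

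As written, though, your argument has a genuine gap: the handling of the (spurious) case $m=1$, $r_1=0$ is only a sketch, and its key claim --- that the stray $\VV_\kappa$'s "accumulate harmlessly and can be absorbed into $v$" --- is neither made precise nor proved. It is not innocuous: $\VV_\kappa$ does not commute with the remaining $\UU_{r_i}$'s, so pushing it rightwards produces further commutator terms that would have to be controlled (this is essentially the content of Proposition \ref{prop:remove U0}, which in the paper is proved \emph{using} the present lemma, so appealing to such a statement here risks circularity). There is also a small slip in your base case: if $n \geq 1$, $\kappa = 2n-1$ and $m=1$, then $\kappa+m = 2n < 2n+1$, so Lemma \ref{lem:Whittaker generators BCCA} alone does not give $\psi_n(\VV_{\kappa+m})=0$; you must additionally invoke the maximality in the definition of $\kappa$ (which forces $\mu_{2n}=0$ in that case). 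Both issues evaporate once the bracket is corrected: redoing your inductive step with \eqref{eq:shifted bracket UU-VV} as stated yields the paper's three-line proof.
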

\begin{proof}
    The result is clearly true if $n = 0$, so assume that $n \geq 1$. We proceed by induction on $\ell$, the base case $\ell = 0$ being trivial. For the induction step, assume that the statement holds for some $\ell \in \NN$. Let $x = \UU_{r_0} \UU_{r_1} \dots \UU_{r_\ell} v$, where $v \in U(\PP) \cdot 1_{\psi_n}$ and $0 \leq r_0 \leq r_1 \leq \dots \leq r_\ell \leq n - 1$. For all $m \geq 1$, we have
    \begin{align*}
        \VV_{\kappa + m} \cdot x &= \VV_{\kappa + m} \UU_{r_0} \UU_{r_1} \dots \UU_{r_\ell} v \\
        &= [\VV_{\kappa + m},\UU_{r_0}]\UU_{r_1} \dots \UU_{r_\ell} v + \UU_{r_0} \VV_{\kappa + m} \UU_{r_1} \dots \UU_{r_\ell} v \\
        &= \Big((\kappa + m - r_0 - 1)\VV_{\kappa + m + r_0 + 2} - 4(\kappa + m - r_0)\VV_{\kappa + m + r_0}\Big)\UU_{r_1} \dots \UU_{r_\ell} v \\
        &= 0,
    \end{align*}
    where we used the induction hypothesis in the last two equalities.
\end{proof}

Next, we extend Lemma \ref{lem:large V kills no U-1} to the situation where the element of $M_{\psi_n}$ contains $\UU_{-1}$.

\begin{lemma}\label{lem:large V kills}
    Let $x \in U(\F_0 \OO \ltimes \PP) \cdot 1_{\psi_n}$. Then $\VV_{\kappa + m} \cdot \UU_{-1}^N x = 0$ if $m > N$.
\end{lemma}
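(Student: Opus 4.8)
\textbf{Proof plan for Lemma \ref{lem:large V kills}.}

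The plan is to induct on $N$, using Lemma \ref{lem:large V kills no U-1} as the base case $N = 0$ (note that $m > 0$ there is exactly $m \geq 1$). For the inductive step, suppose the statement holds for $N - 1$, i.e. $\VV_{\kappa + m'} \cdot \UU_{-1}^{N-1} x' = 0$ whenever $m' > N - 1$ and $x' \in U(\F_0 \OO \ltimes \PP) \cdot 1_{\psi_n}$. Given $m > N$, I would commute $\VV_{\kappa + m}$ past the leftmost $\UU_{-1}$:
\begin{equation*}
    \VV_{\kappa + m} \UU_{-1}^N x = [\VV_{\kappa + m}, \UU_{-1}] \UU_{-1}^{N-1} x + \UU_{-1} \VV_{\kappa + m} \UU_{-1}^{N-1} x.
\end{equation*}
The second term vanishes by the inductive hypothesis, since $m > N > N - 1$. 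For the first term, I would use \eqref{eq:shifted bracket UU-VV} to compute
\begin{equation*}
    [\UU_{-1}, \VV_{\kappa + m}] = (-1 - (\kappa + m) + 1)\VV_{\kappa + m + 1} - 4(-1 - (\kappa + m))\VV_{\kappa + m - 1} = -(\kappa + m)\VV_{\kappa + m + 1} + 4(\kappa + m + 1)\VV_{\kappa + m - 1},
\end{equation*}
so $[\VV_{\kappa+m},\UU_{-1}]$ is a linear combination of $\VV_{\kappa + (m+1)}$ and $\VV_{\kappa + (m-1)}$. Thus the first term above becomes a linear combination of $\VV_{\kappa + (m+1)} \UU_{-1}^{N-1} x$ and $\VV_{\kappa + (m-1)} \UU_{-1}^{N-1} x$.

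Now apply the inductive hypothesis to each: for $\VV_{\kappa + (m+1)}\UU_{-1}^{N-1}x$ we need $m + 1 > N - 1$, which holds since $m > N \geq N - 2$; for $\VV_{\kappa + (m-1)}\UU_{-1}^{N-1}x$ we need $m - 1 > N - 1$, i.e. $m > N$, which is exactly our hypothesis. Hence both terms vanish, completing the induction. The one point to be careful about is the index bookkeeping — specifically that the ``$m > N$'' bound (strict, and one unit of slack over the base case's ``$m \geq 1$'') is precisely what makes the $\VV_{\kappa + (m-1)}$ term still covered by the inductive hypothesis; this is the only place the exact form of the bound matters, and it is the reason the lemma is stated with $m > N$ rather than $m \geq 1$. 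Everything else is a routine application of the bracket relation \eqref{eq:shifted bracket UU-VV} and the fact that $\UU_{-1}^{N-1} x \in U(\F_0\OO \ltimes \PP) \cdot 1_{\psi_n}$, which holds because $\UU_{-1} \in \F_{-1}\OO = \OO$ and $x$ lies in this space by assumption (so $\UU_{-1}^{N-1}x$ does too, as $\F_0\OO\ltimes\PP$ is not needed to be enlarged — wait, $\UU_{-1}\notin \F_0\OO$). Let me instead phrase the induction on $x' \in U(\OO \ltimes \PP)\cdot 1_{\psi_n} = M_{\psi_n}$ directly: since $\UU_{-1}^{N-1}x \in M_{\psi_n}$ and any element of $M_{\psi_n}$ can be written (by PBW with the $\UU_{-1}$'s pulled to the left) in the form $\UU_{-1}^{N'} x''$ with $x'' \in U(\F_0\OO\ltimes\PP)\cdot 1_{\psi_n}$ and $N' \leq N - 1$, the inductive hypothesis applies. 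This PBW normalisation is the only mild subtlety; the main obstacle, such as it is, is simply getting the index inequalities to line up, which they do by design.
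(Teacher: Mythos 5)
Your proof is correct and is essentially the paper's argument: the paper's own proof is literally ``an easy induction on $N$, with Lemma \ref{lem:large V kills no U-1} as the base case,'' and your inductive step (commute $\VV_{\kappa + m}$ past one $\UU_{-1}$ via \eqref{eq:shifted bracket UU-VV}, then note that both resulting indices $\kappa + m \pm 1$ still satisfy $m \pm 1 > N - 1$, with the $m - 1$ case being exactly where the hypothesis $m > N$ is used) is the intended one. The closing worry about whether $\UU_{-1}^{N-1}x$ lies in $U(\F_0 \OO \ltimes \PP) \cdot 1_{\psi_n}$, and the PBW re-normalisation it prompted, are unnecessary: the inductive hypothesis is quantified over all $x \in U(\F_0 \OO \ltimes \PP) \cdot 1_{\psi_n}$ and all admissible indices, and every term in your step has the form $\VV_{\kappa + m'} \UU_{-1}^{N-1} x$ with the \emph{same} $x$, so it applies verbatim and that paragraph can simply be deleted.
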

\begin{proof}
    Follows via an easy induction on $N$, with Lemma \ref{lem:large V kills no U-1} as the base case.
\end{proof}

In the following result, we show that the action $\VV_{\kappa + N} \cdot \UU_{-1}^N x$ results in the complete removal of $\UU_{-1}$, in other words, that $\VV_{\kappa + N} \cdot \UU_{-1}^N x$ is a nonzero element of $U(\F_0 \OO \ltimes \PP) \cdot 1_{\psi_n}$, where $x \in U(\F_0 \OO \ltimes \PP) \cdot 1_{\psi_n}$.

\begin{proposition}\label{prop:remove U-1}
    Let $x \in U(\F_0 \OO \ltimes \PP) \cdot 1_{\psi_n} \nonzero$. Then, for any $N \geq 1$, there exists  $c \in \kk \nonzero$ such that $$\VV_{\kappa + N} \cdot \UU_{-1}^N x = c x + \text{lower degree terms in } U(\F_0 \OO \ltimes \PP) \neq 0,$$
    where \emph{lower degree terms} means terms with lower degree than $x$ under the PBW filtration on $U(\bcca)$.
\end{proposition}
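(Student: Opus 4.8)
The statement to prove is Proposition~\ref{prop:remove U-1}: for a nonzero $x \in U(\F_0 \OO \ltimes \PP) \cdot 1_{\psi_n}$ and any $N \geq 1$, we have $\VV_{\kappa + N} \cdot \UU_{-1}^N x = c x + (\text{lower degree terms in } U(\F_0 \OO \ltimes \PP))$ with $c \neq 0$. The natural approach is induction on $N$. Throughout, one tracks how the commutator $[\VV_{\kappa+N}, \UU_{-1}]$ behaves: by the shifted bracket \eqref{eq:shifted bracket UU-VV} (read with the roles swapped and a sign),
\[
    [\VV_{\kappa+N}, \UU_{-1}] = -[\UU_{-1}, \VV_{\kappa+N}] = -\big((\kappa+N+2)\VV_{\kappa+N+1} - 4(\kappa+N+1)\VV_{\kappa+N-1}\big).
\]
The first term $\VV_{\kappa+N+1}$ has index exceeding $\kappa$, so by Lemma~\ref{lem:large V kills} (applied with the appropriate power of $\UU_{-1}$) it will annihilate everything to its right once we have peeled off enough copies of $\UU_{-1}$; the surviving contribution comes entirely from the second term, $+4(\kappa+N+1)\VV_{\kappa+N-1}$, which lowers the $\VV$-index by $2$ and prepares the induction step.

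\textbf{Key steps.} First I would set up the base case $N = 1$: compute
\[
    \VV_{\kappa+1} \cdot \UU_{-1} x = [\VV_{\kappa+1}, \UU_{-1}] \cdot x + \UU_{-1} \VV_{\kappa+1} \cdot x.
\]
Since $x \in U(\F_0 \OO \ltimes \PP) \cdot 1_{\psi_n}$, Lemma~\ref{lem:large V kills no U-1} (case $N=0$ of Lemma~\ref{lem:large V kills}) gives $\VV_{\kappa+1} \cdot x = 0$, so only the commutator term survives, yielding $-\big((\kappa+3)\VV_{\kappa+2} - 4(\kappa+2)\VV_{\kappa}\big)\cdot x = 4(\kappa+2)\VV_\kappa \cdot x$ modulo a term killed by Lemma~\ref{lem:large V kills no U-1}. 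Now $\VV_\kappa \cdot x = \mu_\kappa x + (\text{lower degree terms})$ because $\mu_\kappa \neq 0$ by the choice of $\kappa$ and because commuting $\VV_\kappa$ past the $\UU_{r_i}$'s (all with $r_i \le n-1 \le \kappa$... more precisely $r_i \leq n-1$ so $\kappa + r_i \geq \kappa$, and indices $>\kappa$ contribute lower-degree-or-zero terms) produces only lower-degree corrections; so $c = 4(\kappa+2)\mu_\kappa \neq 0$, as required.

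For the inductive step, assume the result for $N-1$. Write
\[
    \VV_{\kappa+N} \cdot \UU_{-1}^N x = [\VV_{\kappa+N}, \UU_{-1}] \UU_{-1}^{N-1} x + \UU_{-1} \VV_{\kappa+N} \UU_{-1}^{N-1} x.
\]
By Lemma~\ref{lem:large V kills} (with $m = N > N-1$), the term $\VV_{\kappa+N} \cdot \UU_{-1}^{N-1} x = 0$, killing the second summand. In the commutator term, the $\VV_{\kappa+N+1}$ piece is killed by Lemma~\ref{lem:large V kills} applied to $\UU_{-1}^{N-1}x$ (since $N+1 > N-1$), leaving $4(\kappa+N+1)\VV_{\kappa+N-1} \cdot \UU_{-1}^{N-1} x$. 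But $\VV_{\kappa+N-1} = \VV_{\kappa'+N-1}$ with $\kappa' := \kappa$ shifted down — more cleanly, one should phrase the induction so that the hypothesis applies with base index $\kappa$ and power $N-1$: $\VV_{\kappa+(N-1)} \cdot \UU_{-1}^{N-1} x = c' x + (\text{lower degree})$ with $c' \neq 0$. Multiplying the scalar $4(\kappa+N+1) \neq 0$ by $c'$ gives the desired nonzero $c$. The lower-degree terms propagate correctly since each operation ($\VV$ acting, commuting past $\UU_{-1}$) is filtered of degree $\le$ the input.

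\textbf{Main obstacle.} The delicate point is the bookkeeping of "lower degree terms under the PBW filtration" and ensuring that the leading coefficient genuinely does not vanish — in particular, that when $\VV_{\kappa+N-1}$ (with index possibly equal to $\kappa$ or between $\kappa$ and $2n$) acts, the nonvanishing persists. One must be careful that the index $\kappa + N - 1$ might land in the range where $\mu$ is not a priori controlled, but actually the induction processes this via the hypothesis rather than directly via $\mu$-values, so the only genuine input about $\mu$ is $\mu_\kappa \neq 0$ at the base case; every subsequent step uses only the structure constants $4(\kappa + N + j)$, which are nonzero integers. I would also need to verify carefully that Lemma~\ref{lem:large V kills} is stated in enough generality to cover $\VV_{\kappa+N+1} \cdot \UU_{-1}^{N-1}x$ and $\VV_{\kappa+N}\cdot\UU_{-1}^{N-1}x$ — both require $m > N-1$, i.e. $m \in \{N, N+1\}$, which is exactly what the lemma provides. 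Assembling these, the induction closes and the proposition follows.
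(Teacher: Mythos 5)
Your proposal follows essentially the same route as the paper's proof: induction on $N$, with Lemma \ref{lem:large V kills} annihilating both the $\VV_{\kappa+N+1}$ piece of the commutator and the term $\UU_{-1}\VV_{\kappa+N}\UU_{-1}^{N-1}x$, so that only a nonzero structure-constant multiple of $\VV_{\kappa+N-1}\cdot\UU_{-1}^{N-1}x$ survives, and the base case reduces to $\VV_\kappa\cdot x = \mu_\kappa x + \text{lower degree terms}$ with $\mu_\kappa \neq 0$. The only blemishes are minor coefficient/sign slips in the bracket (by \eqref{eq:shifted bracket UU-VV} one has $[\VV_{\kappa+N},\UU_{-1}] = (\kappa+N)\VV_{\kappa+N+1} - 4(\kappa+N+1)\VV_{\kappa+N-1}$, so the surviving scalar is $-4(\kappa+N+1)$ rather than $+4(\kappa+N+1)$), which are harmless since the misstated term is killed anyway and only nonvanishing of the surviving scalar is needed.
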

\begin{proof}
    We proceed by induction on $N$. First, consider the base case $N = 0$:
    $$\VV_\kappa \cdot x = \mu_\kappa x + \text{lower degree terms in } U(\F_0 \OO \ltimes \PP),$$
    which is nonzero.

    For the induction step, assume the statement is true for some $N \in \NN$. Then
    \begin{align*}
        \VV_{\kappa + N + 1} \cdot \UU_{-1}^{N + 1} x &= [\VV_{\kappa + N + 1}, \UU_{-1}] \UU_{-1}^N x + \UU_{-1} \VV_{\kappa + N + 1} \UU_{-1}^N x \\
        &= \Big((\kappa + N + 1)\VV_{\kappa + N + 2} - 4(\kappa + N + 2)\VV_{\kappa + N}\Big) \UU_{-1}^N x \\
        &= -4(\kappa + N + 2)\VV_{\kappa + N} \UU_{-1}^N x,
    \end{align*}
    where we used Lemma \ref{lem:large V kills} in the last two equalities. By the induction hypothesis, it follows that $\VV_{\kappa + N + 1} \cdot \UU_{-1}^{N + 1} x \in U(\F_0 \OO \ltimes \PP) \nonzero$, which concludes the proof.
\end{proof}

Having shown how $\UU_{-1}$ can be removed from any element of $U(\bcca)$, we now proceed to also remove $\UU_0$.

\begin{proposition}\label{prop:remove U0}
    Suppose $n \geq 1$. Then the following hold.
    \begin{enumerate}
        \item Let $x \in U(\F_1 \OO \ltimes \PP) \cdot 1_{\psi_n} \nonzero$. Then $\VV_\kappa \cdot x = \mu_\kappa x$.\label{item:remove U0 part 1}
        \item Let $x \in U(\F_1 \OO \ltimes \PP) \cdot 1_{\psi_n} \nonzero$. Then there exists $c \in \kk \nonzero$ such that $(\VV_{\kappa}-\mu_\kappa)^N \cdot \UU_{0}^N x = cx$.\label{item:remove U0 part 2}
    \end{enumerate}
\end{proposition}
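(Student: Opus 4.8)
The plan is to mimic the strategy used for removing $\UU_{-1}$ in Propositions \ref{prop:remove U-1}, now for the generator $\UU_0$, after first establishing part \eqref{item:remove U0 part 1} as a lemma-style input for part \eqref{item:remove U0 part 2}. For part \eqref{item:remove U0 part 1}, I would work with the subalgebra $\F_1 \OO \ltimes \PP$ and show $\VV_\kappa$ acts as the scalar $\mu_\kappa$ on any nonzero $x \in U(\F_1 \OO \ltimes \PP) \cdot 1_{\psi_n}$. The key point is that $\F_1 \OO$ is spanned by $\{\UU_k \mid k \geq 1\}$, so when commuting $\VV_\kappa$ past a generator $\UU_r$ with $r \geq 1$, equation \eqref{eq:shifted bracket UU-VV} gives $[\VV_\kappa, \UU_r] = -[\UU_r,\VV_\kappa] = -(r-\kappa+1)\VV_{r+\kappa+2} + 4(r-\kappa)\VV_{r+\kappa}$, and since $r \geq 1$ we have $r + \kappa \geq \kappa + 1 > \kappa$, so by Lemma \ref{lem:Whittaker generators BCCA} both $\VV_{r+\kappa}$ and $\VV_{r+\kappa+2}$ act as $0$ on $1_{\psi_n}$; an easy induction on the length of a PBW monomial in the $\UU_k$ ($k \geq 1$) and $\VV_j$ (which commute among themselves and with $\VV_\kappa$) then shows $\VV_\kappa$ picks up only the scalar $\mu_\kappa$. (The maximality of $\kappa$, or the convention $\kappa = 0$ when $n = 0$, is not needed here — only that $\VV_m$ acts by a scalar for $m \geq \kappa$ and by $0$ for $m > 2n$, combined with $\kappa \geq 2n - 1$.)

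For part \eqref{item:remove U0 part 2}, I would proceed by induction on $N$, exactly paralleling the proof of Proposition \ref{prop:remove U-1}, but using the operator $\VV_\kappa - \mu_\kappa$ in place of $\VV_{\kappa+N}$. The base case $N = 0$ is trivial. For the inductive step, write $x' \coloneqq \UU_0 x$ and compute $(\VV_\kappa - \mu_\kappa) \cdot \UU_0^{N+1} x = (\VV_\kappa - \mu_\kappa)\UU_0 (\UU_0^N x)$. Expanding, $(\VV_\kappa - \mu_\kappa)\UU_0 = \UU_0(\VV_\kappa - \mu_\kappa) + [\VV_\kappa, \UU_0]$, and $[\VV_\kappa, \UU_0] = -[\UU_0,\VV_\kappa] = -(1-\kappa)\VV_{\kappa+2} + 4(-\kappa)\VV_\kappa$ by \eqref{eq:shifted bracket UU-VV}; since $\kappa \geq 2n - 1 \geq 1$ we have $\kappa + 2 > 2n$, so $\VV_{\kappa+2}$ acts as $0$ on everything in $U(\F_1 \OO \ltimes \PP)\cdot 1_{\psi_n}$, leaving $[\VV_\kappa,\UU_0]$ acting as the scalar $-4\kappa \mu_\kappa$ on such elements. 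Thus $(\VV_\kappa - \mu_\kappa)^{N+1}\UU_0^{N+1}x = (\VV_\kappa - \mu_\kappa)^N\big(\UU_0(\VV_\kappa - \mu_\kappa)\UU_0^N x - 4\kappa\mu_\kappa \UU_0^N x\big)$; using part \eqref{item:remove U0 part 1} to see $(\VV_\kappa - \mu_\kappa)$ kills $\UU_0^N x$ (which lies in $U(\F_1\OO\ltimes\PP)\cdot 1_{\psi_n}$... — wait, $\UU_0 \notin \F_1\OO$). Here I must be careful: $\UU_0^N x$ is \emph{not} in $U(\F_1 \OO \ltimes \PP) \cdot 1_{\psi_n}$, so I cannot directly apply part \eqref{item:remove U0 part 1}. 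Instead, I would commute $(\VV_\kappa - \mu_\kappa)$ all the way through the $\UU_0^N$ factor, at each step producing a scalar multiple $-4\kappa\mu_\kappa$ of the lower power (the $\VV_{\kappa+2}$ terms vanishing as above), obtaining $(\VV_\kappa - \mu_\kappa)\UU_0^N x = \sum_{j} (\text{scalar}) \UU_0^{N-j}(\VV_\kappa - \mu_\kappa)^{?}\cdots$; more cleanly, an induction shows $(\VV_\kappa - \mu_\kappa)\UU_0^N x = \UU_0^N(\VV_\kappa - \mu_\kappa)x + (\text{lower powers of }\UU_0 \text{ with scalar coefficients})$, and since $x \in U(\F_1\OO\ltimes\PP)\cdot 1_{\psi_n}$, part \eqref{item:remove U0 part 1} gives $(\VV_\kappa - \mu_\kappa)x = 0$. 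Iterating $N$ times and tracking the leading coefficient gives $(\VV_\kappa - \mu_\kappa)^N \UU_0^N x = c\,x$ with $c$ a nonzero product of the scalars $-4\kappa\mu_\kappa$ (nonzero because $\mu_\kappa \neq 0$ by the standing assumption and $\kappa \geq 1$); the only subtlety is checking the leading term survives, which follows from a PBW-degree count as in Proposition \ref{prop:remove U-1}.

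The main obstacle I anticipate is bookkeeping in the second part: unlike the $\UU_{-1}$ case where one used a \emph{different} operator $\VV_{\kappa+N}$ at each stage (chosen large enough to annihilate all lower-degree debris via Lemma \ref{lem:large V kills}), here one uses the \emph{fixed} operator $\VV_\kappa - \mu_\kappa$, so the "debris" is controlled not by degree in the $\VV$-index but by the fact that $\VV_\kappa - \mu_\kappa$ annihilates $U(\F_1\OO\ltimes\PP)\cdot 1_{\psi_n}$ (part \eqref{item:remove U0 part 1}) and that commuting it past $\UU_0$ only produces scalar multiples of lower $\UU_0$-powers. Making this precise requires a careful induction establishing the commutation identity $(\VV_\kappa - \mu_\kappa)\UU_0^N = \UU_0^N(\VV_\kappa - \mu_\kappa) + \sum_{i=0}^{N-1} d_{i,N}\,\UU_0^i$ with explicit scalars $d_{i,N}$ (all involving $\mu_\kappa \neq 0$ as a factor when they matter), then combining with part \eqref{item:remove U0 part 1} and a PBW-degree argument to extract the nonzero leading coefficient $c$.
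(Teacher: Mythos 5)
Your proposal follows essentially the same route as the paper: part \eqref{item:remove U0 part 1} is proved by induction, commuting $\VV_\kappa$ past the $\UU_r$ with $r \geq 1$ and killing the resulting higher-index $\VV$'s (the paper packages exactly this annihilation as Lemma \ref{lem:large V kills no U-1}); for part \eqref{item:remove U0 part 2} the paper only says ``similar to Proposition \ref{prop:remove U-1}, using \eqref{item:remove U0 part 1}'', and your fixed-operator induction with the commutation relation $[\VV_\kappa,\UU_0]=(\kappa-1)\VV_{\kappa+2}-4\kappa\VV_\kappa$, part \eqref{item:remove U0 part 1} applied to $x$, and a leading-coefficient count is precisely that argument, including the correct observation that $\UU_0^N x$ is no longer in $U(\F_1\OO\ltimes\PP)\cdot 1_{\psi_n}$ so part \eqref{item:remove U0 part 1} cannot be applied to it directly.

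One concrete correction: your parenthetical claim that the maximality of $\kappa$ is not needed in part \eqref{item:remove U0 part 1}, and the accompanying justification that $\VV_{r+\kappa}$ annihilates $1_{\psi_n}$ ``by Lemma \ref{lem:Whittaker generators BCCA}'', is wrong in the case $\kappa = 2n-1$. There, commuting $\VV_\kappa$ past $\UU_1$ produces a $\VV_{2n}$ term with coefficient $-4(2n-2)\neq 0$ for $n\geq 2$, and Lemma \ref{lem:Whittaker generators BCCA} says nothing about $\psi_n(\VV_{2n})$; you need $\mu_{2n}=0$, which is exactly the maximality in the definition of $\kappa$ (and is what makes the paper's Lemma \ref{lem:large V kills no U-1} true). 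The fix is one line, so this does not derail the proof, but the assertion as written is false. Two smaller points: the ``commutation identity'' $(\VV_\kappa-\mu_\kappa)\UU_0^N=\UU_0^N(\VV_\kappa-\mu_\kappa)+\sum_i d_{i,N}\UU_0^i$ holds only as an identity of operators on the relevant vectors, not in $U(\bcca)$, since $[\VV_\kappa,\UU_0]$ is not a scalar; and in part \eqref{item:remove U0 part 2} you need $\VV_{\kappa+2}$ to annihilate $U(\F_0\OO\ltimes\PP)\cdot 1_{\psi_n}$ (the debris contains powers of $\UU_0$), not merely $U(\F_1\OO\ltimes\PP)\cdot 1_{\psi_n}$ --- again this is exactly Lemma \ref{lem:large V kills no U-1}, so citing it resolves both issues.
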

\begin{proof}
    For \eqref{item:remove U0 part 1}, it suffices to consider $x = \UU_{r_1} \dots \UU_{r_\ell}y$, where $y \in U(\PP) \cdot 1_{\psi_n}$ and $1 \leq r_1 \leq r_2 \leq \dots \leq r_\ell \leq n - 1$. The result is clearly true for $n = 1$, so assume that $n \geq 2$. We proceed by induction on $\ell$, the case $\ell = 0$ being trivial. For the induction step, assume that the statement is true for some $\ell \in \NN$. Then
    $$\VV_\kappa \cdot \UU_{r_0} \UU_{r_1} \dots \UU_{r_\ell} y = \UU_{r_0} \VV_\kappa \UU_{r_1} \dots \UU_{r_\ell} y + [\VV_\kappa,\UU_{r_0}]\UU_{r_1} \dots \UU_{r_\ell} y.$$
    By the induction hypothesis, we have
    $$\VV_\kappa \UU_{r_1} \dots \UU_{r_\ell} y = \mu_\kappa \VV_\kappa \UU_{r_1} \dots \UU_{r_\ell} y.$$
    Therefore,
    $$\VV_\kappa \cdot \UU_{r_0} \UU_{r_1} \dots \UU_{r_\ell} y = \mu_\kappa \UU_{r_0} \UU_{r_1} \dots \UU_{r_\ell} y + \Big((\kappa - r_0 - 1)\VV_{\kappa + r_0 + 2} - 4(\kappa - r_0)\VV_{\kappa + r_0}\Big)\UU_{r_1} \dots \UU_{r_\ell} y.$$
    By Lemma \ref{lem:large V kills no U-1}, it follows that $\Big((\kappa - r_0 - 1)\VV_{\kappa + r_0 + 2} - 4(\kappa - r_0)\VV_{\kappa + r_0}\Big)\UU_{r_1} \dots \UU_{r_\ell} y = 0$, since $r_0 \geq 1$. We conclude that $\VV_\kappa \cdot \UU_{r_0} \UU_{r_1} \dots \UU_{r_\ell} y = \mu_\kappa \UU_{r_0} \UU_{r_1} \dots \UU_{r_\ell} y$, which concludes the proof of \eqref{item:remove U0 part 1}.
    
    For \eqref{item:remove U0 part 2}, the proof is similar to that of Proposition \ref{prop:remove U-1}, using \eqref{item:remove U0 part 1}.
\end{proof}

The next result shows how to remove all the other $\UU_i$ from an element of $U(\bcca)$ by successively acting by $(\VV_m - \mu_m)$ for some appropriately chosen $m \in \NN$. Of course, if $n = 0$ or $n = 1$, then we have already removed all $\UU_i$ in the previous results, so we do not need to consider these cases.

\begin{proposition}\label{prop:remove all U}
    Suppose $n \geq 2$, and let $x = \UU_{q_1}^{p_{1}} \dots \UU_{q_k}^{p_{k}} y$, where $y \in U(\PP) \cdot 1_{\psi_n}$ and $1 \leq q_1 < q_2 < \dots < q_k \leq n - 1$. Then
    $$(\VV_m - \mu_m) \cdot x = \begin{cases}
        -4 p_1 \mu_\kappa (\kappa - 2q_1) \UU_{q_1}^{p_{1} - 1} \dots \UU_{q_k}^{p_{k}} y, &\text{if } m = \kappa - q_1, \\
        0, &\text{if } m > \kappa - q_1.
    \end{cases}$$
\end{proposition}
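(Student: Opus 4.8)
The plan is to reduce the statement to a single application of the Leibniz rule for the derivation $\ad_{\VV_m}$ on $U(\bcca)$, combined with Lemma \ref{lem:large V kills no U-1} and Proposition \ref{prop:remove U0}\eqref{item:remove U0 part 1}. First I would write $x = X\cdot y$ with $X \coloneqq \UU_{q_1}^{p_1}\cdots\UU_{q_k}^{p_k} \in U(\bcca)$ and $y \in U(\PP)\cdot 1_{\psi_n}$. Since $m \geq \kappa - q_1 \geq (2n-1)-(n-1) = n$ (using $\kappa \in \{2n-1,2n\}$ and $q_1 \leq n-1$), the element $\VV_m$ lies in $\F_n\bcca$, so it acts on $1_{\psi_n}$ by the scalar $\mu_m = \psi_n(\VV_m)$; as $\VV_m$ also commutes with $U(\PP)$, it follows that $\VV_m\cdot x = \mu_m x + [\VV_m, X]\cdot y$, i.e.\ $(\VV_m - \mu_m)\cdot x = [\VV_m, X]\cdot y$. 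Thus everything comes down to evaluating $[\VV_m,X]\cdot y$.

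Next, using that $\ad_{\VV_m}$ is a derivation of $U(\bcca)$,
$$[\VV_m, X] = \sum_{i=1}^{k}\sum_{j=0}^{p_i-1}\UU_{q_1}^{p_1}\cdots\UU_{q_{i-1}}^{p_{i-1}}\,\UU_{q_i}^{j}\,[\VV_m,\UU_{q_i}]\,\UU_{q_i}^{p_i-1-j}\,\UU_{q_{i+1}}^{p_{i+1}}\cdots\UU_{q_k}^{p_k},$$
where $[\VV_m,\UU_{q_i}] = -(q_i-m+1)\VV_{q_i+m+2} + 4(q_i-m)\VV_{q_i+m}$ by \eqref{eq:shifted bracket UU-VV}. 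The key numerical facts are that $q_i+m \geq q_1+m \geq \kappa$ for all $i$, with equality only when $i=1$ and $m = \kappa - q_1$, while $q_i+m+2 > \kappa$ for all $i$. Because $\mu_\ell = 0$ for every $\ell > \kappa$ (by the maximality in the definition of $\kappa$ together with Lemma \ref{lem:Whittaker generators BCCA}), every summand that contains a factor $\VV_\ell$ with $\ell > \kappa$ contributes $0$: in such a summand $\VV_\ell$ acts on the ordered monomial in the $\UU_r$'s standing to its right (all with $0 \le r \le n-1$) times $y$, which vanishes by Lemma \ref{lem:large V kills no U-1}, and the $\UU$'s to its left then act on zero. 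This immediately settles the case $m > \kappa - q_1$, where every $\VV$-index occurring exceeds $\kappa$; and when $m = \kappa - q_1$ it kills all terms with $i \geq 2$ together with the $\VV_{\kappa+2}$-contribution of the $i=1$ terms.

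It then remains to collect, in the case $m = \kappa - q_1$, the surviving $\VV_\kappa$-contributions coming from the $i=1$ summands. For each $j \in \{0,\dots,p_1-1\}$ the element $z_j \coloneqq \UU_{q_1}^{p_1-1-j}\UU_{q_2}^{p_2}\cdots\UU_{q_k}^{p_k}\cdot y$ lies in $U(\F_1\OO\ltimes\PP)\cdot 1_{\psi_n}$ (all $\UU$-indices are at least $q_1 \geq 1$), so Proposition \ref{prop:remove U0}\eqref{item:remove U0 part 1} gives $\VV_\kappa\cdot z_j = \mu_\kappa z_j$ (when $k=1$ and $j = p_1-1$ this is simply $\VV_\kappa y = \mu_\kappa y$). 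Re-inserting $\UU_{q_1}^{j}$ on the left shows each of these $p_1$ terms equals $\mu_\kappa\,\UU_{q_1}^{p_1-1}\UU_{q_2}^{p_2}\cdots\UU_{q_k}^{p_k}\cdot y$, and with the coefficient $4(q_1-m) = 4(2q_1-\kappa) = -4(\kappa-2q_1)$ the sum over $j$ yields $[\VV_m,X]\cdot y = -4p_1\mu_\kappa(\kappa-2q_1)\,\UU_{q_1}^{p_1-1}\UU_{q_2}^{p_2}\cdots\UU_{q_k}^{p_k}\cdot y$, which is exactly the claimed identity.

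I expect the only real difficulty to be the bookkeeping: one must check that every monomial fed into Lemma \ref{lem:large V kills no U-1} genuinely has its $\UU$-indices in $[0,n-1]$ and every monomial fed into Proposition \ref{prop:remove U0}\eqref{item:remove U0 part 1} has all $\UU$-indices at least $1$ (both guaranteed here by $1 \le q_1 < \cdots < q_k \le n-1$), and that no commutator ever lowers a $\VV$-index back down to $\kappa$ except through the single channel identified above. The degenerate cases $k=1$, and $n=2$ (which forces $k=1$, $q_1=1$), are handled by the same computation read with empty products.
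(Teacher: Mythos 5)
Your proposal is correct and follows essentially the same route as the paper's proof: expand $[\VV_m,X]$ by the Leibniz rule using \eqref{eq:shifted bracket UU-VV}, kill every summand whose $\VV$-index exceeds $\kappa$ via Lemma \ref{lem:large V kills no U-1}, and evaluate the surviving $\VV_\kappa$-terms (only from the $\UU_{q_1}$ factors when $m=\kappa-q_1$) by Proposition \ref{prop:remove U0}\eqref{item:remove U0 part 1}, with the same coefficient bookkeeping $4(q_1-m)=-4(\kappa-2q_1)$. The only cosmetic difference is that the paper flattens $x$ into a non-decreasing product $\UU_{s_1}\dots\UU_{s_\ell}y$ instead of keeping the power notation.
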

\begin{proof}
    Rewrite $x$ as
    $$x = \UU_{s_1} \dots \UU_{s_\ell} y,$$
    where $1 \leq s_1 \leq s_2 \leq \dots \leq s_\ell \leq n - 1$. Of course, we have $s_1 = q_1$. For $m \geq n$, we have
    \begin{align*}
        \VV_m \cdot x &= \VV_m \UU_{s_1} \dots \UU_{s_\ell} y \\
        &= \UU_{s_1} \dots \UU_{s_\ell} \VV_m y + [\VV_m,\UU_{s_1} \dots \UU_{s_\ell}]y \\
        &= \mu_m x + \sum_{j = 1}^\ell \UU_{s_1} \dots \UU_{s_{j - 1}} [\VV_m,\UU_{s_j}] \UU_{s_{j + 1}} \dots \UU_{s_\ell}y \\
        &= \mu_m x + \sum_{j = 1}^\ell \UU_{s_1} \dots \UU_{s_{j - 1}} \Big((m - s_j - 1)\VV_{m + s_j + 2} - 4(m - s_j)\VV_{m + s_j}\Big) \UU_{s_{j + 1}} \dots \UU_{s_\ell}y.
    \end{align*}
    If $m + s_j > \kappa$, then the $j^\text{th}$ term of the above summation is zero, by Lemma \ref{lem:large V kills no U-1}. In particular, if $m > \kappa - s_1 = \kappa - q_1$, then the entire summation vanishes, and thus $\VV_m \cdot x = \mu_m x$, as required.

    Now suppose $m = \kappa - s_1 = \kappa - q_1$. By the above discussion, we can only get a contribution from the $j^\text{th}$ term of the above summation if $s_j = s_1$, since in this case we have $m + s_j = \kappa$. Therefore, switching back to the original notation for $x$, we get
    $$\VV_{\kappa - q_1} \cdot x = \mu_{\kappa - q_1} x - 4(\kappa - 2q_1) \sum_{i = 0}^{p_1 - 1} \UU_{q_1}^i \VV_\kappa \UU_{q_1}^{p_1 - i - 1} \UU_{q_2}^{p_2} \dots \UU_{q_k}^{p_k} y.$$
    By Proposition \ref{prop:remove U0}\eqref{item:remove U0 part 1}, it follows that
    $$\VV_\kappa \cdot \UU_{q_1}^{p_1 - i - 1} \UU_{q_2}^{p_2} \dots \UU_{q_k}^{p_k} y = \mu_\kappa \UU_{q_1}^{p_1 - i - 1} \UU_{q_2}^{p_2} \dots \UU_{q_k}^{p_k} y,$$
    and thus
    $$\VV_{\kappa - q_1} \cdot x = \mu_{\kappa - q_1} x - 4 p_1 \mu_\kappa (\kappa - 2q_1) \UU_{q_1}^{p_1 - 1} \UU_{q_2}^{p_2} \dots \UU_{q_k}^{p_k} y,$$
    which concludes the proof.
\end{proof}

The next result is a direct consequence of Propositions \ref{prop:remove U-1}, \ref{prop:remove U0}, and \ref{prop:remove all U}.

\begin{corollary}\label{cor:Whittaker contains polynomial in V}
    Let $M$ be a nonzero submodule of $M_{\psi_n}$. Then $M$ contains a nonzero element in $U(\PP) \cdot 1_{\psi_n}$. \qed
\end{corollary}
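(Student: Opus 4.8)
The plan is to show that any nonzero submodule $M$ of $M_{\psi_n}$ contains a nonzero element lying in $U(\PP) \cdot 1_{\psi_n}$, that is, an element with no $\UU_i$'s in its PBW expansion. The strategy is a ``stripping'' argument: starting from an arbitrary nonzero element of $M$ (which, by PBW applied to the decomposition $\bcca = \F_0\OO\ltimes\PP \oplus \kk\UU_{-1}$, can be written in a normal form with all $\UU_{-1}$-factors collected on the left, then all $\UU_0$-factors, then an element of $U(\F_1\OO\ltimes\PP)\cdot 1_{\psi_n}$), we successively apply suitable operators from $\PP$ to remove these $\UU_i$ one layer at a time, controlling the result via the PBW filtration on $U(\bcca)$.

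First I would pick $0 \neq w \in M$ and use the PBW theorem (with the ordered basis putting $\UU_{-1}$ last, then $\UU_0$, then the remaining $\UU_i$ and $\VV_j$) to write $w$ as a sum of monomials $\UU_{-1}^{N} \UU_0^{p_0} \UU_{q_1}^{p_1}\cdots\UU_{q_k}^{p_k}\VV_{\ast}\cdots\VV_{\ast}\cdot 1_{\psi_n}$ with $1\le q_1<\cdots<q_k\le n-1$. Let $N$ be the maximal power of $\UU_{-1}$ appearing, and let $w'$ be the ``top part'' of $w$ in the $\UU_{-1}$-degree. Applying $\VV_{\kappa+N}$ and invoking Proposition \ref{prop:remove U-1}, the action $\VV_{\kappa+N}\cdot\UU_{-1}^N x$ produces $cx + (\text{lower degree in }U(\F_0\OO\ltimes\PP))$ with $c\neq 0$, while by Lemma \ref{lem:large V kills} the monomials of $w$ with strictly smaller $\UU_{-1}$-degree are killed. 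Hence $\VV_{\kappa+N}\cdot w$ is a nonzero element of $M$ lying in $U(\F_0\OO\ltimes\PP)\cdot 1_{\psi_n}$, i.e.\ with no $\UU_{-1}$. Iterating this I reach a nonzero element of $M$ in $U(\F_0\OO\ltimes\PP)\cdot 1_{\psi_n}$.

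Next, if $n\ge 1$, I would remove $\UU_0$: given such an element $x$ with maximal $\UU_0$-degree $N$ and top part $x'$, Proposition \ref{prop:remove U0}\eqref{item:remove U0 part 2} gives $(\VV_\kappa-\mu_\kappa)^N\cdot\UU_0^N x' = cx'$ with $c\neq 0$, while part \eqref{item:remove U0 part 1} shows $(\VV_\kappa - \mu_\kappa)$ annihilates any monomial in $U(\F_1\OO\ltimes\PP)\cdot 1_{\psi_n}$, so the lower-$\UU_0$-degree terms are killed and we land in $U(\F_1\OO\ltimes\PP)\cdot 1_{\psi_n}\nonzero$. Finally, if $n\ge 2$, I would peel off the remaining $\UU_{q_j}$ in order of increasing index: among the monomials of the current element, choose one in which the smallest occurring index $q_1$ is as small as possible, then apply $\VV_{\kappa-q_1}$ and invoke Proposition \ref{prop:remove all U}, which shows that $\VV_{\kappa-q_1}$ either kills a monomial (if its smallest index exceeds $q_1$, or more precisely if $m>\kappa-(\text{its smallest index})$) or replaces $\UU_{q_1}^{p_1}\cdots$ by $-4p_1\mu_\kappa(\kappa-2q_1)\UU_{q_1}^{p_1-1}\cdots$ modulo the diagonal action of $\mu_{\kappa-q_1}$; since $\mu_\kappa\neq 0$ and $\kappa-2q_1\neq 0$ (as $\kappa\in\{2n-1,2n\}$ and $q_1\le n-1$), the coefficient is nonzero. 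Repeatedly applying $(\VV_{\kappa-q_1}-\mu_{\kappa-q_1})$ strictly decreases the total $\UU$-degree of the relevant part until all $\UU$'s are gone, at which point we have a nonzero element of $M$ in $U(\PP)\cdot 1_{\psi_n}$.

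The main obstacle — and the reason this has to be organised as a sequence of separate propositions rather than one slick induction — is the bookkeeping of which monomials survive each application of a $\VV_m$: one must always act with an index $m$ large enough that the unwanted ``lower'' monomials are annihilated (this is exactly the content of Lemmas \ref{lem:large V kills no U-1} and \ref{lem:large V kills}) yet the ``top'' monomial is transformed with a provably nonzero coefficient. In other words, the delicate point is that the three stripping stages are not independent: removing $\UU_{-1}$ relies on the abelianness of $\PP$ to avoid recreating $\UU$'s, removing $\UU_0$ relies on $\VV_\kappa$ acting semisimply on $\UU_0$-free monomials, and removing the higher $\UU_{q_j}$ relies on the sign/nonvanishing conditions $\mu_\kappa\neq 0$ and $\kappa\neq 2q_1$. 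Once the three propositions are in hand, however, the corollary is just their composition, applied layer by layer with a downward induction on PBW degree, so the write-up is short.
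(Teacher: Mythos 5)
Your argument is correct and is essentially the paper's own: the corollary is stated there as an immediate consequence of Propositions \ref{prop:remove U-1}, \ref{prop:remove U0}, and \ref{prop:remove all U} (together with Lemmas \ref{lem:large V kills no U-1} and \ref{lem:large V kills}), and your layer-by-layer stripping of $\UU_{-1}$, then $\UU_0$, then the $\UU_{q_j}$, using the maximal-degree term and the vanishing statements to kill the lower terms, is exactly the intended composition of those results. The only nitpick is the phrase ``modulo the diagonal action of $\mu_{\kappa-q_1}$'' in the last stage, since Proposition \ref{prop:remove all U} already gives an exact equality for $(\VV_{\kappa-q_1}-\mu_{\kappa-q_1})\cdot x$; this does not affect the argument.
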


Thanks to Corollary \ref{cor:Whittaker contains polynomial in V}, we can simply focus on elements of $M_{\psi_n}$ not containing any $\UU_i$. Thankfully, the removal of $\VV_j$ is much easier, since the $\VV_j$ commute with each other.

\begin{lemma}\label{lem:U decreases V-length}
    Let $y = \VV_{q_1}^{p_1} \dots \VV_{q_k}^{p_k} \cdot 1_{\psi_n} \in M_{\psi_n}$ with $-1 \leq q_1 < \dots < q_k \leq n - 1$. Then
    $$(\UU_m - \lambda_m) \cdot y = \begin{cases}
        -4 p_1 \mu_\kappa (\kappa - 2q_1) \VV_{q_1}^{p_1 - 1} \VV_{q_2}^{p_2} \dots \VV_{q_k}^{p_k} \cdot 1_{\psi_n}, &\text{if } m = \kappa - q_1, \\
        0, &\text{if } m > \kappa - q_1.
    \end{cases}.$$
\end{lemma}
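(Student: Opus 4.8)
The statement is a direct computation using the commutator formula \eqref{eq:shifted bracket UU-VV}. The plan is to expand $\UU_m y$ by moving $\UU_m$ past each factor $\VV_{q_i}$ in turn, using the Leibniz-type identity for the action of a single element on a product, together with the fact that the $\VV_j$ commute. Concretely, $\UU_m \cdot y = \lambda_m y + \sum_{i=1}^k \VV_{q_1}^{p_1} \cdots \VV_{q_{i-1}}^{p_{i-1}} [\UU_m, \VV_{q_i}^{p_i}] \VV_{q_{i+1}}^{p_{i+1}} \cdots \VV_{q_k}^{p_k} \cdot 1_{\psi_n}$, and each $[\UU_m, \VV_{q_i}^{p_i}]$ expands as $p_i$ copies of $[\UU_m, \VV_{q_i}] = (m - q_i + 1)\VV_{m + q_i + 2} - 4(m - q_i)\VV_{m + q_i}$ sandwiched between powers of $\VV_{q_i}$ (again using that the $\VV$'s commute, so the position of the bracket inside the power does not matter up to relabelling).

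First I would record that, since $-1 \le q_i \le n-1$ and $m \ge n$ (as $\UU_m \in \F_n\bcca$ acting through $\psi_n$ only for $m \ge n$; here we just need $m \ge \kappa - q_1$ with $\kappa \le 2n$), every index $m + q_i + 2$ and $m + q_i$ arising is $\ge n$, so $\VV_{m+q_i+2}$ and $\VV_{m+q_i}$ act on elements of $U(\PP)\cdot 1_{\psi_n}$ as the scalars $\mu_{m+q_i+2}$ and $\mu_{m+q_i}$ respectively (the $\VV$'s commute, so no reordering issue arises). By definition of $\kappa$, $\mu_j = 0$ for all $j > \kappa$, and in the case $n \ge 1$ also $\mu_j = 0$ for $n \le j < \kappa$ unless $j = \kappa$; combined with $\mu_j = 0$ for $j \ge 2n+1$ from Lemma \ref{lem:Whittaker generators BCCA}, the only surviving contribution is from terms where the new index equals exactly $\kappa$.

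Next I would split into the two cases. If $m > \kappa - q_1$, then for every $i$ we have $m + q_i \ge m + q_1 > \kappa$ and a fortiori $m + q_i + 2 > \kappa$, so every $\mu_{m+q_i}$ and $\mu_{m+q_i+2}$ vanishes; hence all bracket terms are zero and $(\UU_m - \lambda_m)\cdot y = 0$. If $m = \kappa - q_1$, then $m + q_i + 2 = \kappa + (q_i - q_1) + 2 > \kappa$ for all $i$, so those terms still vanish, and $m + q_i = \kappa + (q_i - q_1) \ge \kappa$ with equality iff $q_i = q_1$, i.e.\ only $i = 1$ contributes. The $i=1$ bracket term is $[\UU_m, \VV_{q_1}^{p_1}]\VV_{q_2}^{p_2}\cdots\VV_{q_k}^{p_k}\cdot 1_{\psi_n}$, and $[\UU_m, \VV_{q_1}^{p_1}] = \sum_{j=0}^{p_1-1}\VV_{q_1}^{j}[\UU_m,\VV_{q_1}]\VV_{q_1}^{p_1-1-j}$; since the $\VV$'s commute and $[\UU_m,\VV_{q_1}]$ contributes only its $-4(m-q_1)\VV_{m+q_1} = -4(\kappa - 2q_1)\VV_{\kappa}$ part after the vanishing above, this collapses to $-4 p_1 (\kappa - 2q_1)\VV_\kappa \VV_{q_1}^{p_1-1}\VV_{q_2}^{p_2}\cdots\VV_{q_k}^{p_k}\cdot 1_{\psi_n}$, and $\VV_\kappa$ acts as $\mu_\kappa$. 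This yields exactly the claimed formula $-4 p_1 \mu_\kappa(\kappa - 2q_1)\VV_{q_1}^{p_1-1}\VV_{q_2}^{p_2}\cdots\VV_{q_k}^{p_k}\cdot 1_{\psi_n}$.

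There is no real obstacle here: the only point requiring a little care is the bookkeeping of which indices $m + q_i$ and $m + q_i + 2$ can equal $\kappa$ (using $q_1 < q_2 < \cdots < q_k$ to guarantee $i=1$ is the unique contributor when $m = \kappa - q_1$), and the elementary observation that because $\PP$ is abelian the position of $[\UU_m,\VV_{q_1}]$ inside $\VV_{q_1}^{p_1}$ is immaterial, so the $p_1$ identical terms simply add. The case $n = 0$, where $q_1 = -1$ and $\kappa = 0$, is subsumed: then $m = \kappa - q_1 = 1$ forces the single index $m + q_1 = 0 = \kappa$, consistent with Lemma \ref{lem:proper submodule psi0}.
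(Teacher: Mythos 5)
Your proof is correct and follows essentially the same route as the paper: expand $\UU_m$ past the commuting $\VV$'s via $[\UU_m,\VV_q] = (m-q+1)\VV_{m+q+2} - 4(m-q)\VV_{m+q}$, note that every resulting index is at least $\kappa \geq n$, so those $\VV$'s act by scalars that vanish unless the index is exactly $\kappa$ (which happens only for the $\VV_{q_1}$-factors when $m = \kappa - q_1$), and add up the $p_1$ identical contributions. One parenthetical aside is inaccurate --- it is not true in general that $\mu_j = 0$ for $n \leq j < \kappa$ --- but your case analysis never relies on it, since under the hypothesis $m \geq \kappa - q_1$ all indices $m + q_i$ and $m + q_i + 2$ are already $\geq \kappa$.
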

\begin{proof}
    Rewrite $y$ as
    $$y = \VV_{s_1} \dots \VV_{s_\ell} \cdot 1_{\psi_n},$$
    where $-1 \leq s_1 \leq s_2 \leq \dots \leq s_\ell \leq n - 1$ (and thus $s_1 = q_1$). We have
    \begin{align*}
        \UU_m \cdot y &= \UU_m \VV_{s_1} \dots \VV_{s_\ell} \cdot 1_{\psi_n} = \VV_{s_1} \dots \VV_{s_\ell} \UU_m \cdot 1_{\psi_n} + \sum_{j = 1}^\ell \VV_{s_1} \dots \VV_{s_{j - 1}} [\UU_m,\VV_{s_j}] \VV_{s_{j + 1}} \dots \VV_{s_\ell} \cdot 1_{\psi_n} \\
        &= \lambda_m y + \sum_{j = 1}^\ell \VV_{s_1} \dots \VV_{s_{j - 1}} \Big((m - s_j + 1)\VV_{m + s_j + 2} - 4(m - s_j)\VV_{m + s_j}\Big) \VV_{s_{j + 1}} \dots \VV_{s_\ell} \cdot 1_{\psi_n}
    \end{align*}
    Suppose $m = \kappa - q_1 = \kappa - s_1$. Then $m + s_j > \kappa$ if $s_j > s_1$. Therefore, we only get a contribution from the $j^\text{th}$ term in the above summation if $s_j = s_1$, since in this case we have $m + s_j = \kappa$. It follows that
    $$\UU_{\kappa - q_1} \cdot y = \lambda_{\kappa - q_1} y - 4 p_1 \mu_\kappa (\kappa - 2q_1) \VV_{q_1}^{p_1 - 1} \VV_{q_2}^{p_2} \dots \VV_{q_k}^{p_k} \cdot 1_{\psi_n},$$
    which concludes the proof. The case where $m > \kappa - q_1$ follows similarly.
\end{proof}

We are now ready to prove Theorem \ref{thm:bcca Whittaker}.

\begin{proof}[Proof of Theorem \ref{thm:bcca Whittaker}]
    One direction of the statement is simply Lemmas \ref{lem:proper submodule psi0} and \ref{lem:reducible Whittaker for full bcca}. For the converse, assume that $\mu_{2n} \neq 0$ or $\mu_{2n - 1} \neq 0$, and let $M$ be a nonzero submodule of $M_{\psi_n}$. Using Corollary \ref{cor:Whittaker contains polynomial in V}, choose an element $w \in M \cap (U(\PP) \cdot 1_{\psi_n}) \nonzero$. Now apply Lemma \ref{lem:U decreases V-length} to $w$ repeatedly to deduce that $M$ contains a nonzero scalar multiple of the generating vector $1_{\psi_n}$ of $M_{\psi_n}$. Therefore, $M = M_{\psi_n}$, which concludes the proof.
\end{proof}

\begin{remark}
    One may wonder about using the methods of \cite{ChengGaoLiuZhaoZhao} to prove Theorems \ref{theo: simplicity} and \ref{thm:bcca Whittaker}. The main issue is that $\F_n \OO$ and $\F_n \bcca$ are not ideals of $\OO$ and $\bcca$ for $n \geq 0$, making it impossible to apply their results directly, since quasi-Whittaker functions must be defined on ideals of the Lie algebra (recall Definition \ref{def:quasi-Whittaker modules}). But $\F_n \OO$ and $\F_n \bcca$ are ideals of $\F_0 \OO$ and $\F_0 \bcca$, so one could get some irreducibility results by inducing from $\F_n \OO$ or $\F_n \bcca$ to $\F_0 \OO$ or $\F_0 \bcca$ and analysing the Whittaker annihilator, as is done in \cite{ChengGaoLiuZhaoZhao}. However, deducing the irreducibility of the universal Whittaker module $M_{\varphi_n}$ or $M_{\psi_n}$ from the irreducibility of $\Ind_{\F_n \OO}^{\F_0 \OO} \kk_{\varphi_n}$ or $\Ind_{\F_n \bcca}^{\F_0 \bcca} \kk_{\psi_n}$ is a non-trivial task which would require similar analysis to our proof of Theorem \ref{thm:bcca Whittaker}. For this reason, we opted for a more direct proof in the case of $\bcca$, and a proof using the orbit method for $\OO$.
\end{remark}

\section{Future work} \label{sec:conclusions}

In summary, this paper provides new insights into the algebraic structure of the BCCA and studies some of its representations, namely restrictions of representations of larger Lie algebras and Whittaker modules. 
We hope our work serves as a useful starting point for further exploration of the BCCA and the related mathematical and physical concepts that one would encounter. We conclude this paper by presenting some of these future research directions:
\begin{enumerate}
    \item The BCCA can be obtained as the Lie algebra contraction of $\Vir$ (see \cite[Equation (8)]{BCCFGP2024}). Hence, one could study $\bcca$-modules obtained as contractions of $\Vir$-modules. Contractions at the level of representations are essential in Carrollian physics \cite{CampoleoniGonzalezOblakRiegler}, but there is still much that is unclear in this regard. For instance, the contractions of $\Vir \times \Vir$-modules built from ``mode algebras'' or free fields (i.e., $BC$-systems, Heisenberg algebra, etc.) have been considered in the physics literature\footnote{However, the procedure by which these representations are contracted seems different to the one introduced in \cite{CampoleoniGonzalezOblakRiegler}.} 
    (see for example \cite[Sections 5.1.2 and 5.2]{Hao:2021urq}, \cite[Section 4]{BagchiBanerjeeChakraborttyDuttaParekh} and \cite[Section 3.2.1]{Bagchi:2024unl}). However, there are still technical subtleties (e.g., issues with ``normal-ordering'') when it comes to contracting these infinite-dimensional representations that need to be addressed. Attempting to construct sensible representations of the BCCA as contractions of representations of the Virasoro algebra may help address some of these subtleties (see for instance \cite{RagoucyRasmussenRaymond} for some work in this direction on VOA modules). They should also help guide us toward the quantisation of the tensionless open string since we expect such representations to describe the possible spectra of this string. It would be particularly interesting if representations constructed in this manner belong to one of the classes of $\widehat{\bcca}$-modules discussed in this paper.
    
    \item The above line of enquiry leads naturally to the consideration of restrictions of $\bms$-modules that are vertex operator algebra (VOA) modules \cite{Borcherds, FLM, FrenkelBen-Zvi} (see \cite{ZhangDong} for the first appearance of $\bms$-modules\footnote{The BMS$_3$ algebra was called $W(2,2)$ algebra in this work, with both central elements being equal.} in mathematics), such as the $\bms$-module formed from two fermionic $BC$ systems \cite{Feigin, Friedan:1985ey, Friedan:1985ge}, each of conformal weight $(2,-1)$. This module is also known as the semi-infinite wedge or fermionic Fock representation \cite{Feigin, FGZ1986}, denoted $\semiinfforms(\bms)$. For one of the three vacua of closed tensionless string theory, $\semiinfforms(\bms)$ is the so-called ``ghost sector'' of the theory that appears during  Becchi--Rouet--Stora--Tyutin (BRST) quantisation \cite{Becchi:1974md, Becchi:1974xu, Becchi:1975nq, Tyutin:1975qk}. Thus, understanding the restriction of $\semiinfforms(\bms)$ to $\bcca$ (note that it was shown in \cite[Theorem 3.19]{Figueroa-OFarrillVishwa} that $c_M = 0$ for such a $\bms$-module) could be the first step towards formulating a BRST quantisation  procedure for the tensionless open string, or at least one of its vacua (if there exist many). It may also help formulate a generalised notion of semi-infinite cohomology for Lie algebras without integer-grading.

    \item In general, studying the restrictions and, if possible, contractions of VOA modules over $\bms$, $\Vir \times \Vir$ and $\Vir$ would provide field-theoretic insights for the tensionless open string, possibly informing us about its spectrum, quantisation etc. Some of these modules are well-known examples of highest weight modules, so one should expect that after carefully dealing with normal-ordering ambiguities, their restriction to $\widehat{\bcca}$-modules does indeed give rise to the ``almost free'' modules discussed in Section \ref{sec:restriction of modules}. Overall, together with the work in Section \ref{sec:restriction of modules}, we expect these two strands of exploration to aid in understanding the commutative diagram given in Figure \ref{fig:restrictions and contractions} at the level of Lie algebra representations while providing new field-theoretic realisations of the BCCA, which would be very informative in deducing the possible spectra of quantised tensionless open strings.
    
    \item Proving Conjecture \ref{conj:Ind} could not only aid the search for field-theoretic realisations of the BCCA, but also result in the development of novel methods to check the (in)decomposability of Lie algebra modules and/or study representations of infinite-dimensional non-$\ZZ$-graded Lie algebras.
    
    \item A similar programme to the one in this paper can be undertaken for higher spin BCCAs \cite[Equation (26)]{BCCFGP2024}. Following our notation in Section \ref{sec:preliminaries}, the higher spin BCCAs form the one-parameter family of Lie algebras $\OO \ltimes \PP_b$ (and their central extensions), where $b$ is usually taken to be an integer in physics, though the programme undertaken in this paper could be done for arbitrary $b \in \kk$.
    
    \item One could compute the low-dimensional cohomology of the two-parameter family of higher spin BCCAs, analogous to what was done for the Lie algebras $\W(a,b) \coloneqq \W \ltimes I (a,b)$ (see Definition \ref{def:tensor density}) in \cite{GaoJiangPei}. This would include a computation of the central extensions of $\bcca$.

    \item One can go beyond the methods in this paper to construct modules over $\widehat{\bcca}$ or the centrally closed version of $\bcca$, if it is proven not to be $\widehat{\bcca}$. Such representations would inevitably be vital in tensionless open string theory and other fields of theoretical physics.
    
    \item Finally, our work can be extended to the supersymmetric versions of the BCCA. Two such algebras, dubbed the \emph{homogenous and inhomogeneous boundary superconformal Carrollian algebras (BSCCAs)}, were obtained in \cite{Bagchi:2025jgu} via similar constructions to the ones in \cite{BCCFGP2024}. The study of modules over the homogenous BSCCA should serve as an invaluable stepping stone towards understanding the spectrum of the tensionless open superstring.
\end{enumerate}

\begin{figure}
    \centering
    \begin{tikzcd}[row sep=huge, column sep = huge]
        \Vir \times \Vir \arrow[r] \arrow[d, dashed] & \bms \arrow[d, dashed] \\
        \Vir \arrow[r] & \widehat{\bcca}
    \end{tikzcd}
    \caption{Diagram summarising the relations between $\Vir$, $\bms$ and $\widehat{\bcca}$. The vertical dashed arrows denote restrictions while the horizontal solid ones denote Lie algebra contractions.}
    \label{fig:restrictions and contractions}
\end{figure}

\appendix

\section{More subalgebras of the Witt algebra}\label{appendix}

Given the results of Subsection \ref{subsec:new basis for O}, it is natural to ask if we get similar subalgebras of the Witt algebra by choosing generators
$$\mathcal{X}_1 \coloneqq L_1 - \lambda L_{-1}, \quad \mathcal{X}_2 \coloneqq L_2 - \mu L_{-2}$$
for some $\lambda, \mu \in \kk^*$. As the following lemma shows, these elements generate the entirety of $\W$ unless $\mu = \lambda^2$.

\begin{lemma}
    If $\mu \neq \lambda^2$, then $\XX_1$ and $\XX_2$ generate $\W$.
\end{lemma}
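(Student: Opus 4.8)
The plan is to show that the Lie subalgebra $\g$ generated by $\XX_1$ and $\XX_2$ is a finite-codimension subalgebra of $\W$ whose associated set of ratios generates all of $\kk(t)$, so that by Lemma~\ref{lem:derivations of field of ratios} we would get $\g \subseteq \Der(\kk(t)) = \W$ with equality forced by a codimension argument. More concretely, I would first compute a few brackets to produce enough elements of $\g$ to pin down $F(\g)$, then argue $\g = \W$ directly.

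First I would compute $[\XX_1,\XX_2]$ in the standard basis. Using $[L_n,L_m]=(n-m)L_{n+m}$ on $\XX_1 = L_1 - \lambda L_{-1}$ and $\XX_2 = L_2 - \mu L_{-2}$, one gets
\begin{align*}
    [\XX_1,\XX_2] &= [L_1,L_2] - \mu[L_1,L_{-2}] - \lambda[L_{-1},L_2] + \lambda\mu[L_{-1},L_{-2}] \\
    &= -L_3 + 3\mu L_{-1} - 3\lambda L_1 + \lambda\mu L_{-3}.
\end{align*}
So $\g$ contains $\XX_3' \coloneqq L_3 - \lambda\mu L_{-3} + 3\lambda L_1 - 3\mu L_{-1} = L_3 - \lambda\mu L_{-3} + 3\lambda\XX_1 + 3(\lambda^2-\mu)L_{-1}$; since $\XX_1 \in \g$, this shows $L_3 - \lambda\mu L_{-3} + 3(\lambda^2 - \mu)L_{-1} \in \g$. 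Next I would compute $[\XX_1, \XX_3']$ and/or $[\XX_2,\XX_3']$ to extract, after subtracting multiples of previously obtained elements, a combination in which the coefficient of $L_{-1}$ (or of $L_1$) is a nonzero multiple of $(\mu - \lambda^2)$ times something explicit — the point being that the obstruction to these low-level brackets collapsing is exactly the factor $\mu - \lambda^2$, which we are assuming is nonzero. Once I have two linearly independent elements supported on $\{L_1, L_{-1}\}$ (equivalently, I can isolate $L_1$ and $L_{-1}$ individually inside $\g$, at least after also using $\XX_1$), I would then bracket repeatedly with $L_1$ (if available) or with $\XX_1$ to climb up the algebra.

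The cleanest way to finish, once $\g$ is seen to contain a submodule-like spanning set: observe that in basis-free notation $\XX_1 = -(t^2-\lambda)\del$ and $\XX_2 = -t^{-1}(t^4-\mu)\del = -(t^3 - \mu t^{-1})\del$. If I can show $L_1 = -t^2\del \in \g$ and $L_{-1} = -\del \in \g$, then I would be done: $[L_1, L_{-1}] = 2L_0$, and $\{L_1, L_0, L_{-1}\}$ generates $\mathfrak{sl}_2$, while $[L_1, [L_1, L_{-1}]]$-type brackets together with $L_1$ generate all $L_n$ with $n \geq -1$ (indeed $\kk L_{-1} \oplus \kk L_0 \oplus \kk L_1$ and repeated brackets with $L_1$ give $\WW_1$), and symmetrically brackets with $L_{-1}$ generate all $L_n$ with $n \leq 1$, so $\g = \W$. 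So the technical heart is: using $\mu \neq \lambda^2$, extract $L_{-1}$ and $L_1$ (equivalently $\del$ and $t^2\del$) as genuine elements of $\g$ from the iterated brackets of $\XX_1, \XX_2$. The main obstacle I anticipate is the bookkeeping in these iterated bracket computations — one must track the coefficients carefully to verify that the only way the relevant determinant vanishes is $\mu = \lambda^2$, and in particular that no other exceptional values of $\lambda, \mu$ sneak in. I would organise this by working with the $2$-dimensional spaces $\spn\{L_n, L_{-n}\}$ and showing that the ``reflection-coefficient'' data propagates multiplicatively, so that $\g$ fails to be all of $\W$ precisely when $\XX_1, \XX_2$ lie in a common $\OO(\lambda)$, i.e. when $\mu = \lambda^2$, matching Lemma~\ref{lem:O lambda bracket} and the remark following it.
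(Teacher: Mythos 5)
Your proposal has a genuine gap at its technical heart, and its concluding step is incorrect as written. The gap: everything hinges on extracting $L_1$ and $L_{-1}$ as honest elements of $\g$ from iterated brackets of $\XX_1$ and $\XX_2$, but you never carry this out --- you only say you would compute further brackets and ``track the coefficients''. That is precisely the hard part, and it is not clear it can be done by finitely many explicit manipulations: each new bracket introduces fresh tail terms $L_{\pm k}$ with larger $|k|$, and cancelling them exactly is the whole difficulty. The paper's proof deliberately avoids exact extraction: it builds elements with known leading terms whose nonvanishing uses $\mu \neq \lambda^2$ (e.g.\ it only ever obtains $L_{-1} + \tfrac{3\mu}{16\lambda}L_{-3} + \tfrac{\mu}{16}L_{-5}$, not $L_{-1}$ itself), deduces via \cite[Lemma 4.7]{Buzaglo2} that $\g$ meets both $\spn\{L_n \mid n \leq 1\} \cong \WW_1$ and $\WW_1$ in finite codimension, hence has finite codimension in $\W$, and then invokes Proposition \ref{prop:finite codimension subalgebras} to get $g^n\W \subseteq \g \subseteq g\W$; finally $\gcd(t^2 - \lambda,\, t^4 - \mu) = 1$ (this is where $\mu \neq \lambda^2$ enters again) forces $g = 1$, so $\g = \W$. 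Your opening alternative --- finite codimension together with $F(\g) = \kk(t)$ implies $\g = \W$ ``by a codimension argument'' --- is not valid either: $f\W$ has finite codimension and full field of ratios for every nonzero $f$, so one genuinely needs the sandwich theorem plus a divisibility argument of the above kind, neither of which appears in your sketch.

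Separately, the last step is false as stated: $\spn\{L_{-1},L_0,L_1\}$ is closed under the bracket (a copy of $\mathfrak{sl}_2$), so repeated brackets with $L_1$ do \emph{not} generate $\WW_1$, nor do brackets with $L_{-1}$ generate the non-positive half. This particular error is repairable because $\XX_2 \in \g$: if $L_1, L_{-1} \in \g$, then $[L_1,\XX_2] = -L_3 - 3\mu L_{-1}$ and $[L_{-1},\XX_2] = -3L_1 - \mu L_{-3}$ give $L_{\pm 3} \in \g$ (using $\mu \neq 0$), then $[L_3,L_{-1}] = 4L_2$ and $[L_{-3},L_1] = -4L_{-2}$ give $L_{\pm 2}$, and all remaining $L_n$ follow. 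But this repair does not close the main gap above, which is where the hypothesis $\mu \neq \lambda^2$ actually has to do its work.
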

\begin{proof}
    This proof is most easily done with the help of a computer. As such, we omit the tedious computations and instead focus on the main arguments of the proof.
    
    Let $\g$ be the subalgebra of $\W$ generated by $\XX_1$ and $\XX_2$. For $n \geq 3$, inductively define $\XX_n \coloneqq \frac{1}{n - 2}[\XX_{n - 1},\XX_1] \in \g$. In this way, we have $\XX_n = L_n + \text{lower degree terms}$ for all $n \geq 1$. There is another way to generate an element of degree 5: we have $\YY_5 \coloneqq [\XX_3,\XX_2] \in \g$. It can be checked that
    $$\XX_5 - \YY_5 = -\frac{16}{3}\lambda L_3 + \text{lower degree terms},$$
    so we define $\YY_3 \coloneqq -\frac{3}{16\lambda}(\XX_5 - \YY_5) \in \g$. Similarly, we can check that
    $$\XX_3 - \YY_3 = -\frac{45}{16\lambda}(\lambda^2 - \mu)L_1 + \text{lower degree terms}.$$
    Since we assumed that $\mu \neq \lambda^2$, we know that $\XX_3 - \YY_3 \neq 0$ (in fact, $\XX_3 = \YY_3$ if and only if $\mu = \lambda^2$). Thus, define $\YY_1 \coloneqq \frac{16\lambda}{45(\lambda^2 - \mu)}(\XX_3 - \YY_3) \in \g$. Continuing this process, we have
    $$\XX_1 - \YY_1 = -\frac{16}{15}\lambda L_{-1} + \text{lower degree terms},$$
    so we define $\YY_{-1} \coloneqq -\frac{15}{16\lambda}(\XX_1 - \YY_1) \in \g$. Specifically, we have
    $$\YY_{-1} = L_{-1} + \frac{3\mu}{16\lambda} L_{-3} + \frac{\mu}{16} L_{-5}.$$
    We also have the following element of $\g$:
    $$\mathcal{Z} \coloneqq [\XX_1,\YY_{-1}] = 2\lambda L_0 + \frac{3}{4} \mu L_{-2} - \frac{\lambda^2 \mu}{4} L_{-6}.$$
    Notice that $\W_{\leq 1} \coloneqq \spn\{L_n \mid n \leq 1\}$ is a subalgebra of $\W$ isomorphic to $\WW_1$. By \cite[Lemma 4.7]{Buzaglo2}, it follows that $\YY_{-1}$ and $\mathcal{Z}$ generate a subalgebra of $\W_{\leq 1}$ finite codimension. Therefore, $\g \cap \W_{\leq 1}$ has finite codimension in $\W_{\leq 1}$.

    By a completely symmetric argument, we also get that $\g \cap \WW_1$ has finite codimension in $\WW_1$. Therefore, $\g$ has finite codimension in $\W$. But now Proposition \ref{prop:finite codimension subalgebras} implies that
    \begin{equation}\label{eq:finite codimension containment of X1 X2 subalgebra}
        g^n\W \subseteq \g \subseteq g\W
    \end{equation}
    for some $g \in \kk[t,t^{-1}]$, and therefore $g$ must divide both $\XX_1$ and $\XX_2$. Without loss of generality, we may assume that $g \in \kk[t]$, $g$ is monic, and $g(0) \neq 0$. Note that $\XX_1 = -(t^2 - \lambda)\del$ and $\XX_2 = -t^{-1}(t^4 - \mu)\del$. But $\gcd(t^2 - \lambda, t^4 - \mu) = 1$, since $\mu \neq \lambda^2$, and therefore $g = 1$, since $g$ is a common divisor of $\XX_1$ and $\XX_2$. Now \eqref{eq:finite codimension containment of X1 X2 subalgebra} gives that $\g = \W$.
\end{proof}


\begin{thebibliography}{BFOGV24}

\bibitem[ABD{\etalchar{+}}25]{Aggarwal:2025hji}
Ankit Aggarwal, Arjun Bagchi, Stephane Detournay, Daniel Grumiller, Max Riegler, and Joan Sim\'on, \emph{Universal sectors of two-dimensional {C}arrollian {CFT}s}, J. High Energy Phys. (2025), no.~11, Paper No. 39, 57 pp.

\bibitem[ABS97]{AshtekarBicakSchmidt}
Abhay Ashtekar, Ji{\v r}{\'i} Bi{\v c}{\'a}k, and Bernd~G. Schmidt, \emph{Asymptotic structure of symmetry-reduced general relativity}, Phys. Rev. D (3) \textbf{55} (1997), no.~2, 669--686.

\bibitem[AFVH25]{Albrychiewicz:2024tqe}
Emil Albrychiewicz, Andr{\'e}s Franco~Valiente, and Vi~Hong, \emph{{Tropical branes}}, J. High Energy Phys. \textbf{06} (2025), 012.

\bibitem[ALZ16]{AdamovicLuZhao}
Dra{\v z}en Adamovi\'c, Rencai L\"u, and Kaiming Zhao, \emph{Whittaker modules for the affine {L}ie algebra {$A_1^{(1)}$}}, Adv. Math. \textbf{289} (2016), 438--479.

\bibitem[AP74]{ArnalPinczon}
D.~Arnal and G.~Pinczon, \emph{On algebraically irreducible representations of the {L}ie algebra {${\rm sl}(2)$}}, J. Mathematical Phys. \textbf{15} (1974), 350--359.

\bibitem[AR16]{AdamovicRadobolja}
Dra{\v{z}}en Adamovi\'{c} and Gordan Radobolja, \emph{On free field realizations of {$W(2,2)$}-modules}, SIGMA Symmetry Integrability Geom. Methods Appl. \textbf{12} (2016), Paper No. 113, 13 pp.

\bibitem[BB25]{BellBuzaglo}
Jason Bell and Lucas Buzaglo, \emph{Maximal dimensional subalgebras of general {C}artan-type {L}ie algebras}, Bull. Lond. Math. Soc. \textbf{57} (2025), no.~2, 605--624.

\bibitem[BBC{\etalchar{+}}20]{BagchiBanerjeeChakraborttyDuttaParekh}
Arjun Bagchi, Aritra Banerjee, Shankhadeep Chakrabortty, Sudipta Dutta, and Pulastya Parekh, \emph{{A tale of three \textemdash{} tensionless strings and vacuum structure}}, J. High Energy Phys. \textbf{04} (2020), 061.

\bibitem[BBM{\etalchar{+}}24]{Bagchi:2024unl}
Arjun Bagchi, Aritra Banerjee, Saikat Mondal, Debangshu Mukherjee, and Hisayoshi Muraki, \emph{{Beyond Wilson? Carroll from current deformations}}, J. High Energy Phys. \textbf{06} (2024), 215.

\bibitem[BC07]{BarnichCompere}
Glenn Barnich and Geoffrey Comp\`ere, \emph{Classical central extension for asymptotic symmetries at null infinity in three spacetime dimensions}, Classical Quantum Gravity \textbf{24} (2007), no.~5, F15--F23.

\bibitem[BCC{\etalchar{+}}25a]{Bagchi:2025jgu}
Arjun Bagchi, Shankhadeep Chakrabortty, Pronoy Chakraborty, Ritankar Chatterjee, and Priyadarshini Pandit, \emph{{Boundary Carroll CFTs: SUSY and Superstrings}}, 8 2025, arXiv:\href{https://arxiv.org/abs/2508.20165}{\texttt{2508.20165}}.

\bibitem[BCC{\etalchar{+}}25b]{BCCFGP2024}
Arjun Bagchi, Pronoy Chakraborty, Shankhadeep Chakrabortty, Stefan Fredenhagen, Daniel Grumiller, and Priyadarshini Pandit, \emph{{Boundary Carrollian Conformal Field Theories and Open Null Strings}}, Phys. Rev. Lett. \textbf{134} (2025), no.~7, 071604.

\bibitem[BCG17]{BatlleCampelloGomis}
Carles Batlle, Victor Campello, and Joaquim Gomis, \emph{{Canonical realization of ( 2+1 )-dimensional Bondi-Metzner-Sachs symmetry}}, Phys. Rev. D \textbf{96} (2017), no.~2, 025004.

\bibitem[BCG20]{BatlleCampelloGomis2}
Carles Batlle, V{\'\i}ctor Campello, and Joaquim Gomis, \emph{{A canonical realization of the Weyl BMS symmetry}}, Phys. Lett. B \textbf{811} (2020), 135920.

\bibitem[BFOGV24]{BatlleFigueroaGomisVishwa}
Carles Batlle, Jos{\'e}~M. Figueroa-O'Farrill, Joaquim Gomis, and Girish~S. Vishwa, \emph{{BMS-like algebras: canonical realisations and BRST quantisation}}, 11 2024, arXiv:\href{https://arxiv.org/abs/2411.14866}{\texttt{2411.14866}}.

\bibitem[BGMM10]{BagchiGopakumarMandalMiwa}
Arjun Bagchi, Rajesh Gopakumar, Ipsita Mandal, and Akitsugu Miwa, \emph{{GCA in 2d}}, J. High Energy Phys. \textbf{08} (2010), 004.

\bibitem[BI26]{BuzagloIngalls}
Lucas Buzaglo and Colin Ingalls, \emph{Lie subalgebras of vector fields on curves}, 2026, arXiv:\href{https://arxiv.org/abs/2606.21748}{\texttt{2606.21748}}.

\bibitem[BJL{\etalchar{+}}16]{BanerjeeJatkarDileepLodatoMukhiNeogi}
Nabamita Banerjee, Dileep~P. Jatkar, Ivano Lodato, Sunil Mukhi, and Turmoli Neogi, \emph{{Extended Supersymmetric BMS$_3$ algebras and Their Free Field Realisations}}, J. High Energy Phys. \textbf{11} (2016), 059.

\bibitem[BJMN16]{BanerjeeJatkarMukhiNeogi}
Nabamita Banerjee, Dileep~P. Jatkar, Sunil Mukhi, and Turmoli Neogi, \emph{{Free-field realisations of the BMS$_{3}$ algebra and its extensions}}, J. High Energy Phys. \textbf{06} (2016), 024.

\bibitem[BJP11]{BaoJiangPei}
Yixin Bao, Cuipo Jiang, and Yufeng Pei, \emph{Representations of affine {N}appi-{W}itten algebras}, J. Algebra \textbf{342} (2011), 111--133.

\bibitem[BKRS21]{BabichenkoKawasetsuRidoutStewart}
Andrei Babichenko, Kazuya Kawasetsu, David Ridout, and William Stewart, \emph{Representations of the {N}appi-{W}itten vertex operator algebra}, Lett. Math. Phys. \textbf{111} (2021), no.~5, Paper No. 131, 30 pp.

\bibitem[BM11]{BatraMazorchuk}
Punita Batra and Volodymyr Mazorchuk, \emph{Blocks and modules for {W}hittaker pairs}, J. Pure Appl. Algebra \textbf{215} (2011), no.~7, 1552--1568.

\bibitem[BO14]{BarnichOblak}
Glenn Barnich and Blagoje Oblak, \emph{{Notes on the BMS group in three dimensions: I. Induced representations}}, J. High Energy Phys. \textbf{06} (2014), 129.

\bibitem[BO15]{BarnichOblak2}
\bysame, \emph{{Notes on the BMS group in three dimensions: II. Coadjoint representation}}, J. High Energy Phys. \textbf{03} (2015), 033.

\bibitem[Bor86]{Borcherds}
Richard~E. Borcherds, \emph{Vertex algebras, {K}ac-{M}oody algebras, and the {M}onster}, Proc. Nat. Acad. Sci. U.S.A. \textbf{83} (1986), no.~10, 3068--3071.

\bibitem[BPZ84]{BPZ1984}
A.~A. Belavin, Alexander~M. Polyakov, and A.~B. Zamolodchikov, \emph{{Infinite Conformal Symmetry in Two-Dimensional Quantum Field Theory}}, Nucl. Phys. B \textbf{241} (1984), 333--380.

\bibitem[BRS74]{Becchi:1974xu}
C.~Becchi, A.~Rouet, and R.~Stora, \emph{{The Abelian Higgs-Kibble Model. Unitarity of the S Operator}}, Phys. Lett. B \textbf{52} (1974), 344--346.

\bibitem[BRS75]{Becchi:1974md}
\bysame, \emph{{Renormalization of the Abelian Higgs-Kibble Model}}, Commun. Math. Phys. \textbf{42} (1975), 127--162.

\bibitem[BRS76]{Becchi:1975nq}
\bysame, \emph{{Renormalization of Gauge Theories}}, Annals Phys. \textbf{98} (1976), 287--321.

\bibitem[Buz23]{Buzaglo2}
Lucas Buzaglo, \emph{Enveloping algebras of {K}richever-{N}ovikov algebras are not {N}oetherian}, Algebr. Represent. Theory \textbf{26} (2023), no.~5, 2085--2111.

\bibitem[Buz24]{Buzaglo}
\bysame, \emph{Derivations, extensions, and rigidity of subalgebras of the {W}itt algebra}, J. Algebra \textbf{647} (2024), 230--276.

\bibitem[BV25]{BuzagloVishwa}
Lucas Buzaglo and Girish~S. Vishwa, \emph{Central extensions, derivations, and automorphisms of semi-direct sums of the {W}itt algebra with its intermediate series modules}, J. Lie Theory \textbf{35} (2025), no.~3, 455--506.

\bibitem[CGL{\etalchar{+}}26]{ChengGaoLiuZhaoZhao}
Cunguang Cheng, Wenting Gao, Shiyuan Liu, Kaiming Zhao, and Yueqiang Zhao, \emph{Quasi-{W}hittaker modules}, J. Algebra \textbf{698} (2026), 288--315.

\bibitem[CGLW24]{ChenGeLiWang}
Hongjia Chen, Lin Ge, Zheng Li, and Longhui Wang, \emph{Classical {W}hittaker modules for the affine {K}ac-{M}oody algebras {$A_N^{( 1 )}$}}, Adv. Math. \textbf{454} (2024), Paper No. 109874, 60.

\bibitem[CGOR16]{CampoleoniGonzalezOblakRiegler}
Andrea Campoleoni, Hernan~A. Gonzalez, Blagoje Oblak, and Max Riegler, \emph{{BMS Modules in Three Dimensions}}, Int. J. Mod. Phys. A \textbf{31} (2016), no.~12, 1650068.

\bibitem[Che21]{Chen}
Chiwei Chen, \emph{Whittaker modules for classical {L}ie superalgebras}, Commun. Math. Phys \textbf{388} (2021), 351--388.

\bibitem[Chr08]{Konstantina}
Konstantina Christodoulopoulou, \emph{Whittaker modules for {H}eisenberg algebras and imaginary {W}hittaker modules for affine {L}ie algebras}, J. Algebra \textbf{320} (2008), no.~7, 2871--2890.

\bibitem[CJ20]{ChenJiang}
Xue Chen and Cuipo Jiang, \emph{Whittaker modules for the twisted affine {N}appi-{W}itten {L}ie algebra {$\widehat H_4[\tau]$}}, J. Algebra \textbf{546} (2020), 37--61.

\bibitem[CSZ24]{Chen:2024voz}
Bin Chen, Haowei Sun, and Yu-fan Zheng, \emph{{Quantization of Carrollian conformal scalar theories}}, Phys. Rev. D \textbf{110} (2024), no.~12, 125010.

\bibitem[DGL24]{DilxatGaoLiu}
Munayim Dilxat, Shoulan Gao, and Dong Liu, \emph{Whittaker modules for the {$N=1$} super-{$\rm BMS_3$} algebra}, J. Algebra Appl. \textbf{23} (2024), no.~5, Paper No. 2450088, 16 pp.

\bibitem[Dix96]{dixmier1996enveloping}
Jacques Dixmier, \emph{Enveloping algebras}, Graduate Studies in Mathematics, vol.~11, American Mathematical Society, Providence, RI, 1996, Revised reprint of the 1977 translation.

\bibitem[FBZ01]{FrenkelBen-Zvi}
Edward Frenkel and David Ben-Zvi, \emph{Vertex algebras and algebraic curves}, Mathematical Surveys and Monographs, vol.~88, American Mathematical Society, Providence, RI, 2001. \MR{1849359}

\bibitem[Fe{\u \i}84]{Feigin}
B.~L. Fe{\u \i}gin, \emph{Semi-infinite homology of {L}ie, {K}ac-{M}oody and {V}irasoro algebras}, Uspekhi Mat. Nauk \textbf{39} (1984), no.~2(236), 195--196.

\bibitem[FGZ86]{FGZ1986}
I.~B. Frenkel, H.~Garland, and G.~J. Zuckerman, \emph{Semi-infinite cohomology and string theory}, Proc. Nat. Acad. Sci. U.S.A. \textbf{83} (1986), no.~22, 8442--8446.

\bibitem[FLM88]{FLM}
Igor Frenkel, James Lepowsky, and Arne Meurman, \emph{Vertex operator algebras and the {M}onster}, Pure and Applied Mathematics, vol. 134, Academic Press, Inc., Boston, MA, 1988.

\bibitem[FMS86]{Friedan:1985ge}
Daniel Friedan, Emil~J. Martinec, and Stephen~H. Shenker, \emph{{Conformal Invariance, Supersymmetry and String Theory}}, Nucl. Phys. B \textbf{271} (1986), 93--165.

\bibitem[FOV25]{Figueroa-OFarrillVishwa}
Jos\'e~M. Figueroa-O'Farrill and Girish~S. Vishwa, \emph{{The BRST quantisation of chiral BMS-like field theories}}, J. Math. Phys. \textbf{66} (2025), no.~4, 042303.

\bibitem[FPSSJ19]{FarahmandParsaSheikh-Jabbari}
A.~Farahmand~Parsa, H.~R. Safari, and M.~M. Sheikh-Jabbari, \emph{{On Rigidity of 3d Asymptotic Symmetry Algebras}}, J. High Energy Phys. \textbf{03} (2019), 143.

\bibitem[FSM85]{Friedan:1985ey}
Daniel Friedan, Stephen~H. Shenker, and Emil~J. Martinec, \emph{{Covariant Quantization of Superstrings}}, Phys. Lett. B \textbf{160} (1985), 55--61.

\bibitem[Fuc86]{Fuchs}
D.~B. Fuchs, \emph{Cohomology of infinite-dimensional {L}ie algebras}, Contemporary Soviet Mathematics, Consultants Bureau, New York, 1986, Translated from the Russian by A. B. Sosinski\u{\i}.

\bibitem[GJP11]{GaoJiangPei}
Shoulan Gao, Cuipo Jiang, and Yufeng Pei, \emph{Low-dimensional cohomology groups of the {L}ie algebras {$W(a,b)$}}, Comm. Algebra \textbf{39} (2011), no.~2, 397--423.

\bibitem[GL11]{GuoLiu}
Xiangqian Guo and Xuewen Liu, \emph{Whittaker modules over {V}irasoro-like algebra}, J. Math. Phys. \textbf{52} (2011), no.~9, Paper No. 093504, 9 pp.

\bibitem[GLP16]{GaoLiuPei}
Shoulan Gao, Dong Liu, and Yufeng Pei, \emph{Structure of the planar {G}alilean conformal algebra}, Rep. Math. Phys. \textbf{78} (2016), no.~1, 107--122.

\bibitem[GPSJ{\etalchar{+}}20]{GrumillerPerezSheikh-JabbariTroncosoZwikel}
Daniel Grumiller, Alfredo P\'erez, M.~M. Sheikh-Jabbari, Ricardo Troncoso, and C\'eline Zwikel, \emph{Spacetime structure near generic horizons and soft hair}, Phys. Rev. Lett. \textbf{124} (2020), no.~4, Paper No. 041601, 7 pp.

\bibitem[HSXZ22]{Hao:2021urq}
Peng-xiang Hao, Wei Song, Xianjin Xie, and Yuan Zhong, \emph{{BMS-invariant free scalar model}}, Phys. Rev. D \textbf{105} (2022), no.~12, 125005.

\bibitem[JLPZ24]{JiangLiuPeiZhao}
Wei Jiang, Dong Liu, Yufeng Pei, and Kaiming Zhao, \emph{Singular vectors, characters, and composition series for the {N}=1 {BMS} superalgebra}, 2024, arXiv:\href{https://arxiv.org/abs/2412.17000}{\texttt{2412.17000}}.

\bibitem[JPZ18]{JiangPeiZhang}
Wei Jiang, Yufeng Pei, and Wei Zhang, \emph{Determinant formula and a realization for the {L}ie algebra {$W(2, 2)$}}, Sci. China Math. \textbf{61} (2018), no.~4, 685--694.

\bibitem[JZ15]{JiangZhang}
Wei Jiang and Wei Zhang, \emph{Verma modules over the {$W(2,2)$} algebras}, J. Geom. Phys. \textbf{98} (2015), 118--127.

\bibitem[Kir04]{Kirillov2004LecturesOT}
A.~A. Kirillov, \emph{Lectures on the orbit method}, Graduate Studies in Mathematics, vol.~64, American Mathematical Society, Providence, RI, 2004.

\bibitem[Kos78]{Kostant}
Bertram Kostant, \emph{On {W}hittaker vectors and representation theory}, Invent. Math. \textbf{48} (1978), no.~2, 101--184.

\bibitem[LPX19]{LiuPeiXia}
Dong Liu, Yufeng Pei, and Limeng Xia, \emph{Whittaker modules for the super-{V}irasoro algebras}, J. Algebra Appl. \textbf{18} (2019), no.~11, 1950211, 13.

\bibitem[LPX22]{DongPeiXia}
\bysame, \emph{A category of restricted modules for the {O}visenko-{R}oger algebra}, Algebr. Represent. Theory \textbf{25} (2022), no.~3, 777--791.

\bibitem[LPXZ24]{LiuPeiXiaZhao}
Dong Liu, Yufeng Pei, Limeng Xia, and Kaiming Zhao, \emph{Smooth modules over the $n = 1$ {B}ondi–{M}etzner–{S}achs superalgebra}, Commun. Contemp. Math. \textbf{electronic} (2024), Paper No. 2450021.

\bibitem[LWZ10]{LiuWuZhu}
Dong Liu, Yuezhu Wu, and Linsheng Zhu, \emph{Whittaker modules for the twisted {H}eisenberg-{V}irasoro algebra}, J. Math. Phys. \textbf{51} (2010), no.~2, Paper No. 023524, 12 pp.

\bibitem[LZ93]{LianZuckerman}
Bong~H. Lian and Gregg~J. Zuckerman, \emph{{New perspectives on the BRST algebraic structure of string theory}}, Commun. Math. Phys. \textbf{154} (1993), 613--646.

\bibitem[LZ10]{LuZhao10}
Rencai L\"u and Kaiming Zhao, \emph{Classification of irreducible weight modules over the twisted {H}eisenberg-{V}irasoro algebra}, Commun. Contemp. Math. \textbf{12} (2010), no.~2, 183--205.

\bibitem[LZ14]{LuZhao14}
\bysame, \emph{Irreducible {V}irasoro modules from irreducible {W}eyl modules}, J. Algebra \textbf{414} (2014), no.~2, 271--287.

\bibitem[LZ16]{LiuZhao}
Genqiang Liu and Yueqiang Zhao, \emph{Irreducible {$A_1^{(1)}$}-modules from modules over two-dimensional non-abelian {L}ie algebra}, Front. Math. China \textbf{11} (2016), no.~2, 353--363.

\bibitem[LZ20]{LuZhao2020}
Rencai L\"u and Kaiming Zhao, \emph{Generalized oscillator representations of the twisted {H}eisenberg-{V}irasoro algebra}, Algebr. Represent. Theory \textbf{23} (2020), no.~4, 1417--1442.

\bibitem[MZ14]{MazorchukZhao}
Volodymyr Mazorchuk and Kaiming Zhao, \emph{Simple {V}irasoro modules which are locally finite over a positive part}, Selecta Math. (N.S.) \textbf{20} (2014), no.~3, 839--854.

\bibitem[OW09]{OndrusWiesner}
Matthew Ondrus and Emilie Wiesner, \emph{Whittaker modules for the {V}irasoro algebra}, J. Algebra Appl. \textbf{8} (2009), no.~3, 363--377.

\bibitem[Pha25]{pham2025orbit}
Tuan~Anh Pham, \emph{The orbit method for the {V}irasoro algebra}, 2025, arXiv:\href{https://arxiv.org/abs/2504.14670}{\texttt{2504.14670}}.

\bibitem[PS23]{PetukhovSierra}
Alexey Petukhov and Susan Sierra, \emph{The {P}oisson spectrum of the symmetric algebra of the {V}irasoro algebra}, Compos. Math. \textbf{159} (2023), no.~5, 933--984.

\bibitem[Rad13]{Radobolja}
Gordan Radobolja, \emph{Subsingular vectors in {V}erma modules, and tensor product of weight modules over the twisted {H}eisenberg-{V}irasoro algebra and {$W(2,2)$} algebra}, J. Math. Phys. \textbf{54} (2013), no.~7, 071701, 24.

\bibitem[RRR22]{RagoucyRasmussenRaymond}
Eric Ragoucy, Jorgen Rasmussen, and Christopher Raymond, \emph{{Asymmetric Galilean conformal algebras}}, Nucl. Phys. B \textbf{981} (2022), 115857.

\bibitem[Rud74]{Rudakov}
A.~N. Rudakov, \emph{Irreducible representations of infinite-dimensional {L}ie algebras of {C}artan type}, Izv. Akad. Nauk SSSR Ser. Mat. \textbf{38} (1974), 835--866.

\bibitem[Rud86]{Rudakov2}
\bysame, \emph{Subalgebras and automorphisms of {L}ie algebras of {C}artan type}, Funktsional. Anal. i Prilozhen. \textbf{20} (1986), no.~1, 83--84.

\bibitem[Sch03]{SchlichenmaierCrelle}
Martin Schlichenmaier, \emph{Local cocycles and central extensions for multipoint algebras of {K}richever-{N}ovikov type}, J. Reine Angew. Math. \textbf{559} (2003), 53--94.

\bibitem[Sch17]{Schlichenmaier}
\bysame, \emph{{$N$}-point {V}irasoro algebras are multipoint {K}richever-{N}ovikov-type algebras}, Comm. Algebra \textbf{45} (2017), no.~2, 776--821.

\bibitem[Tyu75]{Tyutin:1975qk}
I.~V. Tyutin, \emph{{Gauge Invariance in Field Theory and Statistical Physics in Operator Formalism}}, Preprint of P.N. Lebedev Physical Institute (1975), no.~39, 22 pp.

\bibitem[TZ13]{TanZhao}
Haijun Tan and Kaiming Zhao, \emph{Irreducible {V}irasoro modules from tensor products ({II})}, J. Algebra \textbf{394} (2013), 357--373.

\bibitem[Ush98]{Ushirobira1998}
Rosane Ushirobira, \emph{On the orbit method for the {L}ie algebra of vector fields on a curve}, J. Algebra \textbf{203} (1998), no.~2, 596--620.

\bibitem[Wan11]{Bin}
Bin Wang, \emph{Whittaker modules for graded {L}ie algebras}, Algebr. Represent. Theory \textbf{14} (2011), no.~4, 691--702.

\bibitem[Wei94]{Weibel}
Charles~A. Weibel, \emph{An introduction to homological algebra}, Cambridge Studies in Advanced Mathematics, vol.~38, Cambridge University Press, Cambridge, 1994.

\bibitem[ZD09]{ZhangDong}
Wei Zhang and Chongying Dong, \emph{{$W$}-algebra {$W(2,2)$} and the vertex operator algebra {$L(\frac 12,0)\otimes L(\frac 12,0)$}}, Comm. Math. Phys. \textbf{285} (2009), no.~3, 991--1004.

\bibitem[ZTL10]{ZhangTanLian}
Xiufu Zhang, Shaobin Tan, and Haifeng Lian, \emph{Whittaker modules for the {S}chr\"odinger-{W}itt algebra}, J. Math. Phys. \textbf{51} (2010), no.~8, 083524, 17.

\end{thebibliography}
\end{document}